\numberwithin{equation}{section}
\let\al=\alpha
\let\g=\gamma
\let\la=\lambda
\let\wt=\widetilde
\let\wh=\widehat
\let\th=\theta
\let\pa=\partial
\def\cC{{\mathcal C}}
\def\cD{{\mathcal D}}
\def\cL{{\mathcal L}}
\def\cM{{\mathcal M}}
\def\cN{{\mathcal N}}
\def\cO{{\mathcal O}}
\def\cR{{\mathcal R}}
\def\cS{{\mathcal S}}
\def\cU{{\mathcal U}}
\def\cV{{\mathcal V}}
\def\cW{{\mathcal W}}
\def\R{\mathbb R}
\def\T{\mathbb T}
\def\Z{\mathbb Z}
\def\N{\mathbb N}
\def\e{\mathrm e}
\def\dive{\mathop{\rm div}\nolimits}
\newcommand{\beq}{\begin{equation}}
	\newcommand{\eeq}{\end{equation}}
\newcommand{\ben}{\begin{eqnarray}}
	\newcommand{\een}{\end{eqnarray}}
\newcommand{\beno}{\begin{eqnarray*}}
	\newcommand{\eeno}{\end{eqnarray*}}
\newtheorem{problem}{Problem}
\newtheorem{theorem}{Theorem}[section]
\newtheorem{lemma}[theorem]{Lemma}
\newtheorem{proposition}[theorem]{Proposition}
\newtheorem{corollary}[theorem]{Corollary}
\theoremstyle{remark}
\newtheorem{remark}[theorem]{Remark}
\newtheorem{Theorem}{Theorem}[section]
\newtheorem{Remark}[Theorem]{Remark}
\begin{document}

\title[Blow-up of compressible Navier-Stokes equations]
{Blow-up of the 3-D compressible Navier-Stokes equations for monatomic gases}

\author[F. Shao]{Feng Shao}
\address{School of Mathematical Sciences, Peking University, Beijing 100871,  China}
\email{fshao@stu.pku.edu.cn}

\author[S. Wang]{Shumao Wang}
\address{School of Mathematical Sciences, Peking University, Beijing 100871,  China}
\email{shumaowang@pku.edu.cn}

\author[D. Wei]{Dongyi Wei}
\address{School of Mathematical Sciences, Peking University, Beijing 100871,  China}
\email{jnwdyi@pku.edu.cn}

\author[Z. Zhang]{Zhifei Zhang}
\address{School of Mathematical Sciences, Peking University, Beijing 100871, China}
\email{zfzhang@math.pku.edu.cn}

\date{\today}

\begin{abstract}
In this paper, we prove the blow-up of the $3$-D isentropic compressible Navier-Stokes equations for the adiabatic exponent $\gamma=5/3$, which corresponds to the law of monatomic gases.
This is the degenerate case in the sense of Merle, Raphaël, Rodnianski, and Szeftel \cite{MRRJ2,MRRJ3}. Motivated by these breakthrough works, we first establish the existence of a sequence of smooth, self-similar imploding solutions to the compressible Euler equations for $\gamma=5/3$.  Subsequently, we utilize these self-similar profiles to construct smooth, asymptotically self-similar blow-up solutions to the compressible Navier-Stokes equations for monatomic gases.
\end{abstract}

\maketitle
\tableofcontents

\section{Introduction}

In this paper, we consider the isentropic compressible Euler equations and compressible Navier-Stokes equations
\begin{equation}\label{CNS}
	\begin{cases}
		\rho\pa_t\mathbf u+\rho \mathbf u\cdot\nabla \mathbf u=-\frac1\g\nabla\left(\rho^\g\right)+\nu\Delta \mathbf u,\quad (t,y)\in\R^+\times\R^d\ (\text{or}\ \R^+\times\T^d),\\
		\pa_t\rho+\dive(\rho \mathbf u)=0.
	\end{cases}
\end{equation}
Here $\rho=\rho(t,y)>0$ is the density, $\mathbf u=\mathbf u(t, y)\in\R^d$ is the velocity, $d\in\Z\cap[1,+\infty)$ is the physical dimension, $\T^d$ is the $d$-dimensional torus, $\g>1$ is the adiabatic exponent, and $\nu\in \{0,1\}$ is the viscosity, where $\nu=0$ for Euler and $\nu=1$ for Navier-Stokes.

\if0
The solution to (\ref{CNS}) satisfies the following conservation laws:
\begin{itemize}
\item mass
\[
\int_{\mathbb{R}^{d}}\rho(t,y)\,\mathrm dy=\int_{\mathbb{R}^{d}}\rho_{0}(y)\,\mathrm dy,
\]
\item momentum
\[
\int_{\mathbb{R}^{d}}\rho(t,y)\mathbf{u}(t,y)\,\mathrm dy=\int_{\mathbb{R}^{d}}\rho_{0}(y)\mathbf{u}_{0}(y)\,\mathrm dy,
\]
\item energy
\begin{align*}
 & \int_{\mathbb{R}^{d}}\left(\frac{1}{2}\rho(t,y)|\mathbf{u}(t,y)|^{2}+\frac{1}{\gamma(\gamma-1)}\rho(t,y)^{\gamma}\right)\,\mathrm dy+\int_{0}^{t}\nu||\nabla\mathbf{u}(s,\cdot)||_{L^{2}(\mathbb{R}^{d})}^{2}\,\mathrm ds\\
= & \int_{\mathbb{R}^{d}}\left(\frac{1}{2}\rho_{0}(y)|\mathbf{u}_{0}(y)|^{2}+\frac{1}{\gamma(\gamma-1)}\rho_{0}(y)^{\gamma}\right)\,\mathrm dy.
\end{align*}
\end{itemize}\fi

\subsection{Historical background}

\subsubsection{Compressible Euler}

Local well-posedness of smooth solutions to the compressible Euler equations ($\nu=0$ in \eqref{CNS}) dates back to the classical works of Lax \cite{Lax1973}, Kato \cite{Kato1975}, and Majda \cite{Majda1984} on the theory of quasi-linear symmetric hyperbolic systems. They were able to establish the local Cauchy theory for the non-vacuum case, where $\inf \rho_0>0$, through a symmetrization of the compressible Euler equations. Another symmetrization was proposed later by Makino, Ukai, and Kawashima \cite{MUK1987}, where they considered the vacuum case; see also \cite{Chemin1990}.

Due to the nature of being a hyperbolic conservation system, a typical phenomenon observed in compressible Euler equations is the development of singularities. The first rigorous result was obtained by Lax \cite{Lax1964}, who proved the existence of 1-D shocks using Riemann invariants and the method of characteristics. The existence of finite-time singularities in 2-D and 3-D was established by the pioneering work of Sideris \cite{Sideris1985} via a virial-type argument. In two space dimensions, Alinhac \cite{Alinhac1999,Alinhac2001} was the first to give a detailed description of shock formation for a class of quasilinear wave equations. He proved that small initial data with compact support can lead to the formation of shock waves in finite time due to the intersection of characteristic lines. Based on \cite{Lebaud1994}, Yin \cite{Yin2004} proved shock formation and development in 3-D under spherical symmetry; see also \cite{Chris-Lisibach2016}.
In a series of monographs \cite{Christ2007,Christodoulou2019,Christ2014}, Christodoulou, and Christodoulou and Miao gave the first proof of shock formation and shock development in the absence of symmetry within the regime of irrotational fluids, both in relativistic and non-relativistic cases. These results were extended by Luk-Speck \cite{Luk2018,Luk2024} and Abbrescia-Speck \cite{Speck2022} to the setting involving non-zero vorticity and variable entropy. 
A new approach has been developed by Buckmaster, Shkoller, and Vicol by considering a perturbation of a self-similar Burgers-type shock. This approach was first applied in the 2-D setting with azimuthal symmetry \cite{Buck-Shko-Vicol1} (see also \cite{Buck-D-Shko-Vicol, Buck-Iyer}),  where they constructed shocks for the 2-D isentropic Euler equations and characterized the shock profile as an asymptotically self-similar, stable 1-D blowup profile. They then extended this approach to the 3-D setting, considering the presence of vorticity and entropy \cite{Buck-Shko-Vicol2, Buck-Shko-Vicol3}, and proved for the first time that the 3-D isentropic Euler equations generically form a stable point shock, even in the presence of vorticity . Their recent work \cite{SV} makes significant contributions to understanding the geometry of maximal development and shock formation. We refer readers to the survey \cite{BDSV-survey} for more references in this direction.

Nonetheless, shocks are not the only form of singularity arising from regular initial data. Another blow-up mechanism, known as implosion, where the density and velocity themselves (not only their derivatives) both become infinite at the blow-up time, has been constructed. The classical work by Guderley \cite{Guderley1942} and Sedov \cite{Sedov1959} gave a family of self-similar imploding singularities, although their solutions are not smooth. Motivated by Guderley's paper, in a series of breakthrough works \cite{MRRJ2,MRRJ3,MRRJ4}, Merle, Rapha\"el, Rodnianski and Szeftel rigorously proved the existence of a sequence of smooth imploding solutions to the compressible Euler equations. They then used these profiles to construct radially symmetric smooth blow-up solutions to both 3-D compressible Navier-Stokes equations and high-dimensional energy supercritical defocusing Schrödinger equations. Their result on the nonlinear Schrödinger equation solves a long-standing problem in the field, which was proposed by Bourgain \cite{Bourgain2000}. Their result on compressible Navier-Stokes is the first construction of blow-up solutions to \eqref{CNS} (with $\nu=1$), covering the range $1<\gamma<1+\frac{2}{\sqrt{3}}$ (where $d=3$) except for at most countably many points. However, the case $\g=5/3$, which corresponds to a monatomic gas and thus is physically relevant, is not included in their analysis due to a triple-point degeneration.

In \cite{BCLGS}, Buckmaster, Cao-Labora and G\'omez-Serrano constructed self-similar smooth imploding solutions to the 3-D compressible Euler equations for all $\g>1$ and proved the blow-up of the 3-D compressible Navier-Stokes equations for $\g=7/5$ (the law of a diatomic gas). The non-radial extensions of these results have been established in \cite{CLGSSS} for the compressible Euler and Navier-Stokes equations, and in \cite{CLGSSS-2024} for energy supercritical defocusing Schrödinger equations. These extensions are significant as they move beyond radial symmetry and address more general initial data.  See also \cite{Chen2024,CCSV2024} for further developments extending the work of \cite{MRRJ2} to include the presence of vorticity.

Motivated by the pioneering works on front compression mechanisms \cite{MRRJ2,MRRJ3,MRRJ4}, Shao, Wei and Zhang \cite{SWZ2024_2}  constructed complex-valued blow-up solutions of supercritical defocusing wave equations
\[-\pa_t^2u+\Delta_xu=|u|^{p-1}u\]
for $d=4, p\geq 29$ and $d\geq 5, p\geq 17$. This was achieved by converting the problem to the construction of self-similar smooth imploding solutions to the relativistic Euler equations, which was verified in \cite{SWZ2024_1}. This result has been extended recently by Buckmaster and Chen \cite{Buckmaster-Chen} to $(d,p)=(4,7)$, which is the endpoint of front-compression mechanism (for integer value $p$). They also rely on the construction of self-similar profiles for the relativistic Euler equations.

\subsubsection{Compressible Navier-Stokes}

The local theory of strong solutions to the compressible Navier-Stokes equations has been well developed by Serrin \cite{Serrin1959}, Nash \cite{Nash1962}, Itaya \cite{Itaya1976}, and Danchin \cite{Danchin2001}. In this series of works, they deal with regular initial data with the density bounded away from zero. For general initial density allowing vacuum, the local theory could be found in the papers by Cho, Choe, and Kim \cite{CCK2004,CK2003,CK2006}. Lions \cite{Lions1998} first proved the global existence of weak solutions to the 3-D compressible Navier-Stokes equations arising from large initial data for $\gamma \geq 9/5$. His result was extended to $\gamma > 3/2$ by Feireisl, Novotn\'y and Petzeltov\'a \cite{Feireisl2001}. For spherically
symmetric or axisymmetric initial data, Jiang and Zhang \cite{JZ2001,JZ2003} proved the global existence of weak solutions for the whole range $\gamma>1$.

 In a similar spirit to \cite{Sideris1985}, Xin \cite{Xin1998} proved the non-existence of global solutions in the class $C^1([0,+\infty); H^m(\R^d))$  for sufficiently large $m$ and compactly supported $\rho_0$, by employing a virial-type argument. Rozanova \cite{Rozanova2008} extended this result by replacing the compact-support condition with some rapid-decay conditions. However, it was proved by Li, Wang and Xin \cite{LWX2019} that if the initial density has compact support, then the compressible Navier-Stokes equations are NOT well-posed in the inhomogeneous Sobolev spaces, even \emph{locally}. To the best of our knowledge, the following problem has so far remained open.

\begin{problem}\label{Problem}
Provide a virial type argument to prove the non-existence of global smooth solutions to 2-D and 3-D isentropic compressible Navier-Stokes \eqref{CNS} in suitable functional spaces where the local Cauchy theory is available.
\end{problem}

We also remark that, for the full compressible Navier-Stokes equations involving temperature,  Xin and Yan \cite{XY2013} proved the blow-up of classical solutions if the initial density has compact support (or more generally, has an isolated mass group), by utilizing the extra structure provided by the equation for temperature, and a virial type argument. It is important to note that the blow-up results in \cite{XY2013} are independent of the functional spaces where the solutions may lie in and whether the initial data are large or small. \smallskip

In the seminal paper \cite{Nash1958}, Nash attempted to study the continuity of solutions to the equations governing compressible viscous fluids and proposed what has since become known as the {\bf conditional regularity conjecture}:
\smallskip

{\it  Probably one should first try to prove a conditional existence and uniqueness
theorem for flow equations. This should give existence, smoothness, and unique
continuation (in time) of flows, conditional on the non-appearance of certain
gross types of singularity, such as infinities of temperature or density.}
\smallskip

In   \cite{SWZ2011}, Sun, Wang and Zhang (the fourth author) confirmed Nash's conjecture and showed that  {\it  a smooth solution $(\rho, \mathbf u)$ to 3-D compressible Navier-Stokes equations blows up in finite time $T^*$ if and only if
\[\limsup_{T\uparrow T^*}\left\|\rho(t)\right\|_{L^\infty(0, T; L^\infty(\R^3))}=+\infty.\]

}

\noindent  To the best of our knowledge, the first blow-up result for the 3-D isentropic compressible Navier-Stokes equations was obtained by Merle, Rapha\"el, Rodnianski and Szeftel \cite{MRRJ2,MRRJ3}, as we mentioned in the previous sub-subsection. The main result in \cite{MRRJ3} states as follows. \smallskip

\noindent{\it  There exists a (possibly empty) exceptional countable sequence $\{\g_n\}_{n\in\N}$ whose accumulation points can only belong to $\{1, 5/3, +\infty\}$ such that for all $\g\in(1, 1+2/\sqrt3)\setminus(\{5/3\}\cup\{\g_n\}_{n\in\N})$, there exists a discrete sequence of blow-up speeds $\{r_k>1\}_{k\in\N}$ such that for each $k$, there is a finite co-dimensional manifold of smooth spherically symmetric initial data $(\rho_0, \mathbf u_0)\in H_\text{rad}^\infty(\R^3)$, for which the corresponding solution $(\rho, \mathbf u)$ to 3-D isentropic compressible Navier-Stokes equations blows up in finite time $0<T^*<+\infty$, with
\[\|\mathbf u(t,\cdot)\|_{L^\infty}=\frac{c_{\mathbf u}\left(1+o_{t\uparrow T^*}(1)\right)}{(T^*-t)^{(r_k-1)/r_k}},\quad\left\|\rho(t,\cdot)\right\|_{L^\infty}=\frac{c_\rho\left(1+o_{t\uparrow T^*}(1)\right)}{(T^*-t)^{\ell(r_k-1)/r_k}},\]
where $\ell=2/(\g-1)$. }

Here, the blow-up mechanism (i.e., implosion) is compatible  with Nash's conditional regularity conjecture.
The discrete sequence $\{\g_n\}$ of possibly non-admissible equations of state is related to the existence of smooth self-similar imploding solutions 
to the compressible Euler equations, as shown in \cite{MRRJ2}. The sequence $\{\g_n\}$ is given by the zeroes of a specific but complicated series denoted by 
$S_\infty(3,\g)$, which is analytic with respect to $\g$. The authors in \cite{MRRJ2} checked numerically that $S_\infty(3,\cdot)$ is not 
identically zero. Nonetheless, given a particular $\g\in(1, 1+2/\sqrt3)$ (let's say $\g=7/5$), it is challenging to determine that whether 
this specific $\g$ belongs to the sequence $\{\g_n\}$ or not. It was proved in \cite{BCLGS} that
$\g=7/5\not\in\{\g_n\}$, which corresponds to a diatomic gas.
A new proof of Merle-Rapha\"el-Rodnianski-Szeftel's theorem is also provided in \cite{BCLGS}. Recently, the extension to the non-radial case has been established in \cite{CLGSSS} for $\gamma$ values the same as those in \cite{MRRJ3}. The authors in \cite{CLGSSS} proved an abstract theorem (\cite[Theorem 1.2]{CLGSSS}) which converts the blow-up of the 3-D compressible Navier-Stokes equations to the construction of self-similar profiles solving the compressible Euler equations and satisfying some repulsivity properties. This result will be used later in the present paper.

Unfortunately, these results fail for $\g=5/3$ (the law of a monatomic gas), in which case a triple-point degeneracy appears and the series $S_\infty(3,\g)$ is not defined for $\g=5/3$. Due to both the mathematical difficulties caused by the degeneracy and the physical relevance, our current paper focuses primarily on this case. Roughly speaking, we prove that for $\g=5/3$, the compressible Navier-Stokes equations have infinitely many finite-time blow-up solutions. This is achieved by demonstrating that for $\g=5/3$ there exists a variation of $S_\infty$, denoted by $\cS_\infty$ (see \eqref{Eq.S_infty}), such that $\cS_\infty>0$ implies the existence of self-similar imploding profiles to the compressible Euler equations.

\subsubsection{Self-similar solutions}
It is noticed that all the imploding solutions discussed above are of the self-similar type. Now, we review some recent results on the construction of self-similar singularities for related fluid PDEs.

First, there have been some significant developments in the study of blow-up phenomena for the 3-D incompressible Euler equations. These advancements have been achieved by considering low regularity solutions \cite{Chen2021,Elgindi2021-1,Elgindi2021-2}, or by imposing boundary conditions \cite{Chen2021, Chen2022-1, Chen2022-2,HL2014}. For further details, see also the review article \cite{Drivas-Elgindi}. Similar results have been obtained for other models related to the 3-D incompressible Euler equations; see \cite{Chen2023,ChenHouHuang2021,HQWW2023,HQWW2024,HTW2023} and the references therein.

Recently, significant progress has been made in the mathematical theory of gravitational collapse in the field of astrophysics, which refers to the process of star implosion. The corresponding PDE models are the compressible Euler-Poisson equations and the Einstein-Euler equations. For the construction of self-similar radially symmetric imploding solutions, see the series of remarkable works \cite{AHS2023,GHJ2021-1,GHJ2021-2,GHJ2023,GHJS2022}, where the key ingredient is solving some non-autonomous ODEs that possess sonic points.

\subsection{Main results}
Before stating our theorems, we set up the problem following \cite{CLGSSS}. Let $\ell=\frac2{\g-1}>0$ and $T>0$. We introduce the self-similar change of variables\footnote{For simplicity, we let the domain to be $\R^d$. The $\T^d$ case is similar.}
\begin{align*}
	\mathbf u(t,y)&=\frac{(T-t)^{\frac1r-1}}{r}\mathbf U\left(-\frac{\ln(T-t)}{r}, \frac y{(T-t)^{\frac1r}}\right),\\
	 \rho(t,y)^{\frac1\ell}&=\frac{(T-t)^{\frac1r-1}}{\ell\cdot r}S\left(-\frac{\ln(T-t)}{r}, \frac y{(T-t)^{\frac1r}}\right),
\end{align*}
where $r>1$ is the self-similar parameter, $t\in[0, T)$ and $y\in\R^d$. Under the self-similar variables $(s, z)$ defined by
\begin{align*}
	s:=-\frac{\ln(T-t)}{r},\quad z:=\frac y{(T-t)^{\frac1r}}=\e^sy,
\end{align*}
the system \eqref{CNS} is reduced to
\begin{equation}\label{NS_sseq}
	\begin{cases}
		\pa_s\mathbf U=-(r-1)\mathbf U-(z+\mathbf U)\cdot\nabla\mathbf U-\frac1\ell S\nabla S+\nu C_{\text{dis}}\e^{-\delta_\text{dis}s}\frac{\Delta \mathbf U}{S^\ell},\\
		\pa_sS=-(r-1)S-(z+\mathbf U)\cdot\nabla S-\frac1\ell S\dive\mathbf U,
	\end{cases}
\end{equation}
where
\begin{equation*}
	C_{\text{dis}}:=r^{1+\ell}\ell^\ell,\quad \delta_\text{dis}:=\ell(r-1)+r-2.
\end{equation*}

We restrict the parameter $r$ to the range
\begin{equation}\label{Para_rangeEuler}
	1<r<\begin{cases}
		1+\frac{d-1}{\left(1+\sqrt{\frac{2}{\g-1}}\right)^2},&\text{for }1<\g<1+\frac2d,\\
		\frac{d\g+2-d}{2+\sqrt d(\g-1)}, &\text{for }\g\geq1+\frac2d.
	\end{cases}
\end{equation}

In order to use the profile of Euler equations to construct finite-time blow-up solutions of Navier-Stokes equations, we need an extra condition on the parameters
\begin{equation}\label{Para_rangeNS}
	\delta_\text{dis}=\ell(r-1)+r-2>0.
\end{equation}

From \cite{BCLGS}, we know that there exist smooth and radially symmetric profiles $(\mathbf U_E, S_E)$ solving \eqref{NS_sseq} for $\nu=0$:
\begin{equation}\label{profile_Euler}
	\begin{cases}
		(r-1)\mathbf U_E+(z+\mathbf U_E)\cdot\nabla \mathbf U_E+\frac1\ell S_E\nabla S_E=0,\\ (r-1)S_E+(z+\mathbf U_E)\cdot\nabla S_E+\frac1\ell S_E\dive\mathbf U_E=0,
	\end{cases}
\end{equation}
with some $r$ in the range \eqref{Para_rangeEuler}, for $d=3$ and all $\g>1$. In the pioneering work \cite{MRRJ2}, Merle, Rapha\"el, Rodnianski and Szeftel proved that for each $d\in\Z_{\geq 2}$, there exist a set $\mathcal O_d\subset (0,+\infty)\setminus \{d\}$ and a function $S_\infty=S_\infty(d,\ell):\Z_{\geq 2}\times\mathcal O_d\to\R$ such that for any $\ell\in\mathcal O_d$ obeying $S_\infty(d,\ell)\neq0$, there is a sequence of smooth and radially symmetric profiles $(\mathbf U_E, S_E)$ solving \eqref{profile_Euler}, with the corresponding parameter $r$ satisfying both \eqref{Para_rangeEuler} and \eqref{Para_rangeNS}. It is shown in \cite{MRRJ2} that for $d\in\{2,3\}$, there holds $\mathcal O_d=(0,+\infty)\setminus \{d\}$, and the authors proved with computational assistance that $S_\infty(d,\ell)\neq 0$ for all $\ell>0$ but possibly countably many numbers $\{\ell_k\}$. Nonetheless, given a pair of explicit $d$ and $\g>1$, it is challenging to check that whether $\ell\in\{\ell_k\}$ or not. From \cite{BCLGS}, we know the existence of smooth and radially symmetric profiles $(\mathbf U_E, S_E)$ solving \eqref{profile_Euler} with parameter $r$ satisfying both \eqref{Para_rangeEuler} and \eqref{Para_rangeNS} for one explicit pair $(d=3,\g=7/5)$.

It is noted that the methods in \cite{BCLGS,MRRJ2} fail for the case $d=\ell$, which is considered degenerate according to \cite[page 581]{MRRJ2}. Specifically, taking $d=\ell=3$ yields $\g=5/3$, corresponding to the law of monatomic gases. Due to its physical significance and the mathematical challenges posed by degeneracy, it is essential to investigate this degenerate case further, which serves as our primary motivation. From now on, we will focus primarily on the case  $d=3$ and $\g=5/3$. Most of our analysis remains valid for the general case $\ell=d$ (or equivalently $\g=1+2/d$) with $d\in\Z_{\geq 2}$, for the compressible Euler equations.

For radially symmetric $(\mathbf U_E, S_E)$ solving \eqref{profile_Euler}, if we write
\begin{equation}\label{Eq.radial_transform}
	Z:=|z|,\quad \mathbf U_E(z)=U_E(Z)\frac{z}{Z},\quad S_E(z)=S_E(Z),
\end{equation}
then $(U_E, S_E): (0,+\infty)\to\R\times(0,+\infty)$ solves the following system
\begin{equation}\label{Eq.radial_eq}
	\begin{cases}
		(r-1)U_E+(Z+U_E)\pa_ZU_E+\frac1\ell S_E\pa_ZS_E=0,\\
		(r-1)S_E+(Z+U_E)\pa_ZS_E+\frac1\ell\left(\pa_ZU_E+\frac{d-1}{Z}U_E\right)S_E=0.
	\end{cases}
\end{equation}

Our main result is stated as follows.

\begin{theorem}\label{Mainthm}
	Let $d=3$ and $\g=5/3$. There exists a discrete sequence $\{r_n\}$ satisfying
	\[r_n<1+\frac2{(1+\sqrt3)^2}=3-\sqrt3, \quad \left|r_n-(3-\sqrt3)\right|\ll1,\quad \lim_{n\to\infty}r_n=3-\sqrt3\]
	such that \eqref{profile_Euler} with $r=r_n$ admits a global $C^\infty$ radially symmetric solution $(\mathbf U_E, S_E)$ possessing the following properties:
	\begin{align}
		S_E&\gtrsim_r\langle Z\rangle^{-r+1},\label{Coercive1}\\
		\left|\nabla^j U_E\right|+\left|\nabla^j S_E\right|&\lesssim_{r,j} \langle Z\rangle^{-(r-1)-j},\quad\forall\ j\in\Z_{\geq 0},\label{Coercive2}\\
		1+\pa_Z U_E-\frac1\ell\left|\pa_ZS_E\right|&>\eta,\label{Coercive3}\\
		1+\frac{U_E}Z-\frac1\ell\left|\pa_ZS_E\right|&>\eta,\label{Coercive4}
	\end{align}
	for some constant $\eta>0$ depending on $r$, recalling \eqref{Eq.radial_transform}.
\end{theorem}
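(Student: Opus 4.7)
My plan follows the shooting-type construction of self-similar imploding profiles initiated by Guderley and Sedov and refined by Merle-Rapha\"el-Rodnianski-Szeftel \cite{MRRJ2} and Buckmaster-Cao-Labora-G\'omez-Serrano \cite{BCLGS}. First I would recast the radial system \eqref{Eq.radial_eq} as a first-order autonomous planar ODE via Emden-type variables such as $u=U_E/Z$, $\sigma=S_E/(\sqrt{\ell}\,Z)$ with logarithmic time $x=\log Z$, producing a rational ODE $du/d\sigma=N_1(u,\sigma;r)/N_2(u,\sigma;r)$ whose phase portrait possesses two fixed points, $P_0$ (regular center at $Z=0$) and $P_\infty$ (decay at $Z=\infty$), together with a sonic line $\{N_2=0\}$. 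A smooth global profile corresponds to an integral curve joining $P_0$ and $P_\infty$ and necessarily crossing the sonic line transversally at a unique sonic point $P_*=P_*(r)$.

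The core of the argument is the construction of a $C^\infty$ local solution through $P_*$ along a prescribed eigendirection of the linearization, followed by global extension on both sides. A Frobenius expansion near $P_*$ produces a recursion for the Taylor coefficients that is solvable at every order provided resonance denominators of the form $k\lambda_1-\lambda_2$ do not vanish, where $\lambda_{1,2}$ are the eigenvalues at $P_*$. In \cite{MRRJ2,BCLGS} this procedure reduces the existence of a smooth profile to the non-vanishing of an analytic function $S_\infty(d,\gamma;r)$, obtained as the limit of the Frobenius recursion, and a discrete sequence $\{r_n\}$ accumulating at the upper endpoint of the admissible range is produced by exploiting the analytic $r$-dependence of the interior-to-exterior matching at $P_*$. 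For $(d,\gamma)=(3,5/3)$ one has $d=\ell=3$: this is exactly the triple-point degeneracy at which $S_\infty$ ceases to be defined, and it is the principal obstacle.

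To circumvent the degeneracy I would replace $S_\infty$ by the renormalised quantity $\mathcal{S}_\infty$ of \eqref{Eq.S_infty}, designed to subtract off the resonant contributions responsible for the divergence of $S_\infty$ at $d=\ell$ and to retain the leading finite obstruction to smoothness. The decisive technical step, and the hard part, is a refined asymptotic analysis of the eigenvalues, eigenvectors, and Frobenius coefficients at $P_*(r)$ as $r$ approaches the limit speed $r_\star:=3-\sqrt{3}=1+(d-1)/(1+\sqrt{2/(\gamma-1)})^2$, showing that $\mathcal{S}_\infty(r)$ is well-defined, analytic, and strictly positive on a left-neighbourhood of $r_\star$. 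Granted this, a quantization argument based on the asymptotic behaviour at $P_\infty$ (analogous to that in \cite{MRRJ2,BCLGS}) produces a discrete sequence $r_n\uparrow r_\star$ for which the matching condition between the branch emanating from $P_*(r_n)$ and the decaying branch at $P_\infty$ closes and a smooth global profile exists.

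Once the local smooth solution at $P_*$ is in hand, the global extension to $Z\in(0,+\infty)$ and the decay bounds \eqref{Coercive1}-\eqref{Coercive2} follow from a standard ODE bootstrap along the phase portrait, since the constructed trajectory remains transverse to the sonic line and converges to $P_0$ and $P_\infty$ at exponential rates set by the relevant linearizations. The repulsivity inequalities \eqref{Coercive3}-\eqref{Coercive4} reduce to verifying that $1+\partial_Z U_E-\ell^{-1}|\partial_Z S_E|$ and $1+U_E/Z-\ell^{-1}|\partial_Z S_E|$ stay uniformly positive along the trajectory; this is an invariant monotonicity property of the phase portrait that persists under the small perturbation from $r_\star$ to $r_n$. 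Throughout, the only genuinely new ingredient beyond \cite{MRRJ2,BCLGS} is the design and sign analysis of $\mathcal{S}_\infty$ at the triple point; the remainder of the proof is a careful adaptation of their framework to the degenerate case $\ell=d=3$.
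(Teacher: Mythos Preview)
Your outline captures the broad architecture correctly---Emden variables, sonic-point Frobenius analysis, the degeneracy at $\ell=d=3$, and the role of $\mathcal{S}_\infty$---but several specifics are off in ways that would matter if you tried to execute the argument.

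First, $\mathcal{S}_\infty$ is not a function of $r$ to be shown positive on a neighbourhood of $r_\star$: in the paper it is a single numerical constant, defined as $\lim_{n\to\infty}\widehat a_n^\infty/\widehat M_n^\infty$ where $\{a_n^\infty\}$ is the limiting coefficient sequence obtained by sending $R\to\infty$ (equivalently $r\to r_\star$) in the Frobenius recursion. Its positivity is not proved analytically but checked numerically; this is the paper's sole computer-assisted input. The genuinely hard analytic work is Proposition~\ref{Prop.a_n_sharp_est}, the sharp two-sided estimate $a_n\asymp M_n$ for $n\in[\sqrt{R},R)$, which requires several layers of comparison sequences $\{M_n\},\{\widetilde M_n\},\{\widehat M_n\},\{M_n^*\}$ and a delicate treatment of the degeneracy $\lim_{R\to\infty}p_n=0$ (the coefficient of $a_{n-1}$ vanishes in the limit, forcing the auxiliary combination $a_n+\lambda_n a_{n-1}$). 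This is the core technical novelty and is absent from your sketch.

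Second, the discreteness of $\{r_n\}$ does not come from matching at $P_\infty$. It comes from the eigenvalue ratio $R=\lambda_-/\lambda_+$ at the sonic point: for $R$ just above an integer $N$ the local analytic solution $u_L$ lies above the $P_6$--$P_2$ curve $u_F$, while for $R$ just below $N+1$ it lies below (Propositions~\ref{Prop.R_near_odd} and~\ref{Prop.R_near_even}), and an intermediate value argument produces $R_N\in(N,N+1)$ with $u_L\equiv u_F$. The global extension to $P_4$ then requires $N$ odd and a further barrier argument (Proposition~\ref{Prop.u_N_compare0}). Finally, the repulsivity inequalities \eqref{repuls 3}--\eqref{repuls 4} are not obtained by perturbation from $r_\star$: the paper splits the trajectory into four arcs $P_6$--$P_2$--$P_A$--$P_B$--$P_4$ and builds explicit barrier functions $U_{\sigma_1},U_{\sigma_2}$ on the first two arcs, with direct algebraic verifications on the last two (Lemmas~\ref{Lem.repulse1}--\ref{Lem.repulse4}).
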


To prove blow-up of the compressible Navier-Stokes equations, we use the following abstract result from \cite{CLGSSS}.

\begin{theorem}[{\cite[Theorem 1.3]{CLGSSS}}]\label{Thm.stability}
	Let $\nu=1$. Let $(\mathbf U_E, S_E)$ be a smooth self-similar profile solving \eqref{profile_Euler} for some $r$ in the range \eqref{Para_rangeEuler}, \eqref{Para_rangeNS} and satisfying \eqref{Coercive1}$-$\eqref{Coercive4}. Let $T>0$ be sufficiently small and $c>0$ be sufficiently small.
	
	Then there exists $C^\infty$ (non-radially symmetric) initial data $(\mathbf u_0,\rho_0)$ with $\rho_0>c$, for which the Navier-Stokes equations \eqref{CNS} on $\R^3$ (or $\T^3$, in which case $\rho_0>0$) blow up at time $T$ in a self-similar manner. More concretely, for any fixed $z\in\R^3$, we have
	\begin{align}
		\lim_{t\uparrow T}r(T-t)^{1-\frac1r}\mathbf u\left(t, (T-t)^{1/r}z\right)&=\mathbf U_E(z),\label{Asymptotic1}\\
		\lim_{t\uparrow T}\left(\ell r(T-t)^{1-\frac1r}\right)^\ell\rho\left(t, (T-t)^{1/r}z\right)&=S_E(z)^\ell.\label{Asymptotic2}
	\end{align}
	Moreover, there exists a finite co-dimensional set of initial data satisfying the above conclusions.
\end{theorem}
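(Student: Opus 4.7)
The plan is to work in the self-similar variables $(s,z)$ of \eqref{NS_sseq}, in which the Navier-Stokes system becomes a non-autonomous perturbation of the autonomous Euler system \eqref{profile_Euler}. I would write a candidate solution as $\mathbf U(s,z)=\mathbf U_E(z)+\widetilde{\mathbf U}(s,z)$ and $S(s,z)=S_E(z)+\widetilde S(s,z)$, so that the perturbation $\widetilde{\mathbf V}=(\widetilde{\mathbf U},\widetilde S)$ satisfies an equation of the schematic form
\[
\pa_s\widetilde{\mathbf V}=\cL\widetilde{\mathbf V}+\cN(\widetilde{\mathbf V})+\e^{-\delta_\text{dis}s}F(s,\widetilde{\mathbf V}),
\]
where $\cL$ is the linearization of the Euler flow around $(\mathbf U_E,S_E)$, $\cN$ gathers the quadratic-or-higher terms, and the viscous contribution appears as an exponentially small forcing thanks to the assumption $\delta_\text{dis}>0$ coming from \eqref{Para_rangeNS}. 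Everything then reduces to producing a finite co-dimensional set of initial data for which $\widetilde{\mathbf V}(s,\cdot)\to 0$ as $s\to+\infty$; under the self-similar change of variables this convergence immediately gives the pointwise asymptotics \eqref{Asymptotic1}--\eqref{Asymptotic2}.

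The analytic core is a weighted high-regularity energy estimate for $\cL$. I would symmetrize the transport-pressure part using the acoustic variables $(\widetilde{\mathbf U},\,S_E\widetilde S/\ell)$; the zeroth-order contribution of $\cL$ then becomes a pointwise multiplication whose spectrum on the acoustic characteristics is governed exactly by the quantities in \eqref{Coercive3}--\eqref{Coercive4}. These conditions guarantee strict repulsivity of the characteristics at the sonic surface, so that a weighted energy identity in some $H^k_\mu$ yields a coercive bound
\[
\frac{d}{ds}\|\widetilde{\mathbf V}\|_{H^k_\mu}^2\leq -\lambda\|\widetilde{\mathbf V}\|_{H^k_\mu}^2+C\|\widetilde{\mathbf V}\|_{H^k_\mu}^3+C\e^{-\delta_\text{dis}s},
\]
valid modulo projection onto a finite-dimensional unstable subspace. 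The lower bound \eqref{Coercive1} on $S_E$ is used both in the symmetrization and to absorb the viscous coefficient $S^{-\ell}$, while the polynomial decay \eqref{Coercive2} provides integrable commutator terms when derivatives (and, in the non-radial extension, angular vector fields) are applied.

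The remaining obstacle is the unstable spectrum of $\cL$. Compactness of the resolvent on the weighted space, together with the coercivity above, shows that $\cL$ admits only finitely many eigenvalues with non-negative real part, spanning a finite-dimensional generalized eigenspace $\cU$. Splitting $\widetilde{\mathbf V}=\widetilde{\mathbf V}_s+\widetilde{\mathbf V}_u$, the stable component is handled by a bootstrap argument: one starts inside a small ball in $H^k_\mu$, propagates smallness via the energy inequality, and closes using Duhamel on the exponentially small forcing. The unstable component is killed by a topological shooting argument: parametrize a finite-dimensional family of initial data by coordinates in $\cU$, define an exit map from a small ball, and invoke a Brouwer fixed-point or degree argument to produce a zero. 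Varying this construction smoothly in the transverse parameters then produces the announced finite co-dimensional smooth manifold of admissible initial data.

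The main technical obstacle is calibrating the weighted norm: it must be strong enough to control both $\cN$ and the viscous correction $C_\text{dis}\e^{-\delta_\text{dis}s}\Delta\mathbf U/S^\ell$, noting that the $S^{-\ell}$ factor grows like $Z^{\ell(r-1)}$ at infinity by \eqref{Coercive1} and so eats derivatives of $\widetilde{\mathbf U}$, yet weak enough that the unstable subspace of $\cL$ remains finite-dimensional. This is where the full strength of \eqref{Coercive1}--\eqref{Coercive4} is needed. For the non-radial conclusion, an additional spherical harmonic decomposition is required, since each angular frequency produces its own unstable-mode count; again the coercivity conditions are tailored to keep the total unstable dimension finite, so that the same shooting argument applies block by block.
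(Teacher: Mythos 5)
You should first note that this paper does not prove Theorem \ref{Thm.stability} at all: it is imported verbatim as \cite[Theorem 1.3]{CLGSSS} and used as a black box, the paper's own contribution being the construction of the profiles $(\mathbf U_E,S_E)$ in Theorem \ref{Mainthm} that feed into it. So the relevant comparison is with the proof in \cite{CLGSSS} (itself building on \cite{MRRJ3}), and measured against that your outline captures the broad strategy — perturbation around the profile in self-similar variables, coercivity from the repulsivity conditions \eqref{Coercive3}--\eqref{Coercive4}, finitely many unstable directions removed by a Brouwer-type shooting argument — but it is a plan, not a proof, and it elides the steps that carry essentially all of the difficulty.

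Three gaps are concrete. First, the profile $(\mathbf U_E,S_E)$ decays like $\langle Z\rangle^{-(r-1)}$, so the associated density does not stay bounded below at spatial infinity and the self-similar ansatz cannot be imposed globally; to obtain data with $\rho_0>c$ on $\R^3$ (or any data on $\T^3$) one must truncate the profile at a large self-similar scale and run a separate argument in the outer region, glued to the inner self-similar analysis. Your proposal never addresses this truncation/gluing, yet the statements $\rho_0>c$ and the $\T^3$ case are part of the theorem. Second, the viscous term $C_{\text{dis}}\e^{-\delta_{\text{dis}}s}\Delta\mathbf U/S^\ell$ is not merely an ``exponentially small forcing'': it loses two derivatives relative to the energy space and carries the factor $S^{-\ell}\sim\langle Z\rangle^{\ell(r-1)}$, so closing the estimate requires either exploiting parabolic smoothing or a weight structure calibrated to this growth; you flag the issue but offer no mechanism, and the coercive differential inequality you write down would not follow from the ingredients you list. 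Third, the central functional-analytic step — that the linearized operator is maximal dissipative modulo a finite-rank (hence finite-dimensional unstable) perturbation in the chosen weighted space, uniformly over angular frequencies in the non-radial setting — is asserted rather than derived; in \cite{MRRJ3,CLGSSS} this requires high-order energy identities that are coercive only at top order, a compactness argument for the lower-order remainder, and uniformity in the spherical-harmonic index, none of which is supplied by the pointwise conditions \eqref{Coercive1}--\eqref{Coercive4} alone. As a reconstruction of the published strategy your sketch is sound in outline, but it could not be accepted as a proof of Theorem \ref{Thm.stability}.
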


In Theorem \ref{Mainthm}, the parameters $r_n$ satisfy \eqref{Para_rangeEuler} and
\[\ell(r_n-1)+r_n-2=4r_n-5\to 7-4\sqrt3>0,\quad\text{as}\quad n\to\infty.\]
Hence $r_n$ lies in the range \eqref{Para_rangeNS} for $n$ sufficiently large. Moreover, the smooth self-similar profiles constructed in Theorem \ref{Mainthm} satisfy the assumptions in Theorem \ref{Thm.stability}. Combining Theorem \ref{Mainthm} and Theorem \ref{Thm.stability}, we establish

\begin{corollary}\label{cor:NS}
	Let $d=3,\g=5/3$ and $\nu=1$. There exists a discrete sequence $\{r_n\}$ satisfying
	\[r_n<3-\sqrt3, \quad \left|r_n-(3-\sqrt3)\right|\ll1,\quad \lim_{n\to\infty}r_n=3-\sqrt3\]
	such that for each $r\in\{r_n\}$ the following facts hold for any sufficiently small $T>0$:
	\begin{itemize}
	\item There exists a smooth non-radially symmetric initial data $(\rho_0, \mathbf u_0)$ with $\inf\rho_0>0$ (or $\rho_0>0$ for $\T^3$), for which the corresponding solution to  Navier-Stokes equations \eqref{CNS} blows up at time $T$ with the asymptotic behavior \eqref{Asymptotic1} and \eqref{Asymptotic2}, where $(\mathbf U_E, S_E)$ is the self-similar profile corresponding to this parameter $r$ obtained from Theorem \ref{Mainthm};
	\item There exists a finite co-dimensional set of initial data satisfying the above item.
	\end{itemize}
\end{corollary}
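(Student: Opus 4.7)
The plan is to prove this corollary as a straightforward combination of the two ingredients already in place: Theorem \ref{Mainthm}, which supplies a discrete sequence of smooth, radially symmetric self-similar Euler profiles $(\mathbf U_E, S_E)$ for $(d,\gamma)=(3,5/3)$ with blow-up speeds $r_n \to 3-\sqrt 3$ satisfying the coercivity bounds \eqref{Coercive1}--\eqref{Coercive4}, and Theorem \ref{Thm.stability} (the abstract result from \cite{CLGSSS}), which upgrades any such profile to a finite-time, asymptotically self-similar blow-up solution of the viscous system \eqref{CNS} from smooth non-radial Cauchy data. Thus essentially no new analysis is required; what remains is a parameter-range check and a direct invocation.

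First I would verify that each $r_n$ produced by Theorem \ref{Mainthm} eventually satisfies the full set of hypotheses of Theorem \ref{Thm.stability}. For $\gamma=5/3$ one has $\ell = 2/(\gamma-1)=3=d$, so the Euler admissibility condition \eqref{Para_rangeEuler} reads $r < 1 + 2/(1+\sqrt 3)^2 = 3-\sqrt 3$, which is precisely the window in which Theorem \ref{Mainthm} places $\{r_n\}$. The Navier--Stokes admissibility condition \eqref{Para_rangeNS} becomes $\delta_\text{dis} = \ell(r-1)+r-2 = 4r-5 > 0$, i.e. $r > 5/4$. Since $3-\sqrt 3 = 1.267\ldots > 1.25 = 5/4$ and $r_n \to 3-\sqrt 3$, there is an index $N_0$ beyond which $r_n \in (5/4,\, 3-\sqrt 3)$, so $r_n$ satisfies both \eqref{Para_rangeEuler} and \eqref{Para_rangeNS}. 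Passing to the tail $\{r_n\}_{n \geq N_0}$ and relabeling, I may assume every $r_n$ is admissible while retaining the limiting behavior $r_n \to 3-\sqrt 3$ asserted in the corollary. Smoothness of $(\mathbf U_E, S_E)$ and the coercivity bounds \eqref{Coercive1}--\eqref{Coercive4} are part of the conclusion of Theorem \ref{Mainthm}, so all the hypotheses of Theorem \ref{Thm.stability} are met.

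At this point the corollary is immediate: for each $r = r_n$ with $n \geq N_0$ and for every sufficiently small $T>0$ and $c>0$, Theorem \ref{Thm.stability} produces smooth non-radially symmetric initial data $(\mathbf u_0, \rho_0)$ with $\rho_0 > c$ on $\R^3$ (respectively $\rho_0 > 0$ on $\T^3$) such that the corresponding solution of \eqref{CNS} blows up at time $T$ with the asymptotic self-similar behavior \eqref{Asymptotic1}--\eqref{Asymptotic2}, where $(\mathbf U_E, S_E)$ is exactly the profile that Theorem \ref{Mainthm} attaches to $r=r_n$. The finite codimension clause is the concluding sentence of Theorem \ref{Thm.stability} transferred verbatim. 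The only genuine obstacle in the entire argument is upstream, namely the construction of the smooth self-similar imploding Euler profiles at the degenerate exponent $\gamma=5/3$ (Theorem \ref{Mainthm}); once that is granted, the deduction of the Navier--Stokes blow-up reduces to the numerical inequality $3-\sqrt 3 > 5/4$ and an appeal to a black-box stability theorem, with no further PDE analysis needed.
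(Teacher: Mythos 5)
Your proposal is correct and follows essentially the same route as the paper: the paper likewise observes that $\delta_{\text{dis}}=4r_n-5\to 7-4\sqrt3>0$, so \eqref{Para_rangeNS} holds for large $n$, and then combines Theorem \ref{Mainthm} with Theorem \ref{Thm.stability} to conclude. No gaps.
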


Several remarks are in order.

\begin{enumerate}[1.]
	\item In the proof of Theorem \ref{Mainthm}, one of the key points is to verify the non-degeneracy of a number $\cS_\infty$, which is the limit of a sequence defined by an explicit but rather complicated recurrence relation without any parameter. We verify that $\cS_\infty>0$ with the help of a computer. This is the only instance in this paper where we use computational assistance.
	
	\item It is reasonable to believe that the proof of Theorem \ref{Mainthm} can be extended to the general case $d=\ell$ for all $d\in\Z_{\geq 2}$. In this general case, we would also need to assume the non-degeneracy of a limit similar to $\cS_\infty$, the validity of which can similarly be verified with computational assistance. Nevertheless, we note that Theorem \ref{Thm.stability} requires $d=3$, as stated in \cite{CLGSSS}. Due to this limitation, at this stage, we can prove the blow-up of the compressible Navier-Stokes equations with $d=\ell$ only for $d=3$.
	
	\item The abstract Theorem  \ref{Thm.stability} should hold for the general diffusion $-\mu \Delta \mathbf u-(\lambda+\mu)\nabla\text{div}\mathbf u$ with $\mu>0$ and $2\mu+3\lambda>0$. Consequently, Corollary \ref{cor:NS} should also hold.
	
	\item Due to the scaling invariance, Theorem \ref{Mainthm} holds for any 3-D torus of any size. Moreover, thanks to \cite[Remark 1.6]{CLGSSS}, given the local behavior of the singularity, we can also construct solutions that blow up at $m$ points.
	
	\item It is noticed that for $d=2$, the ranges \eqref{Para_rangeEuler} and \eqref{Para_rangeNS} have an empty intersection. This indicates that the front compression mechanism fails in the construction of blow-up solutions to the 2-D isentropic compressible Navier-Stokes equations. Consequently, it remains unknown whether smooth solutions to the 2-D isentropic compressible Navier-Stokes equations blow up or not.
	\end{enumerate}

Let's conclude the introduction by notations. 
\begin{itemize}
	\item We denote $A\lesssim B$ (or $B\gtrsim A$) if $A\leq CB$ for some absolute constant $C>0$, and denote $A\asymp B$ if $A\lesssim B$ and $B\lesssim A$.
	\item We denote $A\lesssim_pB$ (or $B\gtrsim A$) if $A\leq C(p)B$ for some constant $C(p)>0$ depending on $p$, and define $A\asymp_pB$ similarly.
	\item Given two functions $f=f(x)$ and $g=g(x)\neq 0$, we say ``$f\sim g$ as $x\to x_0$" if $\lim_{x\to x_0}f(x)/g(x)=1$. We denote $f=\cO(g)$ if $f/g$ is bounded.
	\item For $r\in\R$, we denote $\Z_{\geq r}:=\Z\cap[r,+\infty)$. We let $\N:=\Z_{\geq 0}$, $\Z_+:=\Z_{\geq 1}$, $\sqrt\N:=\{\sqrt n: n\in\N\}$. We also denote $2\Z+1$ the set consisting of all odd integers.
	\item For $R\in\R$, we denote $\lceil R \rceil=\inf(\Z\cap[R,+\infty))$.
\end{itemize}

\section{The autonomous ODE system and sketch of the proof}
In this section, we first convert \eqref{profile_Euler} into an autonomous ODE system. We then translate Theorem \ref{Mainthm} into two results: Theorem \ref{Thm.ODE} and Proposition \ref{Prop.coercive}, which address the existence and repulsivity of global smooth solutions to the autonomous ODE system, respectively.

\subsection{The autonomous ODE system and phase portraits}\label{Subsec.ODE_portrait}
We seek smooth, radially symmetric solutions $(\mathbf U_E, S_E)$ to \eqref{profile_Euler}, which is equivalent to solving \eqref{Eq.radial_eq}. Following \cite{MRRJ2}, we introduce the Emden transform
\begin{equation}\label{Eq.Emden_transform}
    S_E(Z)=\ell Z\sigma(x),\quad U_E(Z)=-Zw(x),\quad Z=\e^x.
\end{equation}
Then \eqref{Eq.radial_eq} is equivalent to the following autonomous ODE system
\begin{equation}\label{autonomousODE}
	\begin{cases}
		(w-1)\frac{\mathrm dw}{\mathrm dx}+\ell\sigma\frac{\mathrm d\sigma}{\mathrm dx}+w^2-rw+\ell\sigma^2=0,\\
		\frac{\sigma}{\ell}\frac{\mathrm dw}{\mathrm dx}+(w-1)\frac{\mathrm d\sigma}{\mathrm dx}+\sigma\left[w\left(1+\frac d\ell\right)-r\right]=0,
	\end{cases}\Longleftrightarrow\begin{cases}
	\Delta\frac{\mathrm dw}{\mathrm dx}=-\Delta_1,\\
	\Delta\frac{\mathrm d\sigma}{\mathrm dx}=-\Delta_2,
	\end{cases}
\end{equation}
where
\begin{align}
	\Delta&=\Delta(\sigma,w)=(w-1)^2-\sigma^2,\label{Eq.Delta}\\
	\Delta_1&=\Delta_1(\sigma,w)=w(w-1)(w-r)-\big(dw-\ell(r-1)\big)\sigma^2,\label{Eq.Delta_1}\\
	\Delta_2&=\Delta_2(\sigma,w)=\frac{\sigma}{\ell}\left[(\ell+d-1)w^2-(\ell+d+\ell r-r)w+\ell r-\ell\sigma^2\right].\label{Eq.Delta_2}
\end{align}

Then Theorem \ref{Mainthm} is a consequence of Theorem \ref{Thm.ODE} and Proposition \ref{Prop.coercive} as below.

\begin{theorem}[Existence]\label{Thm.ODE}
	Let $d=\ell=3$. There exists a discrete sequence $\{r_n\}$ satisfying
	\begin{equation}\label{Eq.r_n}
	r_n<3-\sqrt3, \quad \left|r_n-(3-\sqrt3)\right|\ll1,\quad \lim_{n\to\infty}r_n=3-\sqrt3
	\end{equation}
	such that \eqref{autonomousODE} with $r=r_n$ admits a global $C^\infty$ solution $(\sigma,w)=(\sigma,w)(x)$ with the following properties
	\begin{itemize}
		\item $\Delta(\sigma(x), w(x))<0$, $\Delta_1(\sigma(x), w(x))>0$ and $\Delta_2(\sigma(x), w(x))<0$ for all $x<0$;
		\item $(\sigma(0), w(0))=(1-w_-, w_-)$;
		\item The map $x\mapsto\Delta_1(\sigma(x), w(x))$ has only two solutions $x=0$ and $x=x_A>0$;
		\item $\Delta(\sigma(x), w(x))>0$ and $\Delta_2(\sigma(x), w(x))>0$ for $x>0$;
		\item $\Delta_1(\sigma(x), w(x))<0$ for $x\in(0, x_A)$ and $\Delta_1(\sigma(x), w(x))>0$ for $x>x_A$;
		\item $w(x)>a(1+a)\sigma(x)^2$ for all $x>0$.
	\end{itemize}
	Here $w_-=w_-(r)=(r-\sqrt{r^2-6r+6})/2$ and $a=w_-/(1-w_-)$. See Figure \ref{Fig.Phase_Portrait_w_sigma}. Moreover, $(\mathbf U_E, S_E)$ defined by \eqref{Eq.radial_transform} and \eqref{Eq.Emden_transform} is a smooth self-similar solution to \eqref{profile_Euler} on $\R^3$.
\end{theorem}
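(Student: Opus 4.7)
The plan is to treat the problem as a shooting problem anchored at the sonic point $P_s = (1-w_-, w_-)$, which is the unique point on the sonic line $\Delta = 0$ where simultaneously $\Delta = \Delta_1 = \Delta_2 = 0$, as ensured by the choice $w_- = (r - \sqrt{r^2 - 6r + 6})/2$ (a direct algebraic check from \eqref{Eq.Delta}--\eqref{Eq.Delta_2} with $d=\ell=3$). The parameter $r$ plays the role of the shooting parameter: only for a discrete set of values will a trajectory that emanates smoothly from $P_s$ extend globally to a $C^\infty$ solution of \eqref{autonomousODE}. I would decompose the proof into three analytic stages --- local construction at $P_s$, backward extension to $x = -\infty$, and forward extension through $x_A$ to $x = +\infty$ --- together with a shooting argument that selects $\{r_n\}$.

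\textbf{Local construction at $P_s$.} I would seek formal power series $w(x) = w_- + \sum_{k \geq 1} a_k x^k$ and $\sigma(x) = (1-w_-) + \sum_{k\geq 1} b_k x^k$. Since $\Delta(P_s) = \Delta_1(P_s) = \Delta_2(P_s) = 0$, the relations $\Delta\,dw/dx = -\Delta_1$ and $\Delta\,d\sigma/dx = -\Delta_2$ are $0/0$ indeterminacies to leading order, and matching first-order coefficients produces a quadratic for the ratio $a_1/b_1$; I would pick the root compatible with the sign conditions on $(-\infty, 0)$. At order $k \geq 2$ the coefficients $(a_k, b_k)$ satisfy a linear $2\times 2$ system whose determinant $D_k(r)$ is a polynomial in $k$; away from a discrete set of $r$-values $D_k(r)\neq 0$ for every sufficiently large $k$, and a majorant argument then delivers analyticity of the local branch.

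\textbf{Global extension and shooting.} Integrating \eqref{autonomousODE} backward from $P_s$, I would prove --- using the invariance of $\{\Delta<0,\Delta_1>0,\Delta_2<0\}$ and the monotonicity of $w$ and $\sigma$ implied by the signs of $\Delta_1/\Delta$ and $\Delta_2/\Delta$ --- that the trajectory remains bounded and converges to an equilibrium as $x \to -\infty$; translating via the Emden transform \eqref{Eq.Emden_transform} yields smoothness at $Z = 0$. Integrating forward I would identify the unique $x_A>0$ at which $\Delta_1$ crosses zero transversally while $\Delta$ stays positive, and show the trajectory then approaches $(0,0)$ along the stable manifold of the linearization of \eqref{autonomousODE} at the origin; the algebraic decay of that stable manifold converts into \eqref{Coercive1}--\eqref{Coercive2}. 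Whether the trajectory actually lands on the stable manifold is the \emph{shooting condition}: it amounts to the vanishing of a single real-valued quantity $\Phi(r)$. Following the strategy of \cite{MRRJ2,BCLGS} but adapted to $d=\ell$, I would realize $\Phi(r)$ as the limit $\cS_\infty$ of an explicit recurrence (see \eqref{Eq.S_infty}); showing $\cS_\infty\neq 0$ and applying a transversality/normal-families argument produces a discrete sequence $\{r_n\}$ accumulating at the endpoint $3 - \sqrt 3$ of \eqref{Para_rangeEuler}. The repulsivity bullet $w > a(1+a)\sigma^2$ is then verified a posteriori along the global trajectory.

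The principal obstacle is the degeneration $d = \ell = 3$: at $P_s$ the linearization acquires an extra resonance, and the classical series $S_\infty(d,\ell)$ of \cite{MRRJ2} becomes ill-defined. Re-deriving a well-posed shooting quantity $\cS_\infty$ that both captures the existence of a smooth global profile and admits a workable recurrence is the central novelty; the hardest single step is then the computer-assisted verification $\cS_\infty > 0$. A secondary technical issue is controlling the condition number of the $2\times 2$ linear systems for $(a_k, b_k)$ uniformly in $k$ as $r \to 3 - \sqrt 3$, which is needed both for the convergence of the local series and for the transversality of the shooting equation.
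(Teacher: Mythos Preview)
Your proposal locates the shooting mechanism in the wrong place and misreads the role of $\cS_\infty$. In the paper, the $P_6$--$P_2$ trajectory $u_F$ exists for \emph{every} $r$ in the range \eqref{Para_rangeEuler} (this is Step~1, taken from \cite{MRRJ2} and valid even when $d=\ell$), and the local analytic branch $u_L$ at $P_2$ exists for every $R\notin\Z$. The nontrivial condition is that $u_L$ and $u_F$ coincide, i.e., that the analytic branch continued toward $\sigma\to+\infty$ actually connects to $P_6$. Your claim that the backward extension from $P_s$ is automatic ``using the invariance of $\{\Delta<0,\Delta_1>0,\Delta_2<0\}$'' is precisely what fails: for $R$ just above an integer $N$ one has $u_L>u_F$ and $u_L$ exits the strip $u_{\mathrm g}<u<u_{\mathrm b}$ (this is the content of Proposition~\ref{Prop.R_near_odd} and the inequality \eqref{Eq.U_1>u_b}), so the analytic branch does \emph{not} reach $P_6$. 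Conversely, once $u_L=u_F$ has been arranged and $N$ is odd, the forward extension to $P_4$ is essentially automatic via the barrier $u_{(N)}$ (Proposition~\ref{Prop.u_N_compare0} and Proposition~\ref{Prop.global}); it is not a shooting condition. Incidentally, the backward trajectory is not bounded: $\sigma(x)\to+\infty$ as $x\to-\infty$.

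Your identification of $\cS_\infty$ with a shooting function $\Phi(r)$ is also off. The quantity $\cS_\infty$ in \eqref{Eq.S_infty} is a single real number --- the limit of $\widehat a_n^\infty/\widehat M_n^\infty$ for the \emph{limiting} recurrence as $R\to\infty$ --- and its positivity is a non-degeneracy input that secures the lower bound $a_n\gtrsim M_n$ (Proposition~\ref{Prop.an_lower_bound}). The discrete sequence $\{r_n\}$ does not arise from zeros of any $\Phi(r)$; it comes from the intermediate value theorem applied to $u_L-u_F$ on each interval $R\in(N,N+1)$. The integers $R=N$ are exactly the zeros of your determinant $D_k(r)$ (in the paper's normalization, $\gamma_n=\delta(R-n)$), and the resulting blow-up $a_N\sim(R-N)^{-1}$ is what forces the sign change in Propositions~\ref{Prop.R_near_odd}--\ref{Prop.R_near_even}. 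So those resonances are not exceptional values to be discarded; they are the engine of the argument.
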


\begin{proposition}[Repulsivity]\label{Prop.coercive}
	Let $d=\ell=3$ and $\{r_n\}$ be given by Theorem \ref{Thm.ODE}. For each $r\in\{r_n\}$, let $(\sigma,w)$ be the solution to \eqref{autonomousODE} given by Theorem \ref{Thm.ODE}. Then we have	
	\begin{align}
		\sigma(x)&\gtrsim_r \min\left\{\e^{-rx},\e^{-x}\right\}  \label{repuls 1},\\
		\left|w^{(j)}(x)\right|+\left|\sigma^{(j)}(x)\right|&\lesssim_{r,j} \min\left\{\e^{-rx},\e^{-x}\right\},\quad\forall\
         j\in\Z_{\geq 0},\label{repuls 2}\\
		1-\left(w(x)+w'(x)\right)-\left|\sigma(x)+\sigma'(x)\right|&>\eta\label{repuls 3},\\
		1-w(x)-\left|\sigma(x)+\sigma'(x)\right|&>\eta\label{repuls 4},
		\end{align}
	for some constant $\eta>0$ depending on $r$. Here the derivatives are taken with respect to $x\in\R$.
\end{proposition}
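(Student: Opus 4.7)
The plan is to split $\R$ into three regimes---$x \ge X$, $x \le -X$, and the compact interval $[-X, X]$ for some large $X = X(r)$---and verify the four estimates in each. \emph{As $x \to +\infty$:} I would linearize \eqref{autonomousODE} at the fixed point $(\sigma, w) = (0,0)$. From \eqref{Eq.Delta}--\eqref{Eq.Delta_2}, $\Delta(0,0) = 1$ and the linear parts of $\Delta_1, \Delta_2$ at the origin are $rw$ and $r\sigma$ respectively, so the linearization is $w' = -rw$, $\sigma' = -r\sigma$. Theorem \ref{Thm.ODE} traps the trajectory in $\{\Delta, \Delta_1, \Delta_2 > 0\}$ for $x > x_A$ and forces $(\sigma, w) \to (0,0)$ as $x \to +\infty$. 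Since both eigenvalues of the linearization equal $-r$, standard stable-manifold arguments give $\sigma(x) \sim c_\sigma \e^{-rx}$ with $c_\sigma > 0$ (the positivity of $\sigma$ along the trajectory rules out a degenerate asymptotic direction) and $|w(x)| \lesssim_r \e^{-rx}$; bootstrapping the ODE then yields $|w^{(j)}| + |\sigma^{(j)}| \lesssim_{r,j} \e^{-rx}$ for all $j \in \N$. In particular \eqref{repuls 3}--\eqref{repuls 4} hold trivially in this regime since their left-hand sides tend to $1$.

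\emph{As $x \to -\infty$:} Since Theorem \ref{Thm.ODE} provides a $C^\infty$ radial solution $(\mathbf U_E, S_E)$ on $\R^3$, $S_E$ is smooth and even in $Z$ with $S_E(0) > 0$, and $U_E$ is smooth and odd in $Z$. Substituting the Taylor expansions into \eqref{Eq.Emden_transform} with $Z = \e^x$ yields $\sigma(x) = \frac{S_E(0)}{\ell}\e^{-x} + \cO(\e^x)$ and $w(x) = w_0 + \cO(\e^{2x})$ with $w_0 := -U_E'(0)$, giving \eqref{repuls 1}--\eqref{repuls 2} in this regime, and moreover $w'(x)$, $\sigma(x) + \sigma'(x) = \cO(\e^{2x}) \to 0$. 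Hence the left-hand sides of \eqref{repuls 3}--\eqref{repuls 4} both converge to $1 - w_0$. Since Theorem \ref{Thm.ODE} gives $\Delta_1 > 0 > \Delta$ on $(-\infty, 0)$, we have $w' = -\Delta_1/\Delta > 0$ there, so $w$ is strictly increasing on $(-\infty, 0)$ and $w_0 < w(0) = w_- < 1$, producing a uniform positive gap.

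\emph{On $[-X, X]$:} Estimates \eqref{repuls 1}--\eqref{repuls 2} follow from continuity and the strict positivity of $\sigma$. For \eqref{repuls 3}--\eqref{repuls 4}, I would exploit the identities $w + w' = -\pa_Z U_E(Z)$ and $\sigma + \sigma' = \pa_Z S_E(Z)/\ell$ (with $Z = \e^x$), which recast the inequalities as $1 + \pa_Z U_E - \frac{1}{\ell}|\pa_Z S_E| > \eta$ and $1 + U_E/Z - \frac{1}{\ell}|\pa_Z S_E| > \eta$---that is, pointwise conditions on functions that are smooth on $[\e^{-X}, \e^X]$ by Theorem \ref{Thm.ODE}. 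One then proceeds region-by-region, using the phase-portrait data of Theorem \ref{Thm.ODE} (signs of $\Delta, \Delta_1, \Delta_2$, the sonic values $(\sigma(0), w(0)) = (1-w_-, w_-)$, and the confinement $w > a(1+a)\sigma^2$ for $x > 0$) to bound $\pa_Z U_E$ and $\pa_Z S_E$ away from the critical threshold.

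The hardest step is expected to be the compact-region verification of \eqref{repuls 3}--\eqref{repuls 4}, particularly in neighborhoods of the sonic radius $Z = 1$ (the image of $x = 0$, where the Emden variables develop $0/0$ indeterminacies) and of $Z = \e^{x_A}$ (where $\Delta_1$ changes sign). While the $Z$-formulation bypasses the sonic singularity at the level of \eqref{repuls 3}--\eqref{repuls 4}, extracting the required sign still demands careful control of the sub-leading Taylor coefficients of $U_E, S_E$ at the sonic radius, and these coefficients depend sensitively on the parameter $r = r_n$ near the degenerate value $3 - \sqrt 3$; the asymptotic regime $r_n \to 3-\sqrt 3$ should be exploited to reduce the verification to a perturbative calculation around the limiting profile.
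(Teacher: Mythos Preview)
Your treatment of the asymptotic regimes $x\to\pm\infty$ is essentially correct and matches the paper's approach: Gr\"onwall/linearization at $P_4$ for $x\to+\infty$, and smoothness of $(\mathbf U_E,S_E)$ at $Z=0$ (equivalently, the $P_6$ asymptotics with $w_0=r-1$) for $x\to-\infty$. The reduction of \eqref{repuls 3}--\eqref{repuls 4} on a compact set to a strict pointwise inequality is also what the paper does (its Proposition~\ref{Prop.replusivity_reduce}).

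The gap is in the compact-region step for \eqref{repuls 3}--\eqref{repuls 4}. The phase-portrait sign information from Theorem~\ref{Thm.ODE} controls $w'$ and $\sigma'$ individually, but not the combined quantity $1-w-w'-|\sigma+\sigma'|$; nor does passing to the $Z$-variables help, since the inequality $1+\pa_Z U_E-\tfrac1\ell|\pa_Z S_E|>0$ has no obvious sign from the data you list. The paper's actual mechanism is different and more concrete: on each of the segments $P_6\text{--}P_2$ and $P_2\text{--}P_A$ it rewrites $\Delta\cdot(1-w-w'-F)$ as a quadratic in $\sigma$ with coefficients $A(w),B(w)$ depending on $w$ (see \eqref{Eq.1-w-w'-F_expression}), so that the desired inequality becomes $\sigma\gtrless\bar\sigma(w)$ for an explicit root $\bar\sigma$. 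This is then relaxed to a comparison with a simpler curve $\sigma_i(w)$, transported to the renormalized $(\tau,u)$-coordinates as an explicit rational function $U_{\sigma_i}(\tau)$, and verified by a barrier argument: one checks $\cL[U_{\sigma_i}]$ has a definite sign (a polynomial identity in $\al,\tau$), then compares $u_F$ to $U_{\sigma_i}$ by the maximum principle. The remaining segments $P_A\text{--}P_B$ and $P_B\text{--}P_4$ are handled by short direct computations. None of this is perturbative in $r_n\to 3-\sqrt3$; the barrier identities hold for all $\al\in(0,1)$, so your suggested limiting-profile strategy is neither needed nor obviously easier---you would still have to prove the strict inequality for the limit, which requires the same barrier machinery.
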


\begin{figure}
	\centering
	\includegraphics[width=1\textwidth]{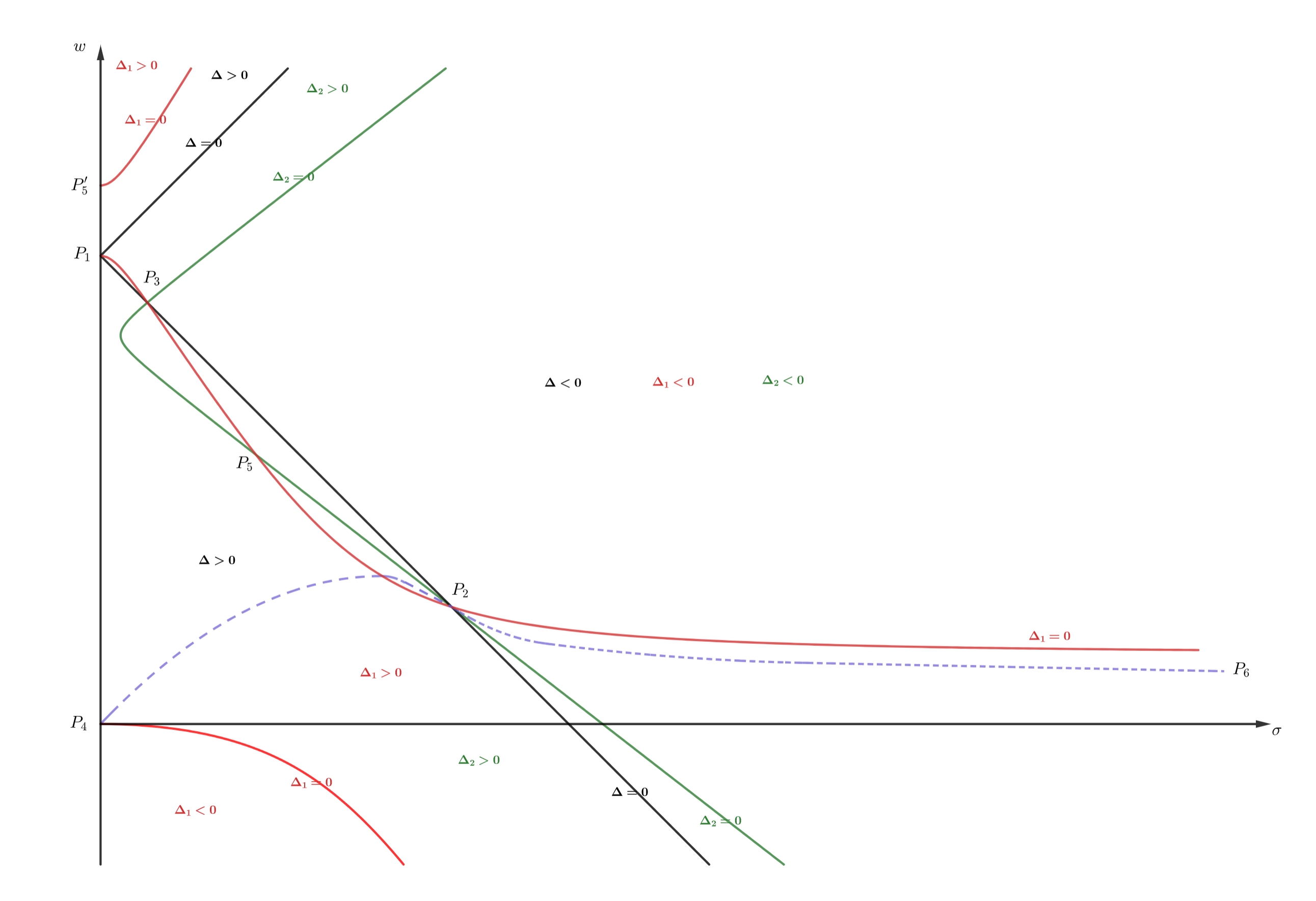}
	\caption{Phase portrait for the $\sigma-w$ system \eqref{autonomousODE}: Dashed curve is the trajectory of the solution constructed in Theorem \ref{Thm.ODE}.}
	\label{Fig.Phase_Portrait_w_sigma}
\end{figure}

The system \eqref{autonomousODE} can be analyzed through the phase portrait in the $\sigma-w$ plane, see Figure \ref{Fig.Phase_Portrait_w_sigma}. The solution curves corresponding to $\Delta=0$, $\Delta_1=0$ and $\Delta_2=0$, along with the intersection points of these curves, play a crucial role in our analysis of the system \eqref{autonomousODE}.

To simplify calculations, we take $d=\ell=3$. Recall that we require the parameter $r$ to lie in the range \eqref{Para_rangeEuler}.
The solutions to $\Delta=0$ are $w-1=\pm\sigma$, which are called \emph{sonic lines}. According to \cite[Lemma 2.3]{MRRJ2}, the roots of $\Delta_1=0$ are given by three curves $w_1(\sigma)<w_2(\sigma)<w_3(\sigma)$ with the following properties:
\begin{align}
	w_1(\sigma)\leq0<r-1<w_2(\sigma)\leq 1<r\leq w_3(\sigma),\quad&\forall\ \sigma\geq 0;\nonumber\\
	w_1'(\sigma)<0,\quad w_2'(\sigma)<0,\quad w_3'(\sigma)>0,\quad&\forall\ \sigma>0;\label{Eq.w_j_monotonicity}\\
	w_1(\sigma)=-\frac{3(r-1)}{r}\sigma^2+\mathcal O(\sigma^3),\ w_2(\sigma)=1+\cO(\sigma^2),\ w_3(\sigma)=r+\cO(\sigma^2),\quad&\text{as }\sigma\downarrow0;\nonumber\\
	w_1(\sigma)=-\sqrt3\sigma+\mathcal O(1),\ w_2(\sigma)=r-1+\cO(\sigma^{-2}),\ w_3(\sigma)=\sqrt3\sigma+\cO(1),\quad&\text{as }\sigma\uparrow+\infty.\nonumber
\end{align}
The solutions to $\Delta_2=0$ are given by $\sigma=0$ and another two curves $w_2^\pm=w_2^\pm(\sigma)$, where
\[w_2^\pm(\sigma)=\frac{r+3\pm\sqrt{r^2-9r+9+15\sigma^2}}{5},\quad \forall\ \sigma\geq \sigma_2^{(0)},\]
and
\[\sigma_2^{(0)}:=\begin{cases}
	\sqrt{\frac{-r^2+9r-9}{15}}&\text{if } \frac{9-3\sqrt{5}}{2}<r<3-\sqrt3,\\
	0&\text{if }1<r\leq \frac{9-3\sqrt{5}}{2}.
\end{cases}\]

By \cite[Lemma 2.4]{MRRJ2}, the solutions to $\Delta_1=\Delta_2=0$ are
\begin{align}
	&P_1=(0,1),\quad P_2=(\sigma_2=1-w_-, w_-),\quad P_3=(1-w_+, w_+),\label{Eq.P_1_2}\\
	&P_4=(0,0),\quad P_5=\left(\frac{\sqrt3 r}{6}, \frac r2\right),\quad P_5'=(0,r),\nonumber
\end{align}
where
\begin{equation}\label{Eq.w+-}
    w_\pm=\frac{r\pm\sqrt{r^2-6r+6}}{2}\in(r-1,1).
\end{equation}
Here we also note that $r^2-6r+6>0$ due to $r<3-\sqrt3$. It is easy to see that
\begin{itemize}
	\item $P_5$ lies between $P_2$ and $P_3$;
	\item $P_5$ and $P_2$ lies in the curve $\sigma\mapsto(\sigma,w_2^-(\sigma))$;\footnote{It seems that $P_3$ lies in the curve $\sigma\mapsto(\sigma,w_2^+(\sigma))$ from our Figure \ref{Fig.Phase_Portrait_w_sigma}. Unfortunately, this is not true for $r$ being sufficiently close to $3-\sqrt3$. Indeed, there holds
	\[w_+=\frac{r+\sqrt{r^2-6r+6}}{2}<\frac{r+3}{5}=\min w_2^+\quad\text{for}\quad \frac{57-5\sqrt{57}}{16}<r<3-\sqrt3.\]
	Therefore, if we let $r$ be sufficiently close to $3-\sqrt3$, then $P_3$ will lie in the curve $\sigma\mapsto(\sigma,w_2^-(\sigma))$, 
not exactly as shown in our Figure \ref{Fig.Phase_Portrait_w_sigma}. Nevertheless, this inconsistency does not affect our proof.}

	\item $P_3$ and $P_2$ lies in the sonic line $w=1-\sigma$ and
	\[\sigma+w_2(\sigma)\begin{cases}
		>1&\text{for}\quad 0<\sigma<1-w_+=\sigma(P_3),\\
		<1&\text{for}\quad \sigma(P_3)=1-w_+<\sigma<1-w_-=\sigma(P_2)=\sigma_2,\\
		>1&\text{for}\quad \sigma>1-w_-=\sigma(P_2)=\sigma_2.
	\end{cases}\]
	\item As $r\uparrow 3-\sqrt3$, we have
	\begin{equation}\label{Eq.degeneracy}
		w_\pm(r)\to \frac{3-\sqrt3}{2},\quad w(P_5(r))\to \frac{3-\sqrt3}{2},
	\end{equation}
	in which case the phase portrait has a triple point degeneracy.
\end{itemize}

We note that $P_2$ solves $\Delta=\Delta_1=\Delta_2=0$, where the system \eqref{autonomousODE} degenerates and the classical ODE theory fails at $P_2$. In the literature, $P_2$ is commonly referred to as a \emph{sonic point}.

\subsection{Sketch of the proof: existence}\label{Subsec.sketch_existence}

In this subsection, we outline  the proof of Theorem \ref{Thm.ODE}. Due to the appearance of the sonic point $P_2$,  the proof is rather complex and intricate.\smallskip

\textbf{Step 1.} \underline{Near the origin $Z=0$: $P_6-P_2$ solution curve.} Eliminating the implicit variable $x$ in \eqref{autonomousODE} gives the ODE of $w=w(\sigma)$:
\begin{equation}\label{ODE_w_sigma}
	\frac{\mathrm dw}{\mathrm d\sigma}=\frac{\Delta_1(\sigma,w)}{\Delta_2(\sigma,w)}.
\end{equation}
Following \cite{MRRJ2}, the first step is to construct a local solution to \eqref{ODE_w_sigma} emerging from $P_6(\sigma=+\infty, w=r-1)$ and prove that it can be extended until reaching $P_2$, see Lemma 3.1 in \cite{MRRJ2}. We remark that the proof of \cite[Lemma 3.1]{MRRJ2} works well in our setting, 
even if $\ell= d$. Moreover, \cite[Lemma 3.1]{MRRJ2} also proves the smoothness of $(\mathbf U_E, S_E)$ (defined by \eqref{Eq.radial_transform} and \eqref{Eq.Emden_transform}) at the spatial origin.
Now we obtain a smooth function $w=w_F(\sigma;r)$ defined on $(\sigma,r)\in(\sigma_2, +\infty)\times(1, 3-\sqrt3)$, such that for each $r\in(1, 3-\sqrt3)$, $w_F(\cdot;r)$ solves \eqref{ODE_w_sigma} and
\begin{align*}
	w_F(\sigma;r)=r-1+\frac{(r-1)(2-r)}{5}&\frac1{\sigma^2}+\mathcal O\left(\frac1{\sigma^4}\right)\quad\text{as}\quad\sigma\uparrow+\infty,\\
	\max\left\{r-1, w_2^-(\sigma)\right\}<w_F(\sigma;r)&<w_2(\sigma)\quad \forall\ \sigma_2<\sigma<+\infty,\\
	\lim_{\sigma\downarrow\sigma_2}w_F(\sigma;r&)=w_-=w(P_2).
\end{align*}
This solution depends smoothly on the parameter $r\in (1, 3-\sqrt3)$.

\medskip

{\textbf{Step 2.} \underline{Near the sonic point: re-normalization and local smooth solutions.} \, One of the main difficulties in proving Theorem \ref{Thm.ODE} lies in extending $w_F(\cdot; r)$ to pass through the sonic point $P_2$ smoothly. However, this is not possible for some parameter $r$. To find the desired parameter, the strategy is to construct a local smooth solution $w_L(\sigma; r)$ near $P_2$, defined by a power series $w_L(\sigma;r)=\sum_{n=0}^\infty \widetilde{w}_n(r)(\sigma-\sigma_2)^n$, and then show that $w_L(\sigma; r_n)=w_F(\sigma;r_n)$ for some well-chosen parameters $\{r_n\}$. In this strategy, understanding the local solution $w_L$ in a quantitative manner is crucial, hence obtaining the sharp estimates of the coefficients $\{\widetilde{w}_n\}$ is essential.

Unfortunately, it turns out that the sequence $\{\widetilde{w}_n\}$ obeys a rather complicated recurrence relation. To simplify the analysis, we introduce the following renormalization
\begin{equation}\label{Eq.renormalization}
	a:=\frac{w_-}{1-w_-},\quad\alpha:=\frac{a(a+1)}{a+3},\quad u=(1+a)^2\sigma^2,\quad \tau=-(1+a)(w-w_-),
\end{equation}
then \eqref{ODE_w_sigma} is mapped to
\begin{equation}\label{ODE_u_tau}
	\Delta_\tau\mathrm du-\Delta_u\mathrm d\tau=0,
\end{equation}
where
\begin{align}
	\Delta_u(\tau,u):&=-2u^2+2u-\frac43(\al-4)\tau u+\frac{10}3\tau^2u,\label{Eq.Delta_u}\\
	\Delta_\tau(\tau,u):&=3(\al-\tau)u-3\al -(4\al-1)\tau-(\al-2)\tau^2+\tau^3.\label{Eq.Delta_tau}
\end{align}

\begin{Remark}
The re-normalization \eqref{Eq.renormalization}
reduces the power for $\sigma$ in the equation \eqref{ODE_w_sigma}. The introduction of the parameter $a$ helps us avoid many computations involving radicals. This significantly simplifies the computations. We emphasize that part of this renormalization procedure is not constrained by the condition $d=\ell=3$. For parameters belonging to other ranges, a similar renormalization is available, which could potentially provide a simpler proof of Merle-Raphaël-Rodnianski-Szeftel's theorem \cite{MRRJ2}. 
\end{Remark}

Similarly with the $\sigma-w$ ODE \eqref{ODE_w_sigma}, solutions of $\Delta_u=0$ and $\Delta_\tau=0$ play a crucial role in our analysis of the $\tau-u$ ODE. It is a direct computation to obtain the solution of $\Delta_u(\tau, u)=0$:
\begin{equation}\label{Eq.u_g}
	u=u_\text g(\tau):=1-\frac23(\al-4)\tau+\frac53\tau^2,
\end{equation}
see the green curve in Figure \ref{Fig.Phase_Portrait_u_tau}; and the solution of $\Delta_\tau(\tau, u)=0$:
\begin{equation}\label{Eq.u_b}
	u=u_\text b(\tau):=\frac{-\tau^3+(\al-2)\tau^2+(4\al-1)\tau+3\al}{3(\al-\tau)},
\end{equation}
see the blue curve in Figure \ref{Fig.Phase_Portrait_u_tau}.

The above $\al$ is the parameter and the following maps are all strictly increasing and bijective:
\begin{equation}\label{maps-parameter}
	r\in(1, 3-\sqrt3)\mapsto w_-\in\left(0,\frac{3-\sqrt3}2\right)\mapsto a\in(0,\sqrt3)\mapsto \al\in(0,1).
\end{equation}
Under the transformation $(\sigma,w)\mapsto(\tau, u)$, $P_6$ and $P_2$ are mapped into $Q_6$ and $Q_2(\tau=0,u=1)$, respectively; the solution $w=w_F(\sigma)$ of \eqref{ODE_w_sigma} connecting $P_6$ and $P_2$ is converted to a solution $u=u_F(\tau)$ of \eqref{ODE_u_tau} connecting $Q_6$ and $Q_2$,
and $u_F$ depends smoothly on the parameter $\al$. Moreover,
\begin{align}
	\lim_{\tau\uparrow\tau(Q_6)}u_F(\tau)=+\infty,\quad\tau(Q_6)=\frac{w_--r+1}{1-w_-}=\al,\label{Eq.tau(Q6)=al}\\
	u_\text g(\tau)<u_F(\tau)<u_\text b(\tau),\quad\forall\ 0<\tau<\tau(Q_6)=\al.\label{Eq.u_F_property}
\end{align}  See also Figure \ref{Fig.Phase_Portrait_u_tau}.

\begin{figure}[htbp]
	\centering
	\includegraphics[width=1\textwidth]{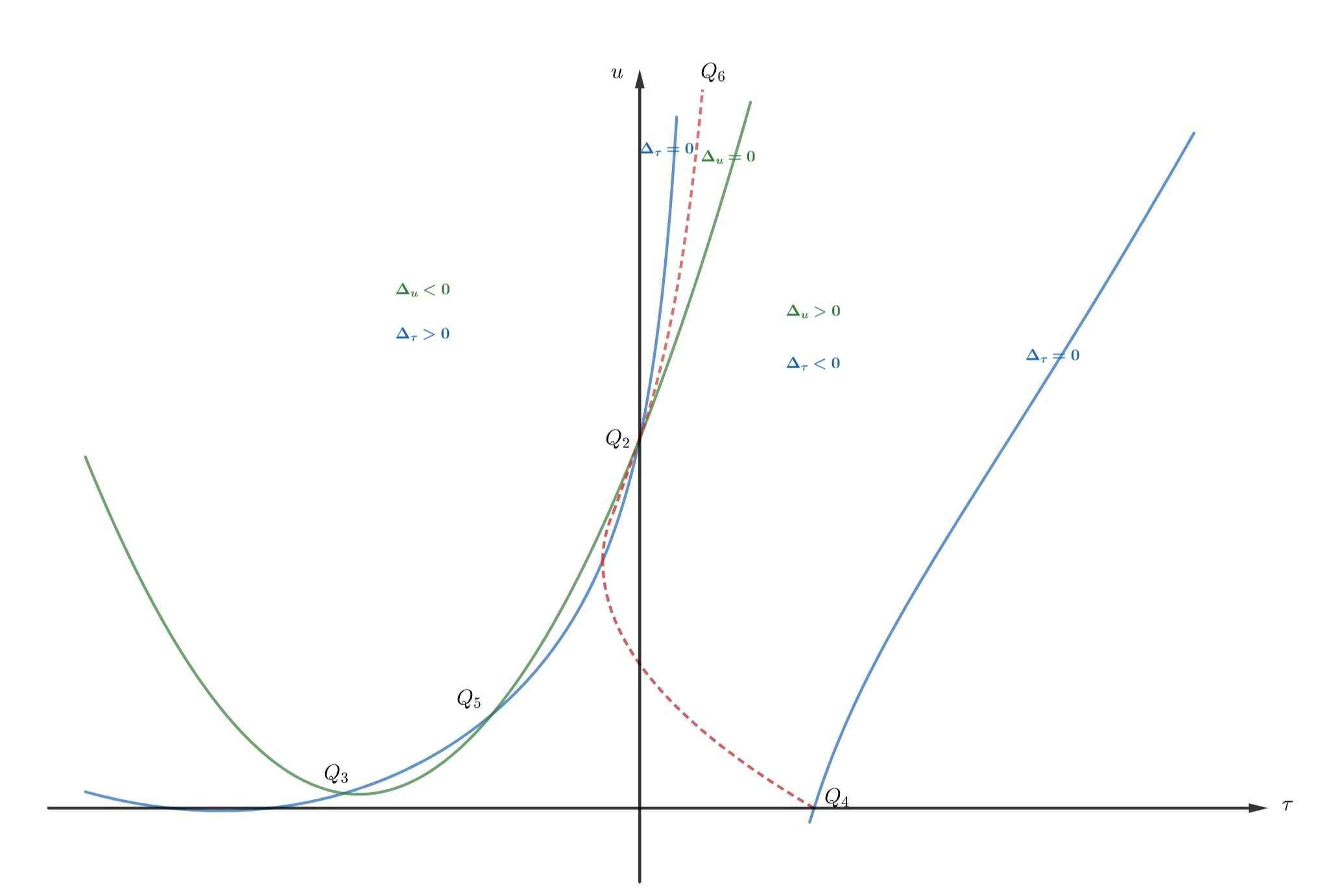}
	\caption{Phase portrait for the $\tau-u$ system \eqref{ODE_u_tau}: Dashed curve is the trajectory of the solution.}
	\label{Fig.Phase_Portrait_u_tau}
\end{figure}

Thanks to the re-normalization \eqref{Eq.renormalization}, the analysis of local solutions to \eqref{ODE_w_sigma} near $P_2$ is equivalent to the analysis of local solutions to \eqref{ODE_u_tau} near $Q_2(0,1)$. As in \cite{MRRJ2,SWZ2024_1}, the eigen-system of \eqref{ODE_u_tau} near $Q_2$ introduces a parameter that plays a crucial role in our analysis.
Let
\begin{equation}\label{Eq.eigen_system}
	\mathcal M:=\begin{pmatrix}
		\pa_u\Delta_u & \pa_\tau\Delta_u\\
		\pa_u\Delta_\tau &  \pa_\tau\Delta_\tau
	\end{pmatrix}\Bigg|_{(\tau, u)=Q_2}=\begin{pmatrix}
		-2 & -\frac43(\al-4)\\
		3\al & -2-4\al
	\end{pmatrix}.
\end{equation}
The eigenvalues of $\cM$ are given by
\begin{align*}
	\la^2+4(\al+1)\la+4(\al-1)^2=0\Longrightarrow
\la_\pm=-2(\al+1\mp2\sqrt\al)=-2(\sqrt\al\mp1)^2<0.
\end{align*}
We introduce the parameter
\begin{equation}\label{Eq.R}
	R:=\frac{\la_-}{\la_+}=\frac{(\sqrt\al+1)^2}{(\sqrt\al-1)^2}>1,
\end{equation}
which will play a central role in the coefficients of the power series expansion of $u$ at the sonic point $Q_2$. Note that the map $$\al\in(0,1)\mapsto\frac{1+\sqrt\al}{1-\sqrt\al}\in(1,+\infty)$$
is smooth, strictly increasing and bijective. Combining this with \eqref{maps-parameter}, we see that\footnote{The strict increase of the map $r\mapsto R$ holds for general $d$ and $\ell$, which can be checked similarly. Note that this fact has also been shown in \cite[Lemma 2.1]{BCLGS}, 
via a computer-assisted proof. Here we give a simple proof.}
\begin{equation}\label{Eq.r_to_R_increasing}
	r\in(1,3-\sqrt3)\mapsto R\in(1,+\infty)\quad\text{is smooth, strictly increasing and bijective}.
\end{equation}
Recall that we aim to construct smooth profiles for $r$ sufficiently close to $3-\sqrt3$, which corresponds to sufficiently large $R$.

\if0 Let us convert the solution $w_{F}$ to \eqref{ODE_w_sigma} connecting $P_6$ and $P_2$ into the smooth solution $u=u_F(\tau;R)$ to \eqref{ODE_u_tau}, defined on $\tau\in(0, \tau(Q_6))$ such that $\lim_{\tau\downarrow 0} u_F(\tau)=1$ and $u_F$ depends smoothly on the parameter $R$. Moreover,
\begin{align}
\lim_{\tau\uparrow\tau(Q_6)}u_F(\tau)=+\infty,\quad\tau(Q_6)=\frac{w_--r+1}{1-w_-}=\al,\label{Eq.tau(Q6)=al}\\
	u_\text g(\tau)<u_F(\tau)<u_\text b(\tau),\quad\forall\ 0<\tau<\tau(Q_6)=\al.\label{Eq.u_F_property}
\end{align}\fi

In order to extend $u_F$ smoothly through $Q_2$, our strategy is to construct a local smooth solution $u_L(\tau, R)$ to \eqref{ODE_u_tau} near $Q_2$, and then prove that $u_L(\cdot;R)=u_F(\cdot;R)$ for some well-chosen parameter $R$.

To construct $u_L$, we seek local solutions that can be expressed as a power series
\begin{equation}\label{Eq.u_L_series}
	u_L(\tau)=\sum_{n=0}^\infty a_n\tau^n.
\end{equation}
Plugging the series into \eqref{ODE_u_tau} yields the recurrence relation for the sequence $\{a_n\}$. A simple upper bound estimate of $\{a_n\}$ implies that the series defining $u_L$ in \eqref{Eq.u_L_series} is uniformly convergent in a neighborhood of $\tau=0$, for all $R\in(1,+\infty)\setminus\Z$. Hence, \eqref{Eq.u_L_series} gives a local analytic solution to \eqref{ODE_u_tau} near $Q_2$, and moreover, $u_L$ depends smoothly on the parameter $R\in(1,+\infty)\setminus\Z$. For full details, readers are referred to Section \ref{Sec.local_sol}.
\smallskip

{\textbf{Step 3.} \underline{Sharp estimates of the coefficients $\{a_n\}$.}
The first key point is to rewrite the recurrence relation of $\{a_{n}\}_{n=0}^\infty$ (see \eqref{Eq.a_n_recurrence_relation}) in the following form
\begin{equation}\label{Eq.re-formulation}
	\g_na_n=2p_na_{n-1}+q_na_{n-2}+s_1a_{n-3}+s_2a_{n-4}+\wt\varepsilon_n,\quad\forall\ n\geq 10,
\end{equation}
where both $s_1$ and $s_2$ are independent of $n$, hence the main order terms are given by the combination of $a_{n-1}$ and $a_{n-2}$. This motivates the introduction of the comparison sequence $\{M_n\}$, given by \eqref{Eq.M_n_sequence}. We prove the following sharp estimate of the sequence $\{a_n\}$.
\begin{proposition}\label{Prop.a_n_sharp_est}
	Let the sequence $\{M_n>0\}_{n=0}^{\lceil R \rceil-1}$ be given by \eqref{Eq.M_n_sequence}. Then there exists $N_0\in\N$ such that for all $R\in(N_0,+\infty)\setminus\Z$, we have
	\[a_n\asymp M_n,\quad\forall \ n\in[\sqrt R, R).\]
	Here the implicit constant is independent of $R$ and $n$.
\end{proposition}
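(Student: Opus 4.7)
The overall strategy is to show that $\{M_n\}$ is an approximate fixed point of the recurrence \eqref{Eq.re-formulation}, so that the ratio $b_n := a_n/M_n$ remains trapped in a compact interval $[c_1, c_2] \subset (0, +\infty)$ independent of $R$ throughout the window $n \in [\sqrt R, R)$. The first step is to extract the asymptotic behavior of $\g_n, p_n, q_n$ on the window: the eigen-analysis at $Q_2$ forces $\g_n$ to vanish at $n = R$, so $\g_n$ factorizes as $(R - n)$ times a smooth nonzero factor, while $p_n$ and $q_n$ are explicit polynomials. The design of \eqref{Eq.M_n_sequence} is such that $M_n$ satisfies the two-term dominant balance
\[
\g_n M_n = 2 p_n M_{n-1} + q_n M_{n-2},
\]
exactly or up to a controlled multiplicative correction, which yields manageable closed-form expressions for the ratios $M_{n-1}/M_n$ and $M_{n-2}/M_n$.

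The second step is to divide \eqref{Eq.re-formulation} by $\g_n M_n$ and use the previous identity to obtain a perturbed recurrence for $b_n$:
\[
b_n = \mu_n b_{n-1} + (1 - \mu_n) b_{n-2} + E_n,
\]
with $\mu_n := 2 p_n M_{n-1}/(\g_n M_n)$ and $E_n$ collecting the contributions of $s_1 a_{n-3}, s_2 a_{n-4}, \wt\varepsilon_n$ divided by $\g_n M_n$. The two key facts I would aim to establish are (i) $\mu_n \in [\mu_*, 1 - \mu_*]$ uniformly on the window for some $\mu_* > 0$ independent of $R$, and (ii) $\sum_n |E_n|$ is small when $R$ is large. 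The convex-combination form of the unperturbed iteration then traps $b_n$ between the base values with only a small accumulated error.

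For the base case at $n$ of order $\sqrt R$, the coefficient $\g_n$ is bounded away from zero, the recurrence is non-degenerate, and an explicit product formula obtained by iterating with $2p_n a_{n-1}$ as the dominant term yields $b_n$ of order $1$, both above and below. Combining this with the propagation above closes the induction and gives $a_n \asymp M_n$ uniformly for $n \in [\sqrt R, R)$ with implicit constants independent of $R$.

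The principal obstacle is the ``balanced convex combination'' condition $\mu_n \in [\mu_*, 1 - \mu_*]$. As $n \uparrow R$ the denominator $\g_n$ collapses, and one must verify that $2 p_n M_{n-1}$ and $q_n M_{n-2}$ shrink at exactly comparable rates so that neither dominates; this delicate cancellation is precisely what motivates the choice of $M_n$ in \eqref{Eq.M_n_sequence}. A secondary difficulty is that $E_n$ is also divided by the shrinking $\g_n$, so an a-priori (non-sharp) upper bound $|a_k| \lesssim M_k$ for $k < n$ must be obtained first---for instance by a majorant-series argument on the power series \eqref{Eq.u_L_series}---before the sharp two-sided comparison can be bootstrapped within the induction.
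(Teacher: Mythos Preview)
Your proposal contains a genuine gap at exactly the point you flag as ``the principal obstacle.'' The balanced convex-combination hypothesis $\mu_n\in[\mu_*,1-\mu_*]$ fails uniformly in $R$. From \eqref{Eq.p_n_asymp} one has $p_n=\frac{3}{A}n+\cO(A^{-1})$ with $A=\sqrt R$, while $q_n=n+\cO(1)$ by \eqref{Eq.q_n_asymp}; thus $p_n/q_n\asymp A^{-1}\to 0$. The paper states this explicitly in the sketch (Step~3 of Subsection~\ref{Subsec.sketch_existence}): as $R\to\infty$ the coefficient of $a_{n-1}$ degenerates and the limiting recurrence is dominated by $a_{n-2}^\infty$. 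Consequently $\mu_n\to 0$ for each fixed $n$ as $R\to\infty$, so no uniform lower bound $\mu_*>0$ exists. Your remark that ``$2p_n a_{n-1}$ is the dominant term'' at the base $n\sim\sqrt R$ has the dominance backwards. When $\mu_n$ is near $0$ the unperturbed map $b_n=\mu_n b_{n-1}+(1-\mu_n)b_{n-2}$ essentially decouples even and odd indices, and there is no mechanism in your argument preventing one parity from drifting to zero.

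The paper circumvents this degeneracy by introducing $\la_n\asymp\sqrt n$ (Lemma~\ref{Lem.la_n_def}) and working with the combinations $\wt a_n=|a_n|+\la_n|a_{n-1}|$ and $\wh a_n=a_n+\la_n a_{n-1}$, for which the effective one-step ratio $(2p_n+\g_n\la_n)/\g_n$ is nondegenerate; the comparison sequences $\wt M_n$, $\wh M_n$ track this and are shown to satisfy $\wt M_n\asymp\wh M_n\asymp M_n$. A second ingredient your proposal misses entirely is that the \emph{lower} bound is not a pure recurrence argument: it relies on the limiting sequence $\{a_n^\infty\}$ and the numerically verified fact $\cS_\infty>0$ in \eqref{Eq.S_infty}. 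The paper first proves $\wh a_n/\wh M_n$ is close to $\cS_\infty$ on a sub-window via \eqref{Eq.a_n/hat_M_n}, then upgrades to the full window by a separate ``Wronskian'' estimate on $a_nM_{n-1}-a_{n-1}M_n$ (Lemma~\ref{Lem.a_n/M_n_lower}). Without an analogue of the $\la_n$-shift and the input $\cS_\infty>0$, your scheme cannot produce a uniform positive lower bound.
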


It is natural to expect that the behavior of $\{a_n\}$ is approximated by the limiting sequence $\{a_n^\infty\}$ as $R\to+\infty$. However, we observe that $\lim_{R\to+\infty}p_n=0$, and hence the limiting sequence obeys a second-order recurrence relation, where the main order is given by $a_{n-2}^\infty$. This degenerate phenomenon is precisely caused by the triple-point degeneracy \eqref{Eq.degeneracy} of the phase portrait. As a consequence,  $a_n^\infty$ grows like $\sqrt{\Gamma(C+n)}/A_*^n$ for some constants $C\in\R$ and $A_*>0$, where $\Gamma$ is the Gamma function. This behavior differs significantly from the case  $d\neq\ell$, where the limiting sequence grows in the order of $\Gamma(C+n)/A_*^n$; see \cite[proposition 5.1]{MRRJ2}.

Section \ref{Sec.an} is primarily devoted to proving Proposition \ref{Prop.a_n_sharp_est}. Since the proof is intricate and highly technical, we divide it into three subsections:\smallskip

In Subsection \ref{Subsec.re-formulation}, we deduce the re-formulation \eqref{Eq.re-formulation} of the recurrence relation \eqref{Eq.a_n_recurrence_relation}.

In Subsection \ref{Subsec.a_n_upper_bound}, we prove the upper-bound aspect of Proposition \ref{Prop.a_n_sharp_est}, namely, Proposition \ref{Prop.a_n_upper_bound}. Here, we introduce another comparison sequence $\{M_n^*\}$ possessing some convexity properties, which facilitates the estimation of the summation terms in $\wt \varepsilon_n$ (see \eqref{Eq.tilde_epsilon_n}). The introduction of convexity replaces various convolution-type estimates of Gamma functions (see \cite[Appendices B and C]{MRRJ2}). We then prove that $M_n^*$ can be bounded by $M_n$. To overcome the degeneracy of the coefficient of $a_{n-1}$, we consider a new sequence $\{|a_n|+\la_n|a_{n-1}|\}$, for which the degeneracy disappears. The coefficient $\la_n$ is carefully chosen (see Lemma \ref{Lem.la_n_def}) such that the recurrence relation of $\{|a_n|+\la_n|a_{n-1}|\}$ has a simple leading order. We use the comparison sequence $\{\wt M_n\}$ to capture this leading order. We will prove that $\wt M_n$ are comparable with $M_n$, thanks to the well-chosen $\la_n$. Hence, the basic logic is
\[|a_n|\leq |a_n|+\la_n|a_{n-1}|\lesssim M_n^*\lesssim \wt M_n\asymp M_n,\quad\ \forall\ n\in\Z\cap[1, R).\]

In Subsection \ref{Subsec.a_n_lower_bound}, we prove the lower-bound aspect of Proposition \ref{Prop.a_n_sharp_est}, namely, Proposition \ref{Prop.an_lower_bound}, where we consider the limiting sequence $\{a_n^\infty\}$ as $R\to+\infty$. We observe that the sequence
\[\left\{\frac{a_n^\infty+\la_n^\infty a_{n-1}^\infty}{M_n^\infty}\right\}\]
has a positive limit as $n\to+\infty$, see \eqref{Eq.S_infty}. Based on this observation, we are able to prove that $(a_n+\la_na_{n-1})/M_n$ has a positive lower bound for appropriately large $n$ and large $R$, see Lemma \ref{Lem.a_n_lower}. Here, we emphasize that, due to the use of a perturbation argument, we are not able to prove the positive lower bound for all $n\in[N_0, R)$. Instead, we can only get the result for $n\in[N_0, c_1R]$, where $c_1\in(0,1)$ is a small constant independent of $n$ and $R$. The next step is to prove that $a_{n}/M_n-a_{n-1}/M_{n-1}$ is small enough, by considering the recurrence of the ``Wronskian" sequence $\{a_nM_{n-1}-a_{n-1}M_n\}$, for which we have a simple leading order, recorded by $\{L_n\}$; see Lemma \ref{Lem.a_n/M_n_lower}. So far, roughly speaking, we have proved
\[\frac{a_n}{M_n}+\la_n\frac{a_{n-1}}{M_n}>c_0>0\quad\text{and}\quad\la_n\frac{a_{n-1}}{M_n}\approx \frac{a_{n-1}}{M_{n-1}}\approx\frac{a_n}{M_n},\]
from which we immediately find that $a_n/M_n$ has a positive lower bound for $n\in[\sqrt R, R]$.

Proposition \ref{Prop.a_n_sharp_est} then follows from combining Proposition \ref{Prop.a_n_upper_bound} and Proposition \ref{Prop.an_lower_bound}. For full details, see Section \ref{Sec.an}.

\smallskip

{\textbf{Step 4.} \underline{Passing through the sonic point smoothly.} Using the estimates of $a_{n}$ established in Section \ref{Sec.an}, we have
the following two propositions.

\begin{proposition}\label{Prop.R_near_odd}
	There exists $N_0\in\mathbb N$ satisfying the following property: for all $N\in \Z\cap(N_0, +\infty)$, we can find $\th_{1}\in(0,1)$ such that for all $R\in(N, N+\th_{1})$ we have
	\begin{equation}\label{Eq.u_L>u_F}
		u_L(\tau;R)>u_F(\tau;R)\quad \forall\ \tau\in(0,\tau_1(R)]\quad \text{for some}\quad\tau_1(R)>0.
	\end{equation}
\end{proposition}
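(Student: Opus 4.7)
The plan is to show that for $R$ slightly above an integer $N$, the Taylor coefficient $a_N(R)$ of $u_L$ becomes large and positive as $R \downarrow N$, while $u_F$ remains uniformly bounded and varies smoothly in $R$; this yields $u_L > u_F$ on a small interval $(0, \tau_1(R)]$.

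The first step is to exploit the comparison $a_n \asymp M_n$ from Proposition \ref{Prop.a_n_sharp_est} at $n = N$. Since $N < R$ and $N \in [\sqrt R, R)$ for $N \geq N_0$ sufficiently large, one has $a_N(R) \asymp M_N(R) > 0$ with uniform implicit constants. The key observation is that $M_N(R)$ itself diverges as $R \downarrow N$: the recurrence defining $\{M_n\}$ inherits the leading coefficient $\gamma_n$ from \eqref{Eq.re-formulation}, and $\gamma_N(R) \propto N - R$ vanishes as $R \downarrow N$ because of the resonance structure of the eigen-system \eqref{Eq.eigen_system} at the sonic point $Q_2$. Consequently $a_N(R) \to +\infty$ as $R \downarrow N$, with rate comparable to $1/(R - N)$.

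The second step is to control $u_F(\tau;R)$ uniformly as $R \to N^+$ and $\tau \to 0^+$. Thanks to the smooth dependence of $u_F$ on $R$ on $(0, \alpha)$ away from the sonic point, and the fact that $u_F(\tau; R) \to 1 = u_L(0; R)$ as $\tau \downarrow 0$, I would compare $u_F$ with the truncated series $\sum_{n=0}^{N-1} a_n(R)\tau^n$ via a linearized stability argument based on \eqref{ODE_u_tau}. The low-order coefficients $a_n(R)$ with $n < N$ stay uniformly bounded in $R$ near $N$ (either by explicit computation for the finitely many $n < \sqrt R$, or by $a_n \asymp M_n$ with $M_n(R)$ bounded for $n < N$), so this truncation is uniformly bounded and smooth in $(\tau, R)$.

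Combining the two steps,
\[u_L(\tau; R) - u_F(\tau; R) \geq \tfrac12 a_N(R)\tau^N\]
on a small interval $(0, \tau_1(R)]$ with $\tau_1(R) \to 0$ as $R \downarrow N$. The main obstacle is ensuring that the higher-order tail $\sum_{n > N} a_n(R)\tau^n$, together with the error in the comparison of $u_F$ with its truncation, does not destroy this inequality; this requires choosing $\tau_1(R)$ small enough in terms of $R - N$ and invoking the quantitative upper bound $a_n \lesssim M_n$ from Proposition \ref{Prop.a_n_sharp_est} for $n \geq N$. A companion statement giving $u_L < u_F$ for $R$ approaching the next integer from below, together with the intermediate value theorem, will then pin down the discrete sequence $\{r_n\}$ needed in Theorem \ref{Thm.ODE}.
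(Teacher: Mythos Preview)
Your intuition that the blow-up of $a_N(R)\asymp 1/(R-N)$ drives the inequality is correct, but the second step has a genuine gap.  You propose to compare $u_F$ with the truncation $\sum_{n=0}^{N-1}a_n(R)\tau^n$, yet there is no reason for $u_F$ to share \emph{any} of the Taylor coefficients of $u_L$ at $Q_2$: the whole point of the construction is that for generic $R$ we have $u_F\neq u_L$, and one does not know a priori at which order (or whether in a polynomial scale at all) they first separate.  A ``linearized stability argument'' cannot produce $u_F(\tau)=\sum_{n<N}a_n\tau^n+O(\tau^N)$ without already knowing something equivalent to what you want to prove.  A secondary issue: your proposed tail control via $a_n\lesssim M_n$ for $n>N$ is unavailable, since Proposition~\ref{Prop.a_n_sharp_est} only treats $n<R$, the sequence $\{M_n\}$ is undefined beyond $n=\lceil R\rceil-1$, and the general bound of Lemma~\ref{Lem.a_n_bound} degenerates as $R\downarrow N$.

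The paper sidesteps both problems with a barrier argument.  It sets $U_1(\tau)=\sum_{j=0}^N a_j\tau^j+\kappa_1\tau^{N+1}$ with $\kappa_1=a_{N+1}-1<a_{N+1}$, so that $u_L>U_1$ for small $\tau>0$ is automatic from the $(N{+}1)$-st coefficient.  Then $U_1>u_F$ on $(0,\tau_{1,2}(R)]$ with $\tau_{1,2}(R)=(R-N)^{1/N}$ is proved by a comparison principle: one checks the boundary inequality $U_1(\tau_{1,2})>u_{\mathrm b}(\tau_{1,2})>u_F(\tau_{1,2})$ (here the large $a_N$ enters, making the dominant term in $U_1-u_{\mathrm b}$ of size $(R-N)^{-1/N}$), and computes $\cL[U_1](\tau)>0$ on $(0,\tau_{1,2}]$ explicitly from the recurrence \eqref{Eq.a_n_recurrence_relation}.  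Since $\cL[u_F]=0$ and $\Delta_\tau<0$ along $u_F$, any first touching point would force $\cL[U_1]\leq 0$, a contradiction.  This uses only the crude enclosure $u_{\mathrm g}<u_F<u_{\mathrm b}$, never the expansion of $u_F$ at $Q_2$, and involves only finitely many coefficients $a_0,\dots,a_{N+1}$ of $u_L$.
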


\begin{proposition}\label{Prop.R_near_even}
	There exists $N_0\in\mathbb N$ satisfying the following property: for all $N\in \Z\cap(N_0, +\infty)$, we can find $\th_{2}\in(0,1)$ such that for all $R\in(N+1-\th_2, N+1)$ we have
	\begin{equation}\label{Eq.u_L<u_F}
		u_L(\tau;R)<u_F(\tau;R)\quad \forall\ \tau\in(0,\tau_2(R)]\quad \text{for some}\quad\tau_2(R)>0.
	\end{equation}
\end{proposition}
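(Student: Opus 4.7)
The plan is to mirror the proof of Proposition \ref{Prop.R_near_odd} with the inequality reversed, the sign change being forced by the fact that the leading correction of $u_F-u_L$ near $\tau=0$ flips sign as $R$ crosses a positive integer. Together with Proposition \ref{Prop.R_near_odd}, this will enable an intermediate value argument that locates the desired sequence $\{r_n\}$.

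More concretely, I would fix $N=\lceil R\rceil-1$ and form the truncation $u_L^{(N)}(\tau)=\sum_{n=0}^{N}a_n\tau^n$. Using the upper bound $|a_n|\lesssim M_n$ from Proposition \ref{Prop.a_n_sharp_est} together with the convergence of the full series, one obtains $u_L(\tau;R)=u_L^{(N)}(\tau)+\cO(M_{N+1}\tau^{N+1})$ uniformly for $\tau$ in a small right neighborhood of $0$. Plugging $u_L^{(N)}$ into \eqref{ODE_u_tau} then produces a polynomial residual whose lowest-order term is a constant multiple of $a_{N+1}\tau^{N+1}$, by the very construction of the recurrence. Linearizing \eqref{ODE_u_tau} along $u_F$ and applying a standard integrating-factor argument yields
\[u_F(\tau;R)-u_L^{(N)}(\tau)=C(R)\,\tau^{N+1}\bigl(1+o_{\tau\downarrow 0}(1)\bigr),\]
where the sign of $C(R)$ coincides with the sign of $a_{N+1}(R)$, up to a positive normalization coming from the principal parts of $\Delta_u/\Delta_\tau$ at $Q_2$.

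The core of the argument is therefore to show that $a_{N+1}(R)$ has the appropriate sign throughout $R\in(N+1-\theta_2,N+1)$, opposite to the sign that arises in the proof of Proposition \ref{Prop.R_near_odd}. In the re-formulated recurrence \eqref{Eq.re-formulation}, the leading coefficient $\gamma_{N+1}$ is a polynomial with $R$ as one of its roots (via the identification \eqref{Eq.R}), so its sign flips as $R$ crosses $N+1$. Combining this with the lower bound $|a_{N+1}|\gtrsim M_{N+1}$ from Proposition \ref{Prop.a_n_sharp_est} and the positivity of $\cS_\infty$, which fixes the sign of the limiting sequence $\{a_n^\infty\}$, shows that $\mathrm{sgn}(a_{N+1})$ for $R$ slightly below $N+1$ is opposite to $\mathrm{sgn}(a_{N+1})$ for $R$ slightly above $N$. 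Continuous dependence on $R$ together with the monotonicity \eqref{Eq.r_to_R_increasing} guarantees that a sufficiently small $\theta_2>0$ keeps this sign constant on $(N+1-\theta_2,N+1)$, and choosing $\tau_2(R)$ small enough to absorb the tail $\cO(M_{N+1}\tau^{N+1})$ into the leading term produces \eqref{Eq.u_L<u_F}.

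The main obstacle is the sign tracking, since Proposition \ref{Prop.a_n_sharp_est} controls only magnitudes and the sign of $a_{N+1}$ must be read off by combining the sign of $\gamma_{N+1}$ with the positivity of $\cS_\infty$ through a perturbation around the limiting sequence $\{a_n^\infty\}$. Ensuring that the $\theta_2$-window and the $\tau_2(R)$-window can be chosen uniformly as $N$ varies is the delicate technical point, to be handled following the template established in the proof of Proposition \ref{Prop.R_near_odd}.
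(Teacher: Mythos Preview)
Your linearization step has a genuine gap. The ODE \eqref{Eq.u_tauODE} is singular at $Q_2$: the Frobenius exponents of the linearized operator are $1$ and $R$ (this is precisely the content of \eqref{Eq.R}), so the homogeneous part of the linearization admits a solution behaving like $\tau^R$. For $R\in(N,N+1)$ this dominates any $\tau^{N+1}$ particular solution, and the coefficient of $\tau^R$ in $u_F-u_L$ is exactly the quantity you are trying to determine---it is fixed by the \emph{global} condition that $u_F$ emanates from $Q_6$, not by any local computation at $Q_2$. A standard integrating-factor argument therefore cannot produce the claimed expansion $u_F-u_L^{(N)}=C(R)\tau^{N+1}(1+o(1))$ with a sign-determined $C(R)$.

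Your sign-tracking is also off. Lemma \ref{Lem.a_N+1<0} shows $a_{N+1}<0$ for \emph{all} $R\in(N,N+1)$; there is no flip of $\mathrm{sgn}(a_{N+1})$ across the interval. What distinguishes the two endpoints is which coefficient blows up: near $R=N$ one has $a_N\asymp_N (R-N)^{-1}>0$, while near $R=N+1$ one has $-a_{N+1}\asymp_N (N+1-R)^{-1}$ (see \eqref{Eq.a_N_asymp} and \eqref{Eq.a_N+1_asymp}). The paper exploits this blow-up in \emph{magnitude}, not a sign change.

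The paper's route is a barrier argument that sidesteps the singular linearization entirely. One sets $U_2(\tau)=\sum_{j=0}^N a_j\tau^j+\tfrac12 a_{N+1}\tau^{N+1}+\kappa_2\tau^{N+2}$ with $\kappa_2=(N+1-R)^{-1-1/N}$, and checks three things on $(0,\tau_{2,2}(R)]$ with $\tau_{2,2}(R)=(N+1-R)^{1/(N-1)}$: (i) $u_L<U_2$ near $0$, which is immediate from $a_{N+1}/2>a_{N+1}$; (ii) $\cL[U_2]<0$, established by computing all the coefficients $\cU_{N+k}^{(2)}$ and using that $|a_{N+1}|$ and $\kappa_2$ blow up as $R\uparrow N+1$; (iii) $U_2(\tau_{2,2})<u_{(2)}(\tau_{2,2})<u_F(\tau_{2,2})$, the last inequality coming from the auxiliary Lemma \ref{Lem.u_2_compare}. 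Then (ii) and (iii) together with a maximum-principle contradiction argument force $U_2<u_F$ on the whole interval, and combining with (i) gives \eqref{Eq.u_L<u_F}. The boundary comparison at $\tau_{2,2}(R)>0$ is essential precisely because no information can be extracted at $\tau=0$, where $u_F$ and $U_2$ agree to order $N$.
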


With the above two propositions at hand, we let $N_0\in\N$ be sufficiently large so that both conclusions of these two propositions hold for all $N\in \Z\cap(N_0, +\infty)$. Fixing such an $N$, a direct application of the intermediate value theorem implies the existence of $R_N\in(N, N+1)$ such that $u_L(\cdot; R_N)=u_F(\cdot; R_N)$. Hence, $u_F(\cdot; R_N)$ can be extended to pass through $Q_2$ smoothly. Consequently, using the inverse of \eqref{Eq.renormalization}, we extend $w_F(\cdot; R=R_N)$ to the left so that it passes through $P_2$ smoothly.

The proof of Proposition \ref{Prop.R_near_odd} and Proposition \ref{Prop.R_near_even} is based on the barrier function method. For full details, see Section \ref{Solution curve passing through the sonic point}. We emphasize that in Section \ref{Solution curve passing through the sonic point}, the integer $N$ is fixed, and we consider the limits $R\downarrow N$ or $R\uparrow N+1$, where the implicit constants in the estimates are allowed to depend on $N$ (although should not depend on $R$). This approach simplifies our analysis compared to establishing the barrier properties in Section \ref{Global extension of the solution curve}.
\smallskip

\textbf{Step 5.} \underline{Global extension of the solution curve.} After passing the sonic point $P_{2}$, we prove that if $N\in(N_0, +\infty)\cap(2\Z+1)$, then the extension of $w_F(\cdot; R=R_N)$ will exit the region between $P_5$ and $P_2$ by crossing the red curve in Figure \ref{Fig.Phase_Portrait_w_sigma}. Subsequently, it will be extended toward the left until reaching $P_4$. This corresponds to the extension of solution curve $u_F(\cdot; R_N)$ (denoted by $u_E(\cdot; R_N)$) to \eqref{ODE_u_tau} in Figure \ref{Fig.Phase_Portrait_u_tau}. We need to show that it exits the region between $Q_2$ and $Q_5$ by crossing the blue curve, and then it will be extended toward the right until reaching $Q_4$. Our main goal in Section \ref{Global extension of the solution curve} is to achieve this result, primarily by leveraging the observation thatt $u_{(N)}(\tau):=\sum_{n=0}^Na_n\tau^n$ serves a barrier function.
\begin{proposition}\label{Prop.u_N_compare0}
	Let $u_{(N)}(\tau)=a_0+a_1\tau+\cdots+a_N\tau^N$. There exists $N_0\in\Z_+$ such that if $N\in(N_0, +\infty)\cap(2\Z+1)$ and $R\in(N, N+1)$, then
(here $\cL $ is defined in \eqref{Eq.Lu})
	\begin{equation*}
		\cL\left[u_{(N)}\right](\tau)<0,\quad\forall\ \tau\in\left[-4/\sqrt R, 0\right).
	\end{equation*}
\end{proposition}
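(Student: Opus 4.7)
The plan is to view $\cL[u_{(N)}](\tau)$ as a polynomial in $\tau$, identify its lowest-order non-vanishing Taylor coefficient at $\tau=0$, and show that this leading term dominates on the short interval $[-4/\sqrt{R},0)$. Since $u_L=\sum_{n\ge 0} a_n\tau^n$ is, by construction, a formal power-series solution of the ODE ($\cL[u_L]\equiv 0$) and $u_{(N)}$ is its degree-$N$ truncation, bookkeeping of the recurrence shows that the coefficients of $\tau^k$ in $\cL[u_{(N)}]$ vanish for every $k\le N$, so one has the expansion
\[
\cL[u_{(N)}](\tau) \;=\; \sum_{k=N+1}^{2N} c_k\,\tau^k.
\]

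First I would compute $c_{N+1}$ exactly. Writing $v:=u_L-u_{(N)}=a_{N+1}\tau^{N+1}+O(\tau^{N+2})$ and exploiting that $\Delta_\tau$ is affine and $\Delta_u$ is quadratic in $u$, the identity $\cL[u_L]=0$ reduces to $c_{N+1}=-\gamma_{N+1}\,a_{N+1}$. Substituting the slope $a_1=2(\alpha+2\sqrt\alpha)/(3\alpha)$ determined by the eigensystem \eqref{Eq.eigen_system} (the correct root, as it traps the solution between $u_\text{g}$ and $u_\text{b}$ per \eqref{Eq.u_F_property}) yields
\[
\gamma_{N+1} \;=\; -2(\sqrt\alpha-1)^2\bigl[(N+1)-R\bigr].
\]
For $R\in(N,N+1)$ this is strictly negative. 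Plugging $n=N+1$ into the reformulated recurrence \eqref{Eq.re-formulation}, the right-hand side is a positive combination of $a_N,a_{N-1},\ldots$, each positive by Proposition \ref{Prop.a_n_sharp_est}, and dividing by the negative $\gamma_{N+1}$ gives $a_{N+1}<0$. Therefore $c_{N+1}=2(\sqrt\alpha-1)^2(N+1-R)\,a_{N+1}<0$, and since $N$ is odd, $\tau^{N+1}>0$ on $\tau\in[-4/\sqrt R,0)$, so the leading monomial carries the desired negative sign.

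Next I would control the tail $\sum_{k=N+2}^{2N} c_k\tau^k$. Each $c_k$ is an explicit polynomial expression in $a_0,\ldots,a_N$ arising from the quadratic term $u_{(N)}^2$ in $\Delta_u$ and from the product $\Delta_\tau u_{(N)}'$. Using the two-sided bound $a_n\asymp M_n$ from Proposition \ref{Prop.a_n_sharp_est} together with the convolution and convexity properties of the comparison sequences $\{M_n\}$ and $\{M_n^\ast\}$ established in Subsection \ref{Subsec.a_n_upper_bound}, I would derive $|c_k|\lesssim k\,M_N\,M_{k-N}$ for $N+2\le k\le 2N$. On $|\tau|\le 4/\sqrt R$, the growth rate $M_n/M_{n-1}\sim \sqrt n/A_\ast$ implied by \eqref{Eq.M_n_sequence} gives $|c_k\tau^k|/|c_{N+1}\tau^{N+1}|\lesssim (C/\sqrt R)^{k-N-1}$, so the geometric tail is $O(1/\sqrt R)$, negligible compared to the leading term once $N\ge N_0$. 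Combining this with the sign analysis above yields $\cL[u_{(N)}](\tau)<0$ on the whole interval.

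The main obstacle is this tail estimate: the interval length $4/\sqrt R$ matches the natural scale of convergence of the power series $u_L$, so the higher Taylor coefficients of $\cL[u_{(N)}]$ are only marginally smaller than $c_{N+1}$, and the geometric factor $C/\sqrt R$ per index must be extracted carefully. This is precisely why the two-sided sharpness of Proposition \ref{Prop.a_n_sharp_est} (rather than just an upper bound) is indispensable, and why the recurrence \eqref{Eq.re-formulation} must be pushed one index beyond the range $[\sqrt R,R)$ in order to pin down the sign of $a_{N+1}$.
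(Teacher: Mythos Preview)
Your identification of the leading coefficient $c_{N+1}=-\gamma_{N+1}a_{N+1}<0$ and the role of the parity of $N$ is correct, but the core of your argument --- that the remaining coefficients form a ``geometric tail of size $O(1/\sqrt R)$'' --- is false, and this is a genuine gap. At the left endpoint $|\tau|=4/\sqrt R$ the terms do \emph{not} decay: already $|c_{N+2}\tau|\asymp N a_N\cdot R^{-1/2}=\sqrt N\,a_N$ is of the \emph{same} order as $|c_{N+1}|\asymp\sqrt N\,a_N$, so the second term cannot be absorbed by a crude $O(1/\sqrt R)$ bound. Worse, the top coefficients $c_{2N-1},c_{2N}$ involve $a_N^2$, and since $a_N|\tau|^N\gtrsim(5/2)^N$ by the lower bound $\wh M_N\gtrsim(5\sqrt N/8)^N$, the quantity $Na_N^2|\tau|^{N-2}$ is \emph{exponentially larger} than $|c_{N+1}|$, not smaller. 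Your ratio estimate $|c_k\tau^k|/|c_{N+1}\tau^{N+1}|\lesssim(C/\sqrt R)^{k-N-1}$ is therefore wrong at both ends of the range $N+2\le k\le 2N$.

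The paper's proof (Proposition~\ref{Prop.u_N_compare}) handles this by splitting $f_N(\tau)=\sum_{k=1}^{N}\cU_{N,N+k}\tau^{k-1}$ into three regimes with three different mechanisms. For the first two terms it computes sharp constants, $-\cU_{N,N+1}>\tfrac{21}{4}\sqrt N\,a_N$ and $-\cU_{N,N+2}<\tfrac54 N a_N$, so that at $|\tau|\le 4/\sqrt R$ their sum is below $-\tfrac14\sqrt N\,a_N$; the specific numerics $\tfrac{21}{4}-\tfrac54\cdot4=\tfrac14$ are essential. For the last two terms it argues by \emph{sign}, showing $\cU_{N,2N-1}\tau^{N-2}+\cU_{N,2N}\tau^{N-1}<2Na_N^2\tau^{N-2}<0$ for $\tau<0$ (using $N$ odd). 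Only the middle block $3\le k\le N-2$ is shown to be small, and it is bounded by $CN^{-1/2}\bigl(\sqrt N\,a_N+Na_N^2|\tau|^{N-2}\bigr)$ --- i.e.\ small relative to \emph{both} anchors simultaneously --- via the convexity of $\{M_n^*|\tau|^n\}$. Your single ``leading term dominates'' picture must be replaced by this three-part structure.
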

Proposition \ref{Prop.u_N_compare0} implies that $u_E<u_{(N)}$ for $\tau<0$. On the other hand, it will be shown that $u_{(N)}$ exits the region between $Q_2$ and $Q_5$ by crossing the blue curve (see Lemma \ref{Lem.u_N_zero}), and hence so does $u_E$. We then note that, after crossing the blue curve, $u_E$ will not touch the other blue curve again, unless it reaches $Q_4$. This is a consequence of the observation that the straight segment $U_\text O$ connecting $Q_2$ and $Q_4$ is a barrier function (see Lemma \ref{Lem.U_O_compare}). As a result, in Figure \ref{Fig.Phase_Portrait_w_sigma}, the solution curve will not touch the curve $\sigma\mapsto(\sigma, w_1(\sigma))$ for $\sigma>0$.\footnote{This causes an inconsistency between our Figure \ref{Fig.Phase_Portrait_w_sigma} and figures in \cite{MRRJ2}. The solution curves in \cite{MRRJ2} will intersect the curve $\sigma\mapsto(\sigma, w_1(\sigma))$ for some $\sigma>0$, which will not happen in our situation.}

The proof of Proposition \ref{Prop.u_N_compare0} relies heavily on the estimates of $a_n$ obtained in Section \ref{Sec.an}. Unlike in Section \ref{Solution curve passing through the sonic point}, all estimates here must be uniform with respect to both $R\in(N, N+1)$ and $N\in(N_0, +\infty)\cap(2\Z+1)$. We note that the convexity properties used in Section \ref{Solution curve passing through the sonic point} simplifies the analysis again in Section \ref{Global extension of the solution curve}.

\subsection{Sketch of the proof: repulsivity}\label{Subsec.sketch_repulsivity}

In Section \ref{Section repulsivity property}, we prove Proposition \ref{Prop.coercive}. The first two properties (\eqref{repuls 1} and \eqref{repuls 2}) follow from a standard ODE argument involving applications of Grönwall's inequality; see Subsection \ref{Subsection repuls 12}. The proof of the last two properties (\eqref{repuls 3} and \eqref{repuls 4}) is much more challenging. We introduce two auxiliary points, $P_A$ and $P_B$; see Figure \ref{Fig.w-sigma-repulsivity} for their positions. Using the re-normalization \eqref{Eq.renormalization}, $P_A$ and $P_B$ are mapped to $Q_A$ and $Q_B$; see Figure \ref{Fig.u-tau-replusivity}. We prove \eqref{repuls 3} and \eqref{repuls 4} by considering four segments $P_6-P_2$, $P_2-P_A$, $P_A-P_B$, and $P_B-P_4$ separately. The first two segments are handled by carefully choosing barrier functions under $\tau-u$ coordinates; see Lemma \ref{Lem.repulse1} and Lemma \ref{Lem.repulse2}. The last two segments, $P_A-P_B$ and $P_B-P_4$, are simpler and follow from direct computations; see Lemma \ref{Lem.repulse3} and Lemma \ref{Lem.repulse4}.

\medskip

\noindent{\bf Organization of the paper.} The rest of the paper is organized as follows. In Section \ref{Sec.local_sol}, we present some qualitative properties of $u_{L}(\tau)$ near $Q_{2}$. In Section \ref{Sec.an}, we  establish quantitative properties of $u_{L}(\tau)$ near $Q_{2}$ by obtaining sharp estimates for the sequence $a_{n}$. Section \ref{Solution curve passing through the sonic point} is devoted to proving Propositions \ref{Prop.R_near_odd} and \ref{Prop.R_near_even}, which show that there exists a sequence $\{R_N\}$ such that $u_F(\cdot;R_N)$ passes through the sonic point $Q_2$ smoothly.
In Section \ref{Global extension of the solution curve}, we prove the global extension of the solution curve to the origin, thereby completing the proof of Theorem \ref{Thm.ODE}. Finally, in the last section, we prove Proposition \ref{Prop.coercive}, which provides the repulsivity for the constructed solution curve.

\section{Local smooth solution around the sonic point}\label{Sec.local_sol}

In this section, we construct an analytic local solution $u_L$ to the $\tau-u$ ODE \eqref{ODE_u_tau} that passes through the sonic point $Q_2(\tau=0, u=1)$, for each $R\in(1,+\infty)\setminus\Z$.

\subsection{Fundamental properties of the phase portrait near $Q_2$}
Near the sonic point $Q_2$, we write \eqref{ODE_u_tau} in the form
\begin{equation}\label{Eq.u_tauODE}
	\frac{\mathrm du}{\mathrm d\tau}=\frac{\Delta_u(\tau, u)}{\Delta_\tau(\tau, u)},
\end{equation}
where $\Delta_u$ and $\Delta_\tau$ are given by \eqref{Eq.Delta_u} and \eqref{Eq.Delta_tau}, respectively. For further use, given a $C^1$ function $u=u(\tau)$, we denote
\begin{equation}\label{Eq.Lu}
	\cL[u](\tau):=\Delta_\tau(\tau, u(\tau))\frac{\mathrm du}{\mathrm d\tau}-\Delta_u(\tau, u(\tau)).
\end{equation}

One computes directly that (recalling \eqref{Eq.u_g}, \eqref{Eq.u_b} and $\al\in(0,1)$)
\begin{equation}
	\frac{\mathrm du_{\text g}}{\mathrm d\tau}(0)=\frac23(4-\al)>0,\quad \frac{\mathrm du_{\text b}}{\mathrm d\tau}(0)=\frac{4\al+2}{3\al}>0.
\end{equation}

Assume that $u(\tau)$ is a local smooth solution of \eqref{Eq.u_tauODE} passing through $Q_2$. Let $a_1=\mathrm du/\mathrm d\tau(0)$. By L'H\^opital's rule, \eqref{Eq.u_tauODE} and \eqref{Eq.eigen_system}, we have
\[a_1=\frac{\mathrm du}{\mathrm d\tau}\Big|_{\tau=0}=\frac{\pa_u\Delta_u(Q_2)a_1+\pa_\tau\Delta_u(Q_2)}{\pa_u\Delta_\tau(Q_2)a_1+\pa_\tau\Delta_\tau(Q_2)}=\frac{-2a_1-\frac43(\al-4)}{3\al a_1-2-4\al},\]
i.e.,  $a_1$ solves the quadratic equation $3\al a_1^2-4\al a_1+\frac43(\al-4)=0$, hence (recalling $\al\in(0,1)$)
\[a_1=\frac{2\sqrt\al+4}{3\sqrt\al}>0\quad\text{or}\quad a_1=\frac{2\sqrt\al-4}{3\sqrt\al}<0.\]

Note that $u_{\text g}(\tau)<u_F(\tau)<u_{\text b}(\tau)$ for $\tau>0$. Our aim is to construct a local solution $u_L$ near $Q_2$ such that $u_L=u_F$. Hence ,it is natural to require the slope of $u_L$ at $Q_2$ to be the positive one.  Accordingly, we designate
\begin{equation}\label{Eq.a_1}
	\frac{\mathrm du_L}{\mathrm d\tau}(0)=a_1=\frac{2\sqrt\al+4}{3\sqrt\al}.
\end{equation}
Moreover, one checks easily that
\begin{equation}\label{Eq.slopes}
	\frac{\mathrm du_{\text g}}{\mathrm d\tau}(0)=\frac23(4-\al)<\frac{2\sqrt\al+4}{3\sqrt\al}<\frac{4\al+2}{3\al}=\frac{\mathrm du_{\text b}}{\mathrm d\tau}(0),\quad\forall\ \al\in(0,1).
\end{equation}

\subsection{Analytic solutions near $Q_2$}
In this subsection we construct an analytic local solution to \eqref{Eq.u_tauODE} that passes through $Q_2$ with slope $a_1$ at $Q_2$. We write the Taylor series of a general analytic function $u$ around $Q_2$ with $u(0)=1$ and $u'(0)=a_1$ as
\begin{equation}\label{Eq.u_series}
	u(\tau)=\sum_{n=0}^\infty u_n\tau^n,
\end{equation}
where $u_0=1$ and $u_1=a_1$ is given by \eqref{Eq.a_1}.

\begin{lemma}\label{Lem.recurrence_relation}
	Let $u(\tau)$ be given by \eqref{Eq.u_series} and $\cL[u]$ be defined by \eqref{Eq.Lu}. Then we have
	\begin{equation}
		\cL[u](\tau)=\sum_{n=0}^\infty \cU_n\tau^n,
	\end{equation}
	where $\cU_0=\cU_1=0$ and for each $n\geq 2$.
	\begin{equation}\label{Eq.U_n}
		\cU_n=\delta(R-n)u_n-E_n,
	\end{equation}
	with $\delta:=2(1-\sqrt\al)^2$ and $E_n$ given by
	\begin{equation}
		\begin{aligned}
			E_n:=&\left[(\al-2)(n-1)-\frac43(\al-4)\right]u_{n-1}-\left(n-\frac{16}3\right)u_{n-2}\\
			&\qquad-\frac32\al(n+1)\sum_{j=2}^{n-1}u_ju_{n+1-j}
			+\left(\frac32n-2\right)\sum_{j=1}^{n-1}u_ju_{n-j}.
		\end{aligned}
	\end{equation}
\end{lemma}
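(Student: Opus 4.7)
The proof is a direct power-series computation; the only subtlety is tracking the contribution to the coefficient of $u_n$ and checking it simplifies to the advertised form $\delta(R-n)$.

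\textbf{Setup.} I would first rewrite $\Delta_\tau$ as a polynomial in $\tau$ with coefficients that are affine in $u$:
\[
\Delta_\tau(\tau,u) = 3\alpha(u-1) + \bigl(-3u-(4\alpha-1)\bigr)\tau - (\alpha-2)\tau^2 + \tau^3,
\]
so that evaluating at $u=u(\tau)=\sum_{k\ge 0} u_k\tau^k$ with $u_0=1$, the constant term vanishes identically and the $\tau^k$-coefficient ($k\ge 1$) of $\Delta_\tau(\tau,u(\tau))$ reads $3\alpha u_k - 3u_{k-1}$ plus an explicit correction supported at $k\in\{1,2,3\}$. I also record $u'(\tau)=\sum_{m\ge 0}(m+1)u_{m+1}\tau^m$. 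Then the $\tau^n$-coefficient of $\Delta_\tau u'$ is assembled by convolution, and similarly for $\Delta_u(\tau,u(\tau)) = -2u^2+2u-\tfrac{4}{3}(\alpha-4)\tau u+\tfrac{10}{3}\tau^2 u$.

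\textbf{Verifying $\cU_0=\cU_1=0$.} At $\tau=0$ one has $\Delta_\tau(0,1)=\Delta_u(0,1)=0$, giving $\cU_0=0$. For $\cU_1$, a direct expansion yields $\cU_1 = (2-\delta)u_1 + \tfrac{4}{3}(\alpha-4)$; using $\delta=2(\sqrt\alpha-1)^2$ and the designated value $u_1=a_1=(2\sqrt\alpha+4)/(3\sqrt\alpha)$ from \eqref{Eq.a_1}, the two terms cancel. (This is the reason $a_1$ was selected to be the positive root of the quadratic at $Q_2$.)

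\textbf{Isolating the $u_n$-coefficient for $n\ge 2$.} Splitting each contribution into ``linear in $u_n$'' and ``lower-order,'' the $u_n$-terms in $\text{coef}_{\tau^n}(\Delta_\tau u')$ arise from three places: the $k=1$ and $k=n$ terms of $3\alpha(u-1)u'$ contribute $3\alpha(n+1)u_1 u_n$; the $j=0,n$ diagonal of $-3uu'\tau$ contributes $-3n u_n$; and $-(4\alpha-1)u'\tau$ contributes $-(4\alpha-1)n u_n$. From $-\Delta_u$, the diagonal of $2u^2$ and the term $-2u$ contribute $(4-2)u_n=2u_n$. Using $3\alpha u_1 = 2\alpha+4\sqrt\alpha$, these sum to
\[
-2n(\alpha-2\sqrt\alpha+1)+2(\alpha+2\sqrt\alpha+1)=-n\delta+2(\sqrt\alpha+1)^2=\delta(R-n),
\]
since $\delta R = 2(\sqrt\alpha+1)^2$ by \eqref{Eq.R}. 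This matches the advertised coefficient.

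\textbf{Collecting $E_n$.} The remaining (non-$u_n$) contributions are: from $3\alpha(u-1)u'$ the convolution $\tfrac{3\alpha(n+1)}{2}\sum_{j=2}^{n-1} u_j u_{n+1-j}$ (after symmetrizing $u_k(n+1-k)u_{n+1-k}$); from $-3uu'\tau$ and $2u^2$ combined, the convolution $(-\tfrac{3n}{2}+2)\sum_{j=1}^{n-1}u_j u_{n-j}$; from $-(\alpha-2)u'\tau^2$ and $-\tfrac{4}{3}(\alpha-4)\tau u$, the $u_{n-1}$ term with coefficient $(\alpha-2)(n-1)-\tfrac{4}{3}(\alpha-4)$; and from $u'\tau^3$ and $\tfrac{10}{3}\tau^2 u$, the $u_{n-2}$ term with coefficient $(n-2)-\tfrac{10}{3} = n-\tfrac{16}{3}$. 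Gathering signs (recall $\cU_n=\text{coef}_{\tau^n}(\Delta_\tau u') - \text{coef}_{\tau^n}(\Delta_u)$) yields exactly the stated $-E_n$. The calculation is elementary but bookkeeping-heavy; the only place care is needed is the symmetrization identity $\sum_{k=1}^{n}(n+1-k)u_k u_{n+1-k} = \tfrac{n+1}{2}\sum_{k=1}^n u_k u_{n+1-k}$, and the verification that $u_1$ was chosen precisely so that $\cU_1=0$, which is what makes the power-series ansatz self-consistent at the sonic point.
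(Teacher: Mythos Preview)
Your proof is correct and takes essentially the same approach as the paper: both are direct power-series computations that isolate the $u_n$-coefficient via the identity $3\alpha a_1 = 2\alpha+4\sqrt\alpha$ and then collect the lower-order terms into $E_n$. The paper streamlines the bookkeeping slightly by writing $uu' = \tfrac12(u^2)'$ and working with the symbols $(u^2)_n$, whereas you carry out the same symmetrization explicitly term by term; the content is identical.
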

\begin{proof}
	For notational simplicity, we set $u_n=0$ for $n<0$ and
	\[(u^2)_n:=\sum_{j=0}^nu_ju_{n-j}\quad\forall\ n\in\Z_{\geq 0}.\]
	With this notation, we have
	\[u^2=\sum_{n=0}^\infty(u^2)_n\tau^n,\quad uu'=\frac12(u^2)'=\frac12\sum_{n=0}^\infty (n+1)(u^2)_{n+1}\tau^n.\]
	Using \eqref{Eq.a_1}, it is a direct computation to find that $\cL[u](\tau)=\sum_{n=0}^\infty\cU_n\tau^n$, with $\cU_0=\cU_1=0$ and for $n\geq 2$,
	\begin{align*}
		\cU_n&=\frac32\al(n+1)(u^2)_{n+1}-\left(\frac32n-2\right)(u^2)_n-3\al(n+1)u_{n+1}\\
		&\qquad-\left[(4\al-1)n+2\right]u_n-\left[(\al-2)(n-1)-\frac43(\al-4)\right]u_{n-1}+\left(n-\frac{16}3\right)u_{n-2}\\
		&=\left[(3\al a_1-4\al-2)n+3\al a_1+2\right]u_n-E_n.
	\end{align*}
	By \eqref{Eq.a_1}, we have
	\[3\al a_1-4\al-2=-2(1-\sqrt\al)^2,\quad 3\al a_1+2=2(1+\sqrt\al)^2,\]
	hence, 
	$$(3\al a_1-4\al-2)n+3\al a_1+2=2(1-\sqrt\al)^2\left(\frac{(1+\sqrt\al)^2}{(1-\sqrt\al)^2}-n\right)=\delta(R-n),$$
	where $R$ is defined in \eqref{Eq.R}. This proves \eqref{Eq.U_n}.
\end{proof}

\begin{lemma}\label{Lem.a_n_bound}
	Let $\{a_n\}_{n=0}^\infty$ be a sequence such that $a_0=1$, $a_1$ is given by \eqref{Eq.a_1} and
	\begin{equation}\label{Eq.a_n_recurrence_relation}
		\begin{aligned}
			\delta(R-n)a_n=\wt E_n:&=\left[(\al-2)(n-1)-\frac43(\al-4)\right]a_{n-1}-\left(n-\frac{16}3\right)a_{n-2}\\
			&\qquad-\frac32\al(n+1)\sum_{j=2}^{n-1}a_ja_{n+1-j}
			+\left(\frac32n-2\right)\sum_{j=1}^{n-1}a_ja_{n-j}
		\end{aligned}
	\end{equation}
	for all $n\geq 2$. Assume that $\beta\in(1,2)$, $\cC\subset(1,+\infty)\setminus\Z$ is compact and $R\in \cC$. Then there exists  a constant $K=K(\cC,\beta)>1$ independent of $R\in\cC$ such that
	\begin{equation}\label{Eq.a_n_bound}
		|a_n|\leq \mathfrak{C}_{n-1}K^{n-\beta}\quad\forall\ n\geq2.
	\end{equation}
	Here $\mathfrak{C}_n$ is te Catalan number.
\end{lemma}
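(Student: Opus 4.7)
The plan is to prove \eqref{Eq.a_n_bound} by strong induction on $n \geq 2$, exploiting the Catalan convolution identity
\[
\sum_{k=0}^{m-1} \mathfrak{C}_k \mathfrak{C}_{m-1-k} = \mathfrak{C}_m
\]
to handle the quadratic sums in $\wt E_n$, and choosing $K = K(\cC,\beta)$ large enough that all prefactors close up inductively.

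The first ingredient is a uniform lower bound on $|R-n|$ for $R \in \cC$ and $n \in \Z_{\geq 2}$. Since $\cC$ is compact and disjoint from $\Z$, one has $c_0 := \mathrm{dist}(\cC, \Z) > 0$, so $|R-n| \geq c_0$ for every integer $n \geq 2$. Moreover, for $n \geq n_* := \lceil 2\sup \cC \rceil$, the stronger bound $|R-n| = n - R \geq n/2$ holds. Together these control $\delta(R-n)$ from below in both regimes: the small-$n$ regime, where $|R-n|$ is merely positive, and the large-$n$ regime, where $|R-n|$ grows linearly.

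Granting the induction hypothesis $|a_j| \leq \mathfrak{C}_{j-1} K^{j-\beta}$ for $2 \leq j \leq n-1$, I would estimate each piece of $\wt E_n$ in \eqref{Eq.a_n_recurrence_relation} directly. The Catalan identity applied to the two convolution sums gives
\begin{align*}
\sum_{j=1}^{n-1} |a_j a_{n-j}| &\leq K^{n-2\beta} \sum_{j=1}^{n-1} \mathfrak{C}_{j-1} \mathfrak{C}_{n-j-1} = K^{n-2\beta} \mathfrak{C}_{n-1},\\
\sum_{j=2}^{n-1} |a_j a_{n+1-j}| &\leq K^{n+1-2\beta} \sum_{j=2}^{n-1} \mathfrak{C}_{j-1} \mathfrak{C}_{n-j} \leq 4 K^{n+1-2\beta} \mathfrak{C}_{n-1},
\end{align*}
where the last step uses $\mathfrak{C}_n \leq 4\mathfrak{C}_{n-1}$. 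The linear terms in $\wt E_n$ are bounded by $C_0 (n+1) \mathfrak{C}_{n-1} K^{n-1-\beta}$ via the monotonicity $\mathfrak{C}_{n-3} \leq \mathfrak{C}_{n-2} \leq \mathfrak{C}_{n-1}$. Consequently there is $C_1 = C_1(\al)$ such that
\[
|\wt E_n| \leq C_1 (n+1) \mathfrak{C}_{n-1} K^{n-\beta} \bigl(K^{-1} + K^{1-\beta} + K^{-\beta}\bigr).
\]
Dividing by $\delta|R-n|$ and using $|R-n| \geq n/2$ for $n \geq n_*$, the factor $n+1$ is absorbed, and the remaining bracket tends to $0$ as $K \to \infty$ since $\beta > 1$. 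Hence $K$ can be chosen large enough that \eqref{Eq.a_n_bound} closes inductively for all $n \geq n_*$.

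For the finitely many remaining values $n \in \{2, \ldots, n_*-1\}$ I would argue separately: using $|R-n| \geq c_0$ in the recurrence and the free factor $K^{n-\beta}$ in the target bound, enlarging $K$ absorbs all $n$-dependent constants one index at a time, and the base case $n = 2$ is immediate since $\mathfrak{C}_1 = 1$. Taking the maximum of the two thresholds on $K$ yields the required $K = K(\cC,\beta)$. The main delicacy is the growth factor $n$ in front of the convolution sums, which is only just tamed by the decay $(R-n)^{-1} = O(1/n)$ at large $n$: the extra slack $K^{1-\beta}$ made available by choosing $\beta > 1$ is precisely what closes the induction, while the constraint $\beta < 2$ keeps the base-case inequality $|a_2| \leq K^{2-\beta}$ meaningful rather than vacuous.
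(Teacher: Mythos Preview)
Your approach is essentially the same as the paper's: both argue by strong induction on $n$, use the Catalan convolution identity to control the quadratic sums, and close by taking $K$ large (exploiting $\beta>1$ so that $K^{1-\beta}\to 0$). The paper does not split into the two regimes $n<n_*$ and $n\geq n_*$ but instead uses a single bound $\frac{1}{\delta|R-N|}\leq \frac{C_0(\cC,\beta)}{N}$ valid for all $N\geq 5$; the effect is identical.

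There is one genuine slip in your write-up. In the estimate
\[
\sum_{j=1}^{n-1}|a_ja_{n-j}|\leq K^{n-2\beta}\sum_{j=1}^{n-1}\mathfrak{C}_{j-1}\mathfrak{C}_{n-j-1}
\]
you are implicitly using $|a_1|\leq \mathfrak{C}_0K^{1-\beta}=K^{1-\beta}$, but this is false: $a_1=(2\sqrt\al+4)/(3\sqrt\al)>1$ on $\cC$, while $K^{1-\beta}<1$. The induction hypothesis \eqref{Eq.a_n_bound} is only stated for $n\geq 2$, so the boundary terms $j=1$ and $j=n-1$ in this sum must be separated out, as the paper does:
\[
\Bigl|\sum_{j=1}^{n-1}a_ja_{n-j}\Bigr|\leq 2|a_1|\,|a_{n-1}|+\sum_{j=2}^{n-2}|a_j||a_{n-j}|\lesssim \mathfrak{C}_{n-2}K^{n-1-\beta}+\mathfrak{C}_{n-1}K^{n-2\beta}.
\]
The extra term $\mathfrak{C}_{n-2}K^{n-1-\beta}\lesssim \mathfrak{C}_{n-1}K^{n-\beta}\cdot K^{-1}$ simply enlarges the constant in front of your $K^{-1}$ bracket, so after this correction your final inequality and the rest of the argument go through unchanged. (The other convolution $\sum_{j=2}^{n-1}a_ja_{n+1-j}$ is fine since both indices are $\geq 2$.)
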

\begin{proof}
	The proof is quite  similar to \cite[Lemma 4.5]{SWZ2024_1}. Assume that for all $R\in\cC$ and $n\in\Z_+$, we have $R\leq R_M$ and $|R-n|\geq R_m>0$. It follows from \eqref{Eq.a_1}, $\delta=2(1-\sqrt\al)^2$ and \eqref{Eq.r_to_R_increasing} that $R\mapsto a_1, R\mapsto \delta$ are smooth functions. Hence, $\delta>\delta_m>0$ for all $R\in\cC$.
	
	It follows directly  from \eqref{Eq.a_n_recurrence_relation} that $R\mapsto a_n$ is smooth in $R\in(1,+\infty)\setminus\{2,3,\cdots,n\}$ and \eqref{Eq.a_n_bound} holds for $2\leq n\leq 4$, provided that we take $K>1$ sufficiently large.
	We assume for induction that $R\mapsto a_n$ is smooth in $R\in(1,+\infty)\setminus\{2,3,\cdots,n\}$ and \eqref{Eq.a_n_bound} holds for $2\leq n\leq N-1$, where $N\geq 5$. We have
	\begin{align*}
		\left|\sum_{j=2}^{N-1}a_ja_{N+1-j}\right|\leq \sum_{j=2}^{N-1}\mathfrak{C}_{j-1}\mathfrak{C}_{N-j}K^{N+1-2\beta}\lesssim \mathfrak{C}_{N-1}K^{N+1-2\beta},
	\end{align*}
	where we have used $\mathfrak{C}_n\sim \mathfrak{C}_{n-1}$; and similarly,
	\begin{align*}
		\left|\sum_{j=1}^{N-1}a_ja_{N-j}\right|&=\left|2a_1a_{N-1}+\sum_{j=2}^{N-2}a_ja_{N-j}\right|\lesssim \mathfrak{C}_{N-2}K^{N-1-\beta}\\
		&\qquad+ \sum_{j=2}^{N-2}\mathfrak{C}_{j-1}\mathfrak{C}_{N-j-1}K^{N-2\beta}\lesssim \mathfrak{C}_{N-1}K^{N-1-\beta};
	\end{align*}
	hence,
	\begin{align*}
		\left|\wt E_N\right|\lesssim(N+1)\left(\left|\sum_{j=2}^{N-1}a_ja_{N+1-j}\right|+\left|\sum_{j=1}^{N-1}a_ja_{N-j}\right|+|a_{N-1}|+|a_{N-2}|\right)
		\lesssim  N \mathfrak{C}_{N-1}K^{N+1-2\beta}.
	\end{align*}
	Therefore, by \eqref{Eq.a_n_recurrence_relation} we know that $R\mapsto a_N$ is smooth in $R\in(1,+\infty)\setminus\{2,3,\cdots,N\}$ and for $R\in\cC$ we have
	\[|a_N|=\frac{\left|\wt E_N\right|}{|R-N|\delta}\leq \frac{C_0(\mathcal C,\beta)}N\left|\wt E_N\right|\leq C_1(\mathcal C,\beta) \mathfrak{C}_{N-1}K^{N+1-2\beta}\leq\mathfrak{C}_{N-1}K^{N-\beta},\]
	if we choose $K>C_1^{1/(\beta-1)}$, and hence we close the induction.
\end{proof}

\begin{remark}
	Let $N\in\Z_{\geq 2}$. From the proof of Lemma \ref{Lem.a_n_bound}, we see that for all $n\in\Z_{\geq 0}$, $a_n$ is continuous in $R\in(1,+\infty)\setminus\Z$, and $a_0, a_1,\cdots, a_{N-1}$ are uniformly bounded for $R\in(1, N]$; furthermore, we have $\left|a_N(R)\right|\lesssim |R-N|^{-1}$ as $R\uparrow N$.
\end{remark}

\begin{proposition}\label{Prop.local_sol}
	Assume that $\cC\subset(1,+\infty)\setminus\Z$ is compact and $R\in \cC$. Let $\{a_n\}_{n=0}^\infty$ be defined as in Lemma \ref{Lem.a_n_bound}. Then there exists $\tau_0=\tau_0(\cC)>0$ such that the Taylor series
	\[u_L(\tau)=u_L(\tau;R)=\sum_{n=0}^\infty a_n\tau^n\]
	converges absolutely for $\tau\in[-\tau_0,\tau_0]$, and it gives the unique analytic solution to the $u-\tau$ ODE \eqref{ODE_u_tau} in $[-\tau_0,\tau_0]$ with $u_L(0)=1$ and $u_L'(0)=a_1$. Moreover, the function $u_L(\tau;R)$ is continuous in the domain $\{(\tau,R):\tau\in[-\tau_0,\tau_0], R\in\cC\}$.
\end{proposition}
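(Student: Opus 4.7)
The plan is to define $u_L(\tau) := \sum_{n=0}^\infty a_n \tau^n$ using the sequence $\{a_n\}$ from Lemma \ref{Lem.a_n_bound}, and then verify four properties in turn: absolute and uniform convergence on an interval $[-\tau_0,\tau_0]$ with $\tau_0$ depending only on $\mathcal C$; satisfaction of the ODE \eqref{Eq.u_tauODE}; uniqueness of the analytic solution with prescribed value and slope at $0$; joint continuity in $(\tau, R)$. All four steps are direct consequences of Lemmas \ref{Lem.recurrence_relation} and \ref{Lem.a_n_bound} together with the classical Catalan asymptotic $\mathfrak{C}_{n-1} \leq C\, 4^{n}/n^{3/2}$, and none of them is expected to be hard.

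For the convergence step I would fix some $\beta\in(1,2)$ and let $K=K(\mathcal C,\beta)$ be the constant from Lemma \ref{Lem.a_n_bound}. Combining Lemma \ref{Lem.a_n_bound} with the Catalan bound yields $|a_n(R)|\leq C(\mathcal C,\beta)\,(4K)^n/n^{3/2}$ uniformly in $R\in\mathcal C$. Choosing $\tau_0:=1/(8K)$ (any $\tau_0<1/(4K)$ works), the series $\sum_{n\geq 0}|a_n(R)|\,|\tau|^n$ is dominated on $[-\tau_0,\tau_0]\times\mathcal C$ by the convergent geometric-type series $C\sum_{n\geq0}(4K\tau_0)^n/n^{3/2}$. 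This gives absolute and uniform convergence, so $u_L$ is real-analytic on $[-\tau_0,\tau_0]$ with $u_L(0)=1$ and $u_L'(0)=a_1$.

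For the ODE step, Lemma \ref{Lem.recurrence_relation} provides the expansion $\mathcal L[u_L](\tau)=\sum_{n\geq 0}\mathcal U_n\tau^n$, with $\mathcal U_0=\mathcal U_1=0$ automatic and $\mathcal U_n=\delta(R-n)a_n-E_n$ for $n\geq 2$. Comparing \eqref{Eq.U_n} with \eqref{Eq.a_n_recurrence_relation} one sees $E_n=\widetilde E_n$, so the defining recurrence for $\{a_n\}$ precisely says every $\mathcal U_n=0$; hence $\mathcal L[u_L]\equiv 0$ on $[-\tau_0,\tau_0]$. Since $\Delta_\tau(0,1)=-3\alpha\neq 0$, continuity ensures $\Delta_\tau(\tau,u_L(\tau))\neq 0$ in a (possibly smaller) neighborhood of $0$; dividing by $\Delta_\tau$ there produces \eqref{Eq.u_tauODE}. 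If needed one shrinks $\tau_0$ so that the denominator stays nonzero, uniformly for $R\in\mathcal C$, which is again ensured by the uniform convergence of $u_L$.

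The remaining two properties are essentially free. For uniqueness, any analytic $v(\tau)=\sum v_n\tau^n$ solving \eqref{Eq.u_tauODE} near $0$ with $v_0=1$, $v_1=a_1$ satisfies $\mathcal L[v]\equiv 0$, so every Taylor coefficient $\mathcal U_n[v]$ vanishes; because $R\in\mathcal C\subset(1,+\infty)\setminus\Z$ forces $\delta(R-n)\neq 0$ for all $n\geq 2$, an induction gives $v_n=a_n$. For joint continuity, the remark following Lemma \ref{Lem.a_n_bound} supplies continuity of $R\mapsto a_n(R)$ on $(1,+\infty)\setminus\Z$, and the majorant $|a_n(R)\tau^n|\leq C(4K\tau_0)^n/n^{3/2}$ is independent of $(\tau,R)\in[-\tau_0,\tau_0]\times\mathcal C$; the Weierstrass $M$-test then promotes continuity of each summand to continuity of $u_L(\tau;R)$ on this product set. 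The only nontrivial ingredient is the Catalan-type bound of Lemma \ref{Lem.a_n_bound}, which has already been established, so the main ``obstacle'' here reduces to careful bookkeeping in the choice of $\tau_0$.
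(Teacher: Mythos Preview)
Your proposal is correct and follows essentially the same route as the paper: uniform Catalan-type bounds on $a_n$ give a uniform radius $\tau_0$, Lemma~\ref{Lem.recurrence_relation} identifies $\cL[u_L]\equiv 0$ with the recurrence \eqref{Eq.a_n_recurrence_relation}, uniqueness follows from $\delta(R-n)\neq 0$, and joint continuity is the Weierstrass $M$-test. The paper's version is just slightly terser (it uses the cruder $\mathfrak C_n\leq 4^n$ and absorbs the $4$ into $K$).

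One computational slip worth fixing: you write $\Delta_\tau(0,1)=-3\al\neq 0$, but in fact $\Delta_\tau(0,1)=3\al\cdot 1-3\al=0$. This is exactly the sonic degeneracy at $Q_2$; both $\Delta_\tau$ and $\Delta_u$ vanish there, so you cannot divide by $\Delta_\tau$ uniformly near $\tau=0$. Fortunately this step is unnecessary: the proposition asks for a solution of \eqref{ODE_u_tau}, i.e.\ $\Delta_\tau\,u'-\Delta_u=0$, which is precisely $\cL[u_L]\equiv 0$; no division is required, and your argument already delivers this identity. Just delete the sentence about $\Delta_\tau(0,1)$ and the remark about shrinking $\tau_0$.
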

\begin{proof}
	Since $\mathfrak{C}_n\leq 4^n$ for all $n\geq 0$, it follows from Lemma \ref{Lem.a_n_bound} that there exists a constant $K=K(\mathcal C)>1$ such that $|a_n|\leq K^n$ for all $n\geq0$ and all $R\in\mathcal C$. Taking $\tau_0=\tau_0(\mathcal C)=\frac{1}{2K}>0$, the power series $u_L(\tau)=\sum_{n=0}^\infty a_n\tau^n$ is uniformly convergent and analytic in $\tau\in[-\tau_0, \tau_0]$. It follows from Lemma \ref{Lem.recurrence_relation} that the function $u_L$ is the unique analytic solution to \eqref{ODE_u_tau} in $[-\tau_0, \tau_0]$ with $u_L(0)=1$ and $u_L'(0)=a_1$. Finally, the continuity of $u_L(\tau; R)$ in the domain $\{(\tau,R):\tau\in[-\tau_0,\tau_0], R\in\cC\}$ follows from the fact that uniform convergence preserves the continuity of $a_n$ in $R$.
\end{proof}

\begin{remark}\label{Rmk.u_F_continuity}
	For further use, we note that the $Q_6-Q_2$ solution  $u_F(\tau; R)$ is continuous in the region 
	\[\left\{(\tau, R):0<\tau<\tau(Q_6)=\al=\al(R), R\in(1,+\infty)\right\},\]
	where $\tau(Q_6)$ is defined in  \eqref{Eq.tau(Q6)=al}.
\end{remark}

\section{Quantitative properties of local analytic solutions}\label{Sec.an}

In Proposition \ref{Prop.local_sol}, we show that for $R\in(1,+\infty)\setminus\Z$, the $u-\tau$ ODE \eqref{ODE_u_tau} has a unique local analytic solution $u_L(\tau)=\sum_{n=0}^\infty a_n\tau^n$ with $u_L(0)=1, \frac{\mathrm du_L}{\mathrm d\tau}(0)=a_1$, where the sequence $\{a_n\}$ is given by Lemma \ref{Lem.a_n_bound}. Sometimes, we denote $u_L(\tau)$ by $u_L(\tau; R)$ to emphasize the dependence on $R$. Recall that we also construct the $Q_6-Q_2$ solution curve $u=u_F$. If $u_L$ and $u_F$ are equal in a neighborhood of $\tau=0$, then we know that $u_F$ is smooth at $Q_2$. However, it turns out that $u_L$ and $u_F$ are not the same for some parameters. Our strategy is to use the intermediate value theorem to find suitable parameters such that $u_L$ and $u_F$ are equal. To this end, we establish some qualitative properties of the coefficients $\{a_n\}$. Our main result of this subsection is Proposition \ref{Prop.a_n_sharp_est}.

Proposition \ref{Prop.a_n_sharp_est} implies, in particular, that for $N\in\N$ sufficiently large and $R\in(N, N+1)$, we have $a_n>0$ for $n\in\Z\cap[\sqrt N+1, N]$. For our further use, we have the following lemma stating that $a_{N+1}<0$.

\begin{lemma}\label{Lem.a_N+1<0}
	There exists $N_0\in\N$ such that for all $N>N_0$ and $R\in(N, N+1)$, we have $a_{N+1}<0$.
\end{lemma}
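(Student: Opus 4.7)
The plan is to determine the sign of $a_{N+1}$ directly from the recurrence \eqref{Eq.a_n_recurrence_relation} at $n = N+1$.

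Since $R \in (N, N+1)$ forces $\delta(R-(N+1)) < 0$, it suffices to show $\wt E_{N+1} > 0$ for all $N > N_0$ with $N_0$ sufficiently large. By Proposition \ref{Prop.a_n_sharp_est}, after enlarging $N_0$ if necessary, we have $a_n \asymp M_n > 0$ for every $n \in [\sqrt R, R)$; in particular $a_{N-1}, a_N > 0$ with $a_{N-1}/a_N \asymp M_{N-1}/M_N \asymp 1/\sqrt N$, using the growth $M_n \sim \sqrt{\G(C+n)}/A_*^n$ identified in Section \ref{Sec.an}. For small fixed indices, Lemma \ref{Lem.a_n_bound} gives continuity and uniform boundedness of $a_j(R)$ as $R\to\infty$ (equivalently as $\al\to 1^-$), and the limiting values $a_j(1)$ can be computed in closed form from the recurrence.

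Next I would expand $\wt E_{N+1}$ and collect contributions by the largest index appearing. Using $a_1(\al) = (2\sqrt\al+4)/(3\sqrt\al)$, the coefficient of $a_N$ works out to
\[
    C_0(\al, N) = N\bigl[\al + 4/\sqrt\al - 3\al\, a_2(\al)\bigr] + O_\al(1) = N\, f(\al) + O_\al(1),
\]
with $f(\al) := \al + 4/\sqrt\al - 3\al\, a_2(\al)$. Solving the recurrence at $n = 2$ yields $a_2(1) = 5/3$, so $f(1) = 0$, and differentiation gives $f'(1) = -3/2 < 0$. Combined with $1 - \al \asymp 1/\sqrt R \asymp 1/\sqrt N$ (via $R = (1+\sqrt\al)^2/(1-\sqrt\al)^2$), this yields a strictly positive contribution of order $\sqrt N\, a_N$ from the $C_0\, a_N$ term. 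An analogous computation for the coefficient of $a_{N-1}$ gives
\[
    C_1(\al, N) = N\bigl[3\, a_2(\al) - 3\al\, a_3(\al) - 1\bigr] + O(1),
\]
and using the value $a_3(1) = 1$ one checks that the leading bracket equals $1$ at $\al = 1$, so $C_1(\al, N)\, a_{N-1}$ supplies another positive contribution of order $N\, a_{N-1} \asymp \sqrt N\, a_N$. All remaining contributions involving $a_{N-k}$ with $k \geq 2$ are controlled by $a_{N-k} \lesssim M_{N-k} = O(N^{-k/2}\, M_N)$ and are of strictly smaller order.

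Combining these estimates gives $\wt E_{N+1} \gtrsim \sqrt N\, a_N > 0$ for all $N$ sufficiently large, and the recurrence then yields $a_{N+1} < 0$. The main obstacle is the cancellation $f(1) = 0$: it forces one to work to first order in $1 - \al$ and so relies crucially on both the explicit closed-form expressions for the first few $a_j(\al)$ and the precise growth rate of $M_n$ supplied by Section \ref{Sec.an}. A cleaner alternative would be to pass through the reformulation \eqref{Eq.re-formulation}, in which the five-term recurrence makes the positivity of the dominant combination $2 p_{N+1} a_N + q_{N+1} a_{N-1} + s_1 a_{N-2} + s_2 a_{N-3}$ more directly accessible from the sign information $a_n \asymp M_n > 0$.
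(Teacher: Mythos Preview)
Your strategy is correct and coincides with the paper's: show $\wt E_{N+1}>0$, whence $a_{N+1}=\wt E_{N+1}/[\delta(R-(N+1))]<0$. The dominant contribution is indeed the $a_N$ term, and your leading coefficient $f(\al)=\al+4/\sqrt\al-3\al\,a_2(\al)$ is nothing but $-A_1$ in the paper's notation; the asymptotics \eqref{Eq.A_1B_1_asymp} give $-A_1=6/A+O(A^{-2})$ directly, so $Nf(\al)\asymp\sqrt N>0$ without any need to compute $f'(1)$ or $a_2'(\al)$.

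There is, however, a genuine error in your growth estimate. The heuristic $M_n\sim\sqrt{\Gamma(C+n)}/A_*^n$ refers to the \emph{limiting} sequence $a_n^\infty$ as $R\to\infty$; it does not describe $M_n$ for finite $R$ near the endpoint $n=N$. From $\wh M_n/\wh M_{n-1}=\mu_{n-1}/\gamma_n$ and $\gamma_N=\frac{8}{(A+1)^2}(R-N)\asymp(R-N)/N$, one gets $M_{N-1}/M_N\asymp(R-N)/N^{3/2}$, which can be arbitrarily small and is certainly not $\asymp 1/\sqrt N$. Fortunately this only makes the $C_1 a_{N-1}$ term \emph{smaller} than you claimed, so the conclusion survives. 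A second gap is your treatment of the convolution sums: the phrase ``all remaining contributions involving $a_{N-k}$'' does not cover the middle terms $a_j a_{N+2-j}$ with $j\sim N/2$, which require the convexity argument \eqref{Eq.M_n^*claim} or equivalent.

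The paper takes precisely the ``cleaner alternative'' you mention at the end: it works with the reformulated recurrence \eqref{Eq.a_n_new_recurrence}, writes $\gamma_{N+1}a_{N+1}=2p_{N+1}a_N+q_{N+1}a_{N-1}+\varepsilon_{N+1}$, and invokes the already-established bound $|\varepsilon_{N+1}|\lesssim\wh M_{N-2}$ from \eqref{Eq.epsilon_n_est}. Since $2p_{N+1}a_N\asymp A\,\wh M_N$, $q_{N+1}a_{N-1}\asymp A^{-1}(R-N)\wh M_N$, and $\wh M_{N-2}\asymp A^{-6}(R-N)\wh M_N$, the first term dominates and the whole proof is three lines. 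All the convolution estimates are packaged into \eqref{Eq.epsilon_n_est}, which is why the reformulation pays off here.
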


Recall the sketch of the proof from Subsection \ref{Subsec.sketch_existence}. This section is primarily devoted to proving Proposition \ref{Prop.a_n_sharp_est}. In Subsection \ref{Subsec.re-formulation}, we introduce a re-formulation of the recurrence relation \eqref{Eq.a_n_recurrence_relation}. The key point is to make the coefficients $a_{n-3}$ and $a_{n-4}$ independent of $n$, so that the main order terms are given by the combination of $a_{n-1}$ and $a_{n-2}$. This motivates the definition of the comparison sequence $\{M_n\}$. In Subsection \ref{Subsec.a_n_upper_bound}, we prove the upper-bound aspect of Proposition \ref{Prop.a_n_sharp_est}, namely, Proposition \ref{Prop.a_n_upper_bound}. Subsection \ref{Subsec.a_n_lower_bound} is dedicated to proving the lower-bound, namely, Proposition \ref{Prop.an_lower_bound}, where we consider the limiting sequence $\{a_n^\infty\}$ as $R\to+\infty$. The proof of Lemma \ref{Lem.a_N+1<0} is much simpler and can be found in Subsection \ref{Subsec.a_N+1<0}.

\subsection{Reformulation of the recurrence relation}\label{Subsec.re-formulation}
In this subsection, we derive a new formulation of the recurrence relation $\{a_n\}$, which motivates the definition of $\{M_n\}$. Recall that our goal in this section is to show that $a_n$ and $M_n$ have the same quantitative behavior, at least for $n\in[\sqrt R, R]$.

According to \eqref{Eq.a_n_recurrence_relation}, we can rewrite the recurrence relation for $\{a_n\}$ as follows
\begin{align}
	\gamma_n a_n=\wt E_n&=-(A_1n+B_1)a_{n-1}-(A_2n+B_2)a_{n-2}-(A_3n+B_3)a_{n-3}\label{Eq.a_n_recurrence1}\\
	&\quad-(A_4n+B_4)a_{n-4}-\frac32\al(n+1)\sum_{j=6}^{n-5}a_ja_{n+1-j}+\frac{3n-4}{2}\sum_{j=5}^{n-5}a_ja_{n-j}\nonumber
\end{align}
for all $n\in\Z_{\geq 10}$, where
\begin{align}
	&\gamma_n:=\delta(R-n)=2(1-\sqrt\al)^2(R-n),\label{Eq.gamma_n}\\
	A_1:=3\al a_2&-3a_1-\al+2,\quad B_1:=3\al a_2+4a_1+(7\al-22)/3,\nonumber\\
	A_2&:=3\al  a_3-3a_2+1,\quad B_2:=3\al a_3+4a_2-16/3,\nonumber\\
	A_3:=3\al a_4&-3a_3,\quad B_3:=3\al a_4+4a_3,\quad
	A_4:=3\al a_5-3a_4,\quad B_4:=3\al a_5+4a_4.\nonumber
\end{align}

It is convenient to introduce
\begin{equation}\label{Eq.A_def}
	A:=\sqrt R\in(1,+\infty),\qquad \la:=\sqrt\al\in(0,1).
\end{equation}
Then $A=(1+\sqrt\al)/(1-\sqrt\al)=(1+\la)/(1-\la)$, thus $\la=(A-1)/(A+1)$.
Using $a_0=1$, $a_1=(2\la+4)/(3\la)=(6A+2)/(3A-3)$, and \eqref{Eq.a_n_recurrence_relation}, one can check by induction that
\[A\mapsto a_n\quad \text{is analytic for}\quad A\in(1,+\infty]\setminus\left\{\sqrt k: k\in\Z\cap[0, n]\right\},\quad\forall\ n\in\Z_{\geq 0}.\]
Then $\g_n=\g_n(A)=\frac{8}{(A+1)^2}(A^2-n)=:\g_{(1)}n+\g_{(2)}$, and as $A\to+\infty$ we have
\begin{align}
	A_1=-\frac6A-\frac8{A^2}+\cO(A^{-3}),&\quad B_1=8-\frac2{3A}+\cO(A^{-2}),\label{Eq.A_1B_1_asymp}\\
	A_2=-1-\frac8A+\cO(A^{-2}),&\quad B_2=\frac{13}3+\frac{34}A+\cO(A^{-2}),\label{Eq.A_2B_2_asymp}\\
	A_3=-1-\frac{23}{4A}+\cO(A^{-2}),&\quad B_3=6+\frac{547}{12A}+\cO(A^{-2}),\nonumber\\
	A_4=-\frac38-\frac4A+\cO(A^{-2}),&\quad B_4=\frac{103}{24}+\frac{631}{12A}+\cO(A^{-2}).\nonumber
\end{align}

Replacing $n$ in \eqref{Eq.a_n_recurrence1} by $n-1$, we obtain
\begin{equation}\label{Eq.a_n_recurrence2}
	\begin{aligned}
		\g_{n-1}a_{n-1}&=-(A_1n+B_1-A_1)a_{n-2}-(A_2n+B_2-A_2)a_{n-3}-(A_3n+B_3-A_{3})a_{n-4}\\
		&\qquad-\frac32\al n\sum_{j=5}^{n-5}a_ja_{n-j}+\frac{3n-7}{2}\sum_{j=4}^{n-5}a_ja_{n-1-j};
	\end{aligned}
\end{equation}
Replacing $n$ in \eqref{Eq.a_n_recurrence1} by $n-2$ yields that
\begin{equation}\label{Eq.a_n_recurrence3}
	\begin{aligned}
		\g_{n-2}a_{n-2}&=-(A_1n+B_1-2A_1)a_{n-3}-(A_2n+B_2-2A_2)a_{n-4}\\
		&\qquad-\frac32\al (n-1)\sum_{j=4}^{n-5}a_ja_{n-1-j}+\frac{3n-10}{2}\sum_{j=3}^{n-5}a_ja_{n-2-j}.
	\end{aligned}
\end{equation}

Let
\begin{equation*}
	k_1:=\frac{A_1A_4-A_2A_3}{A_2^2-A_1A_3},\quad k_2:=\frac{A_3^2-A_4A_2}{A_2^2-A_1A_3}.
\end{equation*}
Here, we note that $A_2^2-A_1A_3=1+10/A+\cO(A^{-2})$ as $A\to+\infty$. Hence, $A_2^2-A_1A_3>1>0$ for sufficiently large $A$, and thus $k_1, k_2$ are well-defined for sufficiently large $A$.\footnote{Indeed, by definition, $A_1, A_2, A_3$ are explicit functions on $A$, and one can compute the expression of $A_2^2-A_1A_3$, which is the quotient of two polynomials of degree $20$, then one can show that $A_2^2-A_1A_3>1$ for all $A>1$. Nevertheless, the positivity of $A_2^2-A_1A_3$ for large $A$ is enough for our purpose.} For further use, we compute that
\begin{equation}\label{Eq.k_1k_2asymp}
	k_1=-1-\frac3{2A}+\cO(A^{-2}),\quad k_2=\frac58-\frac7{4A}+\cO(A^{-2}),\quad\text{as}\quad A\to+\infty.
\end{equation}

We consider $\eqref{Eq.a_n_recurrence1}+k_1\cdot\eqref{Eq.a_n_recurrence2}+k_2\cdot\eqref{Eq.a_n_recurrence3}$. We then deduce the following version of the recurrence relation:
\begin{equation}\label{Eq.a_n_new_recurrence}
	\g_na_n=2p_na_{n-1}+q_na_{n-2}+s_1a_{n-3}+s_2a_{n-4}+\wt\varepsilon_n,\quad\forall\ n\geq 10,
\end{equation}
where $\g_n$ is given by \eqref{Eq.gamma_n}, and
\begin{align}
	p_n:&=\left(-\frac{A_1}{2}+\frac{4k_1}{(A+1)^2}\right)n-\frac{B_1}{2}-\frac{4k_1(A^2+1)}{(A+1)^2}=:p_{(1)}n+p_{(2)},\label{Eq.p_n}\\
	& p_{(1)}=\frac3A+\cO(A^{-2}),\quad p_{(2)}=-\frac{5}{3A}+\cO(A^{-2}),\quad\text{as}\quad A\to+\infty,\label{Eq.p_n_asymp}\\
	\notag q_n:&=\left(-A_2-A_1k_1+\frac{8k_2}{(A+1)^2}\right)n-B_2+(A_1-B_1)k_1-\frac{8k_2(A^2+2)}{(A+1)^2}\\&=:q_{(1)}n+q_{(2)},\label{Eq.q_n}\\
	&\qquad q_{(1)}=1+\cO(A^{-1}),\quad q_{(2)}=-\frac43+\cO(A^{-1}),\quad\text{as}\quad A\to+\infty,\label{Eq.q_n_asymp}\\
	s_1:&=-B_3+(A_2-B_2)k_1+(2A_1-B_1)k_2=-\frac{17}3+\frac{34}{3A}+\cO(A^{-2}),\label{Eq.s_1asymp}\\
	s_2:&=-B_4+(A_3-B_3)k_1+(2A_2-B_2)k_2=-\frac54-\frac{131}{12A}+\cO(A^{-2}),\label{Eq.s_2asymp}\\
	\wt\varepsilon_n:&=-\frac32\al(n+1)\sum_{j=6}^{n-5}a_ja_{n+1-j}+\left(\frac{3n-4}{2}-\frac32\al k_1n\right)\sum_{j=5}^{n-5}a_ja_{n-j}\label{Eq.tilde_epsilon_n}\\
	&\quad+\left(\frac{3n-7}{2}k_1-\frac32\al k_2(n-1)\right)\sum_{j=4}^{n-5}a_ja_{n-1-j}+\frac{3n-10}{2}k_2\sum_{j=3}^{n-5}a_ja_{n-2-j}\nonumber
\end{align}

In \eqref{Eq.a_n_new_recurrence}, we note that the coefficients of $a_{n-3}$ and $a_{n-4}$ are independent of $n$, which indicates that the term $s_1a_{n-3}+s_2a_{n-4}+\wt\varepsilon_n$ may be viewed as a perturbation for large $n$. Therefore, we expect that $\{a_n\}$ can be modeled by the simpler sequence $\{M_n>0\}_{n=0}^{\lceil A^2\rceil-1}$ defined by
\begin{equation}\label{Eq.M_n_sequence}
	M_0=1,\quad M_1=a_1>0,\quad \gamma_nM_n=2p_nM_{n-1}+q_nM_{n-2}\quad\forall\ n\in\Z\cap[2,A^2).
\end{equation}

\begin{remark}
	We emphasize that the sequence $\{M_n\}$ has only finitely many terms. This is because $\g_n>0$ holds only for $n<A^2$. The positivity of $M_n$ for $n<A^2$ is ensured by $M_0>1$, $M_1>0$, $p_n>0$, $q_n>0$ and $\g_n>0$ (for sufficiently large $A$).
\end{remark}

\begin{remark}\label{Rmk.gamma_p_q_t}
	We note that $n\mapsto \gamma_n$, $n\mapsto p_n$ and $n\mapsto q_n$ are linear functions. We can define
	\[\gamma_t:=\frac8{(A+1)^2}(A^2-t)=\g_{(1)}t+\g_{(2)},\quad p_t:=p_{(1)}t+q_{(1)},\quad q_t:=q_{(1)}t+q_{(2)},\quad\forall\ t\in\R.\]
\end{remark}

\subsection{Upper bound of $\{a_n\}$}\label{Subsec.a_n_upper_bound}
In this subsection, we prove the ``$\lesssim$" part of Proposition \ref{Prop.a_n_sharp_est}. We remark that, in the rest of this section, all the implicit constants in $\lesssim, \gtrsim, \asymp$ are independent of the index $n$ and the parameter $A$. The main result of this subsection is the following proposition.

\begin{proposition}\label{Prop.a_n_upper_bound}
	There exist $A_0>10$ and $C>0$ such that for all $A>A_0$ we have
	\begin{equation}\label{Eq.a_n_upper_bound}
		|a_n|\leq CM_n,\quad\forall\ n\in\Z\cap[1, A^2).
	\end{equation}
\end{proposition}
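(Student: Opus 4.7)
\textbf{Proof plan for Proposition \ref{Prop.a_n_upper_bound}.} The plan is to run a strong induction on $n$, comparing $|a_n|$ against $M_n$ through two intermediate sequences: a convexity-friendly majorant $\{M_n^*\}$ that absorbs the convolution sums in $\wt\varepsilon_n$, and a ``desingularized'' combination $\{|a_n|+\lambda_n|a_{n-1}|\}$ whose recurrence no longer suffers from the degeneracy $p_{(1)}=\mathcal O(A^{-1})$ visible in \eqref{Eq.p_n_asymp}. The final chain of comparisons will be
\[
|a_n|\ \leq\ |a_n|+\lambda_n|a_{n-1}|\ \lesssim\ M_n^*\ \lesssim\ \wt M_n\ \asymp\ M_n,
\]
valid uniformly for $1\leq n<A^2$ once $A$ is large enough.

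First, I would analyze $\{M_n\}$ itself. Using Remark \ref{Rmk.gamma_p_q_t}, I extend $\g_n,p_n,q_n$ to continuous variables and show that the ratio $M_n/M_{n-1}$ is well-approximated, for $n$ away from the endpoints, by a root of the characteristic equation $\g_t x^2=2p_tx+q_t$. Since $p_{(1)}\to 0$ as $A\to\infty$ and $q_{(1)}\to 1$, this characteristic equation degenerates to $\g_t x^2=q_t$, so $M_n/M_{n-1}$ behaves like $\sqrt{q_n/\g_n}$ up to lower order corrections. In particular I would derive two-sided monotonicity/convexity bounds (e.g.\ a log-concavity type estimate $M_jM_{n-j}\lesssim M_nM_0$ for indices in the range under consideration), so that the convolution sums $\sum a_ja_{n-j}$ can eventually be controlled by $nM_n$ or $nM_{n-1}$. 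This plays the role of the Gamma-function convolution estimates of \cite[App.\ B, C]{MRRJ2}.

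Next, following the sketch, I would define $\lambda_n$ so that $q_n-2p_n\lambda_{n-1}+\g_{n-1}\lambda_n\cdot(\text{ratio})=0$ up to harmless terms; concretely, $\lambda_n$ is chosen to kill the $a_{n-2}$ coefficient in the recurrence for $b_n:=|a_n|+\lambda_n|a_{n-1}|$, so that $b_n$ satisfies a first-order recurrence whose leading order is recorded by a sequence $\wt M_n$ with $\wt M_n\asymp M_n$. For the right-hand side of \eqref{Eq.a_n_new_recurrence}, the $s_1a_{n-3}+s_2a_{n-4}$ terms are bounded using the induction hypothesis $|a_k|\leq CM_k$ together with $M_{n-3},M_{n-4}\lesssim M_n/n$ (from the ratio asymptotics above); they contribute an error of relative size $1/n$. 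The convolution error $\wt\varepsilon_n$ is the main source of trouble: each sum is of length $\sim n$ with coefficients of size $\mathcal O(n)$, and the induction hypothesis gives it a bound $\lesssim n^2\cdot\sup_j M_j M_{n-j}$. This is exactly where the auxiliary majorant $M_n^*$, defined to be the smallest sequence satisfying a one-step log-concavity inequality and dominating $M_n$, is used; its convexity yields $M_j^*M_{n-j}^*\lesssim M_n^*/n^{\kappa}$ for some $\kappa>1$, which absorbs the $n^2$ loss.

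The induction then closes as follows: assume $|a_k|\leq C M_k^*\lesssim C\wt M_k\asymp CM_k$ for all $k<n$. Plug this into \eqref{Eq.a_n_new_recurrence}, combine with the desingularized recurrence for $b_n$, and use $\g_n\geq \g_{(1)}(A^2-n)\gtrsim 1$ (this holds only for $n$ bounded strictly away from $A^2$, but the statement allows $n\in[1,A^2)$, so the degeneracy at the endpoint $n\uparrow A^2$ must be handled by a separate end-stage estimate using the explicit formula for $\g_n$). The only point that requires genuine care, and which I expect to be the main obstacle, is the simultaneous coordination of three choices: (i) the weight $\lambda_n$, which must match the limiting ratio $M_n/M_{n-1}$ with enough precision that the degenerate coefficient $p_n=\mathcal O(1/A)$ cancels rather than amplifies on iteration; (ii) the convexity rate of $M_n^*$, which must beat the $n^2$ loss from the convolutions while remaining comparable to $M_n$; and (iii) uniformity in $A$, so that all implicit constants are absolute for $A>A_0$. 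Once these are tuned, the induction gives $|a_n|\leq b_n\lesssim M_n^*\lesssim \wt M_n\asymp M_n$, which yields \eqref{Eq.a_n_upper_bound} with a uniform $C$.
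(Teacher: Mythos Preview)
Your plan is essentially the paper's proof and is correct in spirit: the chain $|a_n|\leq |a_n|+\lambda_n|a_{n-1}|\lesssim M_n^*\lesssim \wt M_n\asymp M_n$ is exactly what is carried out, and you have correctly identified that the three ingredients (choice of $\lambda_n$, convexity of $M_n^*$, uniformity in $A$) must be tuned simultaneously.

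Two points where your description drifts from what actually closes. First, $M_n^*$ is not defined as a log-convex majorant of $M_n$; it is defined recursively as the maximum of $(M_{n-1}^*)^2/M_{n-2}^*$ and an expression $a_n^*$ that already encodes the full nonlinear recurrence \eqref{Eq.tilde_a_n_recurrence} with $\wt a_k$ replaced by $M_k^*$. This makes $|a_n|\leq\wt a_n\leq M_n^*$ hold \emph{by construction}, not by an induction hypothesis; the analytic work then goes entirely into showing $M_n^*\lesssim\wt M_n$. Your formulation (dominate $M_n$, then induct) would run into circularity. Second, the convolution is not controlled by a uniform bound $M_j^*M_{n-j}^*\lesssim M_n^*/n^\kappa$; log-convexity only pushes the maximum to the boundary, $M_j^*M_{n-j}^*\leq M_k^*M_{n-k}^*$ for $j\in[k,n-k]$. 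Because the sums in $\wt\varepsilon_n$ start at $j\geq 3$, one peels off a few boundary terms to arrive at $\g_na_n^*\leq(2p_n+\g_n\lambda_n)M_{n-1}^*+C(M_{n-3}^*+M_{n-4}^*)+Cn(M_{n-5}^*+M_{n-6}^*)+Cn^2M_{n-7}^*$, and then $\wt b_k\gtrsim\sqrt k$ (from $\lambda_k\asymp\sqrt k$) gives enough decay in $M_{n-j}^*/M_{n-1}^*$ to make the error of size $O(n^{-3/2})$ relative to $\wt b_n$. Finally, your worry about $\g_n\not\gtrsim 1$ near $n=\lceil A^2\rceil-1$ is unnecessary: since the argument bounds the ratio $M_n^*/M_{n-1}^*$ by $b_n\asymp\wt b_n=(2p_n+\g_n\lambda_n)/\g_n$, the smallness of $\g_n$ cancels, and no separate end-stage estimate is needed for \eqref{Eq.a_n_upper_bound} itself (the only endpoint case handled separately is the monotonicity $b_N>b_{N-1}$).
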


We first illustrate that for each $n$, $a_n$ is bounded with respect to $A$.

\begin{lemma}\label{Lem.a_n_trivial_bound}
	For each $n\in\Z_{\geq 0}$, there exists $A(n)>0$ such that
	\begin{equation*}
		|a_n|\lesssim_n 1,\quad\forall\ A>A(n).
	\end{equation*}
\end{lemma}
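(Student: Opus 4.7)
The plan is a straightforward strong induction on $n$, exploiting the recurrence \eqref{Eq.a_n_recurrence_relation} together with the fact that the ``small divisor'' $\gamma_n = \delta(R-n) = 8(A^2-n)/(A+1)^2$ is bounded below by a positive constant once $A$ is large enough relative to $n$.

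For the base cases $n=0,1$, we note $a_0=1$ and $a_1=(2\lambda+4)/(3\lambda)$ with $\lambda=(A-1)/(A+1)\to 1$ as $A\to\infty$, so $a_1$ is uniformly bounded for, say, $A\geq 3$. For the inductive step, fix $n\geq 2$ and suppose the claim holds for all indices $j<n$, with $|a_j|\leq C_j$ whenever $A>A(j)$. Choose $A(n)>\max\{A(j):j<n\}$ so large that $A(n)^2\geq 2n$; this guarantees
\[
\gamma_n = \frac{8(A^2-n)}{(A+1)^2}\geq \frac{4A^2}{(A+1)^2}\geq 1,
\qquad \forall\, A>A(n),
\]
so $\gamma_n$ stays bounded away from zero. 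The other coefficients appearing in $\wt E_n$, namely $(\alpha-2)(n-1)-\tfrac43(\alpha-4)$, $n-\tfrac{16}{3}$, $\tfrac32\alpha(n+1)$, and $\tfrac32 n-2$, are all uniformly bounded (in $A$) by constants depending only on $n$, since $\alpha=\lambda^2\in(0,1)$.

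It then remains to bound the quadratic sums $\sum_{j=2}^{n-1}a_j a_{n+1-j}$ and $\sum_{j=1}^{n-1}a_j a_{n-j}$ appearing in $\wt E_n$. In both sums every index falls strictly between $0$ and $n$, hence the inductive hypothesis yields
\[
\left|\sum_{j=2}^{n-1}a_j a_{n+1-j}\right| + \left|\sum_{j=1}^{n-1}a_j a_{n-j}\right| \leq C'_n,\qquad \forall\, A>A(n),
\]
for a constant $C'_n$ depending only on $n$. Combining these bounds, $|\wt E_n|\lesssim_n 1$, and dividing \eqref{Eq.a_n_recurrence_relation} by $\gamma_n$ gives $|a_n|\leq |\wt E_n|/\gamma_n\lesssim_n 1$, closing the induction. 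There is no serious obstacle here; the only point to verify carefully is the lower bound on $\gamma_n$, which was not needed in Lemma~\ref{Lem.a_n_bound} (where $K$ could depend on $A$) but is essential here because we require uniformity in $A$.
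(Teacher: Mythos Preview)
Your proof is correct. The paper's proof uses the same inductive mechanism but phrases it differently: it introduces the limiting sequence $\{a_n^\infty\}$ defined by $a_0^\infty=1$, $a_1^\infty=2$, and the recurrence obtained from \eqref{Eq.a_n_recurrence_relation} by letting $A\to\infty$ (so that $\gamma_n\to 8$, $\alpha\to 1$), and then checks by induction that $\lim_{A\to\infty} a_n = a_n^\infty\in\R$ for each $n$, which of course implies boundedness. Your argument gives the bound directly without identifying the limit. The paper's route is slightly more informative, since the explicit limiting sequence $\{a_n^\infty\}$ is needed again later in the paper (see Remark~\ref{Rmk.a_n_positive} and the proof of the lower bound in Subsection~\ref{Subsec.a_n_lower_bound}, where the positivity of the limit $\cS_\infty$ in \eqref{Eq.S_infty} is built on $\{a_n^\infty\}$); your approach, while perfectly adequate for the lemma as stated, would require separately setting up $\{a_n^\infty\}$ at that later stage.
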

\begin{proof}
	We consider the sequence $\{a_n^\infty\}_{n=0}^\infty$ defined by $a_0^\infty=1$, $a_1^\infty=2$, and
	\begin{equation}\label{Eq.a_n^infty}
		8a_n^\infty=(5-n)a_{n-1}^\infty-\left(n-\frac{16}{3}\right)a_{n-2}^\infty-\frac32(n+1)\sum_{j=2}^{n-1}a_j^\infty a_{n+1-j}^\infty+\left(\frac32n-2\right)\sum_{j=1}^{n-1}a_j^\infty a_{n-j}^\infty
	\end{equation}
	for $n\in\Z_{\geq 2}$. Since $a_0=1$,
	\[\lim_{A\to+\infty}a_1=\lim_{A\to+\infty}\frac{6A+2}{3A-3}=2,\]
	$\lim_{A\to+\infty}\al=1$ and $\lim_{A\to+\infty}\g_n=8$ for each $n\in\Z_{\geq 0}$, one can verify by induction that
	\begin{equation}\label{Eq.a_n_limit}
		\lim_{A\to+\infty}a_n=a_n^\infty\in\R,\quad\forall\ n\in\Z_{\geq 0}.
	\end{equation}
	This completes the proof.
\end{proof}
\begin{remark}\label{Rmk.a_n_positive}
	We compute that
	\begin{align*}
		&a_0^\infty=1,\ a_1^\infty=2,\ a_2^\infty=\frac53,\ a_3^\infty=1,\ a_4^\infty=\frac23,\ a_5^\infty=\frac{13}{24}, \\
		a_6^\infty&=\frac{17}{36},\ a_7^\infty=\frac{11}{24},\ a_8^\infty=\frac{97}{216},\ a_9^\infty=\frac{6683}{13824},\ a_{10}^\infty=\frac{10547}{20736}.
	\end{align*}
	We remark that $a_n^\infty>0$ holds for $n\in\Z\cap[0,10]$.
\end{remark}

In \eqref{Eq.a_n_new_recurrence}, we note that the coefficient of $a_{n-1}$, which is $2p_n$, becomes zero if $A=+\infty$, due to \eqref{Eq.p_n_asymp}. Hence, we consider an auxiliary sequence $a_n+\la_na_{n-1}$, for which the degeneration disappears. We first construct $\la_n$.

\begin{lemma}\label{Lem.la_n_def}
	For each $n\in\Z\cap[0, A^2)$, we define
	\begin{equation}\label{Eq.mu_n_la_n}
		\mu_n^*:=\sqrt{\gamma_{n+\frac12}q_{n+\frac32}+p_{n+1}^2},\quad \mu_n:=\mu_n^*+p_{n+\frac12},\quad \la_n:=\frac{q_{n+1}}{\mu_n}.
	\end{equation}
	Then there exists $A_0>10$ such that the following properties hold for all $A>A_0$ and all $n\in\Z_+\cap [1, A^2)$:
	\begin{align}
		\mu_n^*\asymp \sqrt n,\quad \mu_n^*>&\ \mu_{n-1}^*,\quad0<\mu_n^*-\mu_{n-1}^*\asymp\frac1{\sqrt n},\quad\la_n\asymp\sqrt n,\quad \mu_n\asymp \sqrt n,\label{Eq.mu_n^*}\\
		&0\leq\g_nq_{n+1}+p_{n+\frac12}^2-\mu_n^*\mu_{n-1}^*\leq \frac Cn,\label{Eq.coeffi_error}\\
		1\leq&\ \frac{2p_n+\g_n\la_n}{\mu_{n-1}}\leq 1+C\left(\frac 1{n^2}+\frac1{\sqrt nA}\right),\label{Eq.quotient_est}
	\end{align}
	where $C>0$ is an absolute constant independent of $A>A_0$ and $n$.
\end{lemma}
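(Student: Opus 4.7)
The plan is to exploit the affine structure of $\gamma_n, p_n, q_n$ in $n$ recorded in Remark \ref{Rmk.gamma_p_q_t}, which yields elementary identities such as $2\gamma_n = \gamma_{n+1/2} + \gamma_{n-1/2}$, $\gamma_{n+1/2} - \gamma_{n-1/2} = \gamma_{(1)}$, and the analogues for $p$ and $q$. All quantitative claims will follow from such identities combined with the asymptotics \eqref{Eq.p_n_asymp}, \eqref{Eq.q_n_asymp}, and the explicit formula $\gamma_n = 8(A^2-n)/(A+1)^2$.

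\emph{Step 1: proof of \eqref{Eq.mu_n^*}.} Inserting the asymptotics into the definition gives, uniformly for $n \in [1, A^2)$ and $A$ large,
\begin{align*}
\mu_n^{*2} = \gamma_{n+1/2} q_{n+3/2} + p_{n+1}^2 = n(8 + n/A^2) + O(1 + n/A),
\end{align*}
which is trapped between two positive multiples of $n$, hence $\mu_n^* \asymp \sqrt n$. Using affinity, a direct computation yields
\begin{align*}
\mu_n^{*2} - \mu_{n-1}^{*2} = (2n+1)\gamma_{(1)}q_{(1)} + \gamma_{(1)}q_{(2)} + \gamma_{(2)}q_{(1)} + 2p_{(1)} p_{n+1/2} = 8 + 2n/A^2 + O(1/A),
\end{align*}
which is bounded between two positive constants for $A$ large and $n \in [1, A^2)$. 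This gives both $\mu_n^* > \mu_{n-1}^*$ and $\mu_n^* - \mu_{n-1}^* = (\mu_n^{*2} - \mu_{n-1}^{*2})/(\mu_n^* + \mu_{n-1}^*) \asymp 1/\sqrt n$. Since $p_{n+1/2} > 0$ for $n \geq 1$ and $p_{n+1/2} \lesssim n/A \lesssim \sqrt n$ when $n \leq A^2$, we conclude $\mu_n \asymp \sqrt n$ and $\lambda_n = q_{n+1}/\mu_n \asymp \sqrt n$.

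\emph{Step 2: proof of \eqref{Eq.coeffi_error}.} The key is a clean algebraic identity. Let $T := \gamma_n q_{n+1} + p_{n+1/2}^2$, $X := \mu_n^{*2}$, $Y := \mu_{n-1}^{*2}$. Using the bilinearity of $(s,t) \mapsto \gamma_s q_t$ when $\gamma_\cdot$ and $q_\cdot$ are affine, one verifies
\begin{align*}
\gamma_n q_{n+1} - \tfrac{1}{2}\left(\gamma_{n+1/2} q_{n+3/2} + \gamma_{n-1/2} q_{n+1/2}\right) = -\gamma_{(1)} q_{(1)}/4,
\end{align*}
while $p_{n+1/2} = (p_n + p_{n+1})/2$ gives $p_{n+1/2}^2 - (p_n^2 + p_{n+1}^2)/2 = -p_{(1)}^2/4$. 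Summing these and combining with $(X+Y)/2 - \sqrt{XY} = (\mu_n^* - \mu_{n-1}^*)^2/2$, we obtain
\begin{align*}
T - \mu_n^* \mu_{n-1}^* = -\frac{\gamma_{(1)} q_{(1)} + p_{(1)}^2}{4} + \frac{(\mu_n^* - \mu_{n-1}^*)^2}{2}.
\end{align*}
The first term is $O(1/A^2)$ and the second is $\asymp 1/n$; since $1/A^2 \lesssim 1/n$ for $n \in [1, A^2)$, the sum is bounded above by $C/n$. For the lower bound, the positive second term is quantitatively $\geq c/n \geq c/A^2$ (using the explicit bounds $\mu_n^{*2} - \mu_{n-1}^{*2} \geq 7$ and $\mu_n^* \leq 3\sqrt n$ from Step 1), which dominates the $O(1/A^2)$ first term once $A$ is large enough, giving non-negativity.

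\emph{Step 3: proof of \eqref{Eq.quotient_est}.} Using $\lambda_n \mu_n = q_{n+1}$ and $2p_n = p_{n-1/2} + p_{n+1/2}$, a direct expansion of $(2p_n + \gamma_n \lambda_n)\mu_n - \mu_n \mu_{n-1}$ produces the crucial identity
\begin{align*}
(2p_n + \gamma_n \lambda_n)\mu_n - \mu_n \mu_{n-1} = p_{n+1/2}(\mu_n^* - \mu_{n-1}^*) + \left(T - \mu_n^* \mu_{n-1}^*\right).
\end{align*}
Both summands on the right are non-negative by Steps 1 and 2, yielding the lower bound in \eqref{Eq.quotient_est}. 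For the upper bound, the estimates $p_{n+1/2}(\mu_n^* - \mu_{n-1}^*) \lesssim (n/A)(1/\sqrt n) = \sqrt n/A$ and $T - \mu_n^* \mu_{n-1}^* \lesssim 1/n$, together with $\mu_n \mu_{n-1} \asymp n$, produce the claimed error $1/(\sqrt n A) + 1/n^2$. The principal obstacle is not any single computation but the verification that the two algebraic identities in Steps 2 and 3 hold exactly and that the $1/n$ and $1/A^2$ error scales balance to preserve positivity uniformly across $n \in [1, A^2)$, especially near the degenerate edge $n \sim A^2$ where $\gamma_n$ vanishes; this is what forces the explicit largeness requirement on $A_0$.
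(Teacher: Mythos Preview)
Your argument is correct and closely parallels the paper's proof in Steps~1 and~3, where the key identity
\[
(2p_n+\g_n\la_n)\mu_n-\mu_n\mu_{n-1}=p_{n+1/2}(\mu_n^*-\mu_{n-1}^*)+\big(\g_nq_{n+1}+p_{n+1/2}^2-\mu_n^*\mu_{n-1}^*\big)
\]
is exactly the rearranged form of what the paper derives in its proof of \eqref{Eq.quotient_est}. In Step~2, however, you take a genuinely cleaner route: rather than expanding $T^2-XY$ as a quadratic in $n$ and checking positivity of each coefficient (as the paper does via its quantities $\Delta_{(0)},\Delta_{(1)},\Delta_{(2)}$), you use the exact identity
\[
T-\mu_n^*\mu_{n-1}^*=-\frac{\mu_{(2)}}{4}+\frac{(\mu_n^*-\mu_{n-1}^*)^2}{2},
\]
which follows from $T=(X+Y)/2-\mu_{(2)}/4$ (a second-difference computation for the quadratic $t\mapsto\mu_{(2)}t^2+\mu_{(1)}t+\mu_{(0)}$) together with AM--GM. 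This makes both the upper bound $C/n$ and the non-negativity transparent, and avoids the paper's brute-force polynomial expansion.

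One minor caution: the explicit constants you quote in Step~2 ($\mu_n^*\le 3\sqrt n$ and $\mu_n^{*2}-\mu_{n-1}^{*2}\ge 7$) are slightly off near the edge $n\sim A^2$, where $\mu_n^{*2}$ picks up the term $\mu_{(2)}n^2\approx n^2/A^2\approx n$ and one gets $\mu_n^*\le\sqrt{13}\,\sqrt n$ rather than $3\sqrt n$. This does not break the positivity argument---the inequality $49/(4\cdot13\,n)>\mu_{(2)}/4\approx 1/(4A^2)$ still holds comfortably for all $n<A^2$---but the numerical claims should be adjusted.
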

\begin{proof}
	Recall $\g_n=\g_{(1)}n+\g_{(2)}$, $p_n=p_{(1)}n+p_{(2)}$, $q_n=q_{(1)}n+q_{(2)}$, Remark \ref{Rmk.gamma_p_q_t} and the definition \eqref{Eq.mu_n_la_n}, one computes that
	\begin{equation}\label{Eq.mu_n^*_expression}
		\left(\mu_n^*\right)^2=\g_{n+\frac12}q_{n+\frac32}+p_{n+1}^2=\mu_{(2)}n^2+\mu_{(1)}n+\mu_{(0)},
	\end{equation}
	where
	\begin{align*}
		\mu_{(2)}:&=\g_{(1)}q_{(1)}+p_{(1)}^2,\\
		\mu_{(1)}:&=\left(\frac12\g_{(1)}+\g_{(2)}\right)q_{(1)}+\g_{(1)}\left(\frac32q_{(1)}+q_{(2)}\right)+2p_{(1)}\left(p_{(1)}+p_{(2)}\right),\\
		\mu_{(0)}:&=\left(\frac12\g_{(1)}+\g_{(2)}\right)\left(\frac32q_{(1)}+q_{(2)}\right)+\left(p_{(1)}+p_{(2)}\right)^2.
	\end{align*}
	We have 
	\begin{align*}
		\mu_{(2)}=\frac1{A^2}
+\cO(A^{-3}),\quad \mu_{(1)}=8
+\cO(A^{-1}),\quad \mu_{(0)}=\frac43
+\cO(A^{-1})
	\end{align*}
	as $A\to+\infty$. As a result, for $A$ sufficiently large, we have
	\begin{equation}\label{Eq.mu_n^*_est}
		7n<\mu_{(1)}n<\left(\mu_n^*\right)^2<\frac{2n^2}{A^2}+9n+2\leq 11n+2\leq 13n,\quad\forall\ n\in\Z_+\cap[1, A^2),
	\end{equation}
	hence $\mu_n^*$ is well-defined and $\mu_n^*\asymp\sqrt n$ for $n\in\Z_+\cap[1, A^2)$. It follows from $\mu_{(2)}>0$ and $\mu_{(1)}>0$ that $n\mapsto\left(\mu_n^*\right)^2$ is strictly increasing, hence $\mu_n^*>\mu_{n-1}^*$ for each $n\in\Z_+\cap[1, A^2)$. Moreover, since
	\begin{align*}
		 \mu_n^*-\mu_{n-1}^*=\frac{\left(\mu_n^*\right)^2-\left(\mu_{n-1}^*\right)^2}{\mu_n^*+\mu_{n-1}^*}=\frac{(2n-1)\mu_{(2)}+\mu_{(1)}}{\mu_n^*+\mu_{n-1}^*},
	\end{align*}
	for $n\in\Z_+\cap[1, A^2)$ we have
	\begin{align*}
		\mu_n^*-\mu_{n-1}^*<\frac{2n\mu_{(2)}+\mu_{(1)}}{\mu_n^*}<\frac{{3n}/{A^2}+9}{\sqrt{7n}}\leq \frac{12}{\sqrt{7n}},\quad
		\mu_n^*-\mu_{n-1}^*>\frac{\mu_{(1)}}{2\mu_n^*}>\frac{7}{2\sqrt{13n}}.
	\end{align*}
	To complete the proof of \eqref{Eq.mu_n^*}, it remains to check that $\la_n\asymp \sqrt n$ and $\mu_n\asymp \sqrt n$. By definition,
	\[\mu_n=
{\mu_n^*+p_{n+\frac12}}=
{\mu_n^*+p_{(1)}\left(n+1/2\right)+p_{(2)}}.\]
	Using \eqref{Eq.p_n_asymp}, \eqref{Eq.q_n_asymp}, if $A$ is sufficiently large, for $n\in\Z\cap[1,A^2)$, on one hand, we have
	\begin{align*}
		\mu_n\geq \mu_n^*\geq \sqrt{7n},
	\end{align*}
	on the other hand, we have
	\begin{align*}
		\mu_n\leq 
{\sqrt{13n}+(4/A)\left(n+1/2\right)}\leq {(\sqrt{13}+6)\sqrt n}.
	\end{align*}
	This proves $\mu_n\asymp\sqrt n$ for $n\in\Z\cap[1,A^2)$. Then $\la_n\asymp \sqrt n$ follows from $q_{n+1}\asymp n$, $\la_n=\frac{q_{n+1}}{\mu_n}$ and $\mu_n\asymp\sqrt n$, and hence \eqref{Eq.mu_n^*} holds.
	
	To prove \eqref{Eq.coeffi_error}, recalling the definition \eqref{Eq.mu_n_la_n}, we first show that
	\begin{equation}\label{Eq.coeffi_error_claim}
		 \left(\g_nq_{n+1}+p_{n+\frac12}^2\right)^2-\left(\g_{n+\frac12}q_{n+\frac32}+p_{n+1}^2\right)\left(\g_{n-\frac12}q_{n+\frac12}+p_n^2\right)\geq0,
\quad\forall\ n\in\Z_+\cap[1, A^2).
	\end{equation}
	Indeed, by \eqref{Eq.mu_n^*_expression}, we can compute
	\begin{align*}&\g_nq_{n+1}+p_{n+\frac12}^2=\mu_{(2)}(n-1/2)^2+\mu_{(1)}(n-1/2)+\mu_{(0)},\\ &\g_{n-\frac12}q_{n+\frac12}+p_n^2=\mu_{(2)}(n-1)^2+\mu_{(1)}(n-1)+\mu_{(0)}.\end{align*}
	Hence,
	\begin{align*}
		 \left(\g_nq_{n+1}+p_{n+\frac12}^2\right)^2-\left(\g_{n+\frac12}q_{n+\frac32}+p_{n+1}^2\right)\left(\g_{n-\frac12}q_{n+\frac12}+p_n^2\right)
=\Delta_{(2)}n^2+\Delta_{(1)}n+\Delta_{(0)},
	\end{align*}
	where
	\begin{align*}
		&\Delta_{(2)}=\mu_{(2)}^2/2,\quad 
		\Delta_{(1)}=\left(\mu_{(1)}-\mu_{(2)}\right)\mu_{(2)}/2,
\quad\Delta_{(0)}=\left(\mu_{(1)}/2-\mu_{(2)}/4\right)^2-\mu_{(0)}\mu_{(2)}/2.
	\end{align*}
	We find that
	\begin{align*}
		\Delta_{(2)}=\frac1{2A^4}
+\cO(A^{-5}),\quad \Delta_{(1)}=\frac4{A^2}
+\cO(A^{-3}),\quad \Delta_{(0)}=16
+\cO(A^{-1})
	\end{align*}
	as $A\to +\infty$. Thus, \eqref{Eq.coeffi_error_claim} holds if $A$ is sufficiently large. Then \eqref{Eq.mu_n_la_n} and \eqref{Eq.coeffi_error_claim} imply
	\begin{equation*}
		\g_nq_{n+1}+p_{n+\frac12}^2-\mu_n^*\mu_{n-1}^*\geq 0,\quad \forall\ n\in\Z_+\cap[1, A^2).
	\end{equation*}
	This is the left side of \eqref{Eq.coeffi_error}. The other side is a consequence of
	\begin{align*}
		 \g_nq_{n+1}+p_{n+\frac12}^2-\mu_n^*\mu_{n-1}^*&=\frac{\left(\g_nq_{n+1}+p_{n+\frac12}^2\right)^2-\left(\mu_n^*\mu_{n-1}^*\right)^2}{\g_nq_{n+1}+p_{n+\frac12}^2+\mu_n^*\mu_{n-1}^*}\overset{(*)}{\lesssim}\frac{\Delta_{(2)}n^2+\Delta_{(1)}n+\Delta_{(0)}}{\left(\mu_n^*\right)^2}\\
		&\lesssim \frac{\frac{n^2}{A^4}+\frac{5n}{A^2}+17}{n}\lesssim \frac1n
	\end{align*}
	for all $n\in\Z_+\cap[1, A^2)$, where in $(*)$ we have used $\mu_{n-1}^*\asymp \mu_n^*$, which follows from $\mu_n^*\asymp\sqrt n$ for all $n\in\Z_+\cap[1, A^2)$ and $\mu_0^*=\sqrt{\mu_{(0)}}\asymp1$.
	
	Finally, we prove \eqref{Eq.quotient_est}. By Remark \eqref{Rmk.gamma_p_q_t}, we have $p_{n-\frac12}+p_{n+\frac12}=2p_n$. Hence, \eqref{Eq.mu_n_la_n} implies that $\mu_{n-1}^*=\mu_{n-1}-p_{n-\frac12}=\mu_{n-1}-\left(2p_n-p_{n+\frac12}\right)$, thus,
	\begin{align*}
		 \g_nq_{n+1}+p_{n+\frac12}^2-\mu_n^*\mu_{n-1}^*&=\g_nq_{n+1}+p_{n+\frac12}^2-\left(\mu_n-p_{n+\frac12}\right)\left(\mu_{n-1}-2p_n+p_{n+\frac12}\right)\\
		&=\g_nq_{n+1}-\mu_n\left(\mu_{n-1}-2p_n\right)+p_{n+\frac12}\left(\mu_{n-1}^*-\mu_n^*\right).
	\end{align*}
	Then it follows from \eqref{Eq.mu_n^*}, \eqref{Eq.coeffi_error} and \eqref{Eq.p_n}, \eqref{Eq.p_n_asymp} that
	\begin{align*}
		0\leq p_{n+\frac12}\left(\mu_n^*-\mu_{n-1}^*\right)&\leq \g_nq_{n+1}-\mu_n\left(\mu_{n-1}-2p_n\right)\leq \frac{C}{n}+p_{n+\frac12}\left(\mu_n^*-\mu_{n-1}^*\right)\\
		&\leq \frac Cn+C\frac nA\frac1{\sqrt n}= C\left(\frac1n+\frac{\sqrt n}A\right)
	\end{align*}
	for all $n\in\Z_+\cap [1, A^2)$. Dividing both sides of the above inequality by $\mu_n\mu_{n-1}$, we obtain
	\begin{align*}
		0\leq \frac1{\mu_{n-1}}\left(2p_n+\frac{\g_nq_{n+1}}{\mu_n}\right)-1&\leq \frac{C}{\mu_n\mu_{n-1}}\left(\frac1n+\frac{\sqrt n}A\right)\leq \frac{C}{\mu_n^*\mu_{n-1}^*}\left(\frac1n+\frac{\sqrt n}A\right)\\
		&\leq\frac{C}{\left(\mu_n^*\right)^2}\left(\frac1n+\frac{\sqrt n}A\right)\leq \frac Cn\left(\frac1n+\frac{\sqrt n}A\right),
	\end{align*}
	where we have used $\mu_n>\mu_n^*$ (due to \eqref{Eq.mu_n_la_n}), $\mu_n^*\asymp\mu_{n-1}^*$ and \eqref{Eq.mu_n^*}. At this stage, \eqref{Eq.quotient_est} follows from the definition of $\la_n$ in \eqref{Eq.mu_n_la_n}.
\end{proof}

\begin{lemma}\label{Lem.la_n_increasing}
	There exists $A_0>10$ such that
	\begin{equation}\label{Eq.la_n_increasing}
		0\leq \la_n-\la_{n-1}\asymp \frac1{\sqrt n},\quad\forall\ n\in\Z\cap[2, A^2),
	\end{equation}
	for all $A>A_0$.
\end{lemma}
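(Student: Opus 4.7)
The plan is to derive a closed-form algebraic identity for $\la_n - \la_{n-1}$ from the defining relation $\la_n\mu_n = q_{n+1}$ and then estimate each piece. Subtracting consecutive identities using $q_{n+1} - q_n = q_{(1)}$ gives
\[(\la_n - \la_{n-1})\mu_n\mu_{n-1} = q_{n+1}\mu_{n-1} - q_n\mu_n,\]
so in view of $\mu_n\mu_{n-1} \asymp n$ from Lemma \ref{Lem.la_n_def}, it suffices to prove that $q_{n+1}\mu_{n-1} - q_n\mu_n$ is positive and $\asymp \sqrt{n}$, uniformly in $n \in \Z\cap[2, A^2)$ for $A$ sufficiently large.

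Next, I split $\mu_n = \mu_n^* + p_{n+1/2}$ (and similarly for $\mu_{n-1}$) to write
\[q_{n+1}\mu_{n-1} - q_n\mu_n = \bigl(q_{n+1}\mu_{n-1}^* - q_n\mu_n^*\bigr) + \bigl(q_{n+1}p_{n-1/2} - q_n p_{n+1/2}\bigr).\]
A direct expansion using $q_t = q_{(1)}t + q_{(2)}$ and $p_t = p_{(1)}t + p_{(2)}$ shows the second bracket telescopes to the $n$-independent quantity $q_{(1)}p_{(2)} - q_{(2)}p_{(1)} - q_{(1)}p_{(1)}/2$, which by \eqref{Eq.p_n_asymp}--\eqref{Eq.q_n_asymp} has size $O(1/A)$. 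For the remaining main term I apply the conjugate identity
\[q_{n+1}\mu_{n-1}^* - q_n\mu_n^* = \frac{q_{n+1}^2(\mu_{n-1}^*)^2 - q_n^2(\mu_n^*)^2}{q_{n+1}\mu_{n-1}^* + q_n\mu_n^*}\]
and substitute $(\mu_n^*)^2 = \mu_{(2)}n^2 + \mu_{(1)}n + \mu_{(0)}$ from \eqref{Eq.mu_n^*_expression}. The numerator becomes a polynomial in $n$ whose $n^4$ and $n^3$ coefficients cancel identically upon expansion, leaving a quadratic with leading coefficient $q_{(1)}^2\mu_{(1)} - 2q_{(1)}(q_{(1)}+q_{(2)})\mu_{(2)} = 8 + O(1/A)$ and lower coefficients uniformly bounded in $A \geq A_0$; in particular the numerator is strictly positive and $\asymp n^2$ for $n \geq 2$, once $A_0$ is chosen large enough (the finite check for small $n$ reduces to verifying the polynomial $8n^2 - 16n/3 - 84/27$ is positive for $n \geq 2$, which is immediate). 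Since $q_n \asymp n$ and $\mu_n^* \asymp \sqrt{n}$, the denominator is $\asymp n^{3/2}$, so the main term is positive and $\asymp \sqrt{n}$.

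Combining, for $A_0$ sufficiently large the main term $\asymp \sqrt{n} \geq \sqrt{2}$ dominates the $O(1/A)$ correction uniformly in $n \in [2, A^2)$, yielding $q_{n+1}\mu_{n-1} - q_n\mu_n \asymp \sqrt{n}$ with positive sign; dividing by $\mu_n\mu_{n-1} \asymp n$ concludes the proof. The main obstacle is the algebraic cancellation of the $n^4$ and $n^3$ coefficients in the numerator of the conjugate identity, which reduces the apparent quartic to a quadratic and is what produces the correct $\sqrt{n}$-scaling of the difference; this depends essentially on the quadratic form of $(\mu_n^*)^2$ together with the linearity of $q_n$, and requires careful tracking of the asymptotic expansions \eqref{Eq.p_n_asymp}--\eqref{Eq.q_n_asymp} to certify the uniform positivity of the leading coefficient.
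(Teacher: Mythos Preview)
Your proposal is correct and follows essentially the same route as the paper's proof: both start from the identity $\la_n-\la_{n-1}=(q_{n+1}\mu_{n-1}-q_n\mu_n)/(\mu_n\mu_{n-1})$, split off the $p$-contribution to get the $n$-independent $O(1/A)$ term, and handle the main piece $q_{n+1}\mu_{n-1}^*-q_n\mu_n^*$ via the conjugate identity, computing the numerator as a quadratic $\wt\Delta_{(2)}n^2+\wt\Delta_{(1)}n+\wt\Delta_{(0)}$ with $\wt\Delta_{(2)}=8+O(A^{-1})$. The only cosmetic difference is that the paper records the explicit limiting values $\wt\Delta_{(1)}=-16/3+O(A^{-1})$ and $\wt\Delta_{(0)}=-28/9+O(A^{-1})$ rather than phrasing the small-$n$ positivity as a finite check on $8n^2-16n/3-28/9$.
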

\begin{proof}
	By the definition, we have
	\begin{align}\label{Eq.la_n-la_n-1}
		\la_n-\la_{n-1}=\frac{q_{n+1}}{\mu_n}-\frac{q_n}{\mu_{n-1}}=\frac{q_{n+1}\mu_{n-1}-q_n\mu_n}{\mu_n\mu_{n-1}}.
	\end{align}
	We also have
	\begin{align*}
		q_{n+1}\mu_{n-1}-q_n\mu_n&=q_{n+1}\left(\mu_{n-1}^*+p_{n-\frac12}\right)-q_n\left(\mu_n^*+p_{n+\frac12}\right)\\
		&=\frac{q_{n+1}^2\left(\mu_{n-1}^*\right)^2-q_n^2\left(\mu_n^*\right)^2}{q_{n+1}\mu_{n-1}^*+q_n\mu_n^*}+q_{n+1}p_{n-\frac12}-q_np_{n+\frac12}.
	\end{align*}
	We compute that
	\begin{align*}
		 &q_{n+1}^2\left(\mu_{n-1}^*\right)^2-q_n^2\left(\mu_n^*\right)^2=q_{n+1}^2\left(\g_{n-\frac12}q_{n+\frac12}+p_n^2\right)-q_n^2\left(\g_{n+\frac12}q_{n+\frac32}+p_{n+1}^2\right)\\
		 =&\left(q_{(1)}n+q_{(1)}+q_{(2)}\right)^2\left(\mu_{(2)}(n-1)^2+\mu_{(1)}(n-1)+\mu_{(0)}\right)\\
&-\left(q_{(1)}n+q_{(2)}\right)^2\left(\mu_{(2)}n^2+\mu_{(1)}n+\mu_{(0)}\right)
		=\wt\Delta_{(2)}n^2+\wt\Delta_{(1)}n+\wt\Delta_{(0)},
	\end{align*}
	where
	\begin{align*}
		\wt\Delta_{(2)}:&=q_{(1)}^2(\mu_{(1)}-2\mu_{(2)})-2q_{(1)}q_{(2)}\mu_{(2)}
		=8+\cO(A^{-1}),\\
		 \wt\Delta_{(1)}:&=q_{(1)}^2(2\mu_{(0)}-\mu_{(1)})-2q_{(2)}\left(q_{(1)}+q_{(2)}\right)\mu_{(2)}
		=-\frac{16}3+\cO(A^{-1}),\\
		\wt\Delta_{(0)}:&=\left(q_{(1)}+q_{(2)}\right)^2(\mu_{(0)}-\mu_{(1)}+\mu_{(2)})-q_{(2)}^2\mu_{(0)}=-\frac{28}9+\cO(A^{-1}),
	\end{align*}
	as $A\to+\infty$; and
	\begin{align*}
		 q_{n+1}p_{n-\frac12}-q_np_{n+\frac12}&=\left(q_{(1)}+q_{(2)}\right)\left(-\frac12p_{(1)}+p_{(2)}\right)-q_{(2)}\left(\frac12p_{(1)}+p_{(2)}\right)\\
		&=\frac{5}{6A}+\cO(A^{-2}),\quad\text{as}\quad A\to+\infty.
	\end{align*}
	Then it follows from \eqref{Eq.mu_n^*} and $q_n\asymp q_{n+1}\asymp n$ that
	\begin{align}\label{Eq.la_n_diff_fenzi}
		0\leq q_{n+1}\mu_{n-1}-q_n\mu_n\asymp\sqrt{n},\quad\forall\ n\in\Z\cap[2, A^2),\ \forall\ A>A_0,
	\end{align}
	by taking $A_0>10$  large enough. Now, \eqref{Eq.la_n_increasing} is a consequence of \eqref{Eq.la_n-la_n-1}, \eqref{Eq.la_n_diff_fenzi} and $\mu_n\asymp\sqrt n$.
\end{proof}

\begin{lemma}\label{Lem.tildeM_nest}
	Define the sequence $\left\{\wt M_n>0\right\}_{n=1}^{\lceil A^2\rceil-1}$ by
	\begin{equation}\label{Eq.tilde_M_n}
		\wt M_n:=M_n+\la_nM_{n-1},\quad\forall\ n\in\Z_+,
	\end{equation}
	where the sequence $\{M_n\}_{n=0}^\infty$ is given by \eqref{Eq.M_n_sequence}. Then there exists $A_0>10$ such that for all $A>A_0$, we have
	\begin{equation}\label{Eq.tildeM_n_quotient}
		\frac{\g_n\wt M_n}{\wt M_{n-1}}\in\left[\mu_{n-1},2p_n+\g_n\la_n\right],\quad\forall\ n\in\Z\cap[2, A^2),
	\end{equation}
	and
	\begin{equation}\label{Eq.tildeM_n_est}
		\wt M_n\asymp\prod_{j=2}^{n}\frac{\mu_{j-1}}{\g_j}\asymp\prod_{j=2}^n\frac{2p_j+\g_j\la_j}{\g_j},\quad\forall\ n\in\Z\cap[1, A^2).
	\end{equation}
\end{lemma}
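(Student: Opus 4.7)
\medskip

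\noindent\textbf{Proof proposal.} The plan is to reduce \eqref{Eq.tildeM_n_quotient} to the key quotient estimate \eqref{Eq.quotient_est}, and then obtain \eqref{Eq.tildeM_n_est} by a one-step telescoping argument, controlling the multiplicative error through summability of $\sum 1/j^2$ together with the constraint $n<A^2$.

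First, I would rewrite $\g_n\wt M_n$ using the recurrence \eqref{Eq.M_n_sequence} and the identity $q_n=\mu_{n-1}\la_{n-1}$, which is simply the defining relation $\la_{n-1}=q_n/\mu_{n-1}$ from \eqref{Eq.mu_n_la_n}. This gives
\[
\g_n\wt M_n \;=\; (2p_n+\g_n\la_n)M_{n-1} \;+\; \mu_{n-1}\la_{n-1}M_{n-2}.
\]
Subtracting $\mu_{n-1}\wt M_{n-1}=\mu_{n-1}M_{n-1}+\mu_{n-1}\la_{n-1}M_{n-2}$ causes the $M_{n-2}$ contributions to cancel exactly, leaving
\[
\g_n\wt M_n - \mu_{n-1}\wt M_{n-1} \;=\; (2p_n+\g_n\la_n-\mu_{n-1})M_{n-1} \;\ge\; 0,
\]
by the left half of \eqref{Eq.quotient_est} and the positivity of $M_{n-1}$. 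For the upper bound, the mirror calculation
\[
(2p_n+\g_n\la_n)\wt M_{n-1} - \g_n\wt M_n \;=\; \bigl[(2p_n+\g_n\la_n)\la_{n-1}-q_n\bigr]M_{n-2} \;=\; \la_{n-1}(2p_n+\g_n\la_n-\mu_{n-1})M_{n-2}\;\ge\;0
\]
uses the same inequality. This proves \eqref{Eq.tildeM_n_quotient}.

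Next, iterating \eqref{Eq.tildeM_n_quotient} yields
\[
\wt M_1\prod_{j=2}^n\frac{\mu_{j-1}}{\g_j} \;\le\; \wt M_n \;\le\; \wt M_1\prod_{j=2}^n\frac{2p_j+\g_j\la_j}{\g_j}.
\]
To show these two bounds are comparable, I would use the right half of \eqref{Eq.quotient_est} to estimate the ratio of the two end products by
\[
\prod_{j=2}^n\frac{2p_j+\g_j\la_j}{\mu_{j-1}} \;\le\; \exp\!\left(C\sum_{j=2}^n\!\left(\frac{1}{j^2}+\frac{1}{\sqrt j\,A}\right)\right) \;\le\; \exp\!\left(C_1+\frac{2C\sqrt n}{A}\right) \;\le\; \exp(C_2),
\]
since $n<A^2$ forces $\sqrt n/A<1$. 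Combined with $\wt M_1=a_1+\la_1\asymp 1$ (using $a_1\asymp1$ and $\la_1\asymp1$ from \eqref{Eq.mu_n^*}), this gives \eqref{Eq.tildeM_n_est}.

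I do not expect a serious obstacle: the lemma is essentially a bookkeeping consequence of Lemma \ref{Lem.la_n_def}, designed precisely so that the degenerate coefficient $2p_n$ in the recurrence for $M_n$ is absorbed into the auxiliary quantities $\mu_n$ and $\la_n$, converting a two-term recurrence with a near-vanishing leading coefficient into a single-step quotient law for $\wt M_n$. The only subtle point is ensuring that the range $n<A^2$ is respected, so that the error sum $\sum 1/(\sqrt j\,A)$ remains $O(1)$ after exponentiation; this is the same reason the comparison sequence $\{M_n\}$ itself is only defined for $n<A^2$.
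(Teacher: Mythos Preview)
Your proposal is correct and follows essentially the same approach as the paper: both reduce the quotient bounds to the inequality $2p_n+\g_n\la_n\ge\mu_{n-1}$ from \eqref{Eq.quotient_est} via the same algebraic identities (the paper packages them as a single recurrence $\g_n\wt M_n=(2p_n+\g_n\la_n)\wt M_{n-1}+\la_{n-1}[\mu_{n-1}-(2p_n+\g_n\la_n)]M_{n-2}$, while you compute the two differences separately), and both telescope and bound the resulting product exactly as you describe.
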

\begin{proof}
	It follows from \eqref{Eq.M_n_sequence}, \eqref{Eq.tilde_M_n} and \eqref{Eq.mu_n_la_n} that
	 \begin{align}
	 	\g_n\wt M_n&=(2p_n+\g_n\la_n)\wt M_{n-1}+\left[q_n-\la_{n-1}(2p_n+\g_n\la_n)\right]M_{n-2}\nonumber\\
	 	&=(2p_n+\g_n\la_n)\wt M_{n-1}+\la_{n-1}\left[\mu_{n-1}-(2p_n+\g_n\la_n)\right]M_{n-2},\quad\forall\ n\in\Z_+\cap[1,A^2).\label{Eq.tilde_M_n_recurrence}
	 \end{align}
	 By \eqref{Eq.quotient_est}, we have $\mu_{n-1}-(2p_n+\g_n\la_n)\leq0$, hence,
	 \begin{equation}\label{Eq.tildeM_n_quotient1}
	 	\frac{\g_n\wt M_n}{\wt M_{n-1}}\leq2p_n+\g_n\la_n,\quad\forall\ n\in\Z_+\cap[1,A^2).
	 \end{equation}
	 On the other hand, \eqref{Eq.tilde_M_n_recurrence} and $\mu_{n-1}-(2p_n+\g_n\la_n)\leq0$ imply that
	 \begin{align*}
	 	\g_n\wt M_n&=(2p_n+\g_n\la_n)\wt M_{n-1}+\la_{n-1}\left[\mu_{n-1}-(2p_n+\g_n\la_n)\right]M_{n-2}\\
	 	&=\mu_{n-1}\wt M_{n-1}+(2p_n+\g_n\la_n-\mu_{n-1})\left(\wt M_{n-1}-\la_{n-1}M_{n-2}\right)\\
	 	&=\mu_{n-1}\wt M_{n-1}+(2p_n+\g_n\la_n-\mu_{n-1})M_{n-1}\\
	 	&\geq \mu_{n-1}\wt M_{n-1}.
	 \end{align*}
	 This, combining with \eqref{Eq.tildeM_n_quotient1}, gives \eqref{Eq.tildeM_n_quotient}.
	
	 As a consequence, we obtain
	 \[\wt M_1\prod_{j=2}^n\frac{\mu_{j-1}}{\gamma_j}\leq\wt M_n\leq \wt M_1\prod_{j=2}^n\frac{2p_j+\g_j\la_j}{\g_j},\quad\forall\ n\in\Z\cap[2,A^2).\]
	 Moreover, thanks to \eqref{Eq.quotient_est}, we have
	 \begin{align*}
	 	1\leq\frac{\prod_{j=2}^n\frac{2p_j+\g_j\la_j}{\g_j}}{\prod_{j=2}^n\frac{\mu_{j-1}}{\gamma_j}}=\prod_{j=2}^n\frac{2p_j+\g_j\la_j}{\mu_{j-1}}\leq \prod_{j=2}^{\lceil A^{2}\rceil-1}\left(1+\frac{C}{j^2}\right)\cdot\prod_{j=2}^{\lceil A^{2}\rceil-1}\left(1+\frac{C}{\sqrt{j}A}\right)\lesssim 1,
	 \end{align*}
     where we have used 
     $$1<\prod_{j=2}^\infty(1+Cj^{-2})<+\infty,\quad 1<\sup_{A>0}\prod_{j=2}^{\lceil A^{2}\rceil-1}\left(1+\frac C{\sqrt jA}\right)<+\infty.$$ Therefore, \eqref{Eq.tildeM_n_est} follows, noting that $\wt M_1\asymp 1$.
\end{proof}

\begin{lemma}\label{Lem.quotientM_n}
	There exist $A_0>10$ and $\delta_0>0$ such that
	\begin{equation}\label{Eq.quotient_M_n}
		\frac{M_n}{M_{n-1}}\geq \delta_0\sqrt n,\quad\forall\ n\in\Z\cap[1,A^2),\quad\forall\ A>A_0.
	\end{equation}
\end{lemma}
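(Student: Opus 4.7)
The plan is to prove $M_n/M_{n-1}\geq\delta_0\sqrt n$ by splitting $n\in[1,A^2)$ into two regimes and using the recurrence \eqref{Eq.M_n_sequence} directly. Writing $r_n:=M_n/M_{n-1}$, the recurrence becomes
\[r_n=\frac{2p_n}{\gamma_n}+\frac{q_n}{\gamma_n r_{n-1}},\]
whence $r_n\geq 2p_n/\gamma_n$ and $r_nr_{n-1}\geq q_n/\gamma_n$, since all quantities are positive.

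In the \emph{near-sonic regime} $n\geq A^2/2$: the factor $\gamma_n=8(A^2-n)/(A+1)^2$ becomes small while $2p_n\asymp n/A\asymp A$ by \eqref{Eq.p_n_asymp}, so $r_n\geq 2p_n/\gamma_n\gtrsim nA/(A^2-n)\gtrsim A\asymp \sqrt n$, establishing the bound directly from the recurrence.

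In the \emph{bulk regime} $n<A^2/2$: here $\gamma_n$ is bounded in $[c_0,8]$ for large $A$, and $q_n/\gamma_n\asymp n$ by \eqref{Eq.q_n_asymp}, while $2p_n/\gamma_n\lesssim\sqrt n$ (using $p_n\lesssim n/A$ and $n/A\leq\sqrt n$ for $n\leq A^2$). I proceed by a simultaneous two-sided induction, maintaining $\delta_0\sqrt n\leq r_n\leq D_0\sqrt n$ with $\delta_0 D_0$ approximately the sharp value $1/8$ dictated by the limit recurrence. The lower bound propagates from $r_n\geq q_n/(\gamma_n r_{n-1})\geq cn/(D_0\sqrt{n-1})\gtrsim\sqrt n/D_0$; the upper bound from $r_n\leq 2p_n/\gamma_n+q_n/(\gamma_n\delta_0\sqrt{n-1})\lesssim\sqrt n(1+1/\delta_0)$. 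For the base case ($n\leq N_0$ with $N_0$ an absolute constant), by Lemma~\ref{Lem.a_n_trivial_bound} and continuity in $A$, the ratios $r_n$ converge as $A\to\infty$ to a limit sequence $r_n^\infty$ determined by $r_n^\infty r_{n-1}^\infty=(n-4/3)/8$ with $r_1^\infty=2$; direct computation plus the two-step identity $r_{n+2}^\infty/r_n^\infty=(n+2/3)/(n-1/3)$ gives $r_n^\infty\asymp\sqrt n$ and yields an explicit lower bound $r_n^\infty\geq c_*\sqrt n$.

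The main technical difficulty lies in \emph{closing the induction in the bulk regime with compatible constants}: the bounds $\delta_0\leq c/D_0$ and $D_0\geq C_1+C_2/\delta_0$ require $c>C_2$, where both $c,C_2\to 1/8$ as $A\to\infty$ (since $q_n/\gamma_n\to(n-4/3)/8$), so sharp quantitative tracking of the correction terms of order $2p_n/\gamma_n=O(\sqrt n/A)$ and $q_n/\gamma_n-(n-4/3)/8=O(1/A)$ from \eqref{Eq.p_n_asymp}--\eqref{Eq.q_n_asymp} is required. Equivalently and more cleanly, Lemma~\ref{Lem.tildeM_nest} provides the upper estimate $M_n\leq\tilde M_n\lesssim P_n:=\prod_{j=2}^n\mu_{j-1}/\gamma_j$, which combined with the identity $\mu_{n-1}\lambda_{n-1}=q_n$ (Lemma~\ref{Lem.la_n_def}) allows one to tighten the upper bound on $r_{n-1}$ and close the induction uniformly for $n\in[1,A^2/2)$.
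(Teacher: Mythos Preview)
Your near-sonic regime ($n\geq A^2/2$) is fine: there $r_n\geq 2p_n/\gamma_n\gtrsim A\asymp\sqrt n$. The difficulty is entirely in the bulk, as you correctly flag---but the gap you identify there is not filled by your proposed fix.

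The two-sided induction cannot close. Writing $c=\inf_n q_n/(n\gamma_n)$ and $C=\sup_n q_n/(n\gamma_n)$ over the bulk, your lower-bound step needs $\delta_0 D_0\lesssim c$, while substituting your upper-bound step $D_0\geq P+C/\delta_0$ gives $\delta_0 D_0\gtrsim \delta_0 P+C\geq C$. These are compatible only when $c\geq C$, but $q_n/(n\gamma_n)$ ranges from about $1/24$ at $n=2$ through $1/8$ for moderate $n$ up to roughly $1/4$ near $n=A^2/2$, so $c<C$ strictly and no choice of constants works.

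Your fallback to Lemma~\ref{Lem.tildeM_nest} does not repair this. That lemma bounds $M_{n-1}$ itself from above (via $M_{n-1}\leq\wt M_{n-1}\lesssim\prod_{j\leq n-1}\mu_{j-1}/\gamma_j$), not the ratio $r_{n-1}=M_{n-1}/M_{n-2}$. To convert this into an upper bound on $r_{n-1}$ you would also need a \emph{lower} bound on $M_{n-2}$---which is exactly the statement you are trying to prove. The argument is circular.

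The paper's proof avoids any two-sided induction via a one-sided trick. Dividing the recurrence $\gamma_kM_k=2p_kM_{k-1}+q_kM_{k-2}$ at $k=n$ by its instance at $k=n-1$ and applying the elementary weighted-mediant inequality $(\alpha X+\beta Y)/(\alpha Z+\beta W)\geq\min(X/Z,Y/W)$ (for positive entries, with $\alpha=2p_{n-1}$, $\beta=q_{n-1}$) yields
\[
\frac{\gamma_n}{\gamma_{n-1}}\,\frac{M_n}{M_{n-1}}\ \geq\ \min\Bigl\{\frac{p_n}{p_{n-1}}\,\frac{M_{n-1}}{M_{n-2}},\ \frac{q_n}{q_{n-1}}\,\frac{M_{n-2}}{M_{n-3}}\Bigr\}.
\]
Now only the \emph{lower} inductive hypothesis $M_j/M_{j-1}\geq\delta_0\sqrt j$ for $j\leq n-1$ is needed. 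Since $\gamma_n<\gamma_{n-1}$, and since for $n\geq 3$ and large $A$ one has the elementary inequalities $\frac{p_n}{p_{n-1}}\sqrt{n-1}>\sqrt n$ and $\frac{q_n}{q_{n-1}}\sqrt{n-2}>\sqrt n$ (these follow from $p_{(2)}/p_{(1)}<-1/2$ and $q_{(2)}/q_{(1)}<-1$, i.e.\ \eqref{Eq.p_n_asymp}--\eqref{Eq.q_n_asymp}), the induction closes with $\delta_0=1/(25\sqrt 2)$, no upper bound on $r_n$ ever required.
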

\begin{proof}
	Since $\lim_{A\to+\infty}M_1/M_0=\lim_{A\to+\infty}a_1=2>0$, \eqref{Eq.quotient_M_n} holds for $n=1$. By \eqref{Eq.M_n_sequence}, we have
	\begin{align*}
		\frac{M_2}{M_1}=\frac{2p_2+\frac{q_2}{a_1}}{\g_2}\longrightarrow \frac{0+(2/3)/2}{8}=\frac1{24}>0\quad\text{as}\quad A\to+\infty.
	\end{align*}
	Thus, there exists $A_0>10$ such that
	\[\frac{M_1}{M_0}>\delta_0:=\frac1{25\sqrt2},\quad \frac{M_2}{M_1}>\delta_0\sqrt2,\quad\forall\ A>A_0.\]
	
	By \eqref{Eq.p_n_asymp} and \eqref{Eq.q_n_asymp}, we have
	\begin{align*}
		\frac{p_{(2)}}{p_{(1)}}=-\frac59+\cO(A^{-1}),\quad \frac{q_{(2)}}{q_{(1)}}=-\frac43+\cO(A^{-1})
	\end{align*}
	Hence, by taking $A_0>10$ to be larger if necessary, there holds
	\begin{equation}\label{Eq.p_2/p_1}
		\frac{p_{(2)}}{p_{(1)}}<-\frac12,\quad \frac{q_{(2)}}{q_{(1)}}<-1,\quad\forall\ A>A_0.
	\end{equation}
	
	Next we use the induction to prove \eqref{Eq.quotient_M_n}. Let $A>A_0$. We assume that $n\in\Z_{\geq 3}\cap(1,A^2)$ and $M_j/M_{j-1}>\delta_0\sqrt j$ holds for all $j\in\Z\cap[1,n-1]$. Since
	\[\g_nM_{n}=2p_nM_{n-1}+q_nM_{n-2},\quad \g_{n-1}M_{n-1}=2p_{n-1}M_{n-2}+q_{n-1}M_{n-3},\]
	we have
	\begin{align*}
		\frac{\g_n}{\g_{n-1}}\frac{M_n}{M_{n-1}}\geq\min\left\{\frac{p_n}{p_{n-1}}\frac{M_{n-1}}{M_{n-2}}, \frac{q_n}{q_{n-1}}\frac{M_{n-2}}{M_{n-3}}\right\}\geq\delta_0\min\left\{\frac{p_n}{p_{n-1}}\sqrt{n-1}, \frac{q_n}{q_{n-1}}\sqrt{n-2}\right\}.
	\end{align*}
	It follows from \eqref{Eq.p_n}, \eqref{Eq.q_n} and \eqref{Eq.p_2/p_1} that
	\begin{align*}
		\frac{p_n}{p_{n-1}}&=\frac{p_{(1)}n+p_{(2)}}{p_{(1)}n-p_{(1)}+p_{(2)}}=1+\frac1{n+\frac{p_{(2)}}{p_{(1)}}-1}>1+\frac{1}{n-\frac32},\\
		\frac{q_n}{q_{n-1}}&=1+\frac1{n+\frac{q_{(2)}}{q_{(1)}}-1}>1+\frac1{n-2}.
	\end{align*}
	Noting that $j\mapsto \g_j=\frac8{(A+1)^2}(A^2-j)>0$ is decreasing for $j<A^2$, we have $\g_n<\g_{n-1}$, then
	\begin{align*}
		\frac{M_n}{M_{n-1}}>\frac{\g_n}{\g_{n-1}}\frac{M_n}{M_{n-1}}\geq \delta_0\min\left\{\left(1+\frac1{n-3/2}\right)\sqrt{n-1}, \left(1+\frac1{n-2}\right)\sqrt{n-2}\right\}.
	\end{align*}
	To complete the induction, it suffices to prove that
	\begin{align*}
		\left(1+\frac1{n-3/2}\right)\sqrt{n-1}>\sqrt n\quad\text{and}\quad \left(1+\frac1{n-2}\right)\sqrt{n-2}>\sqrt n,\quad\forall\ n\in\Z_{\geq 3}.
	\end{align*}
	This is a direct consequence of
	\begin{align*}
		\sqrt{\frac{n}{n-1}}&=\sqrt{1+\frac1{n-1}}<1+\frac1{n-1}<1+\frac1{n-3/2},\\
		&\sqrt{\frac{n}{n-2}}=\sqrt{1+\frac2{n-2}}<1+\frac1{n-2}
	\end{align*}
	for all $n\in\Z_{\geq 3}$.
\end{proof}

\begin{lemma}\label{Lem.M_n_tildeM_n}
	There exists $A_0>10$ such that for all $A>A_0$, we have
	\begin{equation}
		M_n\asymp\wt M_n,\quad\forall\ n\in\Z\cap[1, A^2).
	\end{equation}
\end{lemma}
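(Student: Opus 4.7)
The plan is to extract this equivalence directly from the two preceding lemmas, Lemma \ref{Lem.la_n_def} (giving $\la_n\asymp\sqrt n$) and Lemma \ref{Lem.quotientM_n} (giving $M_n/M_{n-1}\gtrsim\sqrt n$). The estimate $M_n\lesssim\wt M_n$ is immediate: by definition $\wt M_n=M_n+\la_nM_{n-1}$, and since $M_n>0$ and $\la_nM_{n-1}\geq 0$ for $n\in\Z\cap[1,A^2)$ (note $\la_n\asymp\sqrt n>0$ by \eqref{Eq.mu_n^*} and the positivity of $\{M_n\}$ recorded after \eqref{Eq.M_n_sequence}), one has $\wt M_n\geq M_n$, hence $M_n\leq\wt M_n$.

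For the reverse bound $\wt M_n\lesssim M_n$, it suffices to show $\la_nM_{n-1}\lesssim M_n$, or equivalently $\la_nM_{n-1}/M_n\lesssim 1$. Using Lemma \ref{Lem.la_n_def} we have $\la_n\leq C_1\sqrt n$, and Lemma \ref{Lem.quotientM_n} gives $M_{n-1}/M_n\leq(\delta_0\sqrt n)^{-1}$ (valid for all $n\in\Z\cap[1,A^2)$ provided $A>A_0$). Multiplying these yields
\[
\la_n\,\frac{M_{n-1}}{M_n}\leq \frac{C_1}{\delta_0}\asymp 1,
\]
so $\wt M_n=M_n+\la_nM_{n-1}\leq (1+C_1/\delta_0)M_n$, which completes the proof. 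No obstacle is anticipated here; this lemma is essentially a bookkeeping consequence of the sharp asymptotics established in Lemmas \ref{Lem.la_n_def} and \ref{Lem.quotientM_n}, and its role is to legitimize using either $\wt M_n$ or $M_n$ interchangeably as a comparison sequence in the subsequent upper-bound argument for $|a_n|$.
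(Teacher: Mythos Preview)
Your proof is correct and takes essentially the same approach as the paper: use $M_n<\wt M_n$ from the definition \eqref{Eq.tilde_M_n}, and bound $\wt M_n/M_n=1+\la_n M_{n-1}/M_n$ by combining $\la_n\lesssim\sqrt n$ from \eqref{Eq.mu_n^*} with $M_{n-1}/M_n\leq(\delta_0\sqrt n)^{-1}$ from Lemma \ref{Lem.quotientM_n}.
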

\begin{proof}
	It is trivial from \eqref{Eq.tilde_M_n} that $M_n<\wt M_{n}$ for $n\in\Z\cap[1,A^2)$. On the other hand, by Lemma \ref{Lem.quotientM_n} and \eqref{Eq.mu_n^*}, we have
	\begin{align*}
		\frac{\wt M_n}{M_n}=1+\la_n\frac{M_{n-1}}{M_n}\leq 1+\frac{C\sqrt n}{\delta_0\sqrt n}=1+\frac{C}{\delta_0},\quad\forall\ n\in\Z\cap[1, A^2).
	\end{align*}
	This completes the proof.
\end{proof}

Now we are in a position to prove Proposition \ref{Prop.a_n_upper_bound}.

\begin{proof}[Proof of Proposition \ref{Prop.a_n_upper_bound}]
	\underline{\textbf{Step 1.}} By \eqref{Eq.a_n_new_recurrence}, \eqref{Eq.k_1k_2asymp}, \eqref{Eq.s_1asymp}, \eqref{Eq.s_2asymp} and \eqref{Eq.tilde_epsilon_n}, there exist $A_0>10$ and $C_0>0$ such that for $A>A_0$, we have
	\begin{align*}
		\g_n|a_n|&\leq 2p_n|a_{n-1}|+q_n|a_{n-2}|+C_0\left(|a_{n-3}|+|a_{n-4}|\right)\\
		 &\quad+C_0n\left(\sum_{j=6}^{n-5}|a_j||a_{n+1-j}|+\sum_{j=5}^{n-5}|a_j||a_{n-j}|+\sum_{j=4}^{n-5}|a_j||a_{n-1-j}|+\sum_{j=3}^{n-5}|a_j||a_{n-2-j}|\right)
	\end{align*}
	for all $n\in\Z\cap[10, A^2)$. Let
	\begin{equation}\label{Eq.tilde_a_n}
		\wt a_n:=|a_n|+\la_n|a_{n-1}|\geq |a_n|,\quad\forall\ n\in\Z\cap[1,A^2).
	\end{equation}
	Then
	\begin{align*}
		\g_n\wt a_n&\leq (2p_n+\g_n\la_n)\wt a_{n-1}+\left[q_n-\la_{n-1}(2p_n+\g_n\la_n)\right]|a_{n-2}|+C_0\left(|a_{n-3}|+|a_{n-4}|\right)\\
		 &\quad+C_0n\left(\sum_{j=6}^{n-5}|a_j||a_{n+1-j}|+\sum_{j=5}^{n-5}|a_j||a_{n-j}|+\sum_{j=4}^{n-5}|a_j||a_{n-1-j}|+\sum_{j=3}^{n-5}|a_j||a_{n-2-j}|\right)
	\end{align*}
	for all $n\in\Z\cap[10, A^2)$. Using \eqref{Eq.tilde_a_n} and
	\[q_n-\la_{n-1}(2p_n+\g_n\la_n)=\la_{n-1}\left[\mu_{n-1}-(2p_n+\g_n\la_n)\right]\leq 0,\quad\forall\ n\in\Z\cap[1, A^2),\]
	which follows from \eqref{Eq.quotient_est}, we obtain
	\begin{align}
		\g_n\wt a_n&\leq (2p_n+\g_n\la_n)\wt a_{n-1}+C_0\left(\wt a_{n-3}+\wt a_{n-4}\right)\nonumber\\
		&\quad+C_0n\left(\sum_{j=6}^{n-5}\wt a_j\wt a_{n+1-j}+\sum_{j=5}^{n-5}\wt a_j\wt a_{n-j}+\sum_{j=4}^{n-5}\wt a_j\wt a_{n-1-j}+\sum_{j=3}^{n-5}\wt a_j\wt a_{n-2-j}\right)\label{Eq.tilde_a_n_recurrence}
	\end{align}
	for all $n\in\Z\cap[10, A^2)$.
	
	\underline{\textbf{Step 2.}} We consider a sequence $\{M_n^*>0\}_{n=1}^{\lceil A^2\rceil-1}$, which is defined by $M_1^*:=\wt a_1>0$, $M_2^*:=\wt a_2+\wt a_1>M_1^*>0$, and
	\begin{equation}\label{Eq.M_n^*small}
		M_n^*:=\max\left\{\wt a_n,\frac{\left(M_{n-1}^*\right)^2}{M_{n-2}^*}\right\},\quad\forall\ n\in[3,9],
	\end{equation}
	and for $n\in\Z\cap[10, A^2)$,
	\begin{equation}\label{Eq.M_n^*large}
		M_n^*:=\max\left\{a_n^*, \frac{\left(M_{n-1}^*\right)^2}{M_{n-2}^*}\right\}
	\end{equation}
	where
	\begin{align}
		\g_na_n^*:&=(2p_n+\g_n\la_n)M_{n-1}^*+C_0\left(M_{n-3}^*+M_{n-4}^*\right)\nonumber\\
		 &\quad+C_0n\left(\sum_{j=6}^{n-5}M_j^*M_{n+1-j}^*+\sum_{j=5}^{n-5}M_j^*M_{n-j}^*
+\sum_{j=4}^{n-5}M_j^*M_{n-1-j}^*+\sum_{j=3}^{n-5}M_j^*M_{n-2-j}^*\right),
\label{Eq.M_n^*_recurrence}
	\end{align}
	with the constant $C_0>0$ the same as in \eqref{Eq.tilde_a_n_recurrence}.
	Then we have (thanks to $\g_n>0$ for $n<A^2$)
	\begin{equation}\label{Eq.a_n_bound_byM_n^*}
		|a_n|\leq \wt a_n\leq M_n^*,\quad\forall\ n\in\Z\cap[1, A^2);
	\end{equation}
	and
	\begin{equation}\label{Eq.quotient_M_n^*_est}
		(2p_n+\g_n\la_n)M_{n-1}^*\leq \g_na_n^*\leq \g_nM_n^*\Longrightarrow \frac{M_{n-1}^*}{M_n^*}\leq \frac1{\wt b_n},\quad\forall\ n\in\Z\cap[10, A^2),
	\end{equation}
	where (using \eqref{Eq.mu_n^*})
	\begin{equation}\label{Eq.tilde_b_n}
		\wt b_n:=\frac{2p_n+\g_n\la_n}{\g_n}\geq \la_n\gtrsim \sqrt n>0, \quad\forall\ n\in\Z\cap[1,A^2).
	\end{equation}
	We claim that
	\begin{equation}\label{Eq.M_n^*claim}
		M_{i}^*M_{k-i}^*\leq M_{j}^*M_{k-j}^*\quad\text{for}\quad i\in\Z\cap[j,k-j], j\in\Z\cap[1,k/2], k\in\Z\cap[2,A^2).
	\end{equation}
	Indeed, it follows from the definition of $\{M_n^*\}$ that
	\begin{equation*}
		M_n^*\geq\frac{\left(M_{n-1}^*\right)^2}{M_{n-2}^*}\Longleftrightarrow\frac{M_n^*}{M_{n-1}^*}\geq \frac{M_{n-1}^*}{M_{n-2}^*}\quad\forall\ n\in\Z\cap[3,A^2).
	\end{equation*}
	Hence, $n\mapsto \frac{M_n^*}{M_{n-1}^*}$ is increasing for $n\in\Z\cap[2,A^2)$. So,
	\begin{equation}\label{Eq.M_n^*_increasing}
		\frac{M_n^*}{M_{n-1}^*}\geq \frac{M_2^*}{M_1^*}>1\Longrightarrow M_n^*\geq M_{n-1}^*\quad\forall\ n\in\Z\cap[2, A^2).
	\end{equation}
	Let $k\in\Z\cap[2,A^2)$ and $j\in\Z\cap[1,k/2-1]$, (if $j=k/2$, \eqref{Eq.M_n^*claim} is trivial,) then $j\leq k-j-2$ and thus,
	\begin{align*}
		M_{k-j}^*\geq \frac{\left(M_{k-j-1}^*\right)^2}{M_{k-j-2}^*}=\frac{M_{k-j-1}^*}{M_{k-j-2}^*}M_{k-j-1}^*\geq \frac{M_{j+1}^*}{M_{j}^*}M_{k-j-1}^*.
	\end{align*}
	Hence, $M_{j+1}^*M_{k-(j+1)}^*\leq M_{j}^*M_{k-j}^*$. This proves our claim \eqref{Eq.M_n^*claim}.
	
	\underline{\textbf{Step 3.}} Thanks to \eqref{Eq.a_n_bound_byM_n^*}, it suffices to estimate $M_n^*$, the recurrence relation of which was given by \eqref{Eq.M_n^*_recurrence}. By \eqref{Eq.M_n^*claim}, for $n\in\Z\cap[14, A^2)$, we have
	\begin{align*}
		\sum_{j=6}^{n-5}M_j^*M_{n+1-j}^*&=2M_6^*M_{n-5}^*+2M_7^*M_{n-6}^*+\sum_{j=8}^{n-7}M_j^*M_{n+1-j}^*\\
		&\leq 2M_6^*M_{n-5}^*+2M_7^*M_{n-6}^*+(n-14)M_8^*M_{n-7}^*,
	\end{align*}
	and similarly,
	\begin{align*}
		\sum_{j=5}^{n-5}M_j^*M_{n-j}^*&\leq 2M_5^*M_{n-5}^*+2M_6^*M_{n-6}^*+(n-13)M_7^*M_{n-7}^*,\\
		\sum_{j=4}^{n-5}M_j^*M_{n-1-j}^*&\leq 2M_4^*M_{n-5}^*+2M_5^*M_{n-6}^*+(n-12)M_6^*M_{n-7}^*,\\
		\sum_{j=3}^{n-5}M_j^*M_{n-2-j}^*&\leq 2M_3^*M_{n-5}^*+2M_4^*M_{n-6}^*+(n-11)M_5^*M_{n-7}^*.
	\end{align*}
	It follows from \eqref{Eq.a_n_limit}, Remark \ref{Rmk.a_n_positive}, \eqref{Eq.mu_n^*}, \eqref{Eq.tilde_a_n} and \eqref{Eq.M_n^*small} that, by taking $A_0>10$ to be larger if necessary, we have
	\begin{equation}\label{Eq.M_n^*small_est}
		M_j^*\asymp1,\quad\forall\ j\in\Z\cap[1,10],\ \forall\ A>A_0.
	\end{equation}
	Hence, \eqref{Eq.M_n^*_recurrence} implies that
	\begin{equation}\label{Eq.a_n^*_est}
		\g_na_n^*\leq (2p_n+\g_n\la_n)M_{n-1}^*+C_1\left(M_{n-3}^*+M_{n-4}^*\right)+C_1n\left(M_{n-5}^*+M_{n-6}^*\right)+C_1n^2M_{n-7}^*
	\end{equation}
	for all $n\in\Z\cap[14, A^2)$, all $A>A_0$ and some absolute constant $C_1>0$ which is independent of $n$ and $A$. Using \eqref{Eq.quotient_M_n^*_est}, \eqref{Eq.tilde_b_n} and \eqref{Eq.a_n^*_est}, we obtain
	\begin{align*}
		\frac{a_n^*}{M_{n-1}^*}\leq \frac{2p_n+\g_n\la_n}{\g_n}+\frac{C_1}{\g_n}\left(\frac{M_{n-3}^*}{M_{n-1}^*}+\frac{M_{n-4}^*}{M_{n-1}^*}\right)+\frac{C_1n}{\g_n}\left(\frac{M_{n-5}^*}{M_{n-1}^*}+\frac{M_{n-6}^*}{M_{n-1}^*}\right)+\frac{C_1n^2}{\g_n}\frac{M_{n-7}^*}{M_{n-1}^*}\leq b_n,
	\end{align*}
	where
	\begin{align}
		b_n:&=\wt b_n+C_1\left(\frac1{\g_n\wt b_{n-1}\wt b_{n-2}}+\frac1{\g_n\wt b_{n-1}\wt b_{n-2}\wt b_{n-3}}\right)+\frac{C_1n}{\g_n\wt b_{n-1}\wt b_{n-2}\wt b_{n-3}\wt b_{n-4}}\label{Eq.b_n_def}\\
		&\qquad+\frac{C_1n}{\g_n\wt b_{n-1}\wt b_{n-2}\wt b_{n-3}\wt b_{n-4}\wt b_{n-5}}+\frac{C_1n^2}{\g_n\wt b_{n-1}\wt b_{n-2}\wt b_{n-3}\wt b_{n-4}\wt b_{n-5}\wt b_{n-6}}\nonumber
	\end{align}
	for $n\in\Z\cap [14, A^2)$. Then \eqref{Eq.M_n^*large} implies
	\begin{equation}\label{Eq.quotient_M_n^*est}
		\frac{M_n^*}{M_{n-1}^*}=\max\left\{\frac{a_n^*}{M_{n-1}^*}, \frac{M_{n-1}^*}{M_{n-2}^*}\right\}\leq \max\left\{b_n, \frac{M_{n-1}^*}{M_{n-2}^*}\right\},\quad\forall\ n\in\Z\cap[14, A^2).
	\end{equation}
	We define
	\begin{equation}\label{Eq.b_nsmall}
		b_n:=\frac{M_n^*}{M_{n-1}^*},\quad\forall\ n\in\Z\cap[2, 13].
	\end{equation}
	Using \eqref{Eq.M_n^*small_est}, \eqref{Eq.M_n^*_recurrence} and a similar argument to the proof of \eqref{Eq.a_n^*_est}, we then obtain
	\begin{equation}\label{Eq.b_nsmall_est}
		0<b_n\lesssim 1,\quad\forall\ n\in\Z\cap[2,13],\ \forall\ A>A_0.
	\end{equation}
	It follows from \eqref{Eq.quotient_M_n^*est}, \eqref{Eq.b_nsmall} and the induction that
	\begin{equation} \label{Eq.quotient_M_n^*_est2}
		\frac{M_n^*}{M_{n-1}^*}\leq \max_{2\leq j\leq n}b_j,\quad\forall\ n\in\Z\cap[2, A^2).
	\end{equation}
	
	\underline{\textbf{Step 4.}} By \eqref{Eq.tilde_b_n}, $\g_n\wt b_n=2p_n+\g_n\la_n\asymp \mu_{n-1}\asymp \sqrt n$ (due to \eqref{Eq.mu_n^*} and \eqref{Eq.quotient_est}), $0<\g_n<8$ and \eqref{Eq.b_n_def}, we have
	\begin{equation}\label{Eq.b_n_tilde_bn}
		1\leq \frac{b_n}{\wt b_n}=1+\cO\left(n^{-\frac32}\right)\Longrightarrow b_n\gtrsim \sqrt n,\quad\forall\ n\in\Z\cap[14, A^2).
	\end{equation}
    Now we claim that there exist $N_1\in\Z_{\geq 14}$ and $A_0>2N_1+1$ such that
    \begin{equation}\label{Eq.bn_increasing}
        b_n>b_{n-1},\quad\forall\ n\in\Z\cap[N_1, A^2),\ \forall\ A>A_0.
    \end{equation}
    Assuming \eqref{Eq.bn_increasing} temporarily, there holds
    \begin{equation}\label{Eq.b_j_max}
		\max_{2\leq j\leq n}b_j=\max\left\{b_2, b_3, \cdots, b_{N_0-1}, b_n\right\},\quad\forall\ n\in\Z\cap[N_1, A^2).
	\end{equation}
	By \eqref{Eq.tilde_b_n}, \eqref{Eq.p_n}, \eqref{Eq.mu_n^*} and the definition of $\g_n$, for $n\in\Z\cap[1, N_1]$ and $A>A_0$, we have $n<N_1<A_0^2/2<A^2/2$ and
	\begin{align*}
		\g_n=\frac8{(A+1)^2}&(A^2-n)\geq  \frac8{(A+1)^2}\frac{A^2}{2}=\frac{4A^2}{(A+1)^2}\gtrsim 1,\quad p_{n}\lesssim \frac nA\lesssim 1\\
		&\wt b_n=\frac{2p_{n}}{\g_n}+\la_n\lesssim 1+\la_n\lesssim \sqrt n\lesssim \sqrt{N_1}\lesssim 1.
	\end{align*}
	Then \eqref{Eq.b_nsmall_est} and \eqref{Eq.b_n_tilde_bn} imply that
	\begin{equation}\label{Eq.b_nsmall_est2}
		0<b_n\lesssim 1,\quad\forall\ n\in\Z\cap[2, N_1],\ \forall\ A>A_0.
	\end{equation}
	Now, by \eqref{Eq.b_n_tilde_bn}, \eqref{Eq.b_j_max} and \eqref{Eq.b_nsmall_est2}, there exists $N_0\in\Z\cap(N_1, +\infty)$ such that	$\max_{2\leq j\leq n}b_j=b_n$ for $n\in\Z\cap[N_0, A^2)$ and $A>A_0$.
	Then \eqref{Eq.quotient_M_n^*_est2} implies that
	\begin{equation*}
		M_n^*\leq M_{N_0-1}^*\prod_{j=N_0}^n b_j,\quad\forall\ n\in\Z\cap[N_0, A^2).
	\end{equation*}
	Due to \eqref{Eq.b_n_tilde_bn}, the fact $1<\prod_{j=1}^\infty\left(1+\frac C{j^{3/2}}\right)<+\infty$ and \eqref{Eq.tilde_b_n}, we have
	\begin{align*}
		M_n^*\lesssim M_{N_0-1}^*\prod_{j=N_0}^n\wt b_j\lesssim M_{N_0-1}^*\prod_{j=2}^n\wt b_j,\quad\forall\ n\in\Z\cap[N_0, A^2).
	\end{align*}
	It follows from Lemma \ref{Lem.a_n_trivial_bound} and the definition of $\{M_n^*\}$ that $M_{j}^*\lesssim1$ for all $j\in\Z\cap[1,N_0-1]$ and $A>A_0$, by taking $A_0$ to be larger if necessary. Hence,
	\begin{equation}\label{Eq.M_n^*est}
		M_n^*\lesssim \prod_{j=2}^n\wt b_j\asymp \wt M_n\asymp M_n,\quad\forall\ n\in\Z\cap[1, A^2),
	\end{equation}
	where we have used Lemma \ref{Lem.tildeM_nest} and Lemma \ref{Lem.M_n_tildeM_n}. Combining this with \eqref{Eq.a_n_bound_byM_n^*} completes the proof of \eqref{Eq.a_n_upper_bound}.

    \underline{\bf Step 5.} Finally, we prove \eqref{Eq.bn_increasing}. We denote $N:=\lceil A^2\rceil-1$. For $n\in\Z\cap[14, N-1]$, by $\g_n=\frac 8{(A+1)^2}(A^2-n)$, we have $\g_n\asymp\g_{n-1}$. Then $\g_{n-1}\wt b_{n-1}\asymp \sqrt n$ implies that $\g_n\wt b_{n-1}\asymp\sqrt n$, and hence it follows from \eqref{Eq.b_n_def} and \eqref{Eq.tilde_b_n} that $b_n=\wt b_n+\cO(n^{-1})$. As a consequence, by \eqref{Eq.tilde_b_n}, \eqref{Eq.b_n_def}, Lemma \ref{Lem.la_n_increasing}, the increase of $n\mapsto p_n$ and the decrease of $n\mapsto \g_n$, we have
	\begin{align}
		b_n-b_{n-1}&=\wt b_n-\wt b_{n-1}+\cO\left(\frac1n\right)
		=\frac{2p_n}{\g_n}-\frac{2p_{n-1}}{\g_{n-1}}+\la_n-\la_{n-1}+\cO\left(\frac1n\right)\nonumber\\
		&>\la_n-\la_{n-1}+\cO\left(\frac1n\right)\gtrsim \frac1{\sqrt n}+\cO\left(\frac1n\right),\nonumber
	\end{align}
	for $n\in\Z\cap[14, N-1]$. Hence, there exist $N_1\in\Z_{\geq 14}$ and $A_0>2N_1+1$ such that
	\begin{equation*}
		b_n>b_{n-1},\quad\forall\ n\in\Z\cap[N_1, N-1].
	\end{equation*}
    On the other hand, for $n=N$, we have
    \[\wt b_N=\frac{2p_N}{\g_N}+\la_N=\frac{p_N(A+1)^2}{4(A^2-N)}+\la_N\asymp \frac{A^3}{A^2-N}+A\asymp \frac{A^3}{A^2-N}.\]
    Here we used \eqref{Eq.p_n}, \eqref{Eq.p_n_asymp} and \eqref{Eq.mu_n^*}. Therefore, it follows from \eqref{Eq.b_n_tilde_bn}, \eqref{Eq.p_n}, \eqref{Eq.p_n_asymp}, the decrease of $n\mapsto \g_n$ and Lemma \ref{Lem.la_n_increasing} that
    \begin{align*}
        b_N-b_{N-1}&=\wt b_N-\wt b_{N-1}+\cO\left(A^{-3}\frac{A^3}{A^2-N}\right)\\
        &=\frac{2p_N}{\g_N}-\frac{2p_{N-1}}{\g_{N-1}}+\la_N-\la_{N-1}+\cO\left((A^2-N)^{-1}\right)\\
        &>\frac{2p_N-2p_{N-1}}{\g_N}+\cO\left((A^2-N)^{-1}\right)\\
        &\gtrsim \frac{1}{A\g_N}+\cO\left((A^2-N)^{-1}\right)\gtrsim \frac{(A+1)^2}{A(A^2-N)}+\cO\left((A^2-N)^{-1}\right)\\
        &\gtrsim A(A^2-N)^{-1}+\cO\left((A^2-N)^{-1}\right)>0
    \end{align*}
    as long as we take $A_0>10$ to be larger if necessary. This proves \eqref{Eq.bn_increasing}.
\end{proof}

\subsection{Lower bound of $\{a_n\}$}\label{Subsec.a_n_lower_bound}
In this subsection, we prove the ''$\gtrsim$" part of Proposition \ref{Prop.a_n_sharp_est}. Our main goal  is the following proposition.

\begin{proposition}\label{Prop.an_lower_bound}
	There exist $A_0>10$ and $c_0\in(0,1)$ such that
	\begin{equation*}
		\frac{a_n}{M_n}>c_0,\quad\forall\ n\in\Z\cap[A, A^2),\ \forall\ A>A_0.
	\end{equation*}
\end{proposition}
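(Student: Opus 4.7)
The goal is to combine the upper bound $|a_n| \lesssim M_n$ of Proposition \ref{Prop.a_n_upper_bound} with information from the limiting dynamics as $R \to +\infty$ to produce a matching lower bound for $a_n/M_n$. I carry this out in three stages: (i) identify a strictly positive limit $\mathcal{S}_\infty$ of a suitable normalized combination of $a_n$ and $a_{n-1}$, (ii) perturb to finite but large $A$ on an initial window of indices, and (iii) propagate the resulting lower bound across the full range $[A, A^2)$ via a Wronskian-type identity.

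\textbf{Step 1: the limit $R \to +\infty$.} Let $\{a_n^\infty\}$ be the limiting sequence defined in \eqref{Eq.a_n^infty}, and let $\{\lambda_n^\infty\}$, $\{M_n^\infty\}$ be the pointwise limits as $A \to +\infty$ of $\lambda_n$ from \eqref{Eq.mu_n_la_n} and $M_n$ from \eqref{Eq.M_n_sequence}. The coefficient $p_n$ vanishes in the limit (cf.\ \eqref{Eq.p_n_asymp}), so the leading part of both recurrences becomes two-step, driven by $q_n$; one consequence is the degenerate growth $M_n^\infty \sim \sqrt{\Gamma(C+n)}/A_*^n$ highlighted in the introduction. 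The key observation is that
\[
\mathcal{S}_n := \frac{a_n^\infty + \lambda_n^\infty a_{n-1}^\infty}{M_n^\infty}
\]
obeys a recurrence whose nontrivial part forms a summable correction, so $\mathcal{S}_\infty := \lim_{n \to \infty} \mathcal{S}_n$ exists. The non-degeneracy $\mathcal{S}_\infty > 0$ is the content of the computer-assisted verification alluded to in the introduction.

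\textbf{Step 2: perturbation to finite $A$.} The coefficients $\gamma_n, p_n, q_n, s_1, s_2$ and the sums in $\widetilde\varepsilon_n$ depend smoothly on $1/A$. Using the upper bound of Proposition \ref{Prop.a_n_upper_bound} to control the convolution terms in \eqref{Eq.tilde_epsilon_n}, a perturbation of the limiting system should yield
\[
\frac{a_n + \lambda_n a_{n-1}}{M_n} > c_0 > 0, \qquad n \in \mathbb{Z} \cap [N_0, c_1 A^2],
\]
for some small $c_1 \in (0,1)$ and all sufficiently large $A$. The upper threshold $c_1 A^2$ (rather than $A^2$) is intrinsic: as $n$ approaches $A^2$ the coefficient $\gamma_n$ vanishes and the perturbation error blows up. Since $A \leq c_1 A^2$ for $A$ large, this already covers the lower endpoint of our target range.

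\textbf{Step 3: Wronskian propagation and the main obstacle.} To reach indices near $A^2$, introduce the Wronskian $W_n := a_n M_{n-1} - a_{n-1} M_n$. Subtracting an $a_{n-1}$-multiple of the $M_n$-recurrence \eqref{Eq.M_n_sequence} from an $M_{n-1}$-multiple of the $a_n$-recurrence \eqref{Eq.a_n_new_recurrence} produces a first-order recurrence for $W_n$ whose homogeneous leading order is captured by an explicit sequence $\{L_n\}$, while its inhomogeneity comes from $s_1 a_{n-3} + s_2 a_{n-4} + \widetilde\varepsilon_n$. Dividing by $M_n M_{n-1}$ rewrites this as
\[
\frac{a_n}{M_n} - \frac{a_{n-1}}{M_{n-1}} = \frac{W_n}{M_n M_{n-1}},
\]
and, combining the explicit form of the homogeneous part with $|a_j| \lesssim M_j$ and the growth $M_j/M_{j-1} \gtrsim \sqrt j$ from Lemma \ref{Lem.quotientM_n}, the right-hand side is summable across $n \in [A, A^2)$. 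Hence $a_n/M_n$ varies by only a small bounded amount. Using in addition the elementary identity $\lambda_n M_{n-1}/M_n \asymp 1$ (from $\lambda_n \asymp \sqrt n$ and $M_n/M_{n-1} \asymp \sqrt n$), the Step 2 bound forces both $a_n/M_n$ and $a_{n-1}/M_{n-1}$ to be bounded below on $[A, c_1 A^2]$, and the Wronskian estimate propagates this to all $n \in [A, A^2)$. The main difficulty in the scheme is Step 1: the triple-point degeneracy \eqref{Eq.degeneracy} makes the limiting sequence grow with ``half the logarithmic rate'' of its $d \neq \ell$ counterpart in \cite{MRRJ2}, so the existence of $\mathcal{S}_\infty$ already requires a nontrivial cancellation and its non-vanishing is not accessible by any closed-form computation.
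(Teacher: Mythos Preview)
The proposal is correct and follows essentially the same three-stage approach as the paper: Step~1 corresponds to Corollary~\ref{Cor.limit} together with the numerical claim \eqref{Eq.S_infty}, Step~2 to Lemma~\ref{Lem.a_n_lower}, and Step~3 to Lemma~\ref{Lem.a_n/M_n_lower} combined with the final propagation argument. Two small remarks: the paper normalizes by the auxiliary sequence $\wh M_n$ (defined via $\mu_{n-1}/\g_n$) rather than $M_n$ itself, which makes the Cauchy estimate \eqref{Eq.a_n/hat_M_n} cleaner; and your assertion $M_n/M_{n-1}\asymp\sqrt n$ is only valid for $n$ bounded away from $A^2$ (since $\g_n\to 0$ there), which is why the paper first establishes $a_n/M_n>c_{01}$ on the shorter window $[A,A^{3/2}]$ before propagating---but this refinement fits within your Step~3 scheme.
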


The proof reduces to analyzing the limiting sequence $\{a_n^\infty\}$, for which a key non-degenerate property (see \eqref{Eq.S_infty}) is assumed and verified using numerical methods.

\begin{lemma}
	Let
	\begin{equation}\label{Eq.hat_a_n}
		\wh a_n:=a_n+\la_na_{n-1},\quad\forall\ n\in\Z_+.
	\end{equation}
	We define the sequence $\{\wh M_n\}_{n=1}^{\lceil A^2\rceil-1}$ by $\wh M_1=a_1$ and
	\begin{equation}\label{Eq.hat_M_n}
		\wh M_n=\frac{\mu_{n-1}}{\g_n}\wh M_{n-1},\quad\forall\ n\in\Z\cap[2, A^2).
	\end{equation}
	Then there exist $A_0>10$ and a constant $C>0$ such that
	\begin{equation}\label{Eq.a_n/hat_M_n}
		\left|\frac{\wh a_n}{\wh M_n}-\frac{\wh a_{n-1}}{\wh M_{n-1}}\right|\leq C\left(\frac1{n^{3/2}}+\frac1{\sqrt nA}\right),\quad\forall\ n\in\Z\cap[2, A^2),\ \forall\ A>A_0.
	\end{equation}
\end{lemma}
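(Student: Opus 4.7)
The plan is to rewrite the difference as a single quotient whose denominator is comparable to $\sqrt n\, M_{n-1}$, then estimate the numerator term-by-term. Since $\wh M_n = (\mu_{n-1}/\g_n)\wh M_{n-1}$ by \eqref{Eq.hat_M_n}, a direct computation gives
\begin{equation*}
\frac{\wh a_n}{\wh M_n}-\frac{\wh a_{n-1}}{\wh M_{n-1}}=\frac{\g_n\wh a_n-\mu_{n-1}\wh a_{n-1}}{\mu_{n-1}\wh M_{n-1}}.
\end{equation*}
Using the definition \eqref{Eq.hat_a_n} together with the recurrence \eqref{Eq.a_n_new_recurrence} for $a_n$ and the identity $\mu_{n-1}\la_{n-1}=q_n$ (which is the content of \eqref{Eq.mu_n_la_n}), the $a_{n-2}$ contributions cancel exactly, yielding
\begin{equation*}
\g_n\wh a_n-\mu_{n-1}\wh a_{n-1}=\bigl(2p_n+\g_n\la_n-\mu_{n-1}\bigr)a_{n-1}+s_1 a_{n-3}+s_2 a_{n-4}+\wt\varepsilon_n.
\end{equation*}
This cancellation is the heart of the estimate; after this, everything is estimating the four remaining pieces.

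Next, I would invoke Proposition \ref{Prop.a_n_upper_bound}, which gives $|a_j|\lesssim M_j$, together with Lemma \ref{Lem.quotientM_n}, which yields the crucial decay $M_{n-k}/M_{n-1}\lesssim n^{-(k-1)/2}$. The key bound \eqref{Eq.quotient_est} tells us $|2p_n+\g_n\la_n-\mu_{n-1}|\leq C\mu_{n-1}(n^{-2}+(\sqrt n A)^{-1})$, so after dividing by $\mu_{n-1}\wh M_{n-1}\asymp\sqrt n\, M_{n-1}$ (using Lemma \ref{Lem.tildeM_nest} and Lemma \ref{Lem.M_n_tildeM_n} to identify $\wh M_n\asymp M_n$), the first term contributes $\cO(n^{-2}+(\sqrt n A)^{-1})$. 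The terms $s_1 a_{n-3}$ and $s_2 a_{n-4}$ contribute $\cO(n^{-3/2})$ and $\cO(n^{-2})$ respectively, since $s_1,s_2=\cO(1)$ by \eqref{Eq.s_1asymp}--\eqref{Eq.s_2asymp} and by the Lemma \ref{Lem.quotientM_n} ratio bound.

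The main obstacle is controlling the quadratic error $\wt\varepsilon_n$ from \eqref{Eq.tilde_epsilon_n}, which contains four convolution sums weighted by a factor of order $n$. I would bound $|a_j|\leq M_j^*$ (in the notation of the upper-bound proof) and apply the convexity property \eqref{Eq.M_n^*claim} to each sum $\sum_{j=j_0}^{n-j_0}a_j a_{n-m-j}$, just as in Step 3 of the proof of Proposition \ref{Prop.a_n_upper_bound}. This shows each such sum is dominated by a finite combination of $M_{n-j_0}^*$, $M_{n-j_0-1}^*$ and $nM_{n-j_0-2}^*$; using \eqref{Eq.M_n^*small_est} to bound the small indices and \eqref{Eq.M_n^*est} to pass back to $M_{n-k}$, a further application of Lemma \ref{Lem.quotientM_n} gives $|\wt\varepsilon_n|\lesssim n(M_{n-5}+M_{n-6}+nM_{n-7})$. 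Dividing by $\mu_{n-1}\wh M_{n-1}\asymp\sqrt n\, M_{n-1}$ produces an $\cO(n^{-5/2})$ contribution, which is dominated by $n^{-3/2}$. Summing all four contributions yields the stated bound $Cn^{-3/2}+C(\sqrt n A)^{-1}$.
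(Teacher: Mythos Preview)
Your approach is essentially the paper's: rewrite the difference as $(\g_n\wh a_n-\mu_{n-1}\wh a_{n-1})/(\mu_{n-1}\wh M_{n-1})$, use the recurrence together with $\mu_{n-1}\la_{n-1}=q_n$ to cancel the $a_{n-2}$ term, then control the remaining pieces via \eqref{Eq.quotient_est}, Proposition \ref{Prop.a_n_upper_bound}, and the convexity of $\{M_n^*\}$. Two small corrections: first, the recurrence \eqref{Eq.a_n_new_recurrence} is only valid for $n\geq 10$ (and the convolution bound \eqref{Eq.epsilon_n_est} for $n\geq 14$), so small $n$ must be handled separately via Lemma \ref{Lem.a_n_trivial_bound} and $\wh M_n\asymp 1$ there; second, your claimed $\cO(n^{-5/2})$ for the $\wt\varepsilon_n$ contribution is actually $\cO(n^{-3/2})$, since the dominant piece is $nM_{n-5}/(\sqrt n\,M_{n-1})\asymp \sqrt n\cdot n^{-2}=n^{-3/2}$ --- harmless, as this still fits the target bound.
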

\begin{proof}
	By \eqref{Eq.hat_M_n}, \eqref{Eq.tildeM_n_est} and Lemma \ref{Lem.M_n_tildeM_n}, we have
	\begin{equation}\label{Eq.hat_M_n_est}
		\wh M_n=\wh M_1\prod_{j=2}^n\frac{\mu_{j-1}}{\g_j}\asymp \wt M_n\asymp M_n,\quad\forall\ Z\cap[1, A^2),
	\end{equation}
	if $A_0>10$ is large enough, where we have used $\lim_{A\to+\infty}\wh M_1=a_1^\infty=2>0$.
	Let
	\begin{align}\label{Eq.epsilon_n_def}
		\varepsilon_n:=s_1a_{n-3}+s_2a_{n-4}+\wt\varepsilon_n,\quad\forall\ n\in\Z\cap[10, +\infty).
	\end{align}
	The proof of \eqref{Eq.a_n^*_est} gives that
	\begin{equation*}
		\left|\varepsilon_n\right|\leq C_1\left(M_{n-3}^*+M_{n-4}^*\right)+C_1n\left(M_{n-5}^*+M_{n-6}^*\right)+C_1n^2M_{n-7}^*
	\end{equation*}
	for $n\in\Z\cap[14, A^2+3)$ and $A>A_0$, by taking $A_0$ to be larger if necessary, where $C_1>0$ is the same as the constant in \eqref{Eq.a_n^*_est}. Combining this with the fact $\wt b_n\asymp\mu_{n-1}/\g_n$ by \eqref{Eq.quotient_est}, \eqref{Eq.tilde_b_n}, \eqref{Eq.M_n^*est} and \eqref{Eq.hat_M_n_est} gives that
	\begin{align}
		\left|\varepsilon_n\right|&\lesssim \wh M_{n-3}+\wh M_{n-4}+n\wh M_{n-5}+n\wh M_{n-6}+n^2\wh M_{n-7}\nonumber\\
		&\lesssim \wh M_{n-3}\left(1+\frac1{\wt b_{n-3}}+\frac n{\wt b_{n-3}\wt b_{n-4}}+\frac n{\wt b_{n-3}\wt b_{n-4}\wt b_{n-5}}+\frac{n^2}{\wt b_{n-3}\wt b_{n-4}\wt b_{n-5}\wt b_{n-6}}\right)\lesssim \wh M_{n-3},\label{Eq.epsilon_n_est}
	\end{align}
	for all $n\in\Z\cap[14, A^2+3)$ and $A>A_0$. It follows from \eqref{Eq.a_n_new_recurrence}, \eqref{Eq.mu_n_la_n} and \eqref{Eq.epsilon_n_def} that
	\begin{align*}
		\g_n\wh a_n&=(2p_n+\g_n\la_n)a_{n-1}+q_na_{n-2}+\varepsilon_n\\
		&=\mu_{n-1}\wh a_{n-1}+\left(2p_n+\g_n\la_n-\mu_{n-1}\right)a_{n-1}+\varepsilon_n,
	\end{align*}
	for all $n\in\Z\cap[10, +\infty)$. Using \eqref{Eq.quotient_est}, Proposition \ref{Prop.a_n_upper_bound} and \eqref{Eq.hat_M_n_est},  we obtain
	\begin{align*}
		\left|\g_n\wh a_n-\mu_{n-1}\wh a_{n-1}\right|&\leq \mu_{n-1}\left|\frac{2p_n+\g_n\la_n}{\mu_{n-1}}-1\right|\wh M_{n-1}+\wh M_{n-3}\\
		&\lesssim \left(\frac1{n^2}+\frac1{\sqrt nA}\right)\mu_{n-1}\wh M_{n-1}+\wh M_{n-3},
	\end{align*}
	for all $n\in\Z\cap[14, A^2)$ and $A>A_0$. Dividing  both sides by $\g_n\wh M_n=\mu_{n-1}\wh M_{n-1}$ yields  that
	\begin{align*}
		\left|\frac{\wh a_n}{\wh M_n}-\frac{\wh a_{n-1}}{\wh M_{n-1}}\right|&\lesssim \frac1{n^2}+\frac1{\sqrt nA}+\frac{\wh M_{n-3}}{\mu_{n-1}\wh M_{n-1}}\\
		&\lesssim \frac1{n^2}+\frac1{\sqrt nA}+\frac{\g_{n-1}\g_{n-2}}{\mu_{n-1}\mu_{n-2}\mu_{n-3}}\\
		&\lesssim \frac1{n^2}+\frac1{\sqrt nA}+\frac1{n^{3/2}}
	\end{align*}
	for all $n\in\Z\cap[14, A^2)$ and $A>A_0$, where we have used $\mu\asymp\sqrt n$ and $\g_n\leq 8$. Finally, \eqref{Eq.a_n/hat_M_n} for $n\in\Z\cap[2,13]$ follows from Lemma \ref{Lem.a_n_trivial_bound}, \eqref{Eq.mu_n^*} and \eqref{Eq.hat_M_n_est}.
\end{proof}

We recall the definition of $\{a_n^\infty\}_{n=0}^\infty$ from \eqref{Eq.a_n^infty}, and the property \eqref{Eq.a_n_limit}. Letting $A\to\infty$, we obtain (see also \eqref{Eq.p_n}--\eqref{Eq.q_n_asymp}, \eqref{Eq.mu_n_la_n} and \eqref{Eq.mu_n^*_expression})
\begin{align}
	\g_n^\infty:&=\lim_{A\to\infty}\g_n=8,\quad \lim_{A\to\infty}p_n=0,\quad q_n^\infty:=\lim_{A\to\infty}q_n=n-\frac43,\nonumber\\
	\mu_n^\infty:&=\lim_{A\to\infty}\mu_n^*=\sqrt{8n+\frac43},\quad
	\la_n^\infty:=\lim_{A\to\infty}\la_n=\frac{n-\frac13}{\sqrt{8n+\frac43}},\quad\forall\ n\in\Z_{\geq1}.\label{Eq.la_n_limit}
\end{align}
We define the sequence $\{\wh M_n^\infty\}_{n=1}^\infty$ by $\wh M_1^\infty=a_1^\infty=2$, and
\begin{equation}\label{Eq.hat_M_n^infty}
	\wh M_n^\infty=\frac{\mu_{n-1}^\infty}{8}\wh M_{n-1}^\infty,\quad\forall\ n\in\Z_{\geq2}.
\end{equation}

\begin{corollary}\label{Cor.limit}
	Let
	\begin{equation}\label{Eq.hat_a_n^infty}
		\wh a_n^\infty:=a_n^\infty+\la_n^\infty a_{n-1}^\infty,\quad\forall\ n\in\Z_{\geq 1}.
	\end{equation}
	Then the limit $\lim_{n\to\infty}\frac{\wh a_n^\infty}{\wh M_n^\infty}\in\R$ exists.
\end{corollary}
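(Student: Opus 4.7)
The plan is to derive the corollary as a direct pointwise limit of the quantitative estimate \eqref{Eq.a_n/hat_M_n} established in the previous lemma. Since the right-hand side of \eqref{Eq.a_n/hat_M_n} has the summable main order $Cn^{-3/2}$ plus an error $C/(\sqrt n A)$ that vanishes as $A\to+\infty$, one expects the limiting sequence $\{\wh a_n^\infty/\wh M_n^\infty\}$ to inherit a summable telescoping bound, and hence to be Cauchy.

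First, I would verify that for each fixed $n\in\Z_{\geq 1}$, the quantity $\wh a_n/\wh M_n$ converges as $A\to+\infty$ to $\wh a_n^\infty/\wh M_n^\infty$. This reduces to elementary pointwise limits already available in the excerpt: $a_n\to a_n^\infty$ by \eqref{Eq.a_n_limit}, $\la_n\to\la_n^\infty$ by \eqref{Eq.la_n_limit} (which also gives $\mu_{n-1}^*\to\mu_{n-1}^\infty$ and $\g_n\to 8$), and $\wh M_1=a_1\to 2=a_1^\infty=\wh M_1^\infty$; then an immediate induction using \eqref{Eq.hat_M_n} and \eqref{Eq.hat_M_n^infty} gives $\wh M_n\to \wh M_n^\infty$ for every $n$, and the definitions \eqref{Eq.hat_a_n} and \eqref{Eq.hat_a_n^infty} yield $\wh a_n\to\wh a_n^\infty$. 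Since the constant $C$ in \eqref{Eq.a_n/hat_M_n} is independent of $A$, passing $A\to+\infty$ in \eqref{Eq.a_n/hat_M_n} yields
\begin{equation*}
	\left|\frac{\wh a_n^\infty}{\wh M_n^\infty}-\frac{\wh a_{n-1}^\infty}{\wh M_{n-1}^\infty}\right|\leq \frac{C}{n^{3/2}},\quad \forall\ n\geq 2.
\end{equation*}

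Finally, since $\sum_{n\geq 2}n^{-3/2}<+\infty$, the telescoping series
$\sum_{n\geq 2}\left(\frac{\wh a_n^\infty}{\wh M_n^\infty}-\frac{\wh a_{n-1}^\infty}{\wh M_{n-1}^\infty}\right)$
converges absolutely, which means the sequence $\{\wh a_n^\infty/\wh M_n^\infty\}_{n\geq 1}$ is Cauchy in $\R$ and therefore admits a limit in $\R$. No step here is genuinely hard; the only point requiring care is the justification that one may take $A\to+\infty$ in \eqref{Eq.a_n/hat_M_n} for fixed $n$, which is clean because each of $\wh a_n,\wh M_n$ is a finite algebraic expression in $a_0,\dots,a_n$ and $\la_n$, and the bound is uniform in $A$.
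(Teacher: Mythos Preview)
Your proposal is correct and follows essentially the same approach as the paper: pass to the limit $A\to+\infty$ in \eqref{Eq.a_n/hat_M_n} (using the pointwise convergences $\wh a_n\to\wh a_n^\infty$ and $\wh M_n\to\wh M_n^\infty$) to obtain the bound $Cn^{-3/2}$ on successive differences, and conclude that the sequence is Cauchy. The only detail you leave implicit is that \eqref{Eq.a_n/hat_M_n} is stated for $n\in\Z\cap[2,A^2)$, but since $n$ is fixed and $A\to+\infty$ this constraint is eventually satisfied.
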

\begin{proof}
	By \eqref{Eq.hat_a_n}, \eqref{Eq.hat_M_n}, \eqref{Eq.la_n_limit}, \eqref{Eq.hat_M_n^infty} and \eqref{Eq.hat_a_n^infty}, we have
	\begin{equation}\label{Eq.hat_a_n_limit}
		\lim_{A\to+\infty}\wh a_n=\wh a_n^\infty,\quad\lim_{A\to+\infty}\wh M_n=\wh M_n^\infty,\quad\forall\ n\in\Z_+.
	\end{equation}
	Letting $A\to+\infty$ in \eqref{Eq.a_n/hat_M_n} gives that
	\begin{align*}
		\left|\frac{\wh a_n^\infty}{\wh M_n^\infty}-\frac{\wh a_{n-1}^\infty}{\wh M_{n-1}^\infty}\right|\leq \frac C{n^{3/2}},\quad\forall\ n\in\Z_{\geq 2},
	\end{align*}
	for some absolute constant $C>0$. Therefore, $\left\{\wh a_n^\infty/\wh M_n^\infty\right\}_{n=1}^\infty$ is a Cauchy sequence in $\R$.
	\if0
	Letting $A\to+\infty$ in \eqref{Eq.a_n_new_recurrence}, using \eqref{Eq.k_1k_2asymp} and \eqref{Eq.p_n}---\eqref{Eq.tilde_epsilon_n}, we obtain the following re-formulated recurrence relation of $\{a_n^\infty\}_{n=0}^\infty$:
	\begin{equation}
		8a_n^\infty=\left(n-\frac43\right)a_{n-2}^\infty-\frac{17}3a_{n-3}^\infty-\frac54a_{n_4}^\infty+\wt\varepsilon_n^\infty,
	\end{equation}
	where
	\begin{align*}
		\wt\varepsilon_n^\infty:&=-\frac32(n+1)\sum_{j=6}^{n-5}a_j^\infty a_{n+1-j}^\infty+(3n-2)\sum_{j=5}^{n-5}a_j^\infty a_{n-j}^\infty\\
		&\quad-\left(\frac{39}{16}n-\frac{71}{16}\right)\sum_{j=4}^{n-5}a_j^\infty a_{n-1-j}^\infty+\frac{5}{8}\left(\frac32n-5\right)\sum_{j=3}^{n-5}a_j^\infty a_{n-2-j}^\infty,
	\end{align*}
	for all $n\in\Z\cap[10, +\infty)$. The first ten terms of $\{a_n^\infty\}$ are given by Remark \ref{Rmk.a_n_positive}.\fi
\end{proof}

The rest of our proof is based on the positivity of the limit in Corollary \ref{Cor.limit}, which can be checked numerically.

\textit{Numerical claim:} We have
\begin{equation}\label{Eq.S_infty}
	\cS_\infty:=\lim_{n\to\infty}\frac{\wh a_n^\infty}{\wh M_n^\infty}>\frac12>0.
\end{equation}

\begin{lemma}\label{Lem.a_n_lower}
	There exist $N_0\in\Z_+$, $A_0>10$ and $c_{00}\in(0,1)$
such that
	\begin{equation*}
		\frac{a_n+\la_n a_{n-1}}{M_n}>c_{00},\quad\forall\ n\in\Z\cap[N_0, A^{3/2}].
	\end{equation*}
\end{lemma}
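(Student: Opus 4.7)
The plan is to combine the telescoping estimate \eqref{Eq.a_n/hat_M_n} with the positive limit from the numerical claim \eqref{Eq.S_infty}, namely $\cS_\infty = \lim_{n\to\infty} \wh a_n^\infty/\wh M_n^\infty > 1/2$. Since $\wh M_n \asymp M_n$ with implicit constants independent of $A$ and $n$ (cf.\ \eqref{Eq.hat_M_n_est}), it suffices to exhibit a uniform positive lower bound for the renormalized ratio $\wh a_n/\wh M_n$ on $\Z \cap [N_0, A^{3/2}]$, after which the lemma follows from $\wh a_n = a_n + \la_n a_{n-1}$.

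The constants must be chosen in the order $N_0$ first, then $A_0$ (depending on $N_0$). Using Corollary \ref{Cor.limit} together with \eqref{Eq.S_infty}, first pick $N_0 \in \Z_+$ so large that $\wh a_n^\infty/\wh M_n^\infty > 3/8$ for all $n \geq N_0$, and simultaneously such that the tail of the summable part of \eqref{Eq.a_n/hat_M_n} satisfies $C \sum_{k>N_0} k^{-3/2} < 1/16$, where $C$ is the absolute constant in \eqref{Eq.a_n/hat_M_n}. With this $N_0$ now fixed, the termwise convergence \eqref{Eq.hat_a_n_limit} and the fact that $\wh M_{N_0}^\infty > 0$ allow us to choose $A_0 > \max(10, N_0^{2/3})$ so that for every $A > A_0$,
\[
\left| \frac{\wh a_{N_0}}{\wh M_{N_0}} - \frac{\wh a_{N_0}^\infty}{\wh M_{N_0}^\infty}\right| < \frac{1}{16} \qquad \text{and}\qquad \frac{2C}{A^{1/4}} < \frac{1}{16}.
\]

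For any $n \in \Z \cap [N_0, A^{3/2}]$ and $A > A_0$, telescoping \eqref{Eq.a_n/hat_M_n} from index $N_0$ to $n$ yields
\[
\left| \frac{\wh a_n}{\wh M_n} - \frac{\wh a_{N_0}}{\wh M_{N_0}}\right| \leq C\sum_{k=N_0+1}^n \frac{1}{k^{3/2}} + \frac{C}{A}\sum_{k=N_0+1}^n \frac{1}{\sqrt k} \leq \frac{1}{16} + \frac{2C\sqrt n}{A} \leq \frac{1}{16} + \frac{2C}{A^{1/4}} < \frac{1}{8},
\]
where we used $\sum_{k=1}^n k^{-1/2} \leq 2\sqrt n$ and then $n \leq A^{3/2}$. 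Combining this with the two inequalities that define $A_0$ gives
\[
\frac{\wh a_n}{\wh M_n} > \frac{\wh a_{N_0}^\infty}{\wh M_{N_0}^\infty} - \frac{1}{16} - \frac{1}{8} > \frac{3}{8} - \frac{3}{16} = \frac{3}{16}.
\]
A final appeal to $\wh M_n \asymp M_n$ with universal implicit constants converts this into $\wh a_n/M_n > c_{00}$ for some absolute $c_{00} > 0$.

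The main subtlety is the chicken-and-egg decoupling of the constants: because $N_0$ must be absolute, its selection rests solely on asymptotic properties of the limiting sequence $\{a_n^\infty\}$, whereas $A_0$ is only chosen afterwards, depending on $N_0$, so that the finite-index approximation $\wh a_{N_0} \approx \wh a_{N_0}^\infty$ is sharp enough. The restriction $n \leq A^{3/2}$ is exactly the threshold below which the $A$-dependent error $\sqrt n/A$ in the telescoping sum remains controllable as a power of $A^{-1}$; extending the positive lower bound beyond $A^{3/2}$ (up to $A^2$) requires the subsequent Wronskian argument sketched in Subsection \ref{Subsec.sketch_existence}.
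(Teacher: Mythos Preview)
Your proof is correct and follows essentially the same approach as the paper: telescope the estimate \eqref{Eq.a_n/hat_M_n} from a fixed anchor index $N_0$, use the numerical claim \eqref{Eq.S_infty} together with the termwise convergence \eqref{Eq.hat_a_n_limit} to secure a positive lower bound at $n=N_0$, and then exploit $n\leq A^{3/2}$ to make the $A$-dependent error $\sqrt n/A$ small. The paper's argument differs only cosmetically---it phrases the constants in terms of $\cS_\infty$ rather than the explicit bound $1/2$, and actually obtains the conclusion on the slightly larger range $[N_0,\,c_1A^2]$ with $c_1=(\cS_\infty/(16C_0))^2$, which of course contains $[N_0,A^{3/2}]$ for large $A$.
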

\if0
\begin{proof}
	By \eqref{Eq.a_n/hat_M_n}, there exist $A_0>10$ such that for all $A>A_0$, $n\in\Z\cap[2, A^2)$ and $k\in\Z\cap[1,n]$, we have
	\begin{equation}\label{Eq.a_n/M_ndiff}
		\left|\frac{\wh a_n}{\wh M_n}-\frac{\wh a_k}{\wh M_k}\right|\lesssim \sum_{j=k+1}^n\left(\frac1{j^{3/2}}+\frac{1}{\sqrt jA}\right)\leq C_0\left(\frac1{\sqrt k}+\frac{\sqrt n}{A}\right)
	\end{equation}
	for some constant $C_0>0$. By \eqref{Eq.S_infty}, there exists $N_1\in\Z_+$ such that
	\begin{equation}\label{Eq.S_n>0}
		\frac{\wh a_n^\infty}{\wh M_n^\infty}>\frac{1}{2}>0,\quad\forall\ n\in\Z\cap[N_1,+\infty).
	\end{equation}
	Fix an $N_0\in\Z_+$ satisfying $N_0>N_1+(8C_0)^2$. Then it follows from \eqref{Eq.hat_a_n_limit} and \eqref{Eq.S_n>0} that
	\begin{equation}\label{Eq.a_N_0}
		\frac{\wh a_{N_0}}{\wh M_{N_0}}>\frac{1}{2}>0,\quad\forall\ A>A_0,
	\end{equation}
	by taking $A_0>10$ to be larger if necessary. Now, for $N_0\leq n\leq(8C_0)^{-2}A^2$ and $A>A_0$, by \eqref{Eq.a_n/M_ndiff} and the choice of $N_0$, we have
	\begin{align*}
		\left|\frac{\wh a_n}{\wh M_n}-\frac{\wh a_{N_0}}{\wh M_{N_0}}\right|\leq C_0\left(\frac1{\sqrt{N_0}}+\frac{\sqrt n}{A}\right)<\frac{1}{8}+\frac{1}{8}=\frac{1}{4},
	\end{align*}
	which along with \eqref{Eq.a_N_0} implies that $\frac{\wh a_n}{\wh M_n}>\frac{1}{4}$.
	Then our desired result follows from \eqref{Eq.hat_a_n} and \eqref{Eq.hat_M_n_est}, by taking $A_0>10$ to be larger if necessary.
\end{proof}\fi 

\begin{proof}
	By \eqref{Eq.a_n/hat_M_n}, there exists $A_0>10$ such that for $A>A_0$, $n\in\Z\cap[2, A^2)$, $k\in\Z\cap[1,n]$, we have
	\begin{equation}\label{Eq.a_n/M_ndiff}
		\left|\frac{\wh a_n}{\wh M_n}-\frac{\wh a_k}{\wh M_k}\right|\lesssim \sum_{j=k+1}^n\left(\frac1{j^{3/2}}+\frac{1}{\sqrt jA}\right)\leq C_0\left(\frac1{\sqrt k}+\frac{\sqrt n}{A}\right)
	\end{equation}
	for some constant $C_0>0$. By \eqref{Eq.S_infty}, there exists $N_1\in\Z_+$ such that
	\begin{equation}\label{Eq.S_n>0}
		\frac{\wh a_n^\infty}{\wh M_n^\infty}>\frac{\cS_\infty}{2}>0,\quad\forall\ n\in\Z\cap[N_1,+\infty).
	\end{equation}
	Fix an $N_0\in\Z_+$ satisfying $N_0>N_1+(16C_0/\cS_\infty)^2$, then it follows from \eqref{Eq.hat_a_n_limit} and \eqref{Eq.S_n>0} that
	\begin{equation}\label{Eq.a_N_0}
		\frac{\wh a_{N_0}}{\wh M_{N_0}}>\frac{\cS_\infty}{4}>0,\quad\forall\ A>A_0,
	\end{equation}
	by taking $A_0>10$ to be larger if necessary. Now, for $N_0\leq n\leq\left(\frac{\cS_\infty}{16C_0}\right)^2A^2$ and $A>A_0$, by \eqref{Eq.a_n/M_ndiff} and the choice of $N_0$ we have
	\begin{align*}
		\left|\frac{\wh a_n}{\wh M_n}-\frac{\wh a_{N_0}}{\wh M_{N_0}}\right|\leq C_0\left(\frac1{\sqrt{N_0}}+\frac{\sqrt n}{A}\right)<\frac{\cS_\infty}{16}+\frac{\cS_\infty}{16}=\frac{\cS_\infty}{8},
	\end{align*}
	which combining with \eqref{Eq.a_N_0} implies that $\frac{\wh a_n}{\wh M_n}>\frac{\cS_\infty}{8}$.	Then our desired result follows from \eqref{Eq.hat_a_n} and \eqref{Eq.hat_M_n_est}, by taking $A_0>10$ to be larger if necessary.
\end{proof}

\begin{lemma}\label{Lem.a_n/M_n_lower}
	There exist $A_0>10$ and $c_{01}\in(0,1)$ such that
	\begin{equation}\label{Eq.a_n/M_n_lower}
		\frac{a_n}{M_n}>c_{01},\quad\forall\ n\in\Z\cap[A, A^{3/2}],\ \forall\ A>A_0.
	\end{equation}
\end{lemma}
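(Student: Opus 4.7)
The plan is to combine Lemma~\ref{Lem.a_n_lower} with a small-increment bound for the ratios $r_n:=a_n/M_n$. Dividing Lemma~\ref{Lem.a_n_lower} by $M_n$ yields $r_n+\beta_n r_{n-1}>c_{00}$ with $\beta_n:=\la_n M_{n-1}/M_n$, and Lemma~\ref{Lem.quotientM_n} together with $\la_n\asymp\sqrt n$ from \eqref{Eq.mu_n^*} and the two-sided comparison $M_n/M_{n-1}\asymp\sqrt n$ coming from Lemmas~\ref{Lem.tildeM_nest} and \ref{Lem.M_n_tildeM_n} gives $\beta_n\asymp 1$ uniformly in $n$ and $A$. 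Writing $r_{n-1}=r_n-V_n$ with $V_n:=r_n-r_{n-1}$ leads to $r_n(1+\beta_n)\geq c_{00}-\beta_n|V_n|$, so it suffices to bound $|V_n|$ by some small absolute multiple of $c_{00}$ on $n\in\Z\cap[A,A^{3/2}]$, which will force $r_n\geq c_{01}:=c_{00}/(2(1+\sup_n\beta_n))>0$.

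To control $V_n$, I will exploit the Wronskian $W_n:=a_n M_{n-1}-a_{n-1}M_n=V_n M_n M_{n-1}$. Multiplying the reformulated recurrence \eqref{Eq.a_n_new_recurrence} by $M_{n-1}$ and the definition \eqref{Eq.M_n_sequence} of $M_n$ by $a_{n-1}$ and subtracting, the $2p_n$ terms cancel and produce
\[\g_n W_n+q_n W_{n-1}=\varepsilon_n M_{n-1},\]
with $\varepsilon_n$ the remainder from \eqref{Eq.epsilon_n_def}. Dividing by $\g_n M_n M_{n-1}$ and using $\g_n M_n=2p_n M_{n-1}+q_n M_{n-2}$ converts this into the scalar recursion
\[V_n=-\alpha_n V_{n-1}+\delta_n,\qquad \alpha_n:=\frac{q_n M_{n-2}}{\g_n M_n}\in(0,1),\qquad \delta_n:=\frac{\varepsilon_n}{\g_n M_n}.\]
Two estimates drive the analysis: (i) $1-\alpha_n=2p_n M_{n-1}/(\g_n M_n)\gtrsim\sqrt n/A$, obtained from $p_n\asymp n/A$ (cf.\ \eqref{Eq.p_n_asymp}), $\g_n\asymp 1$ for $n\ll A^2$, and $M_n/M_{n-1}\asymp\sqrt n$; and (ii) $|\delta_n|\lesssim n^{-3/2}$, from $|\varepsilon_n|\lesssim M_{n-3}$ (the key bound \eqref{Eq.epsilon_n_est} established during the proof of Proposition~\ref{Prop.a_n_upper_bound}) together with $M_{n-3}/M_n\lesssim n^{-3/2}$.

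Iterating the recursion from a fixed small initial index $k$ (at which $|V_k|\lesssim 1$ by Proposition~\ref{Prop.a_n_upper_bound}) yields
\[|V_n|\leq \Bigl(\prod_{i=k+1}^n\alpha_i\Bigr)|V_k|+\sum_{j=k+1}^n\Bigl(\prod_{i=j+1}^n\alpha_i\Bigr)|\delta_j|.\]
By (i) and $1-x\leq e^{-x}$, $\prod_{i=j+1}^n\alpha_i\leq \exp\bigl(-c(n^{3/2}-j^{3/2})/A\bigr)$, so the first term is $O(e^{-c\sqrt A})$ for $n\geq A$. Splitting the sum at $j=n/2$: the upper tail contributes $\lesssim\sum_{j\geq n/2}j^{-3/2}\lesssim n^{-1/2}$, and the lower tail contributes $\lesssim e^{-c\sqrt A}\sum_{j<n/2}j^{-3/2}\lesssim e^{-c\sqrt A}$. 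Hence $|V_n|\lesssim A^{-1/2}$ uniformly on $[A,A^{3/2}]$, which is smaller than $c_{00}/(4\sup_n\beta_n)$ once $A_0$ is large, completing the argument. The main obstacle is estimate (i): the contraction rate $\sqrt n/A$ degenerates to zero as $A\to\infty$, so the recursion is only weakly contracting pointwise, and one must accumulate these gains over $\asymp n$ steps (giving $n^{3/2}/A$ in the exponent); this in turn requires the sharp two-sided bound $M_n/M_{n-1}\asymp\sqrt n$ rather than just the one-sided lower bound of Lemma~\ref{Lem.quotientM_n}.
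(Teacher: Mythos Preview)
Your argument is correct and essentially the same as the paper's: both derive the first-order Wronskian recurrence, exploit that the contraction factor differs from $1$ by an amount $\asymp\sqrt n/A$, and combine the resulting smallness of $a_n/M_n-a_{n-1}/M_{n-1}$ with Lemma~\ref{Lem.a_n_lower}. The only cosmetic difference is that the paper normalizes the Wronskian by an auxiliary sequence $L_n$ (with $L_n=-(q_n/\g_n)L_{n-1}$) and bounds a telescoped sum $d_n$, whereas you normalize directly by $M_nM_{n-1}$ and iterate the recursion for $V_n$; the resulting products and geometric-type sums are identical in substance.
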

\begin{proof}
	Using \eqref{Eq.M_n_sequence}, \eqref{Eq.a_n_new_recurrence} and \eqref{Eq.epsilon_n_def}, we obtain
	\begin{equation}\label{Eq.Wronski_recurrence}
		\g_n(a_nM_{n-1}-a_{n-1}M_n)=-q_n(a_{n-1}M_{n-2}-a_{n-2}M_{n-1})+\varepsilon_nM_{n-1},\ \ \forall\ n\in\Z\cap[10, A^2).
	\end{equation}
	We consider an auxiliary sequence $\{L_n\}_{n=0}^{\lceil A^2\rceil-1}$ defined by
	\begin{equation}\label{Eq.L_n}
		L_0=1,\quad L_n=-\frac{q_n}{\g_n}L_{n-1}\Longrightarrow L_n=(-1)^n\prod_{j=1}^n\frac{q_j}{\g_j},\quad\forall\ n\in\Z\cap[1, A^2).
	\end{equation}
	Dividing both sides of \eqref{Eq.Wronski_recurrence} by $\g_n L_n=-q_nL_{n-1}$ gives that
	\begin{equation}\label{Eq.Wronski_recurrence2}
		\frac{a_nM_{n-1}-a_{n-1}M_n}{L_n}-\frac{a_{n-1}M_{n-2}-a_{n-2}M_{n-1}}{L_{n-1}}=-\frac{\varepsilon_nM_{n-1}}{q_nL_{n-1}},\quad\forall\ n\in\Z\cap[10, A^2).
	\end{equation}
	Using \eqref{Eq.epsilon_n_est}, \eqref{Eq.q_n}, \eqref{Eq.q_n_asymp}, \eqref{Eq.hat_M_n}, $\wt b_n\asymp \mu_{n-1}/\g_n$, \eqref{Eq.tilde_b_n} and \eqref{Eq.hat_M_n_est}, we have
	\begin{align*}
		\left|\frac{\varepsilon_n}{q_n}\right|\lesssim \frac1n \wh M_{n-3}\lesssim \frac1{n}\frac{1}{\wt b_{n-2}\wt b_{n-1}}\wh M_{n-1}
		\lesssim \frac1{n^2}\wh M_{n-1}\lesssim \frac1{n^2}M_{n-1},\quad\forall\ n\in\Z\cap[14, A^2),\ \forall\ A>A_0,
	\end{align*}
	if $A_0>10$ is chosen to be large enough. By \eqref{Eq.q_n}, \eqref{Eq.q_n_asymp}, Lemma \ref{Lem.a_n_trivial_bound}, \eqref{Eq.mu_n^*}, \eqref{Eq.hat_M_n_est} and \eqref{Eq.L_n} we have
$|a_{13}M_{12}-a_{12}M_{13}|\leq C $, $|L_{13}|+1/|L_{13}|+1/|\wh M_{13}|\leq C$. So, \eqref{Eq.Wronski_recurrence2} and \eqref{Eq.hat_M_n_est} imply that
	\begin{equation}\label{Eq.Wronski_est1}
		\left|\frac{a_nM_{n-1}-a_{n-1}M_n}{L_n}\right|\leq C+\sum_{k=14}^n\left|\frac{\varepsilon_kM_{k-1}}{q_kL_{k-1}}\right|\lesssim1+
\sum_{k=14}^n\frac{M_{k-1}^2}{k^2|L_{k-1}|}\lesssim
\sum_{k=14}^n\frac{\wh M_{k-1}^2}{k^2|L_{k-1}|}.
	\end{equation}
	for all $n\in\Z\cap[14,A^2)$. By \eqref{Eq.hat_M_n}, \eqref{Eq.L_n} and \eqref{Eq.mu_n_la_n}, we have
	\begin{align}
		\frac{\wh M_n^2/|L_n|}{\wh M_{n-1}^2/|L_{n-1}|}&=\left(\frac{\mu_{n-1}}{\g_n}\right)^2\frac{\g_n}{q_n}=\frac{\mu_{n-1}^2}{q_n\g_n}=\frac{\left(\mu_{n-1}^*+p_{n-\frac12}\right)^2}{q_n\g_n}\nonumber\\
		&\geq \frac{\left(\mu_{n-1}^*+p_{n-\frac12}\right)^2}{\left(\mu_{n-1}^*\right)^2}=\left(1+\frac{p_{n-\frac12}}{\mu_{n-1}^*}\right)^2\label{Eq.quotient_lower1}\\
		&\geq\left(1+c_1\frac{n/A}{\sqrt n}\right)^2=\left(1+c_1\frac{\sqrt n}{A}\right)^2\label{Eq.quotient_lower2}
	\end{align}
	for all $n\in\Z\cap[1, A^2)$, where in \eqref{Eq.quotient_lower1} we have used
	\[\left(\mu_{n-1}^*\right)^2=\g_{n-\frac12}q_{n+\frac12}+p_n^2\geq \g_{n-\frac12}q_{n+\frac12}\geq \g_nq_n,\]
	thanks to the increasing of $t\mapsto q_t$ for $t>2$ and the decreasing of $t\mapsto \g_t$ for $t<A^2$, and in \eqref{Eq.quotient_lower2} we have used \eqref{Eq.p_n_asymp} and \eqref{Eq.mu_n^*}. As a consequence, for $n\in\Z\cap[2, A^2)$ and $k\in\Z\cap[2,n]$, we have
	\begin{equation*}
		\frac{\wh M_{k-1}^2}{|L_{k-1}|}\leq \prod_{j=k}^{n-1}\left(1+c_1\frac{\sqrt{j}}A\right)^{-2}\frac{\wh M_{n-1}^2}{|L_{n-1}|}.
	\end{equation*}
	Hence, \eqref{Eq.Wronski_est1} implies that
	\begin{equation}\label{Eq.Wronski_est2}
		\left|\frac{a_nM_{n-1}-a_{n-1}M_n}{L_n}\right|\lesssim d_n\frac{\wh M_{n-1}^2}{|L_{n-1}|},
	\end{equation}
	where
	\begin{equation*}
		d_n:=\sum_{k=14}^n\frac1{k^2}\prod_{j=k}^{n-1}\left(1+c_1\frac{\sqrt{j}}A\right)^{-2}
	\end{equation*}
	for all $n\in\Z\cap[14, A^2)$.
	
	We claim that there exist $A_0>10$ and a constant $C_0>0$ such that
	\begin{equation}\label{Eq.d_n_est}
		d_n\leq C_0\frac{\sqrt A}{n^2}\leq \frac1n,\quad\forall\ n\in\Z\cap[A, A^2),\ \forall\ A>A_0.
	\end{equation}
	
	Assume that \eqref{Eq.d_n_est} holds. Then by \eqref{Eq.Wronski_est2}, \eqref{Eq.hat_M_n_est}, \eqref{Eq.hat_M_n} and \eqref{Eq.L_n}, we have
	\begin{align}
		\left|\frac{a_n}{M_n}-\frac{a_{n-1}}{M_{n-1}}\right|&=\frac{|L_n|}{M_nM_{n-1}}\left|\frac{a_nM_{n-1}-a_{n-1}M_n}{L_n}\right|\lesssim \frac{|L_n|}{\wh M_n\wh M_{n-1}}d_n\frac{\wh M_{n-1}^2}{|L_{n-1}|}\nonumber\\
		&\lesssim \frac{q_n}{\mu_{n-1}}d_n\lesssim \sqrt nd_n,\quad\forall\ n\in\Z\cap[A, A^2),\ \forall\ A>A_0,\label{Eq.a_n/M_ndiff2}
	\end{align}
	where in the last inequality we have used \eqref{Eq.mu_n^*} and $q_n\asymp n$ due to \eqref{Eq.q_n_asymp}. Noting the identity
	\begin{equation*}
		 \frac{M_n}{M_{n-1}}\frac{a_n+\la_na_{n-1}}{M_n}+\la_n\left(\frac{a_n}{M_n}-\frac{a_{n-1}}{M_{n-1}}\right)=\left(\frac1{M_{n-1}}+\frac{\la_n}{M_n}\right)a_n=\left(\frac{M_n}{M_{n-1}}+\la_n\right)\frac{a_n}{M_n},
	\end{equation*}
	it follows from Lemma \ref{Lem.a_n_lower}, \eqref{Eq.hat_M_n_est}, \eqref{Eq.hat_M_n}, \eqref{Eq.mu_n^*}, \eqref{Eq.a_n/M_ndiff2} and \eqref{Eq.d_n_est} that
	\begin{align}
		\left(\frac{M_n}{M_{n-1}}+\la_n\right)\frac{a_n}{M_n}&\geq c_{00}\frac{M_n}{M_{n-1}}-C_1\la_n\sqrt nd_n\geq \wt c_0\frac{\wh M_n}{\wh M_{n-1}}-\wt C_0 nd_n\nonumber\\
		&=\wt c_0\frac{\mu_{n-1}}{\g_n}-\wt C_0 nd_n\geq \wt c_1\sqrt n-\wt C_0\gtrsim \sqrt n,\label{Eq.a_n/M_n_lower1}
	\end{align}
	for all $n\in\Z\cap[A, A^{3/2}]$ and $A>A_0$, by taking $A_0$ to be larger if necessary, where in the last inequality we have used $\mu_n\asymp\sqrt n$ and $\g_n\asymp 1$ for $n\in[A, A^{3/2}]\subset [2, A^2/2]$. On the other hand, by \eqref{Eq.mu_n^*}, \eqref{Eq.tildeM_n_est}, \eqref{Eq.tildeM_n_quotient} and $\g_n\asymp 1$ for $n\in[A, A^{3/2}]$, we have
	\begin{align*}
		0<\frac{M_n}{M_{n-1}}+\la_n\lesssim \sqrt n+\frac{\wt M_n}{\wt M_{n-1}}\lesssim \sqrt n+\frac{\mu_{n-1}}{\g_n}\lesssim \sqrt n,\quad\forall\ n\in\Z\cap[A, A^{3/2}].
	\end{align*}
	This along with \eqref{Eq.a_n/M_n_lower1} gives our desired \eqref{Eq.a_n/M_n_lower}.
	
	It remains to prove the claimed estimate \eqref{Eq.d_n_est} for $\{d_n\}$. Let $A>A_0$ and $n\in\Z\cap[A, A^2)$. If $k\in\Z\cap[n/2, n-1]$, then
	\[\ln\left(1+c_1\frac{\sqrt j}{A}\right)\geq \ln\left(1+c_1\frac{\sqrt k}{A}\right)\geq \ln\left(1+\frac{c_1}{\sqrt 2}\frac{\sqrt n}{A}\right)\gtrsim\frac{\sqrt n}{A},\]
	hence,
	\begin{equation*}
		\sum_{j=k}^{n-1}\ln\left(1+c_1\frac{\sqrt j}{A}\right)\gtrsim (n-k)\frac{\sqrt n}{A};
	\end{equation*}
	if $k\in\Z\cap[2, n/2]$, then
	\begin{equation*}
		\sum_{j=k}^{n-1}\ln\left(1+c_1\frac{\sqrt j}{A}\right)\geq \sum_{j=\lceil n/2\rceil}^{n-1}\ln\left(1+c_1\frac{\sqrt j}{A}\right)\gtrsim \frac n2\frac{\sqrt n}{A} \gtrsim (n-k)\frac{\sqrt n}{A}.
	\end{equation*}
	Thus, we have
	\begin{equation*}
		\sum_{j=k}^{n-1}\ln\left(1+c_1\frac{\sqrt j}{A}\right)\gtrsim (n-k)\frac{\sqrt n}{A},\quad \forall\ k\in[2, n-1].
	\end{equation*}
	Then there exists a constant $c_2\in(0,1)$ independent of $A$ and $n$ such that
	\begin{align*}
		\ln\prod_{j=k}^{n-1}\left(1+c_1\frac{\sqrt{j}}A\right)^{-2}=-2\sum_{j=k}^{n-1}\ln\left(1+c_1\frac{\sqrt j}{A}\right)\leq -c_2(n-k)\frac{\sqrt n}{A},\quad \forall\ k\in[2, n-1].
	\end{align*}
	Hence,
	\begin{align*}
		d_n&=\sum_{k=14}^n\frac1{k^2}\prod_{j=k}^{n-1}\left(1+c_1\frac{\sqrt{j}}A\right)^{-2}\leq \sum_{k=14}^n\frac1{k^2}{\e^{-c_2(n-k)\frac{\sqrt n}{A}}}\\
		&\leq \sum_{k=14}^{\lceil n/2\rceil-1}\frac1{k^2}\exp\left(-c_2\frac n2\frac{\sqrt n}{A}\right)+\sum_{k=\lceil n/2\rceil}^n\frac1{k^2}\e^{-c_2(n-k)\frac{\sqrt n}{A}}\\
        &\lesssim \exp\left(-c_2\sqrt n/2\right)+\frac1{n^2}\sum_{k=0}^{\lceil n/2\rceil}\e^{-c_2k\sqrt n/A}\lesssim \frac1{n^2}+\frac1{n^2}\frac1{1-\e^{-c_2\sqrt n/A}}\\
        &\lesssim n^{-2}+\frac{n^{-2}}{1-\e^{-c_2A^{-1/2}}}\lesssim \frac{\sqrt A}{n^2},
	\end{align*}
	where we have used $n\geq A$. This completes the proof of \eqref{Eq.d_n_est}.
\end{proof}

Now we are ready to prove Proposition \ref{Prop.an_lower_bound}.

\begin{proof}[Proof of Proposition \ref{Prop.an_lower_bound}]
	By Lemma \ref{Lem.a_n/M_n_lower}, there exists $A_0>10$ and $c_{01}\in(0,1)$ such that
	\begin{equation}\label{Eq.a_n/M_n_lower4.81}
		\frac{a_n}{M_n}>c_{01},\quad\forall\ n\in\Z\cap[A, A^{3/2}],\ \forall\ A>A_0.
	\end{equation}
    Let $n\in\Z\cap[A^{3/2}, A^2]$. Using \eqref{Eq.a_n/M_ndiff2} and \eqref{Eq.d_n_est} gives that
    \begin{align*}
        \left|\frac{a_n}{M_n}-\frac{a_{\lceil A^{3/2}\rceil-1}}{M_{\lceil A^{3/2}\rceil-1}}\right|\leq \sum_{j=\lceil A^{3/2}\rceil}^n\left|\frac{a_j}{M_j}-\frac{a_{j-1}}{M_{j-1}}\right|\lesssim \sum_{j=\lceil A^{3/2}\rceil}^n\sqrt jd_j\lesssim \sum_{j=\lceil A^{3/2}\rceil}^n\sqrt Aj^{-3/2}\leq C_0A^{-1/4},
    \end{align*}
    where $C_0>0$ is a constant independent of $A$ and $n$. By adjusting $A_0>10$ to be larger such that $C_0A_0^{-1/4}<c_{01}/2$, \eqref{Eq.a_n/M_n_lower4.81} then implies  that $a_n/M_n>c_{01}/2$ for all $n\in\Z\cap[A^{3/2}, A^2]$.
    \if0
	For $A>A_0$, we consider a sequence $\{\delta_n\}_{n=\lceil A\rceil}^{\lceil A^2\rceil-1}$ defined by
	\begin{equation}\label{Eq.delta_n}
		\delta_{\lceil A\rceil}:=\frac{a_{\lceil A\rceil}}{M_{\lceil A\rceil}},\quad \delta_n=\min\left\{\frac{a_n}{M_n},\delta_{n-1}\right\}\quad\forall\ n\in\Z\cap[A+1, A^2).
	\end{equation}
	Then
	\begin{equation}\label{Eq.delta_n_small}
		\delta_n>\wt c_0, \quad\forall\ n\in\Z\cap[A, A^{3/2}]
	\end{equation}
	and $a_n\geq\delta_n M_n$ for $n\in\Z\cap[A, A^2)$.
	Using \eqref{Eq.a_n_new_recurrence}, \eqref{Eq.epsilon_n_def}, \eqref{Eq.epsilon_n_est} and \eqref{Eq.hat_M_n_est}, we obtain
	\begin{align*}
		\g_na_n=2p_na_{n-1}+q_na_{n-2}+\varepsilon_n\geq 2p_n\delta_{n-1}M_{n-1}+q_n\delta_{n-2}M_{n-2}-C_0M_{n-3},
	\end{align*}
	for $n\in\Z\cap[A+3, A^2)$, where $C_0>0$ is a constant independent of $n$ and $A$. By \eqref{Eq.M_n_sequence} and the decreasing of $n\mapsto \delta_n$, we have
	\begin{align*}
		2p_n\delta_{n-1}M_{n-1}+q_n\delta_{n-2}M_{n-2}&=\delta_{n-1}\left(2p_nM_{n-1}+q_nM_{n-2}\right)+q_n\left(\delta_{n-2}-\delta_{n-1}\right)M_{n-2}\\
		&\geq \g_n\delta_{n-1}M_n,\quad\forall\ n\in\Z\cap[A+2, A^2).
	\end{align*}
	Thus
	\begin{equation*}
		\g_na_n\geq \g_n\delta_{n-1}M_n-C_0M_{n-3},\quad\forall\ n\in\Z\cap[A+3, A^2).
	\end{equation*}
	It follows from \eqref{Eq.hat_M_n_est}, \eqref{Eq.hat_M_n}, the fact $\wt b_n\asymp\mu_{n-1}/\g_n$ due to \eqref{Eq.quotient_est}, and \eqref{Eq.tilde_b_n} that
	\begin{align*}
		\frac{M_{n-3}}{M_n}\asymp \frac{\wh M_{n-3}}{\wh M_n}\asymp\frac1{\wt b_n\wt b_{n-1}\wt b_{n-2}}\lesssim \frac1{n\wt b_n}\lesssim \frac{\g_n}{n\left(2p_n+\g_n\la_n\right)}\lesssim \frac{\g_n}{n\mu_{n-1}}\lesssim \frac{\g_n}{n^{3/2}},\  \forall\ n\in\Z\cap[4, A^2).
	\end{align*}
	where we have used $\mu_n\asymp\sqrt n$ thanks to \eqref{Eq.mu_n^*}, by taking $A_0>10$ to be larger if necessary. Therefore, there exists a constant $C_1>0$ independent of $n$ and $A$ such that
	\begin{equation*}
		\g_na_n\geq \g_n\delta_{n-1}M_n-C_1\frac{\g_n}{n^{3/2}}M_n\Longrightarrow \frac{a_n}{M_n}\geq \delta_{n-1}-\frac{C_1}{n^{3/2}},\quad\forall\ n\in\Z\cap[A+3, A^2).
	\end{equation*}
	This, combining with the definition of $\delta_n$, implies that $\delta_n\geq \delta_{n-1}-C_1n^{-3/2}$ for $n\in\Z\cap[A+3, A^2)$. So, by \eqref{Eq.delta_n_small}, for $n\in\Z\cap [A^{3/2}, A^2)$ and $A>A_0$ we have
	\begin{align*}
		\delta_n&=\delta_{\lceil A^{3/2}\rceil-1}+\sum_{k=\lceil A^{3/2}\rceil}^n(\delta_k-\delta_{k-1})\geq \wt c_0-C_1\sum_{k=\lceil A^{3/2}\rceil}^n\frac1{k^{3/2}}
		\geq \wt c_0-\frac{C_2}{A^{3/4}}.
	\end{align*}
	Taking $A_0>10$ to be larger such that $A_0>(2C_2/\wt c_0)^{4/3}$, we obtain
	\[\delta_n>\wt c_0/2,\quad \forall\ n\in\Z\cap[A, A^2),\ \forall\ A>A_0.\]
	This completes the proof.\fi
\end{proof}

\subsection{Proof of Lemma \ref{Lem.a_N+1<0}}\label{Subsec.a_N+1<0}
\begin{proof}[Proof of Lemma \ref{Lem.a_N+1<0}]
	Let $A_0>10$ be large enough such that all properties in Subsections \ref{Subsec.re-formulation}---\ref{Subsec.a_n_lower_bound} hold. Let $N_0\in\N$ be such that $N_0>A_0^2$ and we fix an $N\in\Z\cap (N_0, +\infty)$. Let $A^2=R\in(N, N+1)$. By \eqref{Eq.a_n_new_recurrence} and \eqref{Eq.epsilon_n_def}, we have
	\begin{equation}
		\g_{N+1}a_{N+1}=2p_{N+1}a_N+q_{N+1}a_{N-1}+\varepsilon_{N+1}.
	\end{equation}
	It follows from \eqref{Eq.epsilon_n_est} that $|\varepsilon_{N+1}|\lesssim \wh M_{N-2}$. Using \eqref{Eq.hat_M_n}, \eqref{Eq.mu_n^*} and $\g_n=\frac{8}{(A+1)^2}(A^2-n)=\frac{8}{(A+1)^2}(R-n)$, we obtain
	\begin{align}\label{Eq.M_N-2}
		\frac{\wh M_{N-2}}{\wh M_N}=\frac{\g_{N-1}\g_N}{\mu_{N-2}\mu_{N-1}}\asymp\frac{A^{-2}\cdot A^{-2}(R-N)}{\sqrt N\cdot\sqrt N}\asymp A^{-6}(R-N),
	\end{align}
	where we have used $(R-(N-1))\asymp1$ for $R\in(N, N+1)$. By Proposition \ref{Prop.a_n_sharp_est}, \eqref{Eq.hat_M_n_est}, \eqref{Eq.q_n} and \eqref{Eq.q_n_asymp}, similarly to \eqref{Eq.M_N-2}, we have
	\begin{align*}
		q_{N+1}a_{N-1}\asymp N\wh M_{N-1}\asymp NA^{-3}(R-N)\wh M_N\asymp A^{-1}(R-N)\wh M_N>0.
	\end{align*}
	Similarly, Proposition \ref{Prop.a_n_sharp_est}, \eqref{Eq.hat_M_n_est}, \eqref{Eq.p_n} and \eqref{Eq.p_n_asymp} imply that $2p_{N+1}a_N\asymp \frac{N}{A}\wh M_N\asymp A\wh M_N>0$. Take $A_0>0$ to be larger if necessary, then $\g_{N+1}a_{N+1}\asymp A\wh M_N>0$, hence,
	$$A^{-2}(R-(N+1))a_{N+1}\asymp A\wh M_N,$$
	namely, 
	\begin{equation}\label{Eq.a_N+1_est}
		-a_{N+1}\asymp\frac{A^3}{N+1-R}\wh M_N\asymp\frac{A^3}{N+1-R}a_N\asymp\frac{A^3}{N+1-R} M_N>0,
	\end{equation}
	here we recall $R\in(N, N+1)$.
\end{proof}

\section{Solution curve passing through the sonic point}\label{Solution curve passing through the sonic point}
Recall that $A=\sqrt R$ by \eqref{Eq.A_def}. By Proposition \ref{Prop.local_sol}, for $R\in(1, +\infty)\setminus\Z$ (equivalently $A\in(1, +\infty)\setminus\sqrt\N$), the $u-\tau$ ODE \eqref{ODE_u_tau} has a unique local analytic solution $u_L(\tau)=u_L(\tau; R)=\sum_{n=0}^\infty a_n\tau^n$ near $Q_2$ with $u_L(0)=1$ and $u_L'(0)=a_1$. We have also constructed the $Q_6-Q_2$ solution $u=u_F$.

Our goal in this section is to prove that the solution $u_F$ can be extended to pass through $Q_2$ smoothly. This is achieved by considering $R\to+\infty$, and by applying Proposition \ref{Prop.R_near_odd}, Proposition \ref{Prop.R_near_even}, and the intermediate value theorem. Readers may find several similarities between this section and \cite[Section 6]{SWZ2024_1}.

We begin with the proof of Proposition \ref{Prop.R_near_odd}.

\begin{proof}[Proof of Proposition \ref{Prop.R_near_odd}]
	Let $A_0>10$ be large enough such that all properties in Section \ref{Sec.an} hold. Let $N_0\in\N$ be such that $N_0>A_0^2$ and we fix an $N\in\Z\cap (N_0, +\infty)$. Let $R\in(N, N+1)$ and
	\begin{equation*}
		U_1(\tau):=U_1(\tau; R)=\sum_{j=0}^Na_j\tau^j+\kappa_1\tau^{N+1},\quad\ \kappa_1:=a_{N+1}-1<a_{N+1}.
	\end{equation*}
	Then for each $R\in(N, N+1)$, there exists $\tau_{1,1}(R)>0$ such that
	\begin{equation}\label{Eq.u_L>U_1}
		u_{L}(\tau; R)>U_1(\tau; R)\quad\forall\ \tau\in(0, \tau_{1,1}(R)].
	\end{equation}
	We define $\tau_{1,2}(R)=(R-N)^{1/N}$ for $R\in(N, N+1)$, and we claim that, taking $A_0>10$ to be larger if necessary, there exists $\th_1\in(0,1/2)$ such that for all $R\in(N, N+\th_1)$, we have
	\begin{equation}\label{Eq.U_1>u_F}
		U_1(\tau; R)>u_F(\tau;R)\quad\forall\ \tau\in(0, \tau_{1,2}(R)].
	\end{equation}
	Assuming \eqref{Eq.U_1>u_F}, for $R\in(N, N+\th_1)$ we take $\tau_1(R):=\min\{\tau_{1,1}(R), \tau_{1,2}(R)\}>0$. Then \eqref{Eq.u_L>u_F} follows from \eqref{Eq.u_L>U_1}.
	
	It suffices to prove \eqref{Eq.U_1>u_F}, which is a consequence of the following two inequalities:
	\begin{align}
		U_1&(\tau_{1,2}(R))>u_\text b(\tau_{1,2}(R))>u_F(\tau_{1,2}(R)),\label{Eq.U_1>u_b}\\
		&\cL[U_1](\tau)>0\quad\forall\ \tau\in(0, \tau_{1,2}(R)],\label{Eq.U_1_compare}
	\end{align}
	where $\cL$ is defined in \eqref{Eq.Lu}. Now we prove \eqref{Eq.U_1>u_F} by using \eqref{Eq.U_1>u_b} and \eqref{Eq.U_1_compare}. Assume on the contrary that \eqref{Eq.U_1>u_F} does not hold for some $R\in(N, N+\th_1)$. By \eqref{Eq.U_1>u_b} and the continuity, there exists $\tau_*\in(0, \tau_{1,2}(R))$ such that $U_1(\tau_*)=u_F(\tau_*)$ and $U_1(\tau)>u_F(\tau)$ for $\tau\in(\tau_*, \tau_{1,2}(R))$, hence $\frac{\mathrm dU_1}{\mathrm d\tau}(\tau_*)\geq \frac{\mathrm du_F}{\mathrm d\tau}(\tau_*)$. Since $\Delta_\tau(\tau_*, U_1(\tau_*))=\Delta_\tau(\tau_*, u_F(\tau_*))<0$, we have
	\begin{align*}
		\cL[U_1](\tau_*)&=\Delta_\tau(\tau_*, U_1(\tau_*))\frac{\mathrm dU_1}{\mathrm d\tau}(\tau_*)-\Delta_u(\tau_*, U_1(\tau_*))\\
		&\leq \Delta_\tau(\tau_*, u_F(\tau_*))\frac{\mathrm du_F}{\mathrm d\tau}(\tau_*)-\Delta_u(\tau_*, u_F(\tau_*))=\cL[u_F](\tau_*)=0,
	\end{align*}
	which contradicts with \eqref{Eq.U_1_compare}. This proves \eqref{Eq.U_1>u_F}.
	
	It remains to prove \eqref{Eq.U_1>u_b} and \eqref{Eq.U_1_compare} for $R$ sufficiently close to $N$. We fix $N\in\Z\cap (N_0, +\infty)$ and we assume $R\in(N, N+1/2)$ is sufficiently close to $N$. Using $\g_n=\frac8{(A+1)^2}(R-n)$, one can prove by  induction that
	\begin{align}
		|a_n|&\lesssim \wt M_n\lesssim_N1,\quad\forall\ n\in\Z\cap[1, N-1],\label{Eq.a_n<N_asymp1}\\
		a_n&\asymp M_n\asymp \wh M_n\asymp_N 1,\quad\forall\ n\in\Z\cap[\sqrt N, N-1]\label{Eq.a_n<N_asymp2}
	\end{align}
	and
	\begin{equation}\label{Eq.a_N_asymp}
		a_{N}\asymp M_N\asymp \wh M_N\asymp_N (R-N)^{-1}>0
	\end{equation}
	as $R\downarrow N$. Here, we note that the implicit constants in \eqref{Eq.a_n<N_asymp1}, \eqref{Eq.a_n<N_asymp2} and \eqref{Eq.a_N_asymp} may depend on $N$, which is fixed. We are interested in the scenario $R\downarrow N$, and the implicit constants are independent of $R$ for $R$ sufficiently close to $N$. By \eqref{Eq.a_N+1_est}, we have $-a_{N+1}\asymp A^3a_N\asymp_N(R-N)^{-1}$, hence,
    \begin{equation}\label{Eq.kappa1_asymp}
		-\kappa_1\asymp_N (R-N)^{-1},\quad\text{as}\quad R\downarrow N;
	\end{equation}
    Therefore, $-\kappa_1\geq -a_{N+1}/2\gtrsim A^3a_N>0$ for $R\in(N, N+1/2)$, and then \eqref{Eq.A_1B_1_asymp} implies that
    \begin{equation*}
        \big(A_1(N+2)+B_1\big)\kappa_1\geq -{\frac N{CA}}a_{N+1}\geq C_*A^4a_N.
    \end{equation*}
    By \eqref{Eq.A_2B_2_asymp}, taking $A_0>10$ to be larger if necessary, we have
	\begin{align}
		&\big(A_1(N+2)+B_1\big)\kappa_1+\big(A_2(N+2)+B_2\big)a_N\nonumber\\
        \geq &\ \big(C_*A^4+A_2(N+2)+B_2\big)a_N>a_N,\quad\forall\ A>A_0.\label{Eq.main_coefficient_asymp}
	\end{align}

	\underline{\it Proof of \eqref{Eq.U_1>u_b}.} Using \eqref{Eq.u_b}, we compute that
	\begin{align*}
		3(\al-\tau)\left(U_1(\tau)-u_\text b(\tau)\right)&=\tau f_1(\tau),
	\end{align*}
	where
	\begin{align*}
		f_1(\tau):&=(3\al a_1-4\al-2)+(3\al a_2-3a_1-\al+2)\tau+(3\al a_3-3a_2+1)\tau^2\\
		&\quad +\sum_{j=4}^{N-1}(3\al a_j-3a_{j-1})\tau^{j-1}+(3\al a_N-3a_{N-1})\tau^{N-1}+(3\al\kappa_1-3a_N)\tau^{N}-3\kappa_1\tau^{N+1}.
	\end{align*}
	It follows from \eqref{Eq.a_n<N_asymp1}, \eqref{Eq.a_n<N_asymp2}, \eqref{Eq.a_N_asymp} and \eqref{Eq.kappa1_asymp} that
	\begin{align*}
		&\left|(3\al a_1-4\al-2)+(3\al a_2-3a_1-\al+2)\tau_{1,2}(R)+(3\al a_3-3a_2+1)\tau_{1,2}(R)^2\right|\lesssim_N1,\\
		&\qquad\qquad\left|\sum_{j=4}^{N-1}(3\al a_j-3a_{j-1})\tau_{1,2}(R)^{j-1}\right|\lesssim_N \tau_{1,2}(R)^3\lesssim_N (R-N)^{3/N},\\
		&(3\al a_N-3a_{N-1})\tau_{1,2}(R)^{N-1}\asymp_N(R-N)^{-1}(R-N)^{(N-1)/N}\asymp_N(R-N)^{-1/N}>0,\\
		&\qquad\left|(3\al\kappa_1-3a_N)\tau_{1,2}(R)^{N}-3\kappa_1\tau_{1,2}(R)^{N+1}\right|\lesssim_N(R-N)^{-1}\tau_{1,2}(R)^{N}\lesssim_N1,
	\end{align*}
	as $R\downarrow N$. Therefore, $f_1(\tau_{1,2}(R))\asymp_N(R-N)^{-1/N}>0$ as $R\downarrow N$. This proves $U_1(\tau_{1,2}(R))>u_\text b(\tau_{1,2}(R))$ for some $\th_1\in(0,1/2)$ and all $R\in(N, N+\th_1)$. The second inequality in \eqref{Eq.U_1>u_b} follows obviously from \eqref{Eq.u_F_property}.\smallskip
	
	\underline{\it Proof of \eqref{Eq.U_1_compare}.} By Lemma \ref{Lem.recurrence_relation}, \eqref{Eq.a_n_recurrence_relation} and \eqref{Eq.a_n_recurrence1}, we compute that
	\begin{equation*}
		\cL[U_1](\tau)=\sum_{n=N+1}^{2N+2}\cU_n^{(1)}\tau^n=\tau^{N+1}g_1(\tau),
	\end{equation*}
	where $g_1(\tau):=\cU_{N+1}^{(1)}+\cU_{N+2}^{(1)}\tau+\cdots+\cU_{2N+2}^{(1)}\tau^{N+1}$, and (for $k\in\Z\cap[4, N-2]$)
	\begin{align*}
		\cU_{N+1}^{(1)}&=\delta(R-(N+1))\kappa_1-\wt E_{N+1}=\delta(R-(N+1))(\kappa_1-a_{N+1})=\delta(N+1-R),\\
		\cU_{N+2}^{(1)}&=\big(A_1(N+2)+B_1\big)\kappa_1+\big(A_2(N+2)+B_2\big)a_N+\frac32\al(N+3)\sum_{j=4}^{N-1}a_ja_{N+3-j}\\
		&\qquad-\frac{3N+2}{2}\sum_{j=3}^{N-1}a_ja_{N+2-j},\\
		\cU_{N+3}^{(1)}&=\big(A_2(N+3)+B_2\big)\kappa_1+\frac32\al(N+4)\sum_{j=4}^{N}a_ja_{N+4-j}-\frac{3N+5}{2}\sum_{j=3}^Na_ja_{N+3-j},\\
		\cU_{N+k}^{(1)}&=\left[3\al(N+k+1)a_k-(3N+3k-4)a_{k-1}\right]\kappa_1+\frac32\al(N+k+1)\sum_{j=k+2}^{N-1}a_ja_{N+k+1-j}\\
		&\quad+\left[3\al(N+k+1)a_{k+1}-(3N+3k-4)a_{k}\right]a_N -\frac{3N+3k-4}{2}\sum_{j=k+1}^{N-1}a_ja_{N+k-j},\\
		\cU_{2N-1}^{(1)}&=3\al Na_N^2+\left[6\al Na_{N-1}-(6N-7)a_{N-2}\right]\kappa_1-(6N-7)a_{N-1}a_N,\\
		\cU_{2N}^{(1)}&=3\al(2N+1)a_N\kappa_1-(3N-2)a_N^2-2(3N-2)a_{N-1}\kappa_1,\\
		\cU_{2N+1}^{(1)}&=3\al(N+1)\kappa_1^2-(6N-1)a_N\kappa_1,\quad \cU_{2N+2}^{(1)}=-(3N+1)\kappa_1^2.
	\end{align*}
	According to \eqref{Eq.a_n<N_asymp1}, \eqref{Eq.a_n<N_asymp2}, \eqref{Eq.a_N_asymp}, \eqref{Eq.kappa1_asymp}, \eqref{Eq.main_coefficient_asymp} and $\delta>0$, we have
	\begin{align*}
		\cU_{N+1}^{(1)}&>0,\\ \cU_{N+2}^{(1)}&>a_N+\frac32\al(N+3)\sum_{j=4}^{N-1}a_ja_{N+3-j}-\frac{3N+2}{2}\sum_{j=3}^{N-1}a_ja_{N+2-j}\gtrsim_N(R-N)^{-1},\\
		\left|\cU_{N+k}^{(1)}\right|&\lesssim_N(R-N)^{-1},\quad\forall\ k\in\Z\cap[3,N-2],\\
		\cU_{2N-1}^{(1)}&\asymp_N(R-N)^{-2},\quad\left|\cU_{2N+k}^{(1)}\right|\lesssim_N(R-N)^{-2},\quad\forall\ k\in\Z\cap[0, 2],
	\end{align*}
	as $R\downarrow N$. As a consequence, for $\tau\in(0, \tau_{1,2}(R))$, we have $0<\tau\ll1$ and
	\[g_1(\tau)\gtrsim_N \cU_{N+2}^{(1)}\tau+\cU_{2N-1}^{(1)}\tau^{N-2}>0.\]
	This completes the proof of \eqref{Eq.U_1_compare}.
\end{proof}

Next, we turn to prove Proposition \ref{Prop.R_near_even}. We need the following lemma.

\begin{lemma}\label{Lem.u_2_compare}
	Let $u_{(2)}(\tau):=a_0+a_1\tau+a_2\tau^2$. There exists $N_0\in\Z_+$ such that for all $R>N_0$ we have
	\begin{equation}\label{Eq.u_2<u_F}
		u_{(2)}(\tau)<u_F(\tau),\quad\ \forall\ \tau\in(0,\al).
	\end{equation}
\end{lemma}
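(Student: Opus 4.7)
The plan is to apply the barrier function method. First, I would derive an explicit formula for $\cL[u_{(2)}](\tau)$. Since $u_{(2)}$ is a polynomial of degree $2$, both $\Delta_u(\tau,u_{(2)})$ and $\Delta_\tau(\tau,u_{(2)})u_{(2)}'$ are polynomials of degree at most $4$ in $\tau$, so $\cL[u_{(2)}](\tau)$ is itself a polynomial of degree at most $4$. Applying Lemma \ref{Lem.recurrence_relation} to the truncated series $u(\tau)=\sum_{n\geq 0}u_n\tau^n$ with $u_0=a_0, u_1=a_1, u_2=a_2$ and $u_n=0$ for $n\geq 3$, the recurrence \eqref{Eq.a_n_recurrence_relation} satisfied by $a_0,a_1,a_2$ forces $\cU_0=\cU_1=\cU_2=0$; the vanishing of all relevant products in $E_n$ for $n\geq 5$ gives $\cU_n=0$; and the remaining coefficients are $\cU_3=-E_3$, $\cU_4=-E_4$, where $E_3=\frac{2\al+4}{3}a_2+\frac{7}{3}a_1-6\al a_2^2+5a_1a_2$ and $E_4=\frac{4}{3}a_2+4a_2^2$. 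Thus
\[\cL[u_{(2)}](\tau)=-\tau^3\bigl(E_3+E_4\tau\bigr).\]

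The second step is to verify $E_3>0$ and $E_4>0$ for all $R$ sufficiently large. Since $E_3$ depends only on $a_0,a_1,a_2$, the $n=3$ case of the recurrence for the analytic series $u_L$ yields the compact identity $E_3=\delta(R-3)a_3$. As $R\to+\infty$ one has $\al\to 1$, $\delta(R-3)\to 8$, and $a_3\to a_3^\infty=1$ by \eqref{Eq.a_n_limit} and Remark \ref{Rmk.a_n_positive}, so $E_3\to 8>0$. Likewise $a_2\to a_2^\infty=5/3>0$ by the same reference, giving $E_4\to 40/3>0$. Choose $N_0\in\Z_+$ large enough so that $E_3,E_4>0$ for all $R>N_0$; then $\cL[u_{(2)}](\tau)<0$ for every $\tau\in(0,\al)$ and every such $R$.

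Finally, the conclusion $u_{(2)}(\tau)<u_F(\tau)$ on $(0,\al)$ follows by the contradiction strategy of Proposition \ref{Prop.R_near_odd}. Suppose $u_{(2)}(\tau_0)\geq u_F(\tau_0)$ for some $\tau_0\in(0,\al)$. Since $u_F(\tau)\to+\infty$ as $\tau\uparrow\al$ by \eqref{Eq.tau(Q6)=al} and $u_{(2)}$ is polynomial, continuity gives $\tau_*:=\sup\{\tau\in[\tau_0,\al):u_F(\tau)\leq u_{(2)}(\tau)\}\in[\tau_0,\al)$, with $u_F(\tau_*)=u_{(2)}(\tau_*)$ and $u_F>u_{(2)}$ on $(\tau_*,\al)$; hence $u_{(2)}'(\tau_*)-u_F'(\tau_*)\leq 0$, using that $u_F$ is smooth on $(0,\al)$. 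The factorization $\Delta_\tau(\tau,u)=3(\al-\tau)(u-u_\text b(\tau))$, derived directly from \eqref{Eq.Delta_tau} and \eqref{Eq.u_b}, combined with $u_F<u_\text b$ on $(0,\al)$ from \eqref{Eq.u_F_property}, yields $\Delta_\tau(\tau_*,u_F(\tau_*))<0$. Subtracting $\cL[u_F](\tau_*)=0$ from $\cL[u_{(2)}](\tau_*)$ gives
\[\cL[u_{(2)}](\tau_*)=\Delta_\tau(\tau_*,u_F(\tau_*))\bigl(u_{(2)}'(\tau_*)-u_F'(\tau_*)\bigr)\geq 0,\]
contradicting $\cL[u_{(2)}](\tau_*)<0$.

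The main technical obstacle is the sign verification in the second step; the identity $E_3=\delta(R-3)a_3$ is the key simplification, converting the a priori complicated polynomial expression $\frac{2\al+4}{3}a_2+\frac{7}{3}a_1-6\al a_2^2+5a_1a_2$ into a product whose $R\to+\infty$ limit is read off from Remark \ref{Rmk.a_n_positive}. The barrier contradiction itself is considerably simpler than those of Propositions \ref{Prop.R_near_odd}--\ref{Prop.R_near_even}, since $\cL[u_{(2)}]$ has the clean factored form $-\tau^3(E_3+E_4\tau)$ with only two coefficients to control, and no delicate dependence on $N$ or $R-N$ enters.
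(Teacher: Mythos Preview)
Your proof is correct and follows essentially the same approach as the paper: both compute $\cL[u_{(2)}](\tau)=\cV\tau^3+\cW\tau^4$ with $\cV=-\delta(R-3)a_3$ and $\cW=-4a_2^2-4a_2/3$, check that both coefficients are negative for large $R$ via the limits $a_2\to a_2^\infty$, $a_3\to a_3^\infty$, and then run the identical barrier contradiction using $\Delta_\tau(\tau_*,u_F(\tau_*))<0$. Your explicit identification $E_3=\delta(R-3)a_3$ is exactly the observation the paper uses to write $\cV$ in closed form.
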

\begin{proof}
	By Lemma \ref{Lem.recurrence_relation} and \eqref{Eq.a_n_recurrence_relation}, we compute that
	\begin{align*}
		\cL\left[u_{(2)}\right](\tau)=\cV\tau^3+\cW\tau^4,\quad\cV:=-\delta(R-3)a_3,\ \cW:=-4a_2^2-4a_2/3.
	\end{align*}
	Using $\delta=2(1-\sqrt\al)^2$, the definition of $R$ in \eqref{Eq.R} and $\lim_{R\to+\infty}\al(R)=1$, we obtain
	\[\lim_{R\to+\infty}\delta(R)(R-3)=\lim_{R\to+\infty}\delta(R)R=2\lim_{\al\to1}(1-\sqrt\al)^2\frac{(\sqrt\al+1)^2}{(\sqrt\al-1)^2}=8,\]
	hence (by Remark \ref{Rmk.a_n_positive}),
	\[\lim_{R\to+\infty}\cV=-8a_3^\infty<0;\]
	moreover, we also have $\lim_{R\to+\infty}\cW=-4(a_2^\infty)^2-4a_2^\infty/3<0$. Therefore, there exists $N_0\in\Z_+$ such that for all $R>N_0$, we have
	\begin{equation}\label{Eq.u_2_compare}
		\cL\left[u_{(2)}\right](\tau)<0,\quad\forall\ \tau>0.
	\end{equation}
	
	Since $\lim_{\tau\uparrow\al}u_F(\tau)=+\infty$ and $u_{(2)}$ is bounded on any finite interval, there exists $\wt\tau_*\in(0,\al)$ such that $u_{(2)}(\tau)<u_F(\tau)$ for all $\tau\in[\wt\tau_*, \al)$. Assume, on the contrary, that \eqref{Eq.u_2<u_F} does not hold. Then by the continuity, we can find $\tau_*\in(0, \wt\tau_*)$ such that $u_{(2)}(\tau_*)=u_F(\tau_*)$ and $u_{(2)}(\tau)< u_F(\tau)$ for all $\tau\in(\tau_*, \al)$, hence $\frac{\mathrm du_{(2)}}{\mathrm d\tau}(\tau_*)\leq\frac{\mathrm du_F}{\mathrm d\tau}(\tau_*)$. It follows from $\Delta_\tau\left(\tau_*, u_{(2)}(\tau_*)\right)=\Delta_\tau(\tau_*, u_F(\tau_*))<0$ that
	\begin{align*}
		\cL\left[u_{(2)}\right](\tau_*)&=\Delta_\tau\left(\tau_*, u_{(2)}(\tau_*)\right)\frac{\mathrm du_{(2)}}{\mathrm d\tau}(\tau_*)-\Delta_u\left(\tau_*, u_{(2)}(\tau_*)\right)\\
		&\geq \Delta_\tau\left(\tau_*, u_F(\tau_*)\right)\frac{\mathrm du_{F}}{\mathrm d\tau}(\tau_*)-\Delta_u\left(\tau_*, u_{F}(\tau_*)\right)=\cL[u_F](\tau_*)=0,
	\end{align*}
	which contradicts  \eqref{Eq.u_2_compare}. This proves \eqref{Eq.u_2<u_F}.
\end{proof}

Now, we are ready to prove Proposition \ref{Prop.R_near_even}.

\begin{proof}[Proof of Proposition \ref{Prop.R_near_even}]
	Let $A_0>10$ be large enough such that all properties in Section \ref{Sec.an} hold. Let $N_0\in\N$ be such that $N_0>A_0^2$ and we fix an $N\in\Z\cap (N_0, +\infty)$. Let $R\in(N, N+1)$ and
	\begin{equation*}
		U_2(\tau):=U_2(\tau; R)=\sum_{j=0}^Na_j\tau^j+\frac12a_{N+1}\tau^{N+1}+\kappa_2\tau^{N+2},\quad\ \kappa_2:=(N+1-R)^{-1-1/N}.
	\end{equation*}
	Since $a_{N+1}<0$ by Lemma \ref{Lem.a_N+1<0}, for each $R\in(N, N+1)$, there exists $\tau_{2,1}(R)>0$ such that
	\begin{equation}\label{Eq.u_L<U_2}
		u_{L}(\tau; R)<U_2(\tau; R)\quad\forall\ \tau\in(0, \tau_{2,1}(R)].
	\end{equation}
	We define $\tau_{2,2}(R)=(N+1-R)^{1/(N-1)}$ for $R\in(N, N+1)$, and we claim that, by taking $A_0>10$ to be larger if necessary, there exists $\th_2\in(0,1/2)$ such that for all $R\in(N+1-\th_2, N+1)$, we have
	\begin{equation}\label{Eq.U_2<u_F}
		U_2(\tau; R)<u_F(\tau;R)\quad\forall\ \tau\in(0, \tau_{2,2}(R)].
	\end{equation}
	Assuming \eqref{Eq.U_2<u_F}, for $R\in(N+1-\th_2, N+1)$, we take $\tau_2(R):=\min\{\tau_{2,1}(R), \tau_{2,2}(R)\}>0$. Then \eqref{Eq.u_L<u_F} follows from \eqref{Eq.u_L<U_2}.
	
	It suffices to prove \eqref{Eq.U_2<u_F}, which is a consequence of the following two inequalities:
	\begin{align}
		U_2&(\tau_{2,2}(R))<u_{(2)}(\tau_{2,2}(R))<u_F(\tau_{2,2}(R)),\label{Eq.U_2<u_g}\\
		&\cL[U_2](\tau)<0\quad\forall\ \tau\in(0, \tau_{2,2}(R)],\label{Eq.U_2_compare}
	\end{align}
	where $\cL$ is defined in \eqref{Eq.Lu}. 
	Now we prove \eqref{Eq.U_2<u_F} by using \eqref{Eq.U_2<u_g} and \eqref{Eq.U_2_compare}. Assume, on the contrary, that \eqref{Eq.U_2<u_F} does not hold for some $R\in(N+1-\th_2, N+1)$. By \eqref{Eq.U_2<u_g} and continuity, there exists $\tau_*\in(0, \tau_{2,2}(R))$ such that $U_2(\tau_*)=u_F(\tau_*)$ and $U_2(\tau)<u_F(\tau)$ for $\tau\in(\tau_*, \tau_{2,2}(R))$, hence $\frac{\mathrm dU_2}{\mathrm d\tau}(\tau_*)\leq \frac{\mathrm du_F}{\mathrm d\tau}(\tau_*)$. Since $\Delta_\tau(\tau_*, U_2(\tau_*))=\Delta_\tau(\tau_*, u_F(\tau_*))<0$, we have
	\begin{align*}
		\cL[U_2](\tau_*)&=\Delta_\tau(\tau_*, U_2(\tau_*))\frac{\mathrm dU_2}{\mathrm d\tau}(\tau_*)-\Delta_u(\tau_*, U_2(\tau_*))\\
		&\geq \Delta_\tau(\tau_*, u_F(\tau_*))\frac{\mathrm du_F}{\mathrm d\tau}(\tau_*)-\Delta_u(\tau_*, u_F(\tau_*))=\cL[u_F](\tau_*)=0,
	\end{align*}
	which contradicts \eqref{Eq.U_2_compare}. This proves \eqref{Eq.U_2<u_F}.
	
	It remains to prove \eqref{Eq.U_2<u_g} and \eqref{Eq.U_2_compare} for $R\in(N, N+1)$ sufficiently close to $N+1$.
We fix $N\in\Z\cap (N_0, +\infty)$ and we assume $R\in(N+1/2, N+1)$ is sufficiently close to $N+1$. Using $\g_n=\frac8{(A+1)^2}(R-n)$, one can prove by  induction that
	\begin{align}
		|a_n|&\lesssim \wt M_n\lesssim_N1,\quad\forall\ n\in\Z\cap[1, N],\label{Eq.a_n<N_asymp3}\\
		a_n&\asymp M_n\asymp \wh M_n\asymp_N 1,\quad\forall\ n\in\Z\cap[\sqrt N, N]\label{Eq.a_n<N_asymp4}
	\end{align}
	and (thanks to \eqref{Eq.a_N+1_est})
	\begin{equation}\label{Eq.a_N+1_asymp}
		-a_{N+1}\asymp \frac{A^3}{N+1-R}a_N\asymp_N (N+1-R)^{-1}
	\end{equation}
	as $R\uparrow N+1$.

	\underline{\it Proof of \eqref{Eq.U_2<u_g}.} We compute that
	\[U_2(\tau)-u_{(2)}(\tau)=\tau^3f_2(\tau),\]
	where
	\begin{equation*}
		f_2(\tau):=a_3+a_4\tau+\cdots+a_N\tau^{N-3}+a_{N+1}\tau^{N-2}/2+\kappa_2\tau^{N-1}.
	\end{equation*}
	It follows from \eqref{Eq.a_n<N_asymp3}, \eqref{Eq.a_N+1_asymp} and $\tau_{2,2}(R)=(N+1-R)^{1/(N-1)}$ that
	\begin{align*}
		&\qquad\qquad\qquad\left|a_3+a_4\tau_{2,2}(R)+\cdots+a_N\tau_{2,2}(R)^{N-3}\right|\lesssim_N1,\\
		&\qquad-a_{N+1}\tau_{2,2}(R)^{N-2}/2\asymp_N(N+1-R)^{-1+\frac{N-2}{N-1}}\asymp_N(N+1-R)^{-1/(N-1)},\\
		&\kappa_2\tau_{2,2}(R)^{N-1}=(N+1-R)^{-1/N}\ll (N+1-R)^{-1/(N-1)}\asymp_N-a_{N+1}\tau_{2,2}(R)^{N-2}/2,
	\end{align*}
	as $R\uparrow N+1$. Therefore, $f_2(\tau_{2,2}(R))<0$ for $R$ sufficiently close to $N+1$. This proves $U_2(\tau_{2,2}(R))<u_{(2)}(\tau_{2,2}(R))$ for some $\th_2\in(0,1/2)$ and for all $R\in(N+1-\th_2, N+1)$. The second inequality in \eqref{Eq.U_2<u_g} follows from Lemma \ref{Lem.u_2_compare}.
	
	\underline{\it Proof of \eqref{Eq.U_2_compare}.} By Lemma \ref{Lem.recurrence_relation}, \eqref{Eq.a_n_recurrence_relation} and \eqref{Eq.a_n_recurrence1}, we compute that
	\begin{equation}\label{Eq.L[U_2]}
		\cL[U_2](\tau)=\sum_{n=N+1}^{2N+4}\cU_n^{(2)}\tau^n=\tau^{N+1}g_2(\tau),
	\end{equation}
	where $g_2(\tau):=\cU_{N+1}^{(2)}+\cU_{N+2}^{(2)}\tau+\cdots+\cU_{2N+4}^{(2)}\tau^{N+3}$, and (for $k\in\Z\cap[5, N-2]$)
	\begin{align*}
		\cU_{N+1}^{(2)}&=\delta(R-(N+1))a_{N+1}/2-\wt E_{N+1}=\delta(R-(N+1))(a_{N+1}/2-a_{N+1})\\
		&=\delta(N+1-R)a_{N+1}/2<0,\\
		\cU_{N+2}^{(2)}&=\delta(R-(N+2))\kappa_2+\big(A_1(N+2)+B_1\big)a_{N+1}/2+\big(A_2(N+2)+B_2\big)a_N\\
		&\qquad +\frac32\al(N+3)\sum_{j=4}^{N-1}a_ja_{N+3-j}-\frac{3N+2}{2}\sum_{j=3}^{N-1}a_ja_{N+2-j},\\
		\cU_{N+3}^{(2)}&=\big(A_1(N+3)+B_1\big)\kappa_2+\big(A_2(N+3)+B_2\big)a_{N+1}/2\\
		&\qquad+\frac32\al(N+4)\sum_{j=4}^{N}a_ja_{N+4-j}-\frac{3N+5}{2}\sum_{j=3}^{N}a_ja_{N+3-j},\\
		\cU_{N+4}^{(2)}&=\big(A_2(N+4)+B_2\big)\kappa_2+\left[3\al(N+5)a_4-(3N+8)a_3\right]a_{N+1}/2\\
		&\qquad+\frac32\al(N+5)\sum_{j=5}^{N}a_ja_{N+5-j}-\frac{3N+8}{2}\sum_{j=4}^{N}a_ja_{N+4-j},\\
		\cU_{N+k}^{(2)}&=\left[3\al(N+k+1)a_{k-1}-(3N+3k-4)a_{k-2}\right]\kappa_2+\frac32\al(N+k+1)\sum_{j=k+1}^Na_ja_{N+k+1-j}\\
		&\quad+\left[3\al(N+k+1)a_k-(3N+3k-4)a_{k-1}\right]\frac{a_{N+1}}2-\frac{3N+3k-4}{2}\sum_{j=k}^Na_ja_{N+k-j},\\
		\cU_{2N-1}^{(2)}&=\left[6\al Na_{N-2}-(6N-7)a_{N-3}\right]\kappa_2+\left[6\al Na_{N-1}-(6N-7)a_{N-2}\right]a_{N+1}/2\\
		&\qquad+3\al Na_N^2-(6N-7)a_{N-1}a_N,\\
		\cU_{2N}^{(2)}&=\left[3\al(2N+1)a_{N-1}-2(3N-2)a_{N-2}\right]\kappa_2\\
		&\qquad+\left[3\al(2N+1)a_N-2(3N-2)a_{N-1}\right]a_{N+1}/2-(3N-2)a_N^2,\\
		\cU_{2N+1}^{(2)}&=\frac{3\al(N+1)}{4}a_{N+1}^2+\left[6\al(N+1)a_N-(6N-1)a_{N-1}\right]\kappa_2-\frac{6N-1}{2}a_Na_{N+1},\\
		\cU_{2N+2}^{(2)}&=-(3N+1)a_{N+1}^2/4+3\al(2N+3)a_{N+1}\kappa_2/2-2(3N+1)a_N\kappa_2,\\
		\cU_{2N+3}^{(2)}&=3\al(N+2)\kappa_2^2-(6N+5)a_{N+1}\kappa_2/2,\quad \cU_{2N+4}^{(2)}=-(3N+4)\kappa_2^2.
	\end{align*}
	According to \eqref{Eq.a_n<N_asymp3}, \eqref{Eq.a_n<N_asymp4}, \eqref{Eq.a_N+1_asymp}, $\kappa_2=(N+1-R)^{-1-1/N}$ and $\delta\asymp_N1>0$, we have
	\begin{align*}
		\cU_{N+1}^{(2)}&<0,\\
		-\cU_{N+2}^{(2)}&\geq\delta(N+2-R)\kappa_2{-C(N+1-R)^{-1}}
\gtrsim_N(N+1-R)^{-1-1/N},\\
		\left|\cU_{N+k}^{(2)}\right|&\lesssim_N(N+1-R)^{-1-1/N},\quad\forall\ k\in\Z\cap[3, N],\\
		\left|\cU_{2N+1}^{(2)}\right|&\lesssim_N(N+1-R)^{-2},\quad \left|\cU_{2N+2}^{(2)}\right|\lesssim_N(N+1-R)^{-2-1/N},\\
		\left|\cU_{2N+3}^{(2)}\right|&+\left|\cU_{2N+4}^{(2)}\right|\lesssim_N(N+1-R)^{-2-2/N},
	\end{align*}
	as $R\uparrow N+1$. Therefore, $g_2(\tau)<\tau\wt g_2(\tau)$ for $\tau\in(0, \tau_{2,2}(R)]$, where
	\[\wt g_2(\tau):=\cU_{N+2}^{(2)}+\cU_{N+3}^{(2)}\tau+\cdots+\cU_{2N+4}^{(2)}\tau^{N+2};\]
	and
	\begin{align*}
		&\left|\cU_{N+3}^{(2)}\tau+\cdots+\cU_{2N+4}^{(2)}\tau^{N+2}\right|\lesssim_N(N+1-R)^{-1-\frac1N+\frac1{N-1}}+(N+1-R)^{-2+1}\\
		&\qquad\qquad\qquad+(N+1-R)^{-2-\frac1N+\frac N{N-1}}+(N+1-R)^{-2-\frac2N+\frac{N+1}{N-1}}\\
		&\qquad\lesssim_N(N+1-R)^{-1}\ll -\cU_{N+2}^{(2)},
	\end{align*}
	for all $\tau\in(0, \tau_{2,2}(R)]$, as $R\uparrow N+1$. Hence, $\wt g_2(\tau)<0$, and thus $g_2(\tau)<0$ for $\tau\in(0, \tau_{2,2}(R)]$. Then \eqref{Eq.U_2_compare} follows from \eqref{Eq.L[U_2]}.
\end{proof}

\begin{proposition}\label{Prop.sol_sonic_point}
	There exists $N_0\in\Z_+$ such that for all $N\in\Z\cap(N_0, +\infty)$, we can find $R_N\in(N, N+1)$ such that $u_L(\cdot; R_N)\equiv u_F(\cdot;R_N)$. As a result, for these special $R=R_N$, the $Q_6-Q_2$ solution curve $u_F$ of the $\tau-u$ ODE \eqref{ODE_u_tau} can be extended to pass through $Q_2$ smoothly.
\end{proposition}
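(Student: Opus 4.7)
The plan is to upgrade the qualitative sign-change between Proposition~\ref{Prop.R_near_odd} and Proposition~\ref{Prop.R_near_even} into an exact crossing via the intermediate value theorem, and then exploit classical ODE uniqueness (away from the sonic point) to promote the single-point agreement to the full identity $u_L \equiv u_F$ on the overlap.

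First I would choose $N_0 \in \Z_+$ large enough that both Propositions~\ref{Prop.R_near_odd} and~\ref{Prop.R_near_even} apply; for a fixed $N > N_0$ these supply $\theta_1,\theta_2 \in (0,1)$. Pick any $R_1 \in (N, N+\theta_1)$ and $R_2 \in (N+1-\theta_2, N+1)$, and set $\tau_* := \min\{\tau_1(R_1), \tau_2(R_2)\} > 0$. Shrinking $\tau_*$ if necessary, I can arrange both $\tau_* \leq \tau_0(\cC)$ for $\cC := [R_1,R_2] \subset (1,+\infty)\setminus\Z$ (via Proposition~\ref{Prop.local_sol}) and $\tau_* < \min_{R\in\cC} \alpha(R)$, which is positive by continuity of $\alpha$; thus both $u_L(\tau_*;R)$ and $u_F(\tau_*;R)$ are well-defined for every $R \in \cC$. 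Define
\[
\Phi : [R_1, R_2] \to \R, \qquad \Phi(R) := u_L(\tau_*; R) - u_F(\tau_*; R).
\]
Proposition~\ref{Prop.local_sol} gives continuity of $R \mapsto u_L(\tau_*;R)$ on $\cC$ and Remark~\ref{Rmk.u_F_continuity} gives continuity of $R \mapsto u_F(\tau_*;R)$, so $\Phi \in C([R_1,R_2])$. By construction $\Phi(R_1) > 0 > \Phi(R_2)$, and the intermediate value theorem produces $R_N \in (R_1, R_2) \subset (N, N+1)$ with $\Phi(R_N) = 0$.

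To promote this single-point coincidence to $u_L(\cdot;R_N) \equiv u_F(\cdot;R_N)$, I would observe that
\[
\frac{d}{d\tau}\Delta_\tau(\tau, u_L(\tau))\Big|_{\tau=0} = \partial_\tau\Delta_\tau(0,1) + \partial_u\Delta_\tau(0,1)\,a_1 = -(2+4\alpha) + 3\alpha\,a_1 = -2(1-\sqrt\alpha)^2 = -\delta < 0,
\]
so $\Delta_\tau(\tau_*, u_L(\tau_*;R_N)) < 0$ once $\tau_* > 0$ is small enough. At this non-sonic point the ODE \eqref{Eq.u_tauODE} has a locally Lipschitz right-hand side, so Picard uniqueness forces $u_L(\cdot;R_N)$ and $u_F(\cdot;R_N)$ to agree on a neighborhood of $\tau_*$; standard unique continuation along \eqref{Eq.u_tauODE} on the regular set $\{\Delta_\tau \neq 0\}$ then propagates this identity over all of $(0,\tau_0]$. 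Concatenating $u_L$ on $[-\tau_0, 0]$ with $u_F$ on $(0, \alpha(R_N))$ gives the desired smooth (in fact analytic) extension of $u_F$ through $Q_2$. The genuinely hard work has already been absorbed into the two preparatory propositions on barrier functions; by contrast, the IVT/uniqueness argument above is entirely routine, with the only mild technical care being to select $\tau_*$ simultaneously inside the analytic radius of $u_L$, inside the $u_F$ domain for all $R \in \cC$, and inside the barrier intervals $(0, \tau_i(R_i)]$ of both propositions.
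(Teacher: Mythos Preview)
Your proof is correct and follows essentially the same approach as the paper: fix two parameters $R_1,R_2$ in the intervals provided by Propositions~\ref{Prop.R_near_odd} and~\ref{Prop.R_near_even}, choose a common small $\tau_*$, apply the intermediate value theorem to $R\mapsto u_L(\tau_*;R)-u_F(\tau_*;R)$ using the joint continuity from Proposition~\ref{Prop.local_sol} and Remark~\ref{Rmk.u_F_continuity}, and then invoke ODE uniqueness at the regular point $\tau_*$. Your extra justification that $\Delta_\tau(\tau_*,u_L(\tau_*;R_N))<0$ is in fact automatic from \eqref{Eq.u_F_property} (since $u_F<u_{\mathrm b}$ on $(0,\alpha)$ forces $\Delta_\tau<0$ there), so no further shrinking of $\tau_*$ is needed at that stage.
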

\begin{proof}
	Let $N_0\in\Z_+$ be large enough such that both Proposition \ref{Prop.R_near_odd} and Proposition \ref{Prop.R_near_even} hold. Let $N\in\Z\cap(N_0, +\infty)$ and let $\th_1,\th_2\in(0,1/2)$ be given by Proposition \ref{Prop.R_near_odd} and Proposition \ref{Prop.R_near_even}, respectively. Let $R_{N, 1}:=N+\th_1/2\in(N, N+1)$, $R_{N,2}:=N+1-\th_2/2\in(N, N+1)$ and $\tau^*:=\min\{\tau_1(R_{N,1}), \tau_2(R_{N,2}), \tau_0([R_{N,1} R_{N,2}])\}>0$, where $\tau_1(R_{N,1})>0$ is given by Proposition \ref{Prop.R_near_odd}, $\tau_2(R_{N,2})>0$ is given by Proposition \ref{Prop.R_near_even}, and $\tau_0([R_{N,1} R_{N,2}])>0$ is given by Proposition \ref{Prop.local_sol}. Proposition \ref{Prop.R_near_odd} implies that $u_L(\tau^*; R_{N,1})>u_F(\tau^*; R_{N,1})$, and Proposition \ref{Prop.R_near_even} implies that $u_L(\tau^*; R_{N,2})<u_F(\tau^*; R_{N,2})$. By Proposition \ref{Prop.local_sol} and Remark \ref{Rmk.u_F_continuity}, we know that the function
	\[[R_{N,1}, R_{N,2}]\ni R\mapsto u_L(\tau^*; R)-u_F(\tau^*; R)\]
	is continuous. By the intermediate value theorem, there exists $R_{N}\in(R_{N,1}, R_{N,2})\subset(N, N+1)$ such that $u_L(\tau^*; R_{N})=u_F(\tau^*; R_{N})$. Therefore, it follows from the uniqueness of solutions to the $\tau-u$ ODE \eqref{ODE_u_tau} with $u(\tau^*)=u_F(\tau^*)$ that $u_L(\cdot; R_N)=u_F(\cdot; R_N)$.
\end{proof}

\section{Global extension of the solution curve}\label{Global extension of the solution curve}
In the preceding sections, we have established the existence of a sequence $\{R_N\}_{N\in\Z_{>N_0}}$ for some sufficiently large $N_0\in\Z_+$ such that: for each $N\in\Z\cap(N_0, +\infty)$, we have $R_N\in(N, N+1)$ and the $P_6-P_2$ solution $w_F(\cdot; R_N)=w_F(\sigma; R_N)$ crosses the sonic point $P_2$ smoothly. Consequently, it is a smooth solution defined on $\sigma\in[\sigma_2-\eta_{0}, +\infty)$ for some small enough $\eta_0=\eta_0(N)>0$.

In this section, we prove that if $N$ is an odd integer, then the extended solution will leave the region between $Q_2$ and $Q_5$ by crossing the red curve $\sigma\mapsto w_2(\sigma)$ solving $\Delta_1=0$, and then it can be extended to $P_4$, where $\sigma=0$. We denote the region between $P_2$ and $P_5$ by
\begin{equation}
	\cR:=\{(\sigma, w): w_2(\sigma)<w<w_-(\sigma),\ \sigma(P_5)=\sqrt 3r/6<\sigma<\sigma_2\}.
\end{equation}

Our approach is to employ the barrier function method. In Subsection \ref{Subsec.barrier_functions}, we establish the key barrier property of the polynomial $u_{(N)}(\tau)=\sum_{n=0}^Na_n\tau^n$; see Proposition \ref{Prop.u_N_compare}. We also determine the location of the graph of $u_{(N)}$ in Lemma \ref{Lem.position}. In Subsection \ref{Subsec.global_extension}, we prove that if $N$ is odd, then the local solution $w_L$ to the $\sigma-w$ ODE \eqref{ODE_w_sigma} near $P_2$ can be extended to the left,  exiting the region between $P_5$ and $P_2$ by crossing the red curve $\sigma\mapsto w_2(\sigma)$, and then be further extended until reaching the origin $P_4$. This is based on standard ODE theory and the key barrier property established in Subsection \ref{Subsec.barrier_functions}. Finally, in Subsection \ref{Subsec.Proof_Thm_ODE}, we prove Theorem \ref{Thm.ODE}.

\subsection{Barrier functions}\label{Subsec.barrier_functions}
We need the following upper-bound estimate on $\wh M_{\lceil R\rceil-2}$ and lower-bound estimate on $\wh M_{\lceil R\rceil-1}$.
\begin{lemma}\label{Lem.M_N-1upper_bound}
	There exist $N_0\in\Z_+$ and a constant $C_0>0$ such that
	\begin{equation*}
		\wh M_{N-1}\leq \left(C_0\sqrt N\right)^N,\quad\forall\ R\in(N, N+1),\ \forall\ N\in\Z\cap(N_0, +\infty).
	\end{equation*}
\end{lemma}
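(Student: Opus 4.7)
The plan is to use the recurrence \eqref{Eq.hat_M_n} to write $\wh M_{N-1}$ as an explicit product, then estimate each factor using the asymptotics from Lemma~\ref{Lem.la_n_def} together with the explicit formula $\g_j=\frac{8}{(A+1)^2}(R-j)$. Unwinding \eqref{Eq.hat_M_n} yields
\[
\wh M_{N-1}=\wh M_1\prod_{j=2}^{N-1}\frac{\mu_{j-1}}{\g_j}.
\]
Since $\wh M_1=a_1\to 2$ as $A\to+\infty$, we have $\wh M_1\lesssim 1$ uniformly.

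Next, by \eqref{Eq.mu_n^*} there is an absolute constant $C_1>0$ with $\mu_{j-1}\le C_1\sqrt{j}$ for all $j\in\Z\cap[2,A^2)$. For the denominator, the hypothesis $R\in(N,N+1)$ gives $R-j>N-j\ge 1$ for every $j\in\Z\cap[2,N-1]$, and $(A+1)^2\le C_2 N$. Hence
\[
\frac{\mu_{j-1}}{\g_j}\le\frac{C_1\sqrt{j}\,(A+1)^2}{8(N-j)}\le\frac{C_3\sqrt{j}\,N}{N-j},\qquad \forall\ j\in\Z\cap[2,N-1],
\]
for some absolute constant $C_3>0$.

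Multiplying and reindexing via $k=N-j$ gives
\[
\prod_{j=2}^{N-1}\frac{C_3\sqrt{j}\,N}{N-j}
=(C_3N)^{N-2}\cdot\frac{\prod_{k=1}^{N-2}\sqrt{N-k}}{\prod_{k=1}^{N-2}k}
=(C_3N)^{N-2}\cdot\frac{\sqrt{(N-1)!}}{(N-2)!}.
\]
Writing $\sqrt{(N-1)!}/(N-2)!=\sqrt{N-1}/\sqrt{(N-2)!}$ and invoking Stirling's formula $(N-2)!\ge c\sqrt{N}\,(N/e)^{N-2}$, one finds
\[
\wh M_{N-1}\lesssim (C_3N)^{N-2}\cdot\frac{\sqrt{N-1}}{\sqrt{(N-2)!}}\lesssim (C_4)^{N}\,N^{N-2}\cdot N^{-1/4}\cdot\Bigl(\frac{e}{N}\Bigr)^{(N-2)/2}\cdot\sqrt{N}\lesssim (C_5\sqrt{N})^{N},
\]
for suitable absolute constants $C_4,C_5>0$. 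Choosing $N_0$ large enough to absorb the implicit constant into a slightly enlarged $C_0>C_5$, the desired inequality $\wh M_{N-1}\le(C_0\sqrt{N})^N$ follows for all $N>N_0$ and $R\in(N,N+1)$. There is no substantive obstacle here: the only point requiring care is keeping the constants in the $\mu_{j-1}\lesssim\sqrt{j}$ bound and in $(A+1)^2\lesssim N$ uniform in $R\in(N,N+1)$, which is immediate from Lemma~\ref{Lem.la_n_def} and the definition $A^2=R$.
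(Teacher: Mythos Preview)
Your proof is correct and follows essentially the same approach as the paper: write $\wh M_{N-1}$ as the product $\wh M_1\prod_{j=2}^{N-1}\mu_{j-1}/\g_j$, bound $\mu_{j-1}\lesssim\sqrt{j}$ and $\g_j\gtrsim (N-j)/N$, and finish with Stirling. The only cosmetic difference is that the paper immediately bounds $\sqrt{j}\le\sqrt{N}$ to get factors $\lesssim N^{3/2}/(N-j)$ before taking the product, whereas you keep $\sqrt{j}$ and handle $\sqrt{(N-1)!}$ separately; both routes yield the same $(C_0\sqrt{N})^N$ bound.
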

\begin{proof}
	By \eqref{Eq.hat_M_n}, we have
	\begin{equation*}
		\wh M_{N-1}=a_1\prod_{k=2}^{N-1}\frac{\mu_{k-1}}{\g_k}.
	\end{equation*}
	Using \eqref{Eq.mu_n^*}, $\g_k=\frac{8}{(A+1)^2}(R-k)$ and $R\in(N, N+1)$ yields that
	\begin{align*}
		\frac{\mu_{k-1}}{\g_k}\asymp \frac{A^2\sqrt k}{R-k}\asymp \frac{N\sqrt N}{N-k},\quad\forall\ k\in\Z\cap[2,N-1].
	\end{align*}
	Hence, there exists an absolute constant $C_1>1$ such that
	\begin{align*}
		\wh M_{N-1}\lesssim \prod_{k=2}^{N-1}\frac{C_1N^{3/2}}{N-k}\lesssim \frac{\left(C_1N^{3/2}\right)^{N-2}}{(N-2)!}\lesssim \frac{N\left(C_1N^{3/2}\right)^{N-2}}{N^{N-1/2}\e^{-N}}\lesssim (C_1\e)^N N^{N/2-3/2}.
	\end{align*}
    This completes the proof.
	\end{proof}

\begin{lemma}\label{Lem.M_N_lower_bound}
	There exists $N_0\in\Z$ such that
	\begin{equation*}
		\wh M_N\gtrsim \left(5\sqrt N/8\right)^N,\quad\forall\ R\in(N, N+1),\ \forall\ N\in\Z\cap(N_0, +\infty).
	\end{equation*}
\end{lemma}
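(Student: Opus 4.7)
The plan is to parallel the argument of Lemma \ref{Lem.M_N-1upper_bound} by using the product formula from \eqref{Eq.hat_M_n},
\[\wh M_N = a_1 \prod_{j=2}^N \frac{\mu_{j-1}}{\g_j},\]
but with sharper lower bounds on each factor. The central idea is that the decomposition \eqref{Eq.mu_n_la_n} gives $\mu_{j-1}=\mu_{j-1}^*+p_{j-1/2}$ with both summands nonnegative for $j\ge 2$ and $A$ sufficiently large (one checks $p_{j-1/2}>0$ from $p_{(1)}=3/A+O(A^{-2})$), so $\mu_{j-1}^2\ge(\mu_{j-1}^*)^2+p_{j-1/2}^2$. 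Combining this with the explicit expansion \eqref{Eq.mu_n^*_expression} and the coefficient asymptotics $\mu_{(1)}\to 8$, $\mu_{(2)}=1/A^2+O(A^{-3})=1/R+O(R^{-3/2})$, and $p_{(1)}=3/A+O(A^{-2})$, one obtains for any small $\eta>0$ and $A$ large enough that
\[\mu_{j-1}^2\ \ge\ (1-\eta)\,(j-1)\Bigl[\,8 + 10\tfrac{j-1}{R}\,\Bigr],\qquad j\in\Z\cap[2,N].\]
Together with $\g_j\le 8(R-j)/R$ (which follows from $(A+1)^2>A^2=R$), this produces a clean pointwise lower bound for each factor in the product.

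Taking logarithms and summing, with $\epsilon:=R-N\in(0,1)$ and $\prod_{j=2}^N(R-j)=\Gamma(N-1+\epsilon)/\Gamma(\epsilon)$,
\[2\log\wh M_N \ \ge\ \log((N-1)!)+\sum_{j=2}^N\log\Bigl(8+10\tfrac{j-1}{R}\Bigr)+2(N-1)\log R-2\log\tfrac{\Gamma(N-1+\epsilon)}{\Gamma(\epsilon)}-(N-1)\log 64+O(\eta N).\]
Stirling's formula gives $\log((N-1)!)=N\log N-N+O(\log N)$ and $\log\Gamma(N-1+\epsilon)=N\log N-N+O(\log N)$, while $\log\Gamma(\epsilon)\ge 0$ on $(0,1]$ (since $\Gamma$ is decreasing there with $\Gamma(1)=1$) and thus only helps. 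The remaining sum is a Riemann sum approximation of $R\int_0^1\log(8+10y)\,dy$ with $O(1)$ error (as the integrand has bounded derivative on $[0,1]$), and the integral evaluates in closed form to $\tfrac1{10}(18\log 18-8\log 8-10)\approx 2.539$. Collecting everything,
\[2\log\wh M_N \ \ge\ N\log N + N\bigl(2.539 + 1 - 6\log 2\bigr) + O(\eta N+\log N),\]
and since $2.539+1-6\log 2\approx -0.620 > -\log(64/25)\approx -0.940$, this yields $\wh M_N\ge(5\sqrt N/8)^N$ for $\eta$ sufficiently small and $N$ sufficiently large.

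The main technical challenge is controlling the estimates uniformly in $\epsilon=R-N\in(0,1)$. As $\epsilon\to 0$ the factor $\g_N=8\epsilon/(A+1)^2$ becomes arbitrarily small, so $\mu_{N-1}/\g_N$ becomes arbitrarily large; this is reflected in $-\log\epsilon$ appearing in the sum. Fortunately this contribution only helps the lower bound, and the clean accounting via $\Gamma(N-1+\epsilon)/\Gamma(\epsilon)$ absorbs it once one uses $\Gamma(\epsilon)\ge 1$ on $(0,1]$. The rest is careful bookkeeping: verifying that the numerical margin of roughly $0.32$ between $-0.620$ and $-0.940$ is wide enough to absorb the $O(\eta+A^{-1})$ corrections coming from the asymptotic expansions of $\mu_{(j)}$ and $p_{(j)}$, and from the Riemann sum error.
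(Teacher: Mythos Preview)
Your approach is correct but takes a more computational route than the paper. The paper argues with $M_N$ rather than $\wh M_N$ (the two are comparable by Lemma~\ref{Lem.M_n_tildeM_n} and \eqref{Eq.hat_M_n_est}), and uses only the crude one-step bound $\g_kM_k=2p_kM_{k-1}+q_kM_{k-2}\ge 2p_kM_{k-1}$ from the recurrence \eqref{Eq.M_n_sequence}. Since $2p_k\ge 5k/A$ for $k\ge 4$ (from $p_{(1)}\approx 3/A$, $p_{(2)}\approx -5/(3A)$) and $\g_k<8(R-k)/A^2$, this gives $M_k/M_{k-1}\ge \tfrac58 A\,\tfrac{k}{R-k}$, and telescoping yields $M_N\gtrsim (\tfrac58 A)^{N-1}\prod_{k=1}^N\tfrac{k}{R-k}$. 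The last product is handled by a clever reindexing: writing $R-k=(k'{+}R{-}N{-}1)$ with $k'=N{+}1{-}k$ and using $R-N-1<0$ shows each factor $k'/(k'{+}R{-}N{-}1)>1$, so the product is $\ge 1$. This sidesteps Stirling, Riemann sums, and the $\Gamma(\epsilon)$ accounting entirely, and the constant $5/8$ emerges directly from the coefficient of $p_{(1)}$ with a small safety margin.

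Your method, starting from the closed product $\wh M_N=a_1\prod_{j=2}^N\mu_{j-1}/\g_j$ and bounding $\mu_{j-1}^2\ge(\mu_{j-1}^*)^2+p_{j-1/2}^2$ (discarding the nonnegative cross term), is also valid; the numerics check out with the $\approx 0.32$ margin you identify. The advantage of your route is that it is systematic and would adapt to squeezing a sharper constant if needed; the cost is the bookkeeping for the $O(\eta N)$, $O(\sqrt N)$ (from $\log(A+1)\approx\tfrac12\log R$), and Riemann-sum errors, plus the dependence of $N_0$ on $\eta$. The paper's trick of pairing $k\leftrightarrow N{+}1{-}k$ in $\prod k/(R-k)$ is the elegant shortcut you may want to remember.
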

\begin{proof}
	By \eqref{Eq.M_n_sequence} and $q_n>0$ due to \eqref{Eq.q_n_asymp}, we have $\g_kM_k=2p_{k}M_{k-1}+q_kM_{k-2}\geq 2p_kM_{k-1}$ for $k\in\Z\cap[2, N]$. Combining this with \eqref{Eq.p_n}, \eqref{Eq.p_n_asymp} and $\g_k=\frac{8}{(A+1)^2}(R-k)$ yields that
	\begin{align*}
		\frac{M_k}{M_{k-1}}\geq \frac{2p_k}{\g_k}\geq \frac{5k/A}{\frac8{A^2}(R-k)}=\frac58A\frac{k}{R-k},\ \forall\ k\in\Z\cap[4, N],\  \frac{M_k}{M_{k-1}}\geq\frac12A\frac{k}{R-k}\ \text{for}\ k\in\{2,3\}.
	\end{align*}
	Hence,
	\begin{align*}
		M_N&=M_1\prod_{k=2}^N\frac{M_k}{M_{k-1}}\gtrsim M_1\left(\frac58A\right)^{N-1}\prod_{k=2}^N\frac{k}{R-k}=a_1(R-1)\left(\frac58A\right)^{N-1}\prod_{k=1}^{N}\frac{k}{R-k}\\
		&\gtrsim \left(\frac58\sqrt N\right)^{N}\prod_{k=1}^{N}\frac{k}{R-(N+1-k)}\gtrsim \left(\frac58\sqrt N\right)^{N}\prod_{k=1}^{N}\frac{k}{k+R-(N+1)}\\
		&\gtrsim \left(5\sqrt N/8\right)^N,
	\end{align*}
	since $k>k+R-(N+1)$, thanks to $R\in(N, N+1)$.
\end{proof}

\begin{proposition}\label{Prop.u_N_compare}
	Let $u_{(N)}(\tau)=a_0+a_1\tau+\cdots+a_N\tau^N$. There exists $N_0\in\Z_+$ such that if $N\in(N_0, +\infty)\cap(2\Z+1)$ and $R\in(N, N+1)$, then
	\begin{equation}\label{Eq.u_N_compare}
		\cL\left[u_{(N)}\right](\tau)<0,\quad\forall\ \tau\in\left[-4/\sqrt R, 0\right).
	\end{equation}
\end{proposition}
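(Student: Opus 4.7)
The plan follows the barrier-function strategy developed in Section \ref{Solution curve passing through the sonic point}, but now the bounds must be uniform in both $R\in(N,N+1)$ and odd $N>N_0$. Using Lemma \ref{Lem.recurrence_relation} together with the recurrence \eqref{Eq.a_n_recurrence_relation}, and the fact that $u_{(N)}$ is the degree-$N$ truncation of $u_L$, I would first write
\[\cL[u_{(N)}](\tau)=\tau^{N+1}g_N(\tau),\qquad g_N(\tau)=\sum_{k=0}^{N+1}\cU_{N+1+k}^{(N)}\tau^k,\]
and identify the leading coefficient
\[\cU_{N+1}^{(N)}=-\wt E_{N+1}=-\delta(R-(N+1))\,a_{N+1}=\delta(N+1-R)\,a_{N+1}.\]
Since $\delta>0$, $N+1-R\in(0,1)$ and $a_{N+1}<0$ by Lemma \ref{Lem.a_N+1<0}, we obtain $\cU_{N+1}^{(N)}<0$; quantitatively, \eqref{Eq.a_N+1_est} gives the sharp lower bound $-\cU_{N+1}^{(N)}\asymp A^3 M_N\asymp N^{3/2}M_N$, independent of how close $R$ is to either endpoint of $(N,N+1)$.

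Next, for each $k\in[1,N+1]$, the coefficient $\cU_{N+1+k}^{(N)}=-E_{N+1+k}$ is a sum of convolutions $a_ia_j$ with $i,j\leq N$ and $i+j\in\{N+k,N+k+1\}$, together with a few linear tail terms. Using Proposition \ref{Prop.a_n_sharp_est} and Lemma \ref{Lem.M_n_tildeM_n} to bound $|a_j|\lesssim M_j$ for $j\leq N$, and the convexity property \eqref{Eq.M_n^*claim} (which forces each convolution sum to be dominated by its extreme-endpoint products), I expect a uniform bound of the form
\[|\cU_{N+1+k}^{(N)}|\lesssim N\,M_{k}\,M_N.\]
Multiplying by $|\tau|^k\leq (4/\sqrt{R})^k\leq (4/\sqrt{N})^k$ and invoking the ratio estimate $M_k/M_{k-1}\gtrsim\sqrt{k}$ from Lemma \ref{Lem.quotientM_n}, the sequence $k\mapsto M_k(4/\sqrt{N})^k$ is geometrically small for small $k$ and only becomes comparable to $M_N(4/\sqrt{N})^N$ near $k\sim N$, so that
\[\sum_{k=1}^{N+1}|\cU_{N+1+k}^{(N)}|\,|\tau|^k\lesssim N M_N\sum_{k=1}^{N+1} M_k(4/\sqrt{N})^k=o(N^{3/2}M_N)\]
uniformly in $R$ and $N$. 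Consequently $g_N(\tau)<\tfrac12\cU_{N+1}^{(N)}<0$ on the whole interval $[-4/\sqrt R,0]$. Since $N$ is odd, $\tau^{N+1}\geq 0$ on this interval with strict inequality for $\tau<0$; hence $\cL[u_{(N)}](\tau)=\tau^{N+1}g_N(\tau)<0$ on $[-4/\sqrt R,0)$.

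The main obstacle is the precise bookkeeping of the convolution sums $\sum_j a_j a_{n+1-j}$ and $\sum_j a_j a_{n-j}$ appearing in $E_n$ for intermediate values of $k$, where neither endpoint is ``close'' to $Q_2$ nor to $N$. Unlike in Propositions \ref{Prop.R_near_odd}--\ref{Prop.R_near_even}, one cannot absorb factors into implicit constants depending on $N$; the estimate must retain the correct power of $N$ and respect the factor $N+1-R$ hidden inside $a_{N+1}$. This forces careful reuse of the convexity comparison sequence $M_n^*$ from Subsection \ref{Subsec.a_n_upper_bound}, together with the sharp two-sided estimate $a_n\asymp M_n$ of Proposition \ref{Prop.a_n_sharp_est}, to ensure the higher-order terms never match the size of the leading one.
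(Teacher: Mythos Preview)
Your plan has a genuine gap at two different places, and the second one is fatal.

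First, the size of the leading coefficient is off by a factor $N$. You quote \eqref{Eq.a_N+1_est} to get $-\cU_{N+1}^{(N)}\asymp A^3M_N$, but $\cU_{N+1}^{(N)}=\delta(N+1-R)\,a_{N+1}$ and $\delta=2(1-\sqrt\al)^2\asymp A^{-2}$; combining this with $-a_{N+1}\asymp A^3(N+1-R)^{-1}M_N$ gives
\[-\cU_{N+1}^{(N)}\asymp A^{-2}(N+1-R)\cdot\frac{A^3}{N+1-R}M_N=A\,M_N\asymp\sqrt N\,M_N,\]
not $N^{3/2}M_N$. With the correct scale the $k=1$ term already matches the leading one: $|\cU_{N+2}^{(N)}|\lesssim N M_N$ and $|\tau|\le 4/\sqrt N$ give a contribution $\asymp\sqrt N\,M_N$. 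The paper resolves this by tracking explicit constants (it shows $-\cU_{N,N+1}>\tfrac{21}{4}\sqrt N\,a_N$ and $-\cU_{N,N+2}<\tfrac54 N\,a_N$, so that $\tfrac{21}{4}>\tfrac54\cdot4=5$), which your ``$\lesssim$'' argument cannot provide.

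Second, and more seriously, the tail cannot be absorbed into the leading term at all. Your assertion $\sum_{k\ge1}|\cU_{N+1+k}^{(N)}|\,|\tau|^k=o(\text{leading term})$ is false near $|\tau|=4/\sqrt R$. One has $\cU_{N,2N}=-(3N-2)a_N^2$, hence $|\cU_{N,2N}|\,|\tau|^{N-1}\asymp N\,a_N^2\,|\tau|^{N-1}$, and since $a_N\asymp M_N\gtrsim(5\sqrt N/8)^N$ by Lemma~\ref{Lem.M_N_lower_bound} this term alone is of order $a_N\cdot(5/2)^N\cdot N^{O(1)}$, astronomically larger than $\sqrt N\,a_N$. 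The paper's proof exploits that these top two coefficients contribute with a \emph{favorable sign}: for $N$ odd and $\tau<0$ one has $\cU_{N,2N-1}\tau^{N-2}+\cU_{N,2N}\tau^{N-1}<2N a_N^2\tau^{N-2}<0$, and this large negative piece is what absorbs the intermediate terms (Step~4 bounds those by $\lesssim N^{-1/2}(\sqrt N\,a_N+N a_N^2|\tau|^{N-2})$). Without isolating this sign information, no purely size-based bound can close.
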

\begin{proof}
	Let $A_0>10$ be large enough such that all properties in Section \ref{Sec.an} hold. Let $N_0\in\N$ be such that $N_0>A_0^2$ and we fix an $N\in(N_0, +\infty)\cap(2\Z+1)$.
	By Lemma \ref{Lem.recurrence_relation}, \eqref{Eq.a_n_recurrence_relation} and \eqref{Eq.a_n_recurrence1}, we compute that
	\begin{equation}\label{Eq.L[u_N]}
		\cL\left[u_{(N)}\right](\tau)=\tau^{N+1}f_N(\tau),\quad f_N(\tau):=\cU_{N, N+1}+\cU_{N, N+2}\tau+\cdots+\cU_{N, 2N}\tau^{N-1},
	\end{equation}
	where
	\begin{align*}
		\cU_{N, N+1}&=-\wt E_{N+1}=\delta(N+1-R)a_{N+1}=-\g_{N+1}a_{N+1},\\
		\cU_{N, N+2}&=\big(A_2(N+2)+B_2\big)a_N+\frac32\al(N+3)\sum_{j=4}^{N-1}a_ja_{N+3-j}-\frac{3N+2}{2}\sum_{j=3}^{N-1}a_ja_{N+2-j},\\
		\cU_{N, N+k}&=\frac32\al(N+k+1)\sum_{j=k+1}^Na_ja_{N+k+1-j}-\frac{3N+3k-4}{2}\sum_{j=k}^Na_ja_{N+k-j},\ \ \forall\ k\in\Z\cap[3, N].
	\end{align*}
	
	\underline{\bf Step 1.} In this step, we deal with the first term in $f_N$. We claim that, by taking $N_0$ to be larger if necessary, we have
	\begin{equation}\label{Eq.U_N,N+1_est}
		-\cU_{N, N+1}>21\sqrt N a_N/4>0,\quad\forall\ N\in\Z\cap(N_0, +\infty),\ \forall\ R\in(N, N+1).
	\end{equation}
	Indeed, by \eqref{Eq.a_n_new_recurrence} and \eqref{Eq.epsilon_n_def}, we have
	\begin{equation*}
		-\cU_{N, N+1}=\g_{N+1}a_{N+1}=2p_{N+1}a_N+q_{N+1}a_{N-1}+\varepsilon_{N+1}.
	\end{equation*}
	It follows from \eqref{Eq.q_n}, \eqref{Eq.q_n_asymp}, Proposition \ref{Prop.a_n_sharp_est}, \eqref{Eq.hat_M_n_est}, \eqref{Eq.mu_n^*} and $\g_n=\frac8{(A+1)^2}(R-n)$ that $q_{N+1}\asymp N$ and
	\begin{align}\label{Eq.M_{N-1}est}
		|a_{N-1}|\lesssim M_{N-1}\asymp \wh M_{N-1}= \frac{\g_N}{\mu_{N-1}}\wh M_N\asymp \frac{A^{-2}(R-N)}{\sqrt N}\wh M_N\lesssim N^{-3/2}M_N\asymp N^{-3/2}a_N,
	\end{align}
	hence, $|q_{N+1}a_{N-1}|\lesssim N^{-1/2}a_N$. By \eqref{Eq.epsilon_n_est}, we similarly have
	\begin{align}\label{Eq.M_{N-2}est}
		\left|\varepsilon_{N+1}\right|\lesssim \wh M_{N-2}=\frac{\g_{N-1}\g_N}{\mu_{N-2}\mu_{N-1}}\wh M_N\lesssim N^{-3}\wh M_N\asymp N^{-3}M_N\asymp N^{-3}a_N.
	\end{align}
	It follows from \eqref{Eq.p_n} and \eqref{Eq.p_n_asymp} that
	\[2p_n>\frac{23}4\frac nA-\frac4A\Longrightarrow 2p_{N+1}>\frac{23}4\frac{N+1}A-\frac4A>\frac{23}4\frac NA>\frac{11}2\sqrt N.\]
	Thus,
	\begin{align*}
		-\cU_{N, N+1}=\g_{N+1}a_{N+1}>11\sqrt Na_N/2-C(N^{-1/2}+N^{-3})a_N>21\sqrt Na_N/4>0.
	\end{align*}
	This proves \eqref{Eq.U_N,N+1_est}.
	
	\underline{\bf Step 2.} In this step, we deal with the second term in $f_N$. We claim that, by taking $N_0$ to be larger if necessary, we have
	\begin{equation}\label{Eq.U_N,N+2_est}
		-\cU_{N, N+2}<5N a_N/4,\quad\forall\ N\in\Z\cap(N_0, +\infty),\ \forall\ R\in(N, N+1).
	\end{equation}
	Indeed, by \eqref{Eq.A_2B_2_asymp}, we have
	\[-(A_2(N+2)+B_2)<10(N+2)/9-4<9N/8.\]
	On the other hand, similarly to the proof of \eqref{Eq.epsilon_n_est}, using the convexity of $\{M_n^*\}$ and \eqref{Eq.M_{N-1}est}, \eqref{Eq.M_{N-2}est}, we obtain
	\begin{align*}
		&\left|\frac32\al(N+3)\sum_{j=4}^{N-1}a_ja_{N+3-j}-\frac{3N+2}{2}\sum_{j=3}^{N-1}a_ja_{N+2-j}\right|\\
		\lesssim &\ N \left(M_4^*M_{N-1}^*+NM_5^*M_{N-2}^*\right)+N\left(M_3^*M_{N-1}^*+NM_4^*M_{N-2}^*\right)\\
		\lesssim &\ NM_{N-1}^*+N^2M_{N-2}^*\lesssim N\wh M_{N-1}+N^2\wh M_{N-2}\\
		\lesssim &\ N\cdot N^{-3/2}a_N+N^2\cdot N^{-3}a_N\lesssim N^{-1/2}a_N.
	\end{align*}
	Therefore, $-\cU_{N, N+2}<9Na_N/8+CN^{-1/2}a_N\leq 5Na_N/4$. This proves \eqref{Eq.U_N,N+2_est}.
	
	\underline{\bf Step 3.} In this step, we deal with the last two terms in $f_N$. We compute that
	\begin{align*}
		\cU_{N, 2N-1}=3\al Na_N^2-(6N-7)a_Na_{N-1},\quad \cU_{N, 2N}=-(3N-2)a_N^2<0.
	\end{align*}
	By Proposition \ref{Prop.a_n_sharp_est} and \eqref{Eq.M_{N-1}est}, we have
	\begin{equation*}
		|(6N-7)a_Na_{N-1}|\lesssim Na_N\wh M_{N-1}\lesssim Na_N\cdot N^{-3/2}a_N\lesssim N^{-1/2}a_N^2.
	\end{equation*}
	Hence, $\cU_{N, 2N-1}>3\al Na_N^2-CN^{-1/2}a_N^2>2Na_N^2>0$. Therefore,
	\begin{equation}\label{Eq.f_N_last}
		\cU_{N, 2N-1}\tau^{N-2}+\cU_{N, 2N}\tau^{N-1}<2Na_N^2\tau^{N-2}<0\quad\forall\ \tau<0,\ \forall\ N\in(N_0, +\infty)\cap(2\Z+1),
	\end{equation}
	for all $R\in(N, N+1)$.
	
	\underline{\bf Step 4.} In this step, we deal with the intermediate terms in $f_N$. We claim that
	\begin{equation}\label{Eq.f_N_middle}
		\left|\sum_{k=3}^{N-2}\cU_{N, N+k}\tau^{k-1}\right|\lesssim \frac1{\sqrt N}\left(\sqrt Na_N+Na_N^2|\tau|^{N-2}\right),\quad\forall\ \tau\in\left[-\frac4{\sqrt R}, 0\right).
	\end{equation}
	To prove \eqref{Eq.f_N_middle}, we first estimate $\cU_{N, N+k}$. Let $k\in\Z\cap[3, N-2]$. Similarly to the proof of \eqref{Eq.epsilon_n_est}, using the convexity of $\{M_n^*\}$, \eqref{Eq.M_n^*_increasing} and \eqref{Eq.hat_M_n_est}, we have
	\begin{align*}
		\left|\cU_{N, N+k}\right|&\lesssim N\left(M_{k+1}^*M_{N}^*+(N-k-2)M_{k+2}^*M_{N-1}^*\right)+N\left(M_k^*M_N^*+NM_{k+1}^*M_{N-1}^*\right)\\
		&\lesssim N\left(M_{k+1}^*M_{N}^*+(N-k-2)M_{k+2}^*M_{N-1}^*\right)+N\left(M_{k+1}^*M_N^*+NM_{k+1}^*M_{N-1}^*\right)\\
		&\lesssim N\left(\wh M_{k+1}\wh M_N+(N-k-2)\wh M_{k+2}\wh M_{N-1}\right)+N\left(\wh M_{k+1}\wh M_N+N\wh M_{k+1}\wh M_{N-1}\right).
	\end{align*}
	It follows from \eqref{Eq.hat_M_n}, \eqref{Eq.mu_n^*} and $\g_n=\frac8{(A+1)^2}(R-n)$ that
	\begin{align*}
		\wh M_{k+2}=\frac{\mu_{k+1}}{\g_{k+2}}\wh M_{k+1}\asymp \frac{\sqrt k}{A^{-2}(R-k-2)}\wh M_{k+1}\lesssim \frac{A^3}{N-k-2}\wh M_{k+1}.
	\end{align*}
	Hence, \eqref{Eq.M_{N-1}est} implies that
	\begin{align*}
		\left|\cU_{N, N+k}\right|&\lesssim N\wh M_{k+1}\wh M_N+NA^3\wh M_{k+1}\wh M_{N-1}+N^2\wh M_{k+1}\wh M_{N-1}\\
		&\lesssim N\wh M_{k+1}\wh M_N+NA^3\wh M_{k+1}\cdot A^{-3}\wh M_N+N^2\wh M_{k+1}\cdot N^{-3/2}\wh M_N\\
		&\lesssim N\wh M_{k+1}\wh M_N\lesssim N M_{k+1}^*\wh M_N,
	\end{align*}
	where we have used the fact $M_n^*\asymp \wh M_n$ for $n\in\Z\cap[1, N]$.\footnote{We recall that by \eqref{Eq.M_n^*est} we have $M_n^*\lesssim \wh M_n$. The fact $M_n^*\asymp \wh M_n$ follows from Proposition \ref{Prop.a_n_sharp_est} and $|a_n|\leq M_n^*$ by the definition of $\{M_n^*\}$ in \eqref{Eq.a_n_bound_byM_n^*}.} As a consequence, we have
	\begin{align*}
		\left|\sum_{k=3}^{N-2}\cU_{N, N+k}\tau^{k-1}\right|\lesssim N\wh M_N\sum_{k=3}^{N-2}M_{k+1}^*|\tau|^{k-1}\lesssim Na_N\sum_{k=3}^{N-2}M_{k+1}^*|\tau|^{k-1}.
	\end{align*}
	\if0the convexity property of $\{M_n^*\}$, we know that
	\[\left(M_k^*|\tau|^{k-2}\right)^2\leq \left(M_{k-1}^*|\tau|^{k-3}\right)\cdot\left(M_{k+1}^*|\tau|^{k-1}\right)\]
	hence the sequence $\{M_n^*|\tau|^{n-2}\}$ also satisfies the convexity property. \fi \if0It follows from \eqref{Eq.M_n^*claim} that
	\begin{equation}
		M_i^*|\tau|^{i-2}M_{k-i}^*|\tau|^{k-i-2}\leq M_j^*|\tau|^{j-2}M_{k-j}^*|\tau|^{k-j-2}
	\end{equation}
	for $i\in\Z\cap[j,k-j]$, $j\in\Z\cap[1,k/2]$, $k\in\Z\cap[2,N]$. \fi
	By the definition of $\{M_n^*\}$, we know that $\{M_n^*\}$ satisfies the convexity property $(M_k^*)^2\leq M_{k-1}^*M_{k+1}^*$, then so does the sequence $\{M_n^*|\tau|^{n-2}\}$. In particular, we have
	\begin{equation*}
		\left(M_k^*|\tau|^{k-2}\right)^2\leq \left(M_{k-1}^*|\tau|^{k-3}\right)\cdot \left(M_{k+1}^*|\tau|^{k-1}\right)\Longrightarrow 2M_k^*|\tau|^{k-2}\leq M_{k-1}^*|\tau|^{k-3}+M_{k+1}^*|\tau|^{k-1}.
	\end{equation*}
	Adapting the proof of \eqref{Eq.M_n^*claim} yields that
	\begin{equation}\label{Eq.M_n^*tau_convexity}
		M_i^*|\tau|^{i-2}+M_{k-i}^*|\tau|^{k-i-2}\leq M_j^*|\tau|^{j-2}+M_{k-j}^*|\tau|^{k-j-2}
	\end{equation}
	for all $i\in\Z\cap[j,k-j]$, $j\in\Z\cap[1,k/2]$, $k\in\Z\cap[2,N]$. Thus,
	\begin{align*}
		\sum_{k=3}^{N-2}M_{k+1}^*|\tau|^{k-1}&\leq M_4^*|\tau|^2+M_{N-1}^*|\tau|^{N-3}+M_5^*|\tau|^3+M_{N-2}^*|\tau|^{N-4}\\
		&\qquad\qquad+(N/2-4)\left(M_6^*|\tau|^4+M_{N-3}^*|\tau|^{N-5}\right)\\
		&\lesssim |\tau|^2+|\tau|^3+N|\tau|^4+M_{N-1}^*|\tau|^{N-3}+M_{N-2}^*|\tau|^{N-4}+NM_{N-3}^*|\tau|^{N-5}\\
		&\lesssim N^{-1}+|\tau|^{N-5}\left(M_{N-1}^*|\tau|^2+M_{N-2}^*|\tau|+NM_{N-3}^*\right)\\
		&\lesssim N^{-1}+|\tau|^{N-5}\left(\wh M_{N-1}|\tau|^2+\wh M_{N-2}|\tau|+N\wh M_{N-3}\right),
	\end{align*}
	for $\tau\in[-4/\sqrt R, 0)$. Similarly to \eqref{Eq.M_{N-1}est} and \eqref{Eq.M_{N-2}est}, we have
	\begin{equation}\label{Eq.M_N-2_est}
		\wh M_{N-2}\asymp A^{-3}\wh M_{N-1}\asymp N^{-3/2}\wh M_{N-1},\quad \wh M_{N-3}\asymp N^{-3}\wh M_{N-1},
	\end{equation}
	hence,
	\begin{align*}
		\sum_{k=3}^{N-2}M_{k+1}^*|\tau|^{k-1}\lesssim N^{-1}+|\tau|^{N-5}\wh M_{N-1}\left(|\tau|^2+N^{-3/2}|\tau|+N^{-2}\right).
	\end{align*}
	To obtain \eqref{Eq.f_N_middle}, it remains to prove that
	\begin{equation}\label{Eq.f_N_middle_claim}
		|\tau|^{N-5}\wh M_{N-1}\left(|\tau|^2+N^{-3/2}|\tau|+N^{-2}\right)\lesssim N^{-1}+N^{-1/2}a_N|\tau|^{N-2},\quad\forall\ \tau\in\R.
	\end{equation}
	Indeed, if $|\tau|\geq N^{-1}$, then \eqref{Eq.M_{N-1}est}, Proposition \ref{Prop.a_n_sharp_est} and \eqref{Eq.hat_M_n_est} imply that
	\begin{align*}
		|\tau|^{N-5}\wh M_{N-1}&\left(|\tau|^2+N^{-3/2}|\tau|+N^{-2}\right)\lesssim |\tau|^{N-5}N^{-3/2}\wh M_N\left(|\tau|^2+N^{-3/2}|\tau|+N^{-2}\right)\\
		&\lesssim a_N|\tau|^{N-2}N^{-3/2}\left(|\tau|^{-1}+N^{-3/2}|\tau|^{-2}+N^{-2}|\tau|^{-3}\right)\\
		&\lesssim a_N|\tau|^{N-2}N^{-3/2}\left(N+N^{1/2}\right)\lesssim N^{-1/2}a_N|\tau|^{N-2}.
	\end{align*}
	If $|\tau|\leq N^{-1}$, then Lemma \ref{Lem.M_N-1upper_bound} implies that
	\begin{align*}
		|\tau|^{N-5}\wh M_{N-1}&\left(|\tau|^2+N^{-3/2}|\tau|+N^{-2}\right)=\wh M_{N-1}\left(|\tau|^{N-3}+N^{-3/2}|\tau|^{N-4}+N^{-2}|\tau|^{N-5}\right)\\
		&\leq \wh M_{N-1}\left(2N^{3-N}+N^{3-N-1/2}\right)\leq 3\left(C_0\sqrt N\right)^NN^{3-N}=\frac{3C_0^8}{N}\left(\frac{C_0^2}{N}\right)^{N/2-4}\\
		&\lesssim N^{-1}.
	\end{align*}
	This proves \eqref{Eq.f_N_middle_claim}, thereby completing the proof of \eqref{Eq.f_N_middle}.
	
	\underline{\bf Step 5.} In this step, we conclude the proof of \eqref{Eq.u_N_compare}. By \eqref{Eq.U_N,N+1_est}, \eqref{Eq.U_N,N+2_est}, \eqref{Eq.f_N_last} and \eqref{Eq.f_N_middle}, for $\tau\in[-4/\sqrt R, 0)$ and $R\in(N, N+1)$, we have
	\begin{align*}
		f_N(\tau)&=\cU_{N, N+1}+\left(-\cU_{N, N+2}\right)(-\tau)+\sum_{k=3}^{N-2}\cU_{N, N+k}\tau^{k-1}+\left(\cU_{N, 2N-1}\tau^{N-2}+\cU_{N, 2N}\tau^{N-1}\right)\\
		&<-\frac{21}4\sqrt Na_N+\frac54Na_N\cdot\frac{4}{\sqrt N}+\frac{C}{\sqrt N}\left(\sqrt Na_N+Na_N^2|\tau|^{N-2}\right)-2Na_N^2|\tau|^{N-2}\\
		&=\left(-\frac14+\frac{C}{\sqrt N}\right)\sqrt Na_N+\left(\frac C{\sqrt N}-2\right)Na_N^2|\tau|^{N-2}<0.
	\end{align*}
	This proves \eqref{Eq.u_N_compare}, recalling \eqref{Eq.L[u_N]}.
\end{proof}

\begin{lemma}\label{Lem.u_N_zero}
	There exists $N_0\in\Z_+$ such that if $N\in(N_0,+\infty)\cap(2\Z+1)$ and $R\in(N, N+1)$, then we have
	\begin{equation}\label{Eq.u_N_zero}
		u_{(N)}(\tau_N)=0\ \text{for some}\ \tau_N\in\left(-\frac{9}{5\sqrt R},0\right),\quad\text{and}\quad u_{(N)}(\tau)>0\ \  \forall\ \tau\in(\tau_N, 0).
	\end{equation}
\end{lemma}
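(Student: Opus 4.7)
\textbf{Proof plan for Lemma \ref{Lem.u_N_zero}.}

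The plan is to set $\tau_0 := -9/(5\sqrt R)$ and establish the strict inequality $u_{(N)}(\tau_0)<0$ for all odd $N\in(N_0,+\infty)$ and all $R\in(N,N+1)$. Since $u_{(N)}(0)=a_0=1>0$, the intermediate value theorem then produces at least one zero in $(\tau_0,0)$. Defining $\tau_N:=\sup\{\tau\in(\tau_0,0):u_{(N)}(\tau)=0\}$ yields a zero strictly inside $(\tau_0,0)$ (strict because $u_{(N)}(\tau_0)<0$ and $u_{(N)}(0)>0$), and by maximality of $\tau_N$ together with $u_{(N)}(0)=1>0$, we obtain $u_{(N)}(\tau)>0$ for all $\tau\in(\tau_N,0)$.

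The proof of $u_{(N)}(\tau_0)<0$ reduces to showing that the term $a_N\tau_0^N$ dominates $\sum_{n=0}^{N-1}a_n\tau_0^n$. Since $N$ is odd and $a_N\asymp M_N>0$ by Proposition \ref{Prop.a_n_sharp_est}, we have $a_N\tau_0^N<0$. By Lemma \ref{Lem.M_N_lower_bound} and $M_N\asymp \wh M_N$,
\[
|a_N\tau_0^N|\gtrsim M_N\left(\frac{9}{5\sqrt R}\right)^N\gtrsim \left(\frac{9\sqrt N}{8\sqrt R}\right)^N\geq \left(\frac98\right)^N\left(\frac{N}{N+1}\right)^{N/2}\gtrsim \left(\frac98\right)^N,
\]
which grows exponentially in $N$.

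To control the remaining terms, set $B_n:=M_n|\tau_0|^n$. By Proposition \ref{Prop.a_n_upper_bound}, $|a_n\tau_0^n|\lesssim B_n$. Using $M_n/M_{n-1}\asymp \mu_{n-1}/\gamma_n\asymp N\sqrt n/(R-n)$ (via \eqref{Eq.hat_M_n}, \eqref{Eq.mu_n^*} and $(A+1)^2\asymp R\asymp N$), one computes
\[
\frac{B_n}{B_{n-1}}\asymp \frac{\sqrt{nR}}{R-n},
\]
which equals $1$ at $n_*:=R(3-\sqrt5)/2\approx 0.382\,R$, is below $1$ for $n<n_*$ and above $1$ for $n>n_*$. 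Thus $B_n$ decreases from $B_0=1$ on $[0,n_*]$, giving $\sum_{n=0}^{n_*}B_n\leq n_*+1=O(N)$. For the upper part $n\in[n_*+1,N-1]$, telescoping the reciprocal ratios $B_{j-1}/B_j\asymp (R-j)/\sqrt{jR}$ near $j=N$ yields $B_{N-k}/B_N\asymp k!/N^k$, since the numerator contributes the factor $\prod_{i=0}^{k-1}(R-N+i)\asymp k!$ while the denominator contributes $\asymp N^k$. A ratio-test argument shows $\sum_{k=1}^{N-n_*}k!/N^k=O(1/N)$, hence $\sum_{n=n_*+1}^{N-1}B_n\lesssim B_N/N$. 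Combining, $\sum_{n=0}^{N-1}|a_n\tau_0^n|\lesssim N+B_N/N$, which is $o(B_N)\asymp o(|a_N\tau_0^N|)$ since $B_N\gtrsim (9/8)^N$ grows exponentially. Therefore $u_{(N)}(\tau_0)<0$ for all sufficiently large $N$.

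The main technical obstacle is ensuring that the ``anchor'' lower bound on $|a_N\tau_0^N|$ beats the combined contribution of all lower-order terms; this requires the specific choice of constant $9/5$ in $\tau_0$, since it produces the growth factor $9\sqrt N/(8\sqrt R)>1$ by matching the $5\sqrt N/8$ lower bound of Lemma \ref{Lem.M_N_lower_bound}. A less delicate choice (say $\tau_0=-1/\sqrt R$) would fail, because then $|a_N\tau_0^N|$ would not grow exponentially relative to $B_N/N$. The factorial/polynomial decay of $B_{N-k}/B_N$ is what makes the summation estimate tight enough to close the argument.
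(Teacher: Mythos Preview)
Your overall strategy --- show that $a_N\tau_0^N$ dominates the lower-order terms at $\tau_0=-9/(5\sqrt R)$ and invoke the intermediate value theorem, then take $\tau_N$ to be the largest zero --- is exactly the paper's approach, and your anchor lower bound $|a_N\tau_0^N|\gtrsim(9/8)^N$ via Lemma~\ref{Lem.M_N_lower_bound} is correct. The gap is in the summation bound for $\sum_{n=0}^{N-1}|a_n||\tau_0|^n$. From $B_n/B_{n-1}\asymp\sqrt{nR}/(R-n)$ you cannot conclude that $B_n$ \emph{actually} decreases on $[0,n_*]$: the implicit constants in $\asymp$ shift the crossing point, and more seriously, when you telescope $k$ such approximate ratios to obtain ``$B_{N-k}/B_N\asymp k!/N^k$'' you accumulate a factor $c^k$ for some unspecified constant $c>0$. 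Over a range $k\sim N$ this is an exponential factor that your ratio-test argument on $\sum k!/N^k$ does not absorb, so the bound $\sum_{n>n_*}B_n\lesssim B_N/N$ is not justified as written.

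The paper avoids this entirely by using the \emph{exact} log-convexity of $\{M_n^*|\tau|^n\}$ (inherited from $(M_k^*)^2\leq M_{k-1}^*M_{k+1}^*$, built into the construction of $M_n^*$). The resulting pairing inequality
\[
M_i^*|\tau|^i+M_{N-i}^*|\tau|^{N-i}\leq M_j^*|\tau|^j+M_{N-j}^*|\tau|^{N-j}\qquad(j\leq i\leq N-j)
\]
collapses the whole sum to three endpoint pairs, the innermost pair $(3,N-3)$ carrying a factor $(N-5)/2$. The small-index contributions are $O(N|\tau|^3)=O(N^{-1/2})$ and the large-index ones are controlled by $a_N|\tau|^N$ through the exact relations among $\wh M_{N-1},\wh M_{N-2},\wh M_{N-3}$, yielding the uniform bound $\sum_{n=1}^{N-1}|a_n\tau^n|\lesssim N^{-1/2}(1+a_N|\tau|^N)$ with no constant accumulation. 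Your approach can be rescued by working with $\wh M_n$ (whose ratios $\mu_{n-1}/\gamma_n$ are exact and strictly increasing, so the decrease-then-increase structure is genuine) and then tracking the constants explicitly, but the convexity pairing is both shorter and more robust.
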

\begin{proof}
	We claim that
	\begin{equation}\label{Eq.u_N_middle_claim}
		\left|a_1\tau+a_2\tau^2+\cdots+a_{N-1}\tau^{N-1}\right|\lesssim N^{-1/2}\left(1+a_N|\tau|^N\right),\quad\forall\ \tau\in\left(-4/\sqrt R, 0\right).
	\end{equation}
	Now we prove \eqref{Eq.u_N_zero} by assuming \eqref{Eq.u_N_middle_claim}. For $\tau\in\left(-4/\sqrt R, 0\right)$ and $N\in2\Z+1$, there holds
	\begin{align*}
		u_{(N)}(\tau)\leq 1-a_N|\tau|^N+\frac C{\sqrt N}\left(1+a_N|\tau|^N\right)\leq 2-a_N|\tau|^N/2.
	\end{align*}
	Hence, it follows from Lemma \ref{Lem.M_N_lower_bound} and $R\in(N, N+1)$ that
	\begin{align}
		u_{(N)}\left(-\frac{9}{5\sqrt R}\right)&\leq 2-\frac{a_N}{2}\left(\frac{9}{5\sqrt{N+1}}\right)^N\nonumber\\
		&\leq 2-C\left(\frac{9}{5\sqrt{N+1}}\frac{5\sqrt N}{8}\right)^N=2-C\left(\frac{10}9\right)^N<0.\label{Eq.u_N<0}
	\end{align}
	Then, \eqref{Eq.u_N_zero} follows from $u_{(N)}(0)=a_0=1>0$ and the intermediate value theorem.
	
	It remains to prove the claim \eqref{Eq.u_N_middle_claim}. By \eqref{Eq.a_n_bound_byM_n^*}, we have
	\begin{align}\label{Eq.u_N_middle_triangle}
		\left|\sum_{j=1}^{N-1}a_j\tau^j\right|\lesssim \sum_{j=1}^{N-1}M_j^*|\tau|^j.
	\end{align}
	By mimicking the proof of \eqref{Eq.M_n^*tau_convexity}, we obtain the convexity of $\{M_n^*|\tau|^n\}$:
	\begin{equation*}
		M_i^*|\tau|^{i}+M_{k-i}^*|\tau|^{k-i}\leq M_j^*|\tau|^{j}+M_{k-j}^*|\tau|^{k-j}
	\end{equation*}
	for all $i\in\Z\cap[j,k-j]$, $j\in\Z\cap[1,k/2]$, $k\in\Z\cap[2,N]$. Hence, for $\tau\in(-4/\sqrt R, 0)$, we have
	\begin{align}
		\sum_{j=1}^{N-1}M_j^*|\tau|^j&\leq M_1^*|\tau|+M_{N-1}^*|\tau|^{N-1}+M_2^*|\tau|^2+M_{N-2}^*|\tau|^{N-2}\nonumber\\
		&\qquad\qquad+\frac{N-5}{2}\left(M_3^*|\tau|^3+M_{N-3}^*|\tau|^{N-3}\right)\nonumber\\
		&\lesssim |\tau|+|\tau|^2+N|\tau|^3+M_{N-1}^*|\tau|^{N-1}+M_{N-2}^*|\tau|^{N-2}+NM_{N-3}^*|\tau|^{N-3}\nonumber\\
		&\lesssim N^{-1/2}+|\tau|^{N-3}\left(M_{N-1}^*|\tau|^{2}+M_{N-2}^*|\tau|+NM_{N-3}^*\right)\nonumber\\
		&\lesssim N^{-1/2}+|\tau|^{N-3}\wh M_{N-1}\left(|\tau|^2+N^{-3/2}|\tau|+N^{-2}\right),\label{Eq.u_N_middle_est}
	\end{align}
	where in the last inequality we have used \eqref{Eq.hat_M_n_est} and \eqref{Eq.M_N-2_est}. Similarly to \eqref{Eq.f_N_middle_claim}, one can prove
	\begin{equation}\label{Eq.u_N_middle_est2}
		|\tau|^{N-3}\wh M_{N-1}\left(|\tau|^2+N^{-3/2}|\tau|+N^{-2}\right)\lesssim N^{-1/2}\left(1+a_N|\tau|^{N}\right),\quad\forall\ \tau\in\R.
	\end{equation}
	Thus, \eqref{Eq.u_N_middle_claim} follows from \eqref{Eq.u_N_middle_triangle}, \eqref{Eq.u_N_middle_est} and \eqref{Eq.u_N_middle_est2}.
\end{proof}

\begin{lemma}[Relative positions of barrier functions]\label{Lem.position}
	There exists $N_0\in\Z_+$ such that if $N\in(N_0,+\infty)\cap(2\Z+1)$ and $R\in(N, N+1)$, then
	\begin{equation}\label{Eq.u_N<u_g}
		u_{(N)}(\tau)<u_{\mathrm g}(\tau),\quad\forall\ \tau\in\left(-\frac{9}{5\sqrt R},0\right).
	\end{equation}
\end{lemma}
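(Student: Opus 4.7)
The plan is a contradiction argument anchored on Proposition \ref{Prop.u_N_compare}. Set $F(\tau) := u_{\mathrm g}(\tau) - u_{(N)}(\tau)$; the goal is to show $F > 0$ on $(-9/(5\sqrt R), 0)$. Two algebraic identities will drive everything: the factorization
\[
\Delta_u(\tau, u) = 2u\bigl(u_{\mathrm g}(\tau) - u\bigr),
\]
which is immediate from \eqref{Eq.Delta_u} and \eqref{Eq.u_g}, and the polynomial identity
\[
\Delta_\tau(\tau, u_{\mathrm g}(\tau)) = -2\tau\bigl[(\alpha-1)-\tau\bigr]\bigl[(\alpha-1)-2\tau\bigr],
\]
obtained by direct substitution.

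I first verify endpoint behavior. By \eqref{Eq.slopes} we have $F(0) = 0$ and $F'(0) = u_{\mathrm g}'(0) - a_1 < 0$, so $F > 0$ in a left-neighborhood of $0$. At the other endpoint, the estimate \eqref{Eq.u_N<0} already established in the proof of Lemma \ref{Lem.u_N_zero} yields $u_{(N)}(-9/(5\sqrt R)) < 0$ for $N$ large, while a direct evaluation of the quadratic $u_{\mathrm g}$ shows $u_{\mathrm g}(-9/(5\sqrt R)) > 0$ for $R$ large (indeed $u_{\mathrm g}$ is increasing on the interval with value tending to $1$ as $R \to \infty$); hence $F(-9/(5\sqrt R)) > 0$.

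Suppose for contradiction there is $\tau_0 \in (-9/(5\sqrt R), 0)$ with $F(\tau_0) \leq 0$. Then the infimum of $F$ on the closed interval $[-9/(5\sqrt R), 0]$ is attained at an interior point $\tau_m$, with $F(\tau_m) \leq 0$ and $F'(\tau_m) = 0$; consequently $u_{(N)}(\tau_m) \geq u_{\mathrm g}(\tau_m) > 0$ and $u_{(N)}'(\tau_m) = u_{\mathrm g}'(\tau_m)$. Three sign facts at $\tau_m$ then close the argument: (i) $u_{\mathrm g}'(\tau_m) \geq 2(4-\alpha)/3 - 6/\sqrt R > 0$ for $R$ large; (ii) from the factorization, $\Delta_\tau(\tau, u_{\mathrm g}(\tau)) > 0$ on the interval provided $|\tau| < (1-\alpha)/2 = 2A/(A+1)^2$, which reduces to $A^2 > 18A + 9$ and thus holds for $A$ (equivalently $N_0$) sufficiently large; positivity then extends to $\Delta_\tau(\tau_m, u_{(N)}(\tau_m)) > 0$ because $\partial_u \Delta_\tau = 3(\alpha-\tau) > 0$ and $u_{(N)}(\tau_m) \geq u_{\mathrm g}(\tau_m)$; (iii) by the identity for $\Delta_u$, $\Delta_u(\tau_m, u_{(N)}(\tau_m)) = 2 u_{(N)}(\tau_m)\,F(\tau_m) \leq 0$. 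Combining (i)--(iii),
\[
\cL[u_{(N)}](\tau_m) = \Delta_\tau(\tau_m, u_{(N)}(\tau_m))\,u_{(N)}'(\tau_m) - \Delta_u(\tau_m, u_{(N)}(\tau_m)) > 0,
\]
contradicting Proposition \ref{Prop.u_N_compare} (which applies since $(-9/(5\sqrt R), 0) \subset [-4/\sqrt R, 0)$).

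The main obstacle is the tight size comparison $9/(5\sqrt R) < (1-\alpha)/2 \approx 2/A$ underpinning (ii): the endpoint $-9/(5\sqrt R)$ is calibrated precisely so that $|\tau|$ stays below $(1-\alpha)/2$ throughout the interval, which is exactly the condition that keeps both bracketed factors in the factorization of $\Delta_\tau(\tau, u_{\mathrm g}(\tau))$ negative and hence their product (together with $-2\tau > 0$) positive. The coarser threshold $-4/\sqrt R$ from Proposition \ref{Prop.u_N_compare} is too wide to provide this sign control; once the sharper threshold is verified, the remainder of the barrier argument is automatic.
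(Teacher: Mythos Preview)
Your argument is correct and follows the same barrier-contradiction strategy as the paper, anchored on Proposition~\ref{Prop.u_N_compare} together with the endpoint estimate \eqref{Eq.u_N<0} and the threshold check $9/(5\sqrt R)<(1-\al)/2$. The execution differs only in mechanics: the paper works at the \emph{first crossing} $\tau_*$ where $u_{(N)}(\tau_*)=u_{\mathrm g}(\tau_*)$, computes $\cL[u_{\mathrm g}]$ explicitly (obtaining the factorization $-\tfrac43\tau(\tau+1-\al)(2\tau+1-\al)(5\tau+4-\al)>0$ on $(-(1-\al)/2,0)$), and then compares $\cL[u_{(N)}](\tau_*)\ge\cL[u_{\mathrm g}](\tau_*)>0$ directly since both $\Delta_\tau,\Delta_u$ coincide at a touching point. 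You instead work at an \emph{interior minimum} of $F$ where values need not coincide, which forces you to invoke the monotonicity $\partial_u\Delta_\tau=3(\al-\tau)>0$ and the factorization $\Delta_u(\tau,u)=2u(u_{\mathrm g}(\tau)-u)$ to control signs separately. Both routes are equally valid; the paper's first-crossing device is marginally cleaner because equality of the $u$-values eliminates the need for monotonicity in $u$, while your direct factorizations make the algebraic structure of $\Delta_u$ and $\Delta_\tau(\cdot,u_{\mathrm g})$ more transparent.
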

\begin{proof}
	We first recall the definition of $u_\text g$ from \eqref{Eq.u_g}. It is a direct computation to find that
	\begin{equation*}
		\cL\left[u_\text g\right](\tau)=-\frac43\tau(\tau+1-\al)(2\tau+1-\al)(5\tau+4-\al),
	\end{equation*}
	hence $\cL\left[u_\text g\right](\tau)>0$ for $\tau\in(-(1-\al)/2,0)$, where we have used $\al\in(0,1)$; see \eqref{maps-parameter}. It follows from \eqref{Eq.R} that
	\[\frac{1-\al}{2}=\frac{2\sqrt R}{(\sqrt R+1)^2}>\frac9{5\sqrt R},\quad\forall\ R>342,\]
	thus,
	\begin{equation}\label{Eq.u_g_compare}
		\cL\left[u_\text g\right](\tau)>0,\quad\forall\ \tau\in\left(-\frac{9}{5\sqrt R},0\right).
	\end{equation}
	
	Now we prove \eqref{Eq.u_N<u_g}. 
	By the fact that $u_\text g$ is a quadratic polynomial, $-(1-\al)/2>-(4-\al)/5$, $u_\text g(-(1-\al)/2)=(\al^2+10\al+1)/2>0$, $-(1-\al)/2<-9/(5\sqrt R)$ and \eqref{Eq.u_N<0}, we have
	\begin{equation*}
		u_\text g\left(-\frac9{5\sqrt R}\right)>0>u_{(N)}\left(-\frac9{5\sqrt R}\right).
	\end{equation*}
	We assume for contradiction that $u_{(N)}(\tau_*)=u_\text g(\tau_*)$ for some $\tau_*\in \left(-\frac{9}{5\sqrt R},0\right)$ and $u_{(N)}(\tau)<u_\text g(\tau)$ for all $\tau\in\left(-\frac{9}{5\sqrt R},\tau_*\right)$. Then $\frac{\mathrm du_{(N)}}{\mathrm d\tau}(\tau_*)\geq \frac{\mathrm du_\text g}{\mathrm d\tau}(\tau_*)$. Since $$\Delta_\tau\left(\tau_*, u_{(N)}(\tau_*)\right)=\Delta_\tau\left(\tau_*, u_\text g(\tau_*)\right)>0,$$ we get by \eqref{Eq.u_g_compare} that 
	\begin{align*}
		\cL\left[u_{(N)}\right](\tau_*)&=\Delta_\tau\left(\tau_*, u_{(N)}(\tau_*)\right)\frac{\mathrm du_{(N)}}{\mathrm d\tau}(\tau_*)-\Delta_u\left(\tau_*, u_{(N)}(\tau_*)\right)\\
		&\geq \Delta_\tau\left(\tau_*, u_\text g(\tau_*)\right)\frac{\mathrm du_\text g}{\mathrm d\tau}(\tau_*)-\Delta_u\left(\tau_*, u_\text g(\tau_*)\right)=\cL\left[u_\text g\right](\tau_*)>0,
	\end{align*}
	which contradicts with Proposition \ref{Prop.u_N_compare}.
\end{proof}

\begin{lemma}\label{Lem.U_O_compare}
	Recall the definition of $a$ in \eqref{Eq.renormalization}. Let $U_\mathrm O(\tau):=1-\tau/a$. If $R>5$, then
	\begin{equation}\label{Eq.U_O_compare}
		\cL\left[U_\mathrm O\right](\tau)<0\quad\forall\ \tau\in(0,a).
	\end{equation}
\end{lemma}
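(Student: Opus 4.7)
\textbf{Proof proposal for Lemma \ref{Lem.U_O_compare}.}

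My plan is to show that $\cL[U_{\mathrm O}](\tau)$ is a cubic polynomial in $\tau$ whose three roots are $0$, $a$, and some $\tau_*\leq 0$, after which the sign on $(0,a)$ is immediate. Since $U_{\mathrm O}(\tau)=1-\tau/a$ is linear with $U_{\mathrm O}'(\tau)=-1/a$, by \eqref{Eq.Lu}, \eqref{Eq.Delta_u}, \eqref{Eq.Delta_tau} we have
\[
\cL[U_{\mathrm O}](\tau)=-\tfrac{1}{a}\Delta_\tau(\tau,1-\tau/a)-\Delta_u(\tau,1-\tau/a),
\]
which is a polynomial in $\tau$ of degree $3$. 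A direct expansion (keeping track of each power of $\tau$) yields the leading coefficient $\frac{7}{3a}>0$.

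The two easy roots come from the sonic points at the endpoints of the segment. At $\tau=0$, $(\tau,U_{\mathrm O})=Q_2=(0,1)$ is a sonic point of \eqref{ODE_u_tau}, so $\Delta_\tau(0,1)=\Delta_u(0,1)=0$; hence $\cL[U_{\mathrm O}](0)=0$. At $\tau=a$, $U_{\mathrm O}(a)=0$ so $\Delta_u(a,0)=0$ trivially, and using $\alpha=a(a+1)/(a+3)$ one checks by a short polynomial identity that $\Delta_\tau(a,0)=0$ as well (this reflects that $Q_4=(a,0)$ is the image of the origin $P_4$ under the renormalization \eqref{Eq.renormalization}). Therefore $\cL[U_{\mathrm O}]$ admits the factorization
\[
\cL[U_{\mathrm O}](\tau)=\tfrac{7}{3a}\,\tau(\tau-a)(\tau-\tau_*)
\]
for some $\tau_*\in\R$. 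Comparing the coefficient of $\tau$ on both sides (on the left it is $\frac{3\alpha}{a^2}+\frac{4\alpha}{a}+\frac{4(\alpha-4)}{3}$, on the right it is $\frac{7\tau_*}{3}\cdot(-1)\cdot(-a)\cdot(-1)/a=\tfrac{7}{3}\cdot a\tau_*/a$-style; I will carry out this comparison carefully) and substituting $\alpha=a(a+1)/(a+3)$, one obtains
\[
\tau_*=\frac{4a^3-27a+9}{7a(a+3)}.
\]

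It remains to prove $\tau_*\leq 0$, i.e.\ $f(a):=4a^3-27a+9\leq 0$ throughout the range of $a$ corresponding to $R>5$. By \eqref{maps-parameter} and \eqref{Eq.r_to_R_increasing}, the composition $R\mapsto a$ is strictly increasing, and $a\in(0,\sqrt 3)$ with $a\to\sqrt 3$ as $R\to+\infty$. At the threshold $R=5$, solving \eqref{Eq.R} gives $\sqrt\alpha=(3-\sqrt 5)/2$, whence $\alpha_5=(7-3\sqrt 5)/2$, and then the quadratic $a^2+(1-\alpha)a-3\alpha=0$ coming from $\alpha=a(a+1)/(a+3)$ determines $a_5$ explicitly. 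A direct numerical/algebraic check gives $f(a_5)<0$. Next, $f'(a)=12a^2-27$ vanishes only at $a=3/2\in(a_5,\sqrt 3)$, so $f$ is decreasing on $[a_5,3/2]$ and increasing on $[3/2,\sqrt 3]$; since $f(3/2)=-18$ and $f(\sqrt 3)=9-15\sqrt 3<0$, we conclude $f(a)<0$ for all $a\in(a_5,\sqrt 3)$, i.e.\ for all parameters with $R>5$. Therefore $\tau_*<0$, and on $(0,a)$ we have $\tau(\tau-a)<0$ while $\tau-\tau_*>0$, so $\cL[U_{\mathrm O}](\tau)<0$, proving \eqref{Eq.U_O_compare}.

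The only real obstacle is the verification in the last step: one must confirm $f(a_5)<0$ (which is tight, $f(a_5)\approx -0.56$, explaining why the hypothesis is stated as $R>5$) and rule out any sign change on $(a_5,\sqrt 3)$; the simple shape of $f'$ makes this easy once $f(a_5)$, $f(3/2)$, and $f(\sqrt 3)$ are computed.
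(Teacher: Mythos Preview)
Your proposal is correct and follows essentially the same route as the paper: compute $\cL[U_{\mathrm O}]$, factor out $\tau(\tau-a)$, and show the remaining linear factor has its root $\tau_*\le 0$ by analyzing the cubic $f(a)=4a^3-27a+9$ on the relevant $a$-range. The paper's explicit factored form
\[
\cL[U_{\mathrm O}](\tau)=\frac{\tau(\tau-a)\bigl(7a(a+3)\tau-4a^3+27a-9\bigr)}{3a^2(a+3)}
\]
agrees with your leading coefficient $7/(3a)$ and your value of $\tau_*$.

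The only difference worth noting is in the verification that $f(a)<0$ for the parameters with $R>5$. You compute $a_5$ implicitly and rely on a numerical check $f(a_5)\approx -0.56$, then use the shape of $f'$ on $[a_5,\sqrt3]$. The paper instead picks the convenient test value $a^*=3/(\sqrt{30}+3)$ (equivalently $\alpha=1/7$), evaluates $\varphi_O=-f$ exactly at $-3,\,0,\,a^*,\,\sqrt3,\,3$ to locate all three roots of the cubic by the intermediate value theorem, and then checks exactly that $R(a^*)=(23+\sqrt{448})/9<5$, so $R>5$ forces $a>a^*$. This sidesteps the need to compute $a_5$ or to estimate $f(a_5)$ numerically. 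Your argument is fine in spirit, but to make it fully rigorous you should either carry out an exact (or interval-arithmetic) bound on $f(a_5)$, or adopt the paper's trick of choosing a clean test point $a^*$ with $R(a^*)<5$ at which $f$ can be evaluated exactly.
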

\begin{proof}
	By \eqref{Eq.Lu}, we compute that
	\begin{equation}
		\cL[U_\text O](\tau)=\frac{\tau(\tau-a)(7a(a+3)\tau-4a^3+27a-9)}{3a^2(a+3)},
	\end{equation}
	recalling \eqref{Eq.renormalization}.
	Let $\varphi_O(a):=-4a^3+27a-9$. By the fact that $\varphi_\text O$ is a cubic polynomial and
	\begin{align*}
		\varphi_O&(-3)=18>0,\ \varphi_O(0)=-9<0,\ \varphi_O\left(\frac{3}{\sqrt{30}+3}\right)=\frac{3^3(14-\sqrt{30})}{(\sqrt{30}+3)^3}>0
\\ &\varphi_O\left(\sqrt{3}\right)=-9+15\sqrt3>0,\ \varphi_O(3)=-36<0,
	\end{align*}
	we have 
	\begin{equation*}
		-4a^3+27a-9>0,\quad\forall\ a\in\left(3/(\sqrt{30}+3),\sqrt 3\right).
	\end{equation*}
	By \eqref{maps-parameter} and \eqref{Eq.r_to_R_increasing}, we know that the map $a\in(0, \sqrt 3)\mapsto R\in(1,+\infty)$ is strictly increasing. Since (note that $a=\frac{3}{\sqrt{30}+3}\Leftrightarrow\alpha=\frac{1}{7}$)
	\begin{align*}
		R\left(a=\frac{3}{\sqrt{30}+3}\right)=\frac{(\sqrt{7}+1)^2}{(\sqrt{7}-1)^2}=\frac{23+\sqrt{448}}{9}<\frac{23+\sqrt{484}}{9}=5,
	\end{align*}
	we know that if $R>5$, then $-4a^3+27a-9>0$, and thus \eqref{Eq.U_O_compare} follows.
\end{proof}

\subsection{Global extension of the $\sigma-w$ solution}\label{Subsec.global_extension}
Let $N_0\in\Z_+$ be large enough such that all properties in previous sections hold true. In this subsection, we prove that for each $N\in(N_0,+\infty)\cap (2\Z+1)$ and $R\in(N, N+1)$, the local solution $w_L=w_L(\sigma)$ near the sonic point $P_2$ constructed in Section \ref{Sec.local_sol} leaves the region $\cR$ between $P_2$ and $P_5$ by crossing the red curve, and can  then be extended until reaching the origin $P_4$.

We begin with some fundamental definitions and properties of the systems \eqref{ODE_w_sigma} and \eqref{ODE_u_tau}. Let
\begin{align*}
    \cD:&=\left\{(\tau, u):u_\text b(\tau)<u<u_\text g(\tau), -\frac{1-\al}{2}=\tau(Q_5)<\tau<0\right\}\\
    &=\left\{(\tau, u): \Delta_\tau(\tau, u)>0, \Delta_u(\tau, u)>0, \tau(Q_5)<\tau<0\right\}.
\end{align*}
We denote the re-normalization map \eqref{Eq.renormalization} by $\Psi:(\sigma,w)\mapsto (\tau, u)$, where
\begin{equation*}
	\Psi(\sigma,w):=(-(1+a)(w-w_-), (1+a)^2\sigma^2).
\end{equation*}
Recall that
\begin{align*}
	\cR=\{(\sigma, w): \Delta_1(\sigma, w)<0, \Delta_2(\sigma, w)>0, w(P_2)=w_-<w<r/2=w(P_5)\}.
\end{align*}
It is a straightforward computation to verify that if $\Psi(\sigma, w)=(\tau, u)$, then
\begin{equation}\label{Eq.Delta_1,2_tau,u}
	\Delta_\tau(\tau, u)=-(1+a)^{3}\Delta_1(\sigma, w),\quad \Delta_u(\tau, u)=2(1+a)^4\sigma\Delta_2(\sigma,w).
\end{equation}
Using these relations, one can check that  both $\Psi: \{(\sigma,w): \sigma>0\}\to\{(\tau, u): u>0\}$ and $\Psi:\cR\to \cD$ are bijective.

\begin{lemma}\label{Lem.alternative}
	Assume that $R>5$. Given $(\sigma_0, w_0)\in\cR$, let $w=w(\sigma)$ be the unique solution of
	\begin{equation}\label{Eq.sigma-w-ODE}
		\frac{\mathrm dw}{\mathrm d\sigma}=\frac{\Delta_1(\sigma, w)}{\Delta_2(\sigma, w)}\quad\text{with}\quad w(\sigma_0)=w_0.
	\end{equation}
	Then, one of the following holds: either
	\begin{enumerate}[(i)]
		\item the solution exists for $\sigma\in(0, \sigma_0]$, and satisfies $\lim_{\sigma\downarrow0}w(\sigma)=0$, $w(\sigma)> a(1+a)\sigma^2$ for all $\sigma\in(0,\sigma_0)$, and there exists $\sigma_1\in(0, \sigma_0)$ such that $\frac{\mathrm dw}{\mathrm d\sigma}(\sigma)>0$ for $\sigma\in(0, \sigma_1)$, $\frac{\mathrm dw}{\mathrm d\sigma}(\sigma)<0$ for $\sigma\in(\sigma_1, \sigma_0)$; or
		\item there exists $\sigma_*\in[\sigma(P_5),\sigma_0)\subset(0,\sigma_0)$ such that the solution exists for $\sigma\in(\sigma_*, \sigma_0]$ and $(\sigma, w(\sigma))\in\cR$ for all $\sigma\in (\sigma_*, \sigma_0]$, and $w_*=w_2^-(\sigma_*)$, where $w_*:=\lim_{\sigma\downarrow\sigma_*}w(\sigma)$.
	\end{enumerate}
\end{lemma}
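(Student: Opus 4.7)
The plan is to combine standard ODE continuation in the open region $\mathcal{R}$, where $\Delta_2>0$ makes the right-hand side of \eqref{Eq.sigma-w-ODE} locally Lipschitz, with the barrier function $U_{\mathrm{O}}(\tau)=1-\tau/a$ from Lemma \ref{Lem.U_O_compare}. First, I would define $\sigma_*\in[0,\sigma_0)$ as the infimum of $\tilde\sigma$ for which the solution extends to $[\tilde\sigma,\sigma_0]$ while remaining in $\overline{\mathcal{R}}$, so that the task reduces to identifying how the trajectory exits $\mathcal{R}$ at $\sigma_*$. Since $\Delta_1<0$ and $\Delta_2>0$ in $\mathcal{R}$, we have $dw/d\sigma<0$, so $w$ increases monotonically as $\sigma$ decreases from $\sigma_0$.

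Second, I would enumerate the possible exits at $\sigma_*$. The boundary $\partial\mathcal{R}$ decomposes into the red arc $\{w=w_2(\sigma)\}$, the blue arc $\{w=w_2^-(\sigma)\}$, and the special points $P_2,P_5$. The trajectory cannot accumulate at $P_2$ (it starts inside $\mathcal{R}$, and the local analysis of the flow near $P_2$ from Section \ref{Sec.local_sol}, transported back via $\Psi^{-1}$, shows that the admissible smooth orbit emanating from $P_2$ with slope $a_1>0$ points into $\mathcal{R}$, so no other interior trajectory can approach $P_2$ backward in $\sigma$) nor at $P_5$ (a direct linearization of $\Delta_1=\Delta_2=0$ at $P_5$ forbids interior accumulation coming from the $\mathcal{R}$-side). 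Thus the exit is either through the blue arc, yielding case (ii) directly with $\sigma_*\in[\sigma(P_5),\sigma_0)$ and $w_*=w_2^-(\sigma_*)$, or through the red arc at some interior $\sigma_1>\sigma_*$.

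In the latter case, for $\sigma$ just below $\sigma_1$ the trajectory enters $\{\Delta_1>0,\Delta_2>0\}$, so $dw/d\sigma>0$, which already gives the required change of monotonicity of $w$ at $\sigma_1$. To extend past $\sigma_1$ down to $\sigma=0$ and to preclude the trajectory from subsequently touching the blue curve $\Delta_2=0$, I would invoke the barrier $U_{\mathrm{O}}$. Under $\Psi$, the inequality $u<U_{\mathrm{O}}(\tau)$ translates via a direct computation using $(1+a)w_-=a$ into $w>a(1+a)\sigma^2$. The standard barrier comparison is: if $u(\tau_*)=U_{\mathrm{O}}(\tau_*)$ at some $\tau_*\in(0,a)$ with $u<U_{\mathrm{O}}$ just before, then $u'(\tau_*)\geq U_{\mathrm{O}}'(\tau_*)$, and combined with $\Delta_\tau(\tau_*,U_{\mathrm{O}}(\tau_*))>0$ this yields $0=\mathcal{L}[u](\tau_*)\geq \mathcal{L}[U_{\mathrm{O}}](\tau_*)$, contradicting Lemma \ref{Lem.U_O_compare}. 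This confines the trajectory strictly below $U_{\mathrm{O}}$ for all $\tau\in(0,a)$, equivalently $w>a(1+a)\sigma^2$ for all $\sigma\in(0,\sigma_0)$, and in particular rules out touching $\Delta_2=0$ before $\sigma=0$.

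Finally, to obtain $w(\sigma)\to 0$ as $\sigma\downarrow 0$, the monotonicity $dw/d\sigma>0$ on $(0,\sigma_1)$ yields a limit $w^*:=\lim_{\sigma\downarrow 0}w(\sigma)\in[0,w_2(\sigma_1))\subset[0,1)$. If $w^*\in(0,1)$, then $\Delta_1(0,w^*)=w^*(w^*-1)(w^*-r)\neq 0$ (since $0<w^*<1<r$), while $\Delta_2(0,w^*)=0$, so $|dw/d\sigma|=|\Delta_1/\Delta_2|\to\infty$ along the trajectory, contradicting the finite monotone limit. Hence $w^*=0$, and together with the sign of $\Delta_1/\Delta_2$ on either side of the red curve this yields the full case (i). I anticipate that the main obstacle will be the second step above: carefully ruling out $P_2$ and $P_5$ as exit scenarios requires precise local phase-portrait information at these degenerate points, which for $P_2$ must be extracted from the analytic construction of Section \ref{Sec.local_sol} and for $P_5$ from a direct eigenvalue computation of the linearized ODE.
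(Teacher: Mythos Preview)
Your exit-enumeration strategy is plausible but differs from the paper's, and what you flag as the ``main obstacle'' (local phase-portrait analysis at $P_2$ and $P_5$) is precisely what the paper avoids. The paper's key device is to set $W(\sigma)=w(\sigma)-w_2(\sigma)$ and observe that, after multiplication by an integrating factor built from $F_1:=(w-w_1)(w-w_3)/\Delta_2$, the rescaled quantity $\widetilde W$ is \emph{strictly increasing} on the whole maximal interval $(\sigma_*,\sigma_0]$ in $\Omega=\{\Delta_2>0\}$ (this uses only $w_2'<0$). The resulting dichotomy---either (a) $\widetilde W>0$ throughout, or (b) $\widetilde W$ changes sign exactly once---maps cleanly onto cases (ii) and (i). In case (a) the paper obtains $\sigma_*\ge\sigma(P_5)$ by a short topological contradiction (if $\sigma_*<\sigma(P_5)$ one produces a point $\sigma_4$ with $w_2^-(\sigma_4)<w(\sigma_4)<w_2^+(\sigma_4)$, i.e.\ $\Delta_2<0$), not by any linearization at $P_5$. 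Note also that $P_2$ is never in play: it sits at $\sigma(P_2)>\sigma_0$, so decreasing $\sigma$ trivially moves the trajectory away from it; your local analysis there is unnecessary.

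Two further gaps in your sketch. First, the barrier: the paper establishes $w(\sigma)>a(1+a)\sigma^2$ as its \emph{first} step, valid on the full maximal interval in $\Omega$, by comparison in $(\sigma,w)$ coordinates where the only sign needed is $\Delta_2>0$ at a putative touching point. Your formulation in $(\tau,u)$ restricts to $\tau\in(0,a)$, i.e.\ to $w<w_-$, which leaves the stretch between the red-arc crossing (where $w=w_2(\sigma_1)>w_-$) and the level $w=w_-$ unaddressed; moreover, along $U_{\mathrm O}$ for $\tau\in(0,\alpha)$ one actually has $\Delta_\tau<0$ (since $U_{\mathrm O}<u_{\mathrm b}$ there), so your stated sign of $\Delta_\tau$ is reversed---the contradiction $0\le \cL[U_{\mathrm O}](\tau_*)$ still follows once this is corrected. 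Second, your endgame ``$|dw/d\sigma|\to\infty$ contradicts the finite monotone limit'' is not a contradiction as stated: a function can have unbounded derivative near $0$ yet converge (e.g.\ $w^*+\sqrt\sigma$). The paper instead argues that if $w^*>0$ then $\sigma\cdot\Delta_1/\Delta_2$ tends to a positive constant as $\sigma\downarrow0$, so $dw/d\sigma\sim c/\sigma$ is non-integrable, forcing $w_0-w^*=\int_0^{\sigma_0}(dw/d\sigma)\,d\sigma$ to diverge.
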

\begin{proof}
	Let $\Omega:=\{(\sigma, w): \Delta_2(\sigma, w)>0\}$. Then $(\sigma_0, w_0)\in\cR\subset \Omega$, $\Omega\subset \R^2$ is an open set and $\Delta_1/\Delta_2$ is a smooth function on $\Omega$. Assume that the maximal interval of existence of the solution to \eqref{Eq.sigma-w-ODE} (such that $(\sigma, w)\in\Omega$) is $(\sigma_*, \sigma_+)$. Then $0\leq\sigma_*<\sigma_0<\sigma_+$.\smallskip
	
	\underline{\bf Step 1.} In this step, we prove that if $R>5$, then
	\begin{equation}\label{Eq.w>w_O}
		w(\sigma)>a(1+a)\sigma^2,\quad\forall\ \sigma\in(\sigma_*,\sigma_0).
	\end{equation}
	We recall from Lemma \ref{Lem.U_O_compare} that $U_\text O(\tau)=1-\tau/a$ is a barrier function. Under the transformation \eqref{Eq.renormalization}, $U_\text O$ corresponds to the curve $w_\text O(\sigma)=a(1+a)\sigma^2$ in the $\sigma-w$ plane. We denote
	\begin{equation*}
		\cN[w](\sigma):=\Delta_2(\sigma,w(\sigma))\frac{\mathrm dw}{\mathrm d\sigma}-\Delta_1(\sigma,w(\sigma))
	\end{equation*}
	for any $C^1$ function $w=w(\sigma)$. Then it is a straightforward computation to find that
	\begin{align}
		\cN\left[w_\text O\right](\sigma)=\frac1{(1+a)^3}\frac{\cL\left[u_\text O\right](\tau)}{\mathrm du_\text O/\mathrm d\tau}>0,\quad\forall\ \sigma\in\left(0, 1-w_-=\sigma(P_2)\right).\label{Eq.w_O_compare}
	\end{align}
	By \eqref{Eq.w_j_monotonicity}, $w_2$ is strictly decreasing. Using the strict increase of $w_\text O$ in $(0,\sigma(P_2))$ and the fact $w_\text O(\sigma(P_2))=w_-=w(P_2)$, we obtain
	\begin{equation*}
		w_\text O(\sigma)<w_2(\sigma),\quad\forall\ \sigma\in(0,\sigma(P_2)).
	\end{equation*}
	Hence, $w_\text O(\sigma_0)<w_2(\sigma_0)<w_0=w(\sigma_0)$ due to $(\sigma_0,w_0)\in\cR$. We assume for contradiction that there exists $\sigma^*\in(\sigma_*, \sigma_0)$ such that $w(\sigma^*)=w_\text O(\sigma^*)$ and $w(\sigma)>w_\text O(\sigma)$ for $\sigma\in(\sigma^*,\sigma_0]$. Then $\frac{\mathrm dw_\text O}{\mathrm d\sigma}(\sigma^*)\leq \frac{\mathrm dw}{\mathrm d\sigma}(\sigma^*)$. It follows from $\Delta_2(\sigma^*,w(\sigma^*))= \Delta_2(\sigma^*,w_\text O(\sigma^*))>0$ that
	\begin{align*}
		\cN\left[w_\text O\right](\sigma^*)&=\Delta_2\left(\sigma^*,w_\text O(\sigma^*)\right) \frac{\mathrm dw_\text O}{\mathrm d\sigma}(\sigma^*)-\Delta_1 \left(\sigma^*,w_\text O(\sigma^*)\right)\\
		&\leq \Delta_2\left(\sigma^*,w(\sigma^*)\right) \frac{\mathrm dw}{\mathrm d\sigma}(\sigma^*)-\Delta_1 \left(\sigma^*,w(\sigma^*)\right)=0,
	\end{align*}
which contradicts \eqref{Eq.w_O_compare}.\smallskip
	
	\underline{\bf Step 2.} 
	Let $W(\sigma):=w(\sigma)-w_2(\sigma)$ for $\sigma\in(\sigma_*, \sigma_0]$. We compute that
	\begin{align*}
		\frac{\mathrm dW}{\mathrm d\sigma}&=\frac{\mathrm dw}{\mathrm d\sigma}-\frac{\mathrm dw_2}{\mathrm d\sigma}=\frac{\Delta_1(\sigma, w(\sigma))}{\Delta_2(\sigma, w(\sigma))}-\frac{\mathrm dw_2}{\mathrm d\sigma}\\
		&=\frac{(w(\sigma)-w_1(\sigma))(w(\sigma)-w_2(\sigma))(w(\sigma)-w_3(\sigma))}{\Delta_2(\sigma, w(\sigma))}-\frac{\mathrm dw_2}{\mathrm d\sigma}=W(\sigma)F_1(\sigma)-\frac{\mathrm dw_2}{\mathrm d\sigma},
	\end{align*}
	where $F_1(\sigma):=(w(\sigma)-w_1(\sigma))(w(\sigma)-w_3(\sigma))/\Delta_2(\sigma, w(\sigma))\in C((\sigma_*, \sigma_0])$. Then
	\begin{equation*}
		\frac{\mathrm d}{\mathrm d\sigma}\left(\exp\left(\int_\sigma^{\sigma_0}F_1\left(\wt\sigma\right)\,\mathrm d\wt\sigma\right)W\right)=-\exp\left(\int_\sigma^{\sigma_0}F_1\left(\wt\sigma\right)\,\mathrm d\wt\sigma\right)\frac{\mathrm dw_2}{\mathrm d\sigma}(\sigma)>0,\quad\forall\ \sigma\in(\sigma_*,\sigma_0],
	\end{equation*}
	where we have used \eqref{Eq.w_j_monotonicity}. Let
	\[\wt W(\sigma):=\exp\left(\int_\sigma^{\sigma_0}F_1\left(\wt\sigma\right)\,\mathrm d\wt\sigma\right)W(\sigma),\quad\forall\ \sigma\in(\sigma_*, \sigma_0].\]
	Then $\wt W$ is strictly increasing on $\sigma\in(\sigma_*, \sigma_0]$. It follows from $(\sigma_0, w_0)\in\cR$ that $W(\sigma_0)>0$, thus $\wt W(\sigma_0)>0$. Therefore, one of the following holds:
	\begin{enumerate}[(a)]
		\item $\wt W(\sigma)>0$ for all $\sigma\in(\sigma_*,\sigma_0]$;
		\item there exists $\sigma_1\in(\sigma_*,\sigma_0)$ such that $\wt W(\sigma_1)=0$, and
		\[\text{$\wt W(\sigma)>0$ for all $\sigma\in(\sigma_1,\sigma_0]$, \quad$\wt W(\sigma)<0$ for all $\sigma\in(\sigma_*,\sigma_1)$.}\]
	\end{enumerate}
	
	\underline{\bf Step 3.} In this step, we prove that (b) implies (i). Assume that (b) holds, then $W(\sigma)<0$ for all $\sigma\in(\sigma_*,\sigma_1)$,  {$W(\sigma)>0$ for all $\sigma\in(\sigma_1,\sigma_0)$}.  It follows from \eqref{Eq.w>w_O}, $w_1(\sigma)<0<w_\text O(\sigma)$ and $w_3(\sigma)>r>w(\sigma)$ for $\sigma\in(\sigma_*,\sigma_0)$ that $F_1(\sigma)<0$ for $\sigma\in(\sigma_*,\sigma_0)$. Thus,
	\begin{align*}
		\frac{\mathrm dw}{\mathrm d\sigma}(\sigma)=W(\sigma)F_1(\sigma)>0,\quad\forall\ \sigma\in(\sigma_*,\sigma_1),\quad \frac{\mathrm dw}{\mathrm d\sigma}(\sigma)=W(\sigma)F_1(\sigma)<0,\quad\forall\ \sigma\in(\sigma_1,\sigma_0).
	\end{align*}
We assume on contrary that $\sigma_*>0$.	Then $w$ is strictly increasing on $\sigma\in(\sigma_*,\sigma_1)$, and hence the limit $w_*:=\lim_{\sigma\downarrow\sigma_*}w(\sigma)$ exists and $w_*\in[0,1]$. Standard ODE theory (see \cite[Remark 3.2]{SWZ2024_1}) gives that $\Delta_2(\sigma_*, w_*)=0$, hence $w_*=w_2^+(\sigma_*)$ or $w_*=w_2^-(\sigma_*)$. On the other hand, one easily checks that $\sigma_1\in[\sigma(P_5),\sigma(P_2))$, which can be read from the phase portrait (see Figure \ref{Fig.Phase_Portrait_w_sigma}). Otherwise, we would obtain a contradiction with $\Delta_2(\sigma,w(\sigma))>0$ for all $\sigma\in(\sigma_1,\sigma_0]$. So, $w(\sigma)<w(\sigma_1)<w_2(\sigma_1)<w_2(\sigma(P_5))=w_2^-(\sigma(P_5))$ for all $\sigma\in(\sigma_*,\sigma_1)$. Therefore,
\begin{align*}
	w_*<\min\{w_2(\sigma(P_5)),w_2(\sigma_*)\}<w_2^-(\sigma_*)<w_2^+(\sigma_*).
\end{align*}
This is a contradiction. Thus, $\sigma_*=0$ and $w_*\in[0,w(P_5)=r/2]\subset[0,1]$. Next we prove $w_*=0$. We assume on contrary that $w_*\in(0,r/2]$. Recalling the definition of $\Delta_2$, we let
\[F_2(\sigma):=\sigma\frac{\Delta_1(\sigma,w(\sigma))}{\Delta_2(\sigma,w(\sigma))}\in\R,\quad\forall\ \sigma\in(0,\sigma_0].\]
Then $F_2$ is continuous and
\begin{equation}\label{Eq.F_2(0)>0}
	\lim_{\sigma\downarrow0}F_2(\sigma)=\frac{3w_*(w_*-1)(w_*-r)}{5w_*^2-(6+2r)w_*+3r}>0,
\end{equation}
where we have used $5(r/2)^2-(6+2r)r/2+3r=r^2/4>0$ and $(r+3)/5<r/2$, which follows from $r<3-\sqrt3<2$. Using $\mathrm dw/\mathrm d\sigma=F_2(\sigma)/\sigma$, we obtain
\[w_0-w_*=\int_0^{\sigma_0}\frac{F_2(\sigma)}{\sigma}\,\mathrm d\sigma\in\R,\]
which contradicts \eqref{Eq.F_2(0)>0}. Therefore, $w_*=0$. This proves (i).\smallskip

\underline{\bf Step 4.} In this step, we prove that (a) implies (ii). Assume that (a) holds. Then $W(\sigma)>0$ for all $\sigma\in(\sigma_*,\sigma_0]$. Or equivalently, $w(\sigma)>w_2(\sigma)$ for all $\sigma\in(\sigma_*,\sigma_0]$. We first show that $\sigma_*\geq \sigma(P_5)=\sqrt3r/6$. Assume on contrary that $0\leq \sigma_*<\sigma(P_5)$. Then $w(\sigma)>w_2(\sigma)>w_2^-(\sigma)$ for $\sigma\in\left(\max\{\sigma_2^{(0)}, \sigma_*\}, \sigma(P_5)\right)$. Combining with the fact that $\Delta_2(\sigma, w(\sigma))>0$ implies that $w(\sigma)>w_2^+(\sigma)$ for $\sigma\in\left(\max\{\sigma_2^{(0)}, \sigma_*\}, \sigma(P_5)\right)$. We take $\sigma_3:=\left(\max\{\sigma_2^{(0)}, \sigma_*\}+ \sigma(P_5)\right)/2<\sigma(P_5)<\sigma_0$, then $w(\sigma_3)>w_2^+(\sigma_3)>(r+3)/5>w_2^-(\sigma_0)>w(\sigma_0)$. Thus, there exists $\sigma_4\in(\sigma_3,\sigma_0)$ such that $w_2^-(\sigma_4)<w(\sigma_4)<w_2^+(\sigma_4)$, which implies that $\Delta_2(\sigma_4, w(\sigma_4))<0$, contradicting the definition of $\sigma_*$. Therefore, $\sigma_*\geq \sigma(P_5)$. Next we claim that
	\begin{equation}\label{Eq.w_eye_region}
		w_2(\sigma)<w(\sigma)<w_2^-(\sigma),\quad\forall\ \sigma\in(\sigma_*, \sigma_0).
	\end{equation}
	Indeed, the left inequality follows from the assumption (a), and the right inequality can be established using a similar argument to the proof of  $\sigma_*\geq\sigma(P_5)$. As a consequence, we have $(\sigma, w(\sigma))\in\cR$, $\Delta_1(\sigma, w(\sigma))<0$ and $\Delta_2(\sigma, w(\sigma))>0$ for all $\sigma\in(\sigma_*, \sigma_0)$. So, \eqref{Eq.sigma-w-ODE} implies that $\sigma\mapsto w$ is strictly decreasing on $\sigma\in(\sigma_*, \sigma_0)$. Hence, the limit $w_*:=\lim_{\sigma\downarrow\sigma_*}w(\sigma)$ exists. Standard ODE theory (see \cite[Remark 3.2]{SWZ2024_1}) shows that $\Delta_2(\sigma_*, w_*)=0$, hence $\sigma_*=0$ or $w_*=w_2^+(\sigma_*)$ or $w_*=w_2^-(\sigma_*)$. It follows from $\sigma_*\geq\sigma(P_5)>0$ and \eqref{Eq.w_eye_region} that $w_*=w_2^-(\sigma_*)$. This proves (ii).
\end{proof}

To achieve our goal, we only need to rule out case (ii). This requires more information on the local solution $v_L$, especially the barrier function property given by Proposition \ref{Prop.u_N_compare}. Let $N_0\in\Z_+$ be large enough such that all properties in previous sections hold. For all $N\in(N_0, +\infty)\cap(2\Z+1)$ and $R\in(N, N+1)$, we define
\begin{equation}
	\cD_N:=\left\{(\tau, u): \tau\in(\tau_N, 0), u_\text b(\tau)<u<u_{(N)}(\tau)\right\},\quad\cR_N:=\Psi^{-1}(\cD_N).
\end{equation}
Thanks to Lemma \ref{Lem.position} and $\tau_N>-\frac9{5\sqrt R}>-\frac{1-\al}2=\tau(Q_5)$, we have $\cD_N\subset\cD$, hence $\cR_N\subset\cR$.

To make a preparation, we compute by \eqref{Eq.Delta_u}, \eqref{Eq.Delta_tau} and \eqref{Eq.Lu} that
\begin{align}\label{Eq.L[u_N]_modified}
    \Delta_\tau(\tau, u)\frac{\mathrm du_{(N)}}{\mathrm d\tau}(\tau)-\Delta_u(\tau, u)=\cL\left[u_{(N)}\right](\tau)+\left(u-u_{(N)}(\tau)\right)G(\tau, u),
\end{align}
where $G(\tau, u):=3(\al-\tau)\frac{\mathrm du_{(N)}}{\mathrm d\tau}(\tau)+2\left(u+u_{(N)}(\tau)-u_\text g(\tau)\right)$.

\begin{proposition}\label{Prop.global}
	Assume that $N_0\in\Z_+$ is large enough, $N\in(N_0, +\infty)\cap(2\Z+1)$ and $R\in(N, N+1)$. Given $(\sigma_0, w_0)\in\cR_N$, let $w=w(\sigma)$ be the unique solution to \eqref{Eq.sigma-w-ODE}. Then the solution $w=w(\sigma)$ exists for $\sigma\in(0,\sigma_0]$ and
	\[\lim_{\sigma\downarrow0}w(\sigma)=0,\quad w(\sigma)>a(1+a)\sigma^2\ \quad\forall\ \sigma\in(0,\sigma_0).\]
    Moreover, there exists $\sigma_1\in(0, \sigma_0)$ such that $\frac{\mathrm dw}{\mathrm d\sigma}>0$ for $\sigma\in(0, \sigma_1)$, $\frac{\mathrm dw}{\mathrm d\sigma}<0$ for $\sigma\in(\sigma_1, \sigma_0)$.
\end{proposition}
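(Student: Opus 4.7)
The plan is to invoke Lemma~\ref{Lem.alternative} to reduce everything to the dichotomy (i)--(ii), and then rule out the bad alternative (ii) by a barrier comparison with $u_{(N)}$ in the $(\tau,u)$ plane. Since alternative (i) is exactly the desired conclusion, the entire argument reduces to deriving a contradiction from alternative (ii).

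So suppose (ii) holds: there exists $\sigma_*\in[\sigma(P_5),\sigma_0)$ with $(\sigma,w(\sigma))\in\cR$ for every $\sigma\in(\sigma_*,\sigma_0]$ and $w_*=w_2^-(\sigma_*)$. Transferring by $\Psi$ and using the identities \eqref{Eq.Delta_1,2_tau,u}, the trajectory becomes a curve fully contained in $\cD$ for $\tau\in(\tau_*,\tau_0]$, where $\tau_0\in(\tau_N,0)$ comes from $(\sigma_0,w_0)\in\cR_N$, the exit value satisfies $\tau_*=-(1+a)(w_*-w_-)\in[\tau(Q_5),0)$, and $u_*=u_\text{g}(\tau_*)$ (since $\Delta_2(\sigma_*,w_*)=0$). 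Inside $\cD$ we have $\Delta_\tau>0$ and $\Delta_u>0$, so the trajectory is a $C^1$ graph $u=u(\tau)$ with $u'(\tau)=\Delta_u/\Delta_\tau>0$, and $u(\tau_0)<u_{(N)}(\tau_0)$ by definition of $\cR_N$.

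The key step is the barrier comparison
\begin{equation*}
u(\tau)<u_{(N)}(\tau)\quad\text{for every }\tau\in\big(\max(\tau_*,\tau_N),\,\tau_0\big].
\end{equation*}
Were this to fail, continuity would give a first crossing $\tau_\sharp\in(\max(\tau_*,\tau_N),\tau_0)$ at which $u(\tau_\sharp)=u_{(N)}(\tau_\sharp)$ and $u<u_{(N)}$ on $(\tau_\sharp,\tau_0]$. Since the difference $u_{(N)}-u$ vanishes at $\tau_\sharp$ and is positive just to its right, $u_{(N)}'(\tau_\sharp)\geq u'(\tau_\sharp)$; combined with $\Delta_\tau(\tau_\sharp,u(\tau_\sharp))>0$ and $u'(\tau_\sharp)=\Delta_u/\Delta_\tau$, this yields
\begin{equation*}
\cL[u_{(N)}](\tau_\sharp)=\Delta_\tau(\tau_\sharp,u_{(N)}(\tau_\sharp))u_{(N)}'(\tau_\sharp)-\Delta_u(\tau_\sharp,u_{(N)}(\tau_\sharp))\geq 0,
\end{equation*}
contradicting Proposition~\ref{Prop.u_N_compare}, since $\tau_\sharp\in(\tau_N,0)\subset(-4/\sqrt R,0)$ (the last inclusion uses $\tau_N>-9/(5\sqrt R)>-4/\sqrt R$ from Lemma~\ref{Lem.u_N_zero}).

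With the barrier in hand, both possibilities for $\tau_*$ lead to contradictions. If $\tau_*>\tau_N$, then sending $\tau\downarrow\tau_*$ in $u<u_{(N)}$ gives $u_*\leq u_{(N)}(\tau_*)$, yet Lemma~\ref{Lem.position} yields $u_{(N)}(\tau_*)<u_\text{g}(\tau_*)=u_*$, absurd. If instead $\tau_*\leq\tau_N$, the monotonicity of the trajectory in $\tau$ inside $\cD$ forces it to reach the level $\tau=\tau_N$ while still in $\cD$, so $u(\tau_N)>u_\text{b}(\tau_N)>0$ (the latter because $u_\text{b}(\tau_N)\to u_\text{b}(0)=1$ as $N_0\to\infty$); on the other hand, the barrier together with continuity and $u_{(N)}(\tau_N)=0$ forces $u(\tau_N)=\lim_{\tau\downarrow\tau_N}u(\tau)\leq 0$, again absurd. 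Hence (ii) is impossible, alternative (i) of Lemma~\ref{Lem.alternative} holds, and Proposition~\ref{Prop.global} follows. The principal subtlety is that Proposition~\ref{Prop.u_N_compare} provides the barrier property only on $[-4/\sqrt R,0)$, so one must separately handle the case $\tau_*\leq\tau_N$ by sliding the trajectory down to $\tau=\tau_N$ and exploiting $u_{(N)}(\tau_N)=0$ from Lemma~\ref{Lem.u_N_zero}.
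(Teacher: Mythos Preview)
Your proof is correct and follows the same overall strategy as the paper: reduce to Lemma~\ref{Lem.alternative}, assume alternative (ii), push the trajectory into the $(\tau,u)$ plane, and derive a contradiction via the barrier $u_{(N)}$ using Proposition~\ref{Prop.u_N_compare}, Lemma~\ref{Lem.u_N_zero}, and Lemma~\ref{Lem.position}. The only difference is organizational. The paper parametrizes by $\sigma$, derives a linear ODE for $Y(\sigma)=u(\sigma)-u_{(N)}(\tau(\sigma))$ via the identity \eqref{Eq.L[u_N]_modified}, and uses an integrating factor to show $Y<0$ throughout (Claim~1); it then separately proves $\tau(\sigma)>\tau_N$ for all $\sigma$ (Claim~2), which eliminates your second case and leaves a single contradiction with Lemma~\ref{Lem.position}. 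You instead parametrize by $\tau$---legitimate, since $d\tau/d\sigma=-(1+a)\,dw/d\sigma>0$ in $\cR$ and the resulting graph solves $\cL[u]=0$ by \eqref{Eq.Delta_1,2_tau,u}---and run a standard first-crossing barrier argument, then split on whether $\tau_*>\tau_N$ or $\tau_*\le\tau_N$. Your route is a bit more direct and avoids the auxiliary function $G$ and the Gr\"onwall step; the paper's route is slightly more self-contained in that it does not pass through the statement that the pushed-forward curve solves the $(\tau,u)$ ODE. Either way the substance is the same.
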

\begin{proof}
	As $\cR_N\subset \cR$, by Lemma \ref{Lem.alternative}, it suffices to exclude (ii). Assume that (ii) holds. Then there exists $\sigma_*\in[\sigma(P_5),\sigma_0)\subset(0,\sigma_0)$ such that the solution $w=w(\sigma)$ to \eqref{Eq.sigma-w-ODE} exists for $\sigma\in(\sigma_*,\sigma_0]$ and satisfies
	\[(\sigma, w(\sigma))\in\cR\text{ for all}\ \sigma\in(\sigma_*,\sigma_0]\quad\text{and}\quad w_*:=\lim_{\sigma\downarrow\sigma_*}w(\sigma)=w_2^-(\sigma_*).\]
	Let $$(\tau(\sigma),u(\sigma)):=\Psi(\sigma,w(\sigma))=\big(-(1+a)(w(\sigma)-w_-), (1+a)^2\sigma^2\big)\in\cD,\quad\forall\ \sigma\in(\sigma_*,\sigma_0],$$
	and we let $Y(\sigma):=u(\sigma)-u_{(N)}(\tau(\sigma))$ for $\sigma\in(\sigma_*,\sigma_0]$. By \eqref{Eq.Delta_1,2_tau,u} and \eqref{Eq.L[u_N]_modified}, we compute that
	\begin{align}
		\frac{\mathrm dY}{\mathrm d\sigma}(\sigma)&=2(1+a)^2\sigma+(1+a)\frac{\mathrm dw}{\mathrm d\sigma}(\sigma)\frac{\mathrm du_{(N)}}{\mathrm d\tau}(\tau(\sigma))\nonumber\\
		&=(1+a)\left[2(1+a)\sigma+\frac{\Delta_1(\sigma,w(\sigma))}{\Delta_2(\sigma,w(\sigma))}\frac{\mathrm du_{(N)}}{\mathrm d\tau}(\tau(\sigma))\right]\nonumber\\
		&=2(1+a)^2\sigma\left[1-\frac{\Delta_\tau(\tau(\sigma), u(\sigma))}{\Delta_u(\tau(\sigma), u(\sigma))}\frac{\mathrm du_{(N)}}{\mathrm d\tau}(\tau(\sigma))\right]\nonumber\\
		&=-\frac{2(1+a)^2\sigma}{\Delta_u(\tau(\sigma), u(\sigma))}\left[\Delta_\tau(\tau(\sigma), u(\sigma))\frac{\mathrm du_{(N)}}{\mathrm d\tau}(\tau(\sigma))-\Delta_u(\tau(\sigma), u(\sigma))\right]\nonumber\\
		&=\eta_1(\sigma)\left(\eta_2(\sigma)+Y(\sigma)\eta_3(\sigma)\right),\label{Eq.Y_equation}
	\end{align}
	where
	\[\eta_1(\sigma):=-\frac{2(1+a)^2\sigma}{\Delta_u(\tau(\sigma), u(\sigma))}<0,\ \ \eta_2(\sigma):=\cL\left[u_{(N)}\right](\tau(\sigma)),\ \ \eta_3(\sigma):=G(\tau(\sigma), u(\sigma))\]
	for $\sigma\in(\sigma_*,\sigma_0]$. It follows from $(\sigma_0, w_0)\in\cR_N$ that $(\tau(\sigma_0), u(\sigma_0))\in\cD_N$. Thus, $Y(\sigma_0)=u(\sigma_0)-u_{(N)}(\tau(\sigma_0))<0$.

    \underline{\bf Claim 1.} We claim that $Y(\sigma)<0$ for $\sigma\in(\sigma_*,\sigma_0]$. Indeed, by $(\tau(\sigma), u(\sigma))\in\cD$ for all $\sigma\in(\sigma_*,\sigma_0]$, we know that $$\tau(\sigma)>\tau(Q_5)=-\frac{1-\al}2=-\frac{2\sqrt R}{(\sqrt R+1)^2}>-\frac4{\sqrt R}.$$
Hence, Proposition \ref{Prop.u_N_compare} implies that $\eta_2(\sigma)<0$ for $\sigma\in(\sigma_*, \sigma_0]$. By \eqref{Eq.Y_equation}, we have
\begin{align*}
	\frac{\mathrm d\wt Y}{\mathrm d\sigma}=\eta_1(\sigma)\eta_2(\sigma)\exp\left(\int_\sigma^{\sigma_0}\eta_1(\wt\sigma)\eta_3(\wt\sigma)\,\mathrm d\wt\sigma\right)>0,\\
	 \text{where}\quad \wt Y(\sigma):=Y(\sigma) \exp\left(\int_\sigma^{\sigma_0}\eta_1(\wt\sigma)\eta_3(\wt\sigma)\,\mathrm d\wt\sigma\right)
\end{align*}
for $\sigma\in(\sigma_*, \sigma_0]$. Hence, $\wt Y$ is strictly {increasing} on $\sigma\in(\sigma_*, \sigma_0]$. Since $\wt Y(\sigma_0)=Y(\sigma_0)<0$, we have $\wt Y(\sigma)<0$ for all $\sigma\in(\sigma_*, \sigma_0]$, hence $Y(\sigma)<0$ for all $\sigma\in(\sigma_*, \sigma_0]$.\smallskip

\underline{\bf Claim 2.} We claim that $\tau(\sigma)>\tau_N$ for all $\sigma\in(\sigma_*, \sigma_0]$. By the definition of $\tau(\sigma)$ and the fact that $\mathrm dw/\mathrm d\sigma<0$ due to $(\sigma,w(\sigma))\in\cR$, we have
\[\frac{\mathrm d\tau}{\mathrm d\sigma}=-(1+a)\frac{\mathrm dw}{\mathrm d\sigma}>0,\quad\forall\ \sigma\in(\sigma_*, \sigma_0].\]
Hence, $\sigma\mapsto \tau(\sigma)$ is strictly increasing on $\sigma\in(\sigma_*, \sigma_0]$. Since $(\tau(\sigma_0), u(\sigma_0))\in\cD_N$, we have $\tau(\sigma_0)\in(\tau_N, 0)$. We assume, on contrary, that $\tau(\sigma^*)=\tau_N$ for some $\sigma^*\in(\sigma_*, \sigma_0)$. Then, by Lemma \ref{Lem.u_N_zero} and Claim 1, it follows   that $u(\sigma^*)=Y(\sigma^*)+u_{(N)}(\tau_N)<0$. However, it follows from $(\tau(\sigma), u(\sigma))\in\cD$ that $u(\sigma)>u_\text b(\tau(\sigma))>0$ for all $\sigma\in(\sigma_*, \sigma_0]$. This is a contradiction.

So far, we have proved that
\begin{equation}\label{Eq.tau,u_in_D_N}
	(\tau(\sigma), u(\sigma))\in\cD_N,\quad\forall\ \sigma\in(\sigma_*, \sigma_0].
\end{equation}
On the other hand, we denote $(\tau_*, u_*):=\Psi(\sigma_*, w_*)$. Recalling that $\sigma_*\geq \sigma(P_5)$ and $w_*=w_2^-(\sigma_*)$, we have
\begin{equation}\label{Eq.u_*=u_g(tau_*)}
	u_*=u_\text g(\tau_*).
\end{equation}
 By Claim 2, the strict increase of $\sigma\mapsto \tau(\sigma)$ and $\tau_*=\lim_{\sigma\downarrow\sigma_*}\tau(\sigma)$, we obtain $\tau_*\in[\tau_N, 0)$. Using Lemma \ref{Lem.position} and \eqref{Eq.tau,u_in_D_N} gives that
\begin{align*}
	u_*=\lim_{\sigma\downarrow\sigma_*}u(\sigma)\leq \lim_{\sigma\downarrow\sigma_*}u_{(N)}(\tau(\sigma))=u_{(N)}(\tau_*)<u_\text g(\tau_*),
\end{align*}
which contradicts \eqref{Eq.u_*=u_g(tau_*)}. Therefore, (ii) in Lemma \ref{Lem.alternative} cannot be true, and (i) holds.
\end{proof}

\subsection{Proof of Theorem \ref{Thm.ODE}}\label{Subsec.Proof_Thm_ODE}
Now, we are ready to prove Theorem \ref{Thm.ODE}.

\begin{proof}[Proof of Theorem \ref{Thm.ODE}]
	Let $N_0\in\Z_+$ be large enough such that all properties in previous sections hold. For each $N\in(N_0,+\infty)\cap(2\Z+1)$, let $R_N\in(N, N+1)$ be given by Proposition \ref{Prop.sol_sonic_point}, and we let $r_N$ be given by $R_N$ through inverting the map \eqref{Eq.r_to_R_increasing}. Then the sequence $\{r_n\}_{n\in (N_0,+\infty)\cap(2\Z+1) }$ verifies \eqref{Eq.r_n}.
	
	Fix $r=r_N$ for some $N\in(N_0,+\infty)\cap(2\Z+1)$, then $R=R_N$. Proposition \ref{Prop.sol_sonic_point} implies that there exists $\tau_*\in(\tau_N, 0)$ such that the $\tau-u$ ODE \eqref{ODE_u_tau} has a smooth solution $u=\wh u(\tau)$ defined on $\tau\in[\tau_*, \al(R_N))$ such that $\wh u(0)=a_0=1$ and satisfies the following properties:
	\begin{itemize}
		\item $u_\text b(\tau)<\wh u(\tau)<u_\text g(\tau)$ for $\tau\in(\tau_*,0)$ and $u_\text g(\tau)<\wh u(\tau)<u_\text b(\tau)$ for $\tau\in(0,\al(R_N))$;
		\item $\wh u$ is strictly increasing on $\tau\in[\tau_*, \al(R_N))$;
		\item $\lim_{\tau\uparrow \al(R_N)}\wh u(\tau)=+\infty$;
		\item $\wh u$ can be represented as a power series $\wh u(\tau)=\sum_{n=0}^\infty a_n\tau^n$ for $\tau\in[\tau_*, -\tau_*]$.
	\end{itemize}
	In particular, we have
	\begin{equation}
		\wh u(\tau)>0\quad\text{and}\quad \frac{\mathrm d\wh u}{\mathrm d\tau}(\tau)>0\quad\text{for all}\quad \tau\in[\tau_*,\al(R_N)).
	\end{equation}
	
	Let
	\[\left(\sigma(\tau), \wh w(\tau)\right):=\Psi^{-1}\left(\tau, \wh u(\tau)\right),\quad\forall\ \tau\in[\tau_*, \al(R_N)).\]
	Then
	\[\frac{\mathrm d\sigma}{\mathrm d\tau}=\frac1{2(1+a)\sqrt{\wh u(\tau)}}\frac{\mathrm d\wh u}{\mathrm d\tau}>0,\quad\forall\ \tau\in[\tau_*, \al(R_N)).\]
	Hence, $\tau\mapsto \sigma(\tau)$ is strictly increasing on $\tau\in[\tau_*, \al(R_N))$. We denote the inversion of $\tau\mapsto \sigma(\tau)$ by $\tau=\wt\tau(\sigma)$ for $\sigma\in[\sigma_*, +\infty)$, where $\sigma_*:=\sigma(\tau_*)$. Let
	\[\wt w(\sigma):=\wh w\left(\wt \tau(\sigma)\right),\quad\forall\ \sigma\in[\sigma_*, +\infty).\]
	Using \eqref{Eq.Delta_1,2_tau,u} and the definition of $\wt w$, we can directly verify that
	\[\frac{\mathrm d\wt w}{\mathrm d\sigma}(\sigma)=\frac{\Delta_1\left(\sigma,\wt w(\sigma)\right)}{\Delta_2\left(\sigma,\wt w(\sigma)\right)},\quad\forall\ \sigma\in[\sigma_*, +\infty).\]
	Hence, $\wt w$ is a smooth solution to \eqref{ODE_w_sigma} on $\sigma\in[\sigma_*, +\infty)$ and $\sigma_*\in(0, \sigma(P_2))$.

	By Lemma \ref{Lem.a_N+1<0}, we have $a_{N+1}<0$, hence $u_\text b(\tau^*)<\wh u(\tau^*)< u_{(N)}(\tau^*)$ for some $\tau^*\in(\tau_*, 0)$, recalling that $N$ is an odd integer. So,
	\[\left(\tau^*, \wh u\left(\tau^*\right)\right)\in\cD_N\Longrightarrow (\sigma_0, w_0):=\Psi^{-1}\left(\tau^*, \wh u\left(\tau^*\right)\right)\in\cR_N.\]
	Let $\bar w$ be the unique solution to
	\[\frac{\mathrm d\bar w}{\mathrm d\sigma}(\sigma)=\frac{\Delta_1\left(\sigma,\bar w(\sigma)\right)}{\Delta_2\left(\sigma,\bar w(\sigma)\right)}\quad\text{with}\quad \bar w(\sigma_0)=w_0.\]
	Then Proposition \ref{Prop.global} implies that $\bar w$ is defined on $\sigma\in(0, \sigma_0]$ and satisfies  $\lim_{\sigma\downarrow0}\bar w(\sigma)=0$ and $\bar w(\sigma)>a(R_N)(1+a(R_N))\sigma^2$ for $\sigma\in(0, \sigma_0]$, where $a(R_N)$ is given by $R_N$ through \eqref{Eq.renormalization}. The uniqueness implies that $\bar w(\sigma)=\wt w(\sigma)$ for all $\sigma\in(\sigma_*, \sigma_0]$.
	
	Let
	\begin{equation}\label{Eq.w_def}
		w(\sigma):=\begin{cases} \bar w(\sigma) & \text{if}\ \sigma\in(0,\sigma_0],\\
			\wt w(\sigma)& \text{if}\ \sigma>\sigma_0.\end{cases}
	\end{equation}
	Then $w$ is a smooth solution to \eqref{ODE_w_sigma} on $(0, +\infty)$ such that $\lim_{\sigma\downarrow0}w(\sigma)=0$, $w(\sigma(P_2))=w(P_2)$ and $\lim_{\sigma\uparrow+\infty}w(\sigma)=r_N-1$. Moreover, we also have $w(\sigma)>a(R_N)(1+a(R_N))\sigma^2$ for $\sigma\in(0,\sigma_0]$. Since $w$ is strictly decreasing on $\sigma\in(\sigma_0,\sigma(P_2))$ due to $(\sigma_0, w_0)\in\cR_N\subset\cR$, $\sigma\mapsto a(R_N)(1+a(R_N))\sigma^2$ is strictly increasing on $\sigma\in(\sigma_0,\sigma(P_2))$ and $w(\sigma(P_2))=w(P_2)=a(R_N)(1+a(R_N))\sigma(P_2)^2$, we obtain $w(\sigma)>a(R_N)(1+a(R_N))\sigma^2$ for $\sigma\in[\sigma_0,\sigma(P_2))$.
	
    Finally, we consider the ODE
	\begin{equation}\label{Eq.simga-X_ODE}
		\frac{\mathrm dX}{\mathrm d\sigma}=f(\sigma):=-\frac{\Delta(\sigma,w(\sigma))}{\Delta_2(\sigma,w(\sigma))}\quad\text{with}\quad X(\sigma=\sigma(P_2))=0,
	\end{equation}
	where $w(\sigma)$ is the smooth solution to \eqref{ODE_w_sigma} defined in \eqref{Eq.w_def}. It is elementary to check that $f\in C^\infty((0,+\infty))$, hence $X=X(\sigma)$ is defined on $(0, +\infty)$. Since $\Delta(\sigma,w(\sigma))(\sigma-\sigma(P_2))<0$ and $\Delta(\sigma,w(\sigma))(\sigma-\sigma(P_2))<0$ for all $\sigma>0$, we have $f(\sigma)<0$ for all $\sigma>0$, thus $\sigma\mapsto X(\sigma)$ is strictly decreasing on $\sigma\in(0, +\infty)$. By the fact that
	\[f(\sigma)\asymp -\sigma^{-1}\text{ as}\ \sigma\downarrow0;\quad f(\sigma)\asymp -\sigma^{-1}\text{ as}\ \sigma\uparrow+\infty,\]
	we have $\lim_{\sigma\downarrow 0}X(\sigma)=+\infty$ and $\lim_{\sigma\uparrow+\infty}X(\sigma)=-\infty$. We denote the inversion of $\sigma\mapsto X(\sigma)$ by $\sigma=\sigma(x)$ for $x\in\R$, and we also let $w(x):=w(\sigma(x))$ for $x\in\R$,\footnote{Here we abuse the notation.} Then $x\mapsto (\sigma,w)(x)$ is a global smooth solution to \eqref{autonomousODE}. Letting $x_A:=X(\sigma_1)$ completes the proof of Theorem \ref{Thm.ODE}, where $\sigma_1\in(0, \sigma(P_2))$ is given by Proposition \ref{Prop.global}.
\end{proof}

\section{Proof of repulsivity property}\label{Section repulsivity property}
In this section, we focus on proving the repulsivity of these profiles, namely, Proposition \ref{Prop.coercive}.

\subsection{Proof of (\ref{repuls 1}) and (\ref{repuls 2})}\label{Subsection repuls 12}
In this subsection, we prove the first two properties of Proposition \ref{Prop.coercive}. The proof does not rely on the assumption $d=\ell=3$.

\begin{lemma}
    Let $d>0$, $\ell>0$ and $r>1$. Assume that $(\sigma, w)=(\sigma(x), w(x))$ is a $C^\infty$ global solution to the autonomous ODE system \eqref{autonomousODE} such that $x\mapsto\sigma(x)$ is a strictly decreasing bijection from $\R$ onto $(0, +\infty)$, and
    \[\lim_{x\to+\infty}w(x)=0,\quad\lim_{x\to-\infty}w(x)\in\R.\]
    Then we have
    \begin{align}
        \sigma(x)&\gtrsim_{d,\ell,r,\sigma,w} \min\left\{\e^{-rx},\e^{-x}\right\}  \label{Eq.sigma_lower_bound},\\
		\left|w^{(j)}(x)\right|+\left|\sigma^{(j)}(x)\right|&\lesssim_{d,\ell,r,j,\sigma,w} \min\left\{\e^{-rx},\e^{-x}\right\},\quad\forall\
         j\in\Z_{\geq 0}.\label{Eq.w-sigma_upper_bound}
    \end{align}
    Here the derivatives are taken with respect to $x\in\R$.
\end{lemma}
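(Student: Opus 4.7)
The strategy is to split $\R$ into a neighborhood of $+\infty$ (where $(\sigma, w) \to (0, 0)$), a neighborhood of $-\infty$ (where $\sigma \to +\infty$ and $w \to w_-^\infty \in \R$), and a compact middle piece handled by $C^\infty$-continuity and strict positivity of $\sigma$.

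For $x \to +\infty$, expanding the system at the equilibrium $(0,0)$ using $\Delta \to 1$, $\Delta_1 = rw + O((|w|+|\sigma|)^2)$ and $\Delta_2/\sigma = r + O(|w|+|\sigma|)$ gives
\begin{equation*}
w' + rw = O(w^2 + \sigma^2), \qquad \sigma'/\sigma + r = O(|w| + |\sigma|).
\end{equation*}
First I derive a preliminary exponential decay via a Lyapunov step: $f := w^2 + \sigma^2$ satisfies $f' = -2rf + O((|w|+|\sigma|)f)$, and because $(w,\sigma) \to (0,0)$ this yields $f' \le -(2r-\epsilon)f$ for large $x$, hence $|w| + |\sigma| \lesssim e^{-\alpha x}$ for some $\alpha > r/2$. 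A single application of variation of parameters to $w(x) = e^{-r(x-X)}w(X) + e^{-rx}\int_X^x e^{rs}(w'+rw)(s)\,ds$ with $|w' + rw| \lesssim e^{-2\alpha x}$ then gives the sharp bound $|w(x)| \lesssim e^{-rx}$. For $\sigma$, the estimate $\sigma'/\sigma + r = O(e^{-\alpha x})$ is already integrable on $[X,+\infty)$, so $\ln\sigma(x) + rx$ converges to a finite limit, yielding $\sigma(x) \asymp e^{-rx}$ (giving both the upper bound and the sharp lower bound simultaneously).

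For $x \to -\infty$, I first identify $w_-^\infty$ from the equation itself: expanding $-\Delta_1/\Delta$ for $w$ bounded and $\sigma$ large gives $w' = \ell(r-1) - dw + O(\sigma^{-2})$, and if its limit $\ell(r-1) - d w_-^\infty$ were nonzero then $w(x) - w(X) \sim (x-X)(\ell(r-1) - d w_-^\infty)$ would blow up as $x \to -\infty$, contradicting convergence of $w$; hence $w_-^\infty = \ell(r-1)/d$. Similarly $\sigma'/\sigma = -1 + O(\sigma^{-2})$ uniformly for bounded $w$, and a preliminary integration with the crude bound $\sigma \gtrsim e^{-(1-\epsilon)x}$ (obtained from $\sigma'/\sigma \in (-1-\epsilon,-1+\epsilon)$ for $x$ very negative) makes $O(\sigma^{-2})$ integrable on $(-\infty, X]$, so $\int_{-\infty}^{x_0}(\sigma'/\sigma + 1)\,ds$ converges and $\ln\sigma(x) + x$ has a finite limit, i.e., $\sigma(x) \asymp e^{-x}$. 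With this sharp rate in hand, setting $\tilde w := w - \ell(r-1)/d$ the equation $\tilde w' + d\tilde w = O(e^{2x})$ combined with boundedness of $\tilde w$ and integration of $(e^{dx}\tilde w)' = O(e^{(d+2)x})$ from $-\infty$ to $x$ yields $|\tilde w(x)| \lesssim e^{2x}$. In particular $|w|$ is bounded on $(-\infty, 0]$, so $|w| + |\sigma| \lesssim e^{-x}$ there.

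Higher derivatives follow by induction on $j$: differentiating $\Delta w' = -\Delta_1$ and $\Delta \sigma' = -\Delta_2$ expresses $w^{(j+1)}, \sigma^{(j+1)}$ as rational functions of $\sigma, w$ and lower derivatives, with denominator $\Delta$ bounded away from zero on both asymptotic regions (since $\Delta \to 1$ at $+\infty$ and $|\Delta| \asymp \sigma^2 \to \infty$ at $-\infty$); the induction propagates the bound $\min\{e^{-rx}, e^{-x}\}$ to all orders. On any compact interval, continuity of $\sigma, w$ and their derivatives, together with $\sigma > 0$, provides the estimates trivially. The main obstacle is extracting the \emph{sharp} rate $e^{-rx}$ at $+\infty$ from the weak hypothesis $(w,\sigma) \to (0,0)$: the Lyapunov step gives only some $\alpha \in (0, r)$, but because the linearization at $(0,0)$ has a repeated eigenvalue $-r$ in diagonal form at leading order, any $\alpha > r/2$ upgrades to the sharp rate $r$ in a single variation-of-parameters step; the needed margin $\alpha > r/2$ is automatic from the Lyapunov estimate by choosing $\epsilon$ small enough.
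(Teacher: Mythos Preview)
Your proposal is correct and follows essentially the same approach as the paper: linearize the system at the equilibria $(0,0)$ (for $x\to+\infty$) and $(\ell(r-1)/d,+\infty)$ (for $x\to-\infty$), obtain a preliminary exponential rate, then bootstrap via variation of parameters/Gr\"onwall to the sharp rate, with higher derivatives by induction. The only notable tactical difference is your treatment of the lower bound on $\sigma$ at $+\infty$: the paper first establishes a two-sided crude bound $c_1 e^{-r_+x}<\sigma<c_2 e^{-r_-x}$ with $r_+<2r_-$, writes $\sigma=\sigma_{+\infty}e^{-rx}+O(e^{-2r_-x})$, and then argues $\sigma_{+\infty}>0$ by contradiction with the crude lower bound; your route via integrability of $\sigma'/\sigma+r$ and convergence of $\ln\sigma+rx$ gives both directions of $\sigma\asymp e^{-rx}$ at once, which is slightly cleaner.
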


\begin{proof}
    We denote
    \begin{equation}\label{Eq.F_2_def}
        F_2(x):=-\frac{\sigma'(x)}{\sigma(x)}=\frac{\Delta_2(\sigma(x), w(x))}{\sigma(x)\Delta(\sigma(x), w(x))},\quad\forall\ x\in\R.
    \end{equation}
    It follows from \eqref{Eq.Delta}, \eqref{Eq.Delta_2} and $\lim_{x\to+\infty}\sigma(x)=\lim_{x\to+\infty} w(x)=0$ that $\lim_{x\to +\infty}F_2(x)=r>0$. Let $r_\pm>0$ be such that $r_-<r<r_+<2r_-$. Then there exists $x_*>0$ such that
    \begin{equation}\label{Eq.F_2_est}
        r_-<F_2(x)<r_+\quad\text{and}\quad |F_2(x)-r|\leq C\left(|w(x)|+|\sigma(x)|^2\right),\quad\forall\ x\geq x_*,
    \end{equation}
    where $C>0$ is a constant independent of $x\geq x_*$. Using Gr\"onwall's inequality, we obtain
    \begin{equation}\label{bound 1 for sigma}
    c_{1}\e^{-r_{+}x}<\sigma(x)<c_{2}\e^{-r_{-}x},\quad\forall\ x\geq x_*
    \end{equation}
    for some constants $c_1, c_2\neq 0$. To estimate $w$, we denote
    $F_1(x):=(w(x)-1)(w(x)-r)/\Delta(\sigma(x), w(x))$. Due to \eqref{Eq.Delta} and $\lim_{x\to+\infty}\sigma(x)=\lim_{x\to+\infty} w(x)=0$, by taking $x_*>0$ to be larger if necessary, we know that $F_1$ is well-defined for $x\geq x_*$, and
    \begin{align}
        w'(x)&=-\frac{\Delta_1(\sigma(x), w(x))}{\Delta(\sigma(x), w(x))}=-w(x)F_1(x)+\frac{\left[dw(x)-\ell(r-1)\right]\sigma(x)^2}{\Delta(\sigma(x), w(x))},\nonumber\\
        &r_-<F_1(x)<r_+,\quad |F_1(x)-r|\leq C\left(|w(x)|+|\sigma(x)|^2\right),\label{Eq.F_1_est}\\
        &\qquad\qquad\qquad\left|w'(x)+w(x)F_1(x)\right|\leq C\sigma(x)^2,\label{Eq.w_differential_inequality}
    \end{align}
    for all $x\geq x_*$. Applying Gr\"onwall's inequality to \eqref{Eq.w_differential_inequality} gives that
    \begin{align*}
        w(x)&\leq w(x_*)\exp\left(-\int_{x_*}^x F_1(\wt x)\,\mathrm d\wt x\right)+C\int_{x_*}^x\left[\sigma(\wt x)^2\exp\left(-\int_{\wt x}^xF_1(\wh x)\,\mathrm d\wh x\right)\right]\,\mathrm d\wt x\leq c_3\e^{-r_-x}
    \end{align*}
    for all $x\geq x_*$ and some constant $c_3>0$, where we have used \eqref{Eq.F_1_est} and \eqref{bound 1 for sigma}. By \eqref{Eq.F_2_def}, \eqref{Eq.F_2_est}, \eqref{Eq.F_1_est}, \eqref{Eq.w_differential_inequality}, \eqref{bound 1 for sigma} and $w(x)\leq c_3\e^{-r_-x}$, we have
    \begin{align*}
        \left|\sigma'(x)+r\sigma(x)\right|&\leq\left|F_2(x)-r\right|\sigma(x)\leq C\left(w(x)^2+\sigma(x)^2\right)\leq C\e^{-2r_-x},\\
        \left|w'(x)+rw(x)\right|\leq &\ \left|F_1(x)-r\right|w(x)+C\sigma(x)^2\leq C\left(w(x)^2+\sigma(x)^2\right)\leq C\e^{-2r_-x},
    \end{align*}
    for all $x\geq x_*$. Thanks to $r<2r_-$, using Gr\"onwall's inequality again implies that
    \[\sigma(x)=\sigma_{+\infty}\e^{-rx}+\cO\left(\e^{-2r_-x}\right),\quad w(x)=w_{+\infty}\e^{-rx}+\cO\left(\e^{-2r_-x}\right),\quad\forall\ x\geq x_*,\]
    where $\sigma_{+\infty}\geq 0$ and $w_{+\infty}\in\R$ are constants. We note that $\sigma_{+\infty}>0$. Otherwise, we would have $\sigma(x)=\cO\left(\e^{-2r_-x}\right)$, which contradicts  \eqref{bound 1 for sigma} due to $r_+<2r_-$. This proves the $x>0$ part of \eqref{Eq.sigma_lower_bound} and \eqref{Eq.w-sigma_upper_bound} for $j=0$. A standard induction argument proves the $x>0$ part of \eqref{Eq.w-sigma_upper_bound} for $j\in\Z_+$. The proof of the $x<0$ part of \eqref{Eq.sigma_lower_bound} and \eqref{Eq.w-sigma_upper_bound} follows a similar approach, and the full details are left to the reader.
    \end{proof}

\if0 Recall that $\sigma$ satisfies the equation $\sigma'=-\frac{\triangle_{2}}{\triangle}=-\frac{\triangle_{2}}{\triangle\cdotp\sigma}\sigma$.
We define $F_{2}:=\frac{\triangle_{2}}{\triangle\cdotp\sigma}\rightarrow r$
as $(w,\sigma)\rightarrow(0,0)$. Using the notations $r_{+}$ and
$r_{-}$ which mean $0<r_{+}-r\ll1$ and $0<r-r_{-}\ll1$, we could
obtain
\[
-r_{+}\sigma<\sigma'<-r_{-}\sigma
\]
 as $(w,\sigma)\rightarrow(0,0)$. Thanks to Gr\"onwall's inequality,
we obtain
\begin{equation}
c_{1}e^{-r_{+}x}<\sigma<c_{2}e^{-r_{-}x}\label{bound 1 for sigma}
\end{equation}
for some $c_{1},c_{2}\neq0$.

Now we consider the equation for $w$,
\[
w'=-\frac{\triangle_{1}}{\triangle}=-w\cdotp F_{1}+\frac{[dw-\ell(r-1)]\sigma^{2}}{\triangle},\;F_{1}=\frac{(w-1)(w-r)}{\triangle},
\]
where $|F_{1}-r|\rightarrow0$ as $(w,\sigma)\rightarrow(0,0)$. Taking
use of (\ref{bound 1 for sigma}), we could obtain
\[
w'\leq-r_{-}w+ce^{-2r_{-}x},
\]
which means $w\leq ce^{-r_{-}x}$. With the help of the new bound
for $w$ and $\sigma$, we could derive
\[
\begin{cases}
|F_{1}-r|\leq ce^{-r_{-}x},\\
|F_{2}-r|\leq ce^{-r_{-}x},
\end{cases}
\]
as $(w,\sigma)\rightarrow(0,0)$. The above bounds for $F_{1}$ and
$F_{2}$ reduce the equations for $w$ and $\sigma$ to the following,
\[
\begin{cases}
|\sigma'+r\sigma|\lesssim e^{-2r_{-}x},\\
|w+rw|\lesssim e^{-2r_{-}x}.
\end{cases}
\]

With the aid of Gr\"onwall's inequality, we derive
\[
\begin{cases}
\sigma(x)=c_{3}e^{-rx}+O(e^{-2r_{-}x}),\\
w(x)=c_{4}e^{-rx}+O(e^{-2r_{-}x}),
\end{cases}
\]
for some constants $c_{3}$ and $c_{4}$. We should note that $c_{3}\neq0$,
otherwise we would reach a contradiction with (\ref{bound 1 for sigma}).
The above analysis helps us finish the proof for (\ref{repuls 1}). Using the
above process repeatedly, we finally complete the proof of (\ref{repuls 2}).\fi

\subsection{Proof of (\ref{repuls 3}) and (\ref{repuls 4})}
In this subsection, we prove the last two properties of Proposition \ref{Prop.coercive}. We first note the following behavior of solutions to \eqref{autonomousODE} as $x\to\pm\infty$.

\begin{lemma}\label{Lem.far-field_behavior}
    Let $d>0$, $\ell>0$ and $r>1$ satisfy $\ell(r-1)<d$. Assume that $(\sigma, w)=(\sigma(x), w(x))$ is a $C^\infty$ global solution to the autonomous ODE system \eqref{autonomousODE} such that $x\mapsto\sigma(x)$ is a strictly decreasing bijection from $\R$ onto $(0, +\infty)$, and
    \[\lim_{x\to+\infty}w(x)=0,\quad\lim_{x\to-\infty}w(x)=\ell(r-1)/d.\]
    Then $\lim_{x\to\pm\infty}\left|\sigma(x)+\sigma'(x)\right|=0$ and $\lim_{x\to\pm\infty}w'(x)=0$. As a consequence, we have
    \begin{align*}
    &\lim_{x\to+\infty}\big(1-(w(x)+w'(x))-\left|\sigma(x)+\sigma'(x)\right|\big)=\lim_{x\to+\infty}\big(1-w(x)-\left|\sigma(x)+\sigma'(x)\right|\big)=1,\\
    &\lim_{x\to-\infty}\big(1-(w(x)+w'(x))-\left|\sigma(x)+\sigma'(x)\right|\big)\\
    &\qquad\qquad
    =\lim_{x\to-\infty}\big(1-w(x)-\left|\sigma(x)+\sigma'(x)\right|\big)=1-\frac{\ell(r-1)}{d}>0.
\end{align*}
\end{lemma}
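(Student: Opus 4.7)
The plan is to establish the two pointwise limits $\lim_{x\to\pm\infty}w'(x)=0$ and $\lim_{x\to\pm\infty}|\sigma(x)+\sigma'(x)|=0$; the four concluding equalities then drop out by substituting the hypothesized limits $\lim_{x\to+\infty}w=0$ and $\lim_{x\to-\infty}w=\ell(r-1)/d$, noting that $1-\ell(r-1)/d>0$ is guaranteed by the standing assumption $\ell(r-1)<d$.

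For $x\to+\infty$ the argument is immediate. Since $\sigma:\R\to(0,+\infty)$ is a decreasing bijection, $\sigma(x)\to 0$; combined with $w(x)\to 0$, one reads off from \eqref{Eq.Delta}--\eqref{Eq.Delta_2} that $\Delta(\sigma,w)\to 1$ while $\Delta_1\to 0$ and $\Delta_2\to 0$ (the latter having an explicit $\sigma$-factor). Then \eqref{autonomousODE} gives $w'=-\Delta_1/\Delta\to 0$ and $\sigma'=-\Delta_2/\Delta\to 0$, so $\sigma+\sigma'\to 0$.

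For $x\to-\infty$ we have $\sigma\to+\infty$ and $w\to w_\infty:=\ell(r-1)/d$. The quantity $\sigma+\sigma'$ will be handled by the algebraic identity
\begin{equation*}
\sigma+\sigma'=\frac{\sigma\Delta-\Delta_2}{\Delta}=\frac{\sigma}{\ell}\cdot\frac{R(w)}{\Delta},\qquad R(w):=\ell(w-1)^2-(\ell+d-1)w^2+(\ell+d+\ell r-r)w-\ell r,
\end{equation*}
in which the cubic-in-$\sigma$ terms cancel, leaving a polynomial purely in $w$. Since $R$ is bounded on bounded $w$-sets and $\Delta=(w-1)^2-\sigma^2\sim-\sigma^2$, the right-hand side is $O(\sigma^{-1})\to 0$. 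For $w'=-\Delta_1/\Delta$ the delicate point is that $\Delta_1=w(w-1)(w-r)-(dw-\ell(r-1))\sigma^2$ contains the term $d(w-w_\infty)\sigma^2$, which requires quantitative control on $w-w_\infty$. This is provided by the expansion of $w_F$ near $P_6$ recalled in Subsection~\ref{Subsec.sketch_existence}, namely $w(\sigma)=w_\infty+\frac{(r-1)(2-r)}{5}\sigma^{-2}+O(\sigma^{-4})$; with this, $\Delta_1$ is uniformly bounded and $w'=O(\sigma^{-2})\to 0$.

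The main obstacle is therefore the limit $w'\to 0$ at $-\infty$, which rests on the $O(\sigma^{-2})$-rate for $w-w_\infty$. The cleanest route is to quote the expansion already established in the construction of $w_F$; alternatively one can derive it \emph{in situ} from the trajectory equation $dw/d\sigma=\Delta_1/\Delta_2$ by ruling out the alternative branch of the unstable manifold at $P_6$ along which $(w-w_\infty)\sigma^2$ diverges, which is incompatible with the assumption $w\to w_\infty$.
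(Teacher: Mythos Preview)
Your treatment of the limit at $+\infty$, and of $\sigma+\sigma'$ at $-\infty$, is correct and is exactly the paper's computation: the identity $\sigma\Delta-\Delta_2=\frac{\sigma}{\ell}R(w)$ with $R$ a polynomial in $w$ alone is precisely what the paper writes (in the form of $F_2(x)-1$).

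The issue is your handling of $w'$ at $-\infty$. You call this ``the main obstacle'' and claim it ``requires quantitative control on $w-w_\infty$''. It does not. Simply divide numerator and denominator of $-w'=\Delta_1/\Delta$ by $\sigma^2$:
\[
-w'=\frac{\dfrac{w(w-1)(w-r)}{\sigma^2}-\big(dw-\ell(r-1)\big)}{\dfrac{(w-1)^2}{\sigma^2}-1}.
\]
As $x\to-\infty$ one has $\sigma\to+\infty$ and $w\to\ell(r-1)/d$, so the numerator tends to $0-0=0$ and the denominator to $-1$. Hence $w'\to 0$ follows from the hypothesized limits alone, with no rate on $w-w_\infty$ needed. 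This is what the paper means by ``similarly''.

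Your detour through the $P_6$-expansion of $w_F$ is therefore unnecessary, and in the generality in which the lemma is stated (arbitrary $d,\ell,r$ and an arbitrary solution with the stated limits) it is not justified: that expansion, with its specific coefficient $(r-1)(2-r)/5$, comes from the $d=\ell=3$ construction. Your fallback argument is also flawed: divergence of $(w-w_\infty)\sigma^2$ is \emph{not} incompatible with $w\to w_\infty$ (take, e.g., $w-w_\infty\sim\sigma^{-1}$), so that dichotomy does not rule anything out.
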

\begin{proof}
    By \eqref{Eq.F_2_def}, we compute that $\sigma(x)+\sigma'(x)=\sigma(x)\left(1-F_2(x)\right)$ for $x\in\R$. It follows from  $\lim_{x\to+\infty}F_2(x)=r\in\R$ and $\lim_{x\to+\infty}\sigma(x)=0$ that $\lim_{x\to+\infty}\left|\sigma(x)+\sigma'(x)\right|=0$. Using \eqref{Eq.Delta} and \eqref{Eq.Delta_2} gives that
    \begin{align*}
        F_2(x)-1=\frac{(\ell+d-1)w(x)^2-(\ell+d+\ell r-r)w(x)+\ell r-\ell(w(x)-1)^2}{\ell\left[(w(x)-1)^2-\sigma(x)^2\right]},\quad \forall\ x\in\R.
    \end{align*}
    Hence, $\lim_{x\to-\infty}\sigma(x)=+\infty$ and $\lim_{x\to-\infty}w(x)\in\R$ imply that $\lim_{x\to-\infty}\sigma(x)(1-F_2(x))=0$, i.e., $\lim_{x\to-\infty}\left|\sigma(x)+\sigma'(x)\right|=0$. One can check $\lim_{x\to\pm\infty}w'(x)=0$ similarly.
\end{proof}

By Lemma \ref{Lem.far-field_behavior}, to show \eqref{repuls 3} and \eqref{repuls 4}, it suffices to prove the following proposition.

\begin{proposition}\label{Prop.replusivity_reduce}
    Let $d=\ell=3$ and $\{r_n\}$ be  given by Theorem \ref{Thm.ODE}. For each $r\in\{r_n\}$, let $(\sigma,w)$ be the solution to \eqref{autonomousODE} given by Theorem \ref{Thm.ODE}. Then we have
	\begin{align}\label{equal condition for repulsivity property}
		1-w(x)>\max\{0,w'(x)\}+|\sigma(x)+\sigma'(x)|,\quad\forall\ x\in\R.
		\end{align}
\end{proposition}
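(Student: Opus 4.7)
The plan is to reduce everything to Proposition \ref{Prop.replusivity_reduce}, whose pointwise inequality $1-w>\max\{0,w'\}+|\sigma+\sigma'|$ implies both \eqref{repuls 3} (since $w+w'\leq w+\max\{0,w'\}$) and \eqref{repuls 4}; combined with Lemma \ref{Lem.far-field_behavior}, which provides a strict positive gap as $x\to\pm\infty$, continuity and a compactness argument on any bounded interval will upgrade the strict pointwise version to the uniform $\eta>0$ lower bound. The strategy is to decompose the solution curve into four arcs separated by the sonic point $P_2$ (at $x=0$), the turning point $P_A=(\sigma(x_A),w(x_A))$ at which $\Delta_1$ vanishes for $x>0$ and $w$ attains its maximum, and an auxiliary intermediate point $P_B$ on the arc from $P_A$ to $P_4$, chosen so that $w(x)\leq c_*$ for some fixed small $c_*>0$ when $x\geq x_B$. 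Under the renormalization $\Psi:(\sigma,w)\mapsto(\tau,u)$ of \eqref{Eq.renormalization} the images $Q_2,Q_A,Q_B$ split the $(\tau,u)$-trajectory $u=u_E(\tau)$ correspondingly. The identity
\[\sigma+\sigma'\;=\;-\,\frac{2\sigma(w-w_-)(w-w_+)}{3\Delta},\qquad w'\;=\;-\frac{\Delta_1}{\Delta},\]
obtained by a direct computation from \eqref{Eq.Delta}-\eqref{Eq.Delta_2} with $d=\ell=3$ and the definition of $w_\pm$ in \eqref{Eq.w+-}, rewrites the target as a polynomial inequality in $(\sigma,w)$.

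On the two arcs $P_6\to P_2$ and $P_2\to P_A$ near the sonic point, I would work in $(\tau,u)$ coordinates via $(1+a)(1-w)=1+\tau$ and $(1+a)^2\sigma^2=u$. The inequality translates to a strict inequality $\Phi(\tau,u)>0$ along $u=u_E(\tau)$, which I would establish by constructing explicit barrier curves $u=\wh u(\tau)$ trapping $u_E$ together with either $u_{\text b}$ or $u_{\text g}$, and verifying $\Phi>0$ on the trapping strip by direct inspection. The maximum-principle comparison used to confirm that $u_E$ lies in the strip is of the same type as in the proofs of Propositions \ref{Prop.R_near_odd}-\ref{Prop.R_near_even}. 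Along $P_6\to P_2$ the large-$\sigma$ regime is controlled by the asymptotic expansion $w_F(\sigma;r)=r-1+(r-1)(2-r)/(5\sigma^2)+\cO(\sigma^{-4})$ from Step 1 of Subsection \ref{Subsec.sketch_existence}; the neighborhood of $Q_2$ uses the local series $u_L(\tau)=1+a_1\tau+\cdots$ with $a_1=(2\sqrt\alpha+4)/(3\sqrt\alpha)$ together with the coefficient bounds of Section \ref{Sec.an}. On the last two arcs the trajectory is bounded away from the degeneracy set and $w'\leq 0$, so the target reduces to $3(1-w)\Delta>2\sigma|(w-w_-)(w-w_+)|$; this polynomial inequality will be checked by direct algebra, using the a priori bound $w(x)>a(1+a)\sigma(x)^2$ from Theorem \ref{Thm.ODE} on $P_A\to P_B$, and the smallness $w,\sigma\ll 1$ together with Lemma \ref{Lem.far-field_behavior} on $P_B\to P_4$. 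The point $P_B$ is selected precisely as the crossover threshold beyond which the near-$P_4$ expansion becomes dominant.

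The main obstacle is the second arc $P_2\to P_A$. Both $1-w-w'$ and $|\sigma+\sigma'|$ vanish to first order at $Q_2$, so the strict inequality encodes a second-order cancellation: the sonic-point slope $a_1=(2\sqrt\alpha+4)/(3\sqrt\alpha)$ and the next coefficients $a_2,a_3$ of $u_L$ must combine with the right sign in the image of the factorization displayed above. Constructing a barrier that is tangent to the correct branch at $Q_2$ and yet separated from $u_{\text b}$ on the interior of the arc relies on the sharp coefficient control of Section \ref{Sec.an} applied to the first few terms of $u_L$; once this is achieved, the remaining three arcs reduce to comparatively routine algebraic verifications.
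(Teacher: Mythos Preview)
Your four-arc decomposition and the overall plan (barriers in $(\tau,u)$ coordinates near the sonic point, direct algebra far from it) match the paper's architecture. However, three points deserve correction.

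\textbf{Choice of $P_B$.} You take $P_B$ to be a smallness threshold for $w$. The paper instead defines $x_B>x_A$ by $w(x_B)=w_-$. This is not an arbitrary bookkeeping choice: since
\[
\sigma+\sigma'=-\frac{2\sigma}{3\Delta}(w-w_-)(w-w_+),
\]
the sign of $\sigma+\sigma'$ on the arc $x>x_A$ is governed by $w-w_-$, so $x_B$ is exactly where $\sigma+\sigma'$ changes sign. With this choice the last two arcs become almost trivial. On $P_A\to P_B$ one has $w'<0$, $\sigma+\sigma'>0$, and $1-w-(\sigma+\sigma')>1-w-\sigma>0$ simply because the curve lies below the sonic line and $\sigma'<0$; the bound $w>a(1+a)\sigma^2$ is not needed. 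On $P_B\to P_4$ one has $\sigma+\sigma'<0$ and the required inequality $1-w+\sigma+\sigma'>0$ follows from a short algebraic manipulation using only $\sigma<\sigma(P_2)=1-w_-$ and $\ell>d-1$. Your smallness threshold would leave a middle zone (between $w=w_-$ and $w=c_*$) where $\sigma+\sigma'$ has already become negative but $w,\sigma$ are not yet small, and your plan does not cover it.

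\textbf{No second-order cancellation at $Q_2$.} Your claim that $1-w-w'$ and $|\sigma+\sigma'|$ both vanish to first order at $Q_2$ is incorrect. A direct computation at $P_2$ (done in the paper via the linearized eigenslope $c_-$) gives $\sigma(0)+\sigma'(0)>0$ and
\[
1-w(0)-w'(0)-\sigma(0)-\sigma'(0)=-(1+c_-)\,\sigma'(0)>0,
\]
so the inequality holds \emph{strictly} at the sonic point. There is no delicate cancellation to verify there, and the first few Taylor coefficients $a_2,a_3$ of $u_L$ play no role.

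\textbf{Barriers do not use Section \ref{Sec.an}.} For both $P_6\to P_2$ and $P_2\to P_A$ the paper writes $1-w-w'-F$ via \eqref{Eq.1-w-w'-F_expression} as a quadratic in $\sigma$ divided by $\Delta$, and then replaces the exact positive root $\bar\sigma(w)$ by explicit elementary upper/lower bounds $\sigma_1(w),\sigma_2(w)$. Their images under $\Psi$ are the closed-form rational barriers $U_{\sigma_1},U_{\sigma_2}$, for which $\cL[U_{\sigma_i}]$ factors completely and has a definite sign on the relevant $\tau$-interval. The comparison $u_F>U_{\sigma_1}$ (resp.\ $u<U_{\sigma_2}$) then follows by the standard maximum-principle argument. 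None of the $\{a_n\}$ estimates from Section \ref{Sec.an} are invoked; the barrier construction is purely algebraic.
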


\begin{figure}[htbp]
	\centering
	\includegraphics[width=1\textwidth]{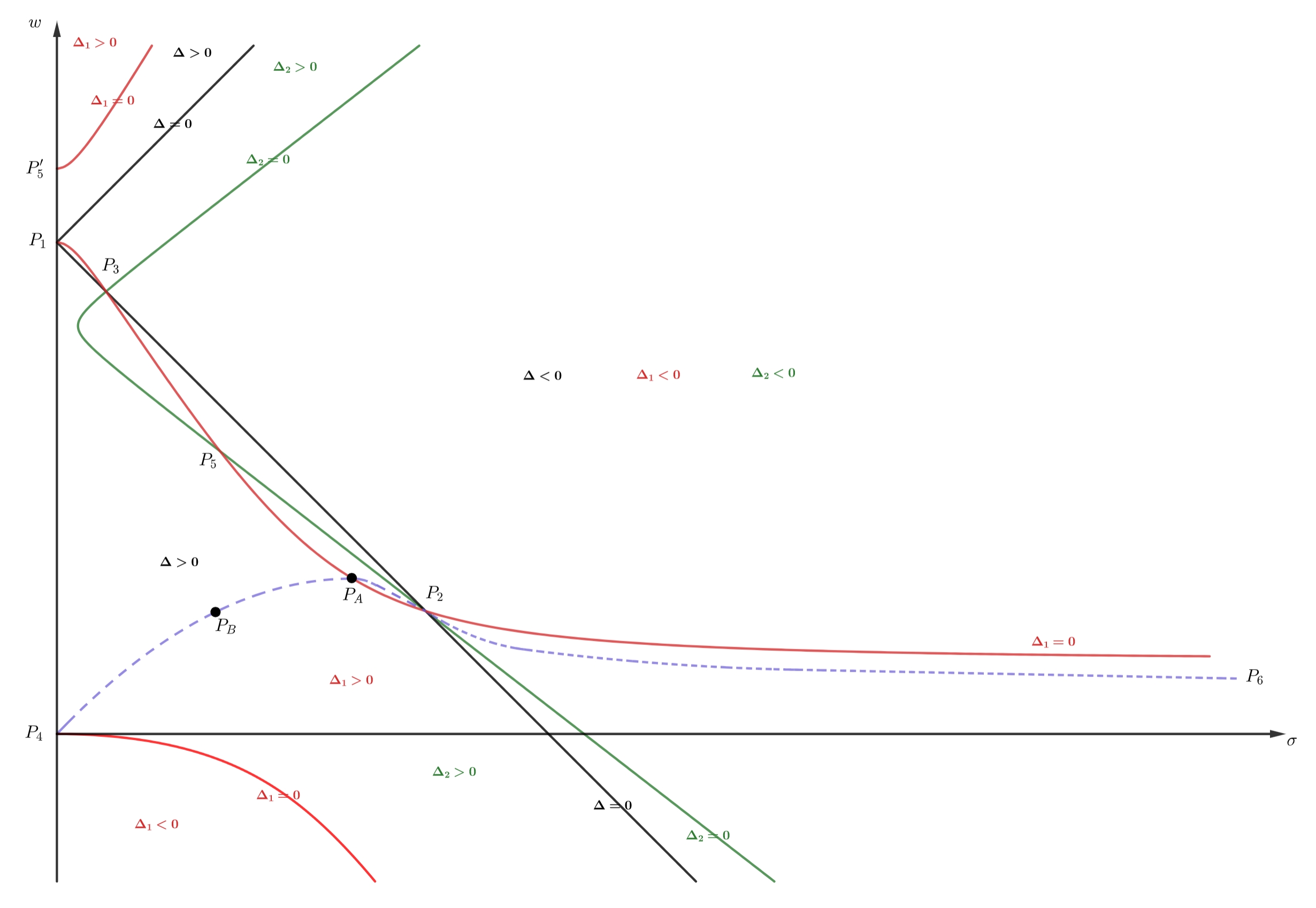}
	\caption{Phase portrait for the $\sigma-w$ system \eqref{ODE_w_sigma}: Positions of auxiliary points $P_A$ and $P_B$.}
	\label{Fig.w-sigma-repulsivity}
\end{figure}

The rest of this section is devoted to the proof of Proposition \ref{Prop.replusivity_reduce}. To this end, we introduce two auxiliary points $P_A$ and $P_B$, whose definitions are provided below; see also Figure \ref{Fig.w-sigma-repulsivity}. Recall from \eqref{Eq.P_1_2} and Subsection \ref{Subsec.Proof_Thm_ODE} (especially \eqref{Eq.simga-X_ODE}) that $(\sigma, w)(x=0)=P_2=(1-w_-, w_-)$. As $x$ increases, $\sigma(x)$ is strictly decreasing, and the trajectory $x\mapsto (\sigma(x), w(x))$ exits the region $\cR$ by crossing the red curve $(\sigma, w_2(\sigma))$. The intersection point is denoted by $P_A(\sigma_A=\sigma(x_A), w_A=w(x_A)$ for some $x_A>0$; moreover, we know that $x\mapsto w(x)$ is strictly increasing on $x\in[0, x_A]$. After crossing the red curve at $P_A$, as $x$ increases, due to the fact that $w(x)>a(1+a)\sigma(x)^2>0$ for all $x>0$ (recall Proposition \ref{Prop.global}), we know that $x\mapsto w(x)$ is strictly decreasing on $x\in[x_A, +\infty)$; since $\lim_{x\to+\infty}w(x)=0$, there exists a unique $x_B\in(x_A, +\infty)$ such that $w(x_B)=w(P_2)=w_-$. The point $(\sigma(x_B), w(x_B))$ is denoted by $P_B$. See Figure \ref{Fig.w-sigma-repulsivity} for the positions of $P_A$ and $P_B$. Under the re-normalization \eqref{Eq.renormalization}, the image of $P_A$ and $P_B$ are denoted by $Q_A(\tau_A, u_A)$ and $Q_B(\tau_B, u_B)$, respectively; see Figure \ref{Fig.u-tau-replusivity}.

\begin{figure}[htbp]
	\centering
	\includegraphics[width=1\textwidth]{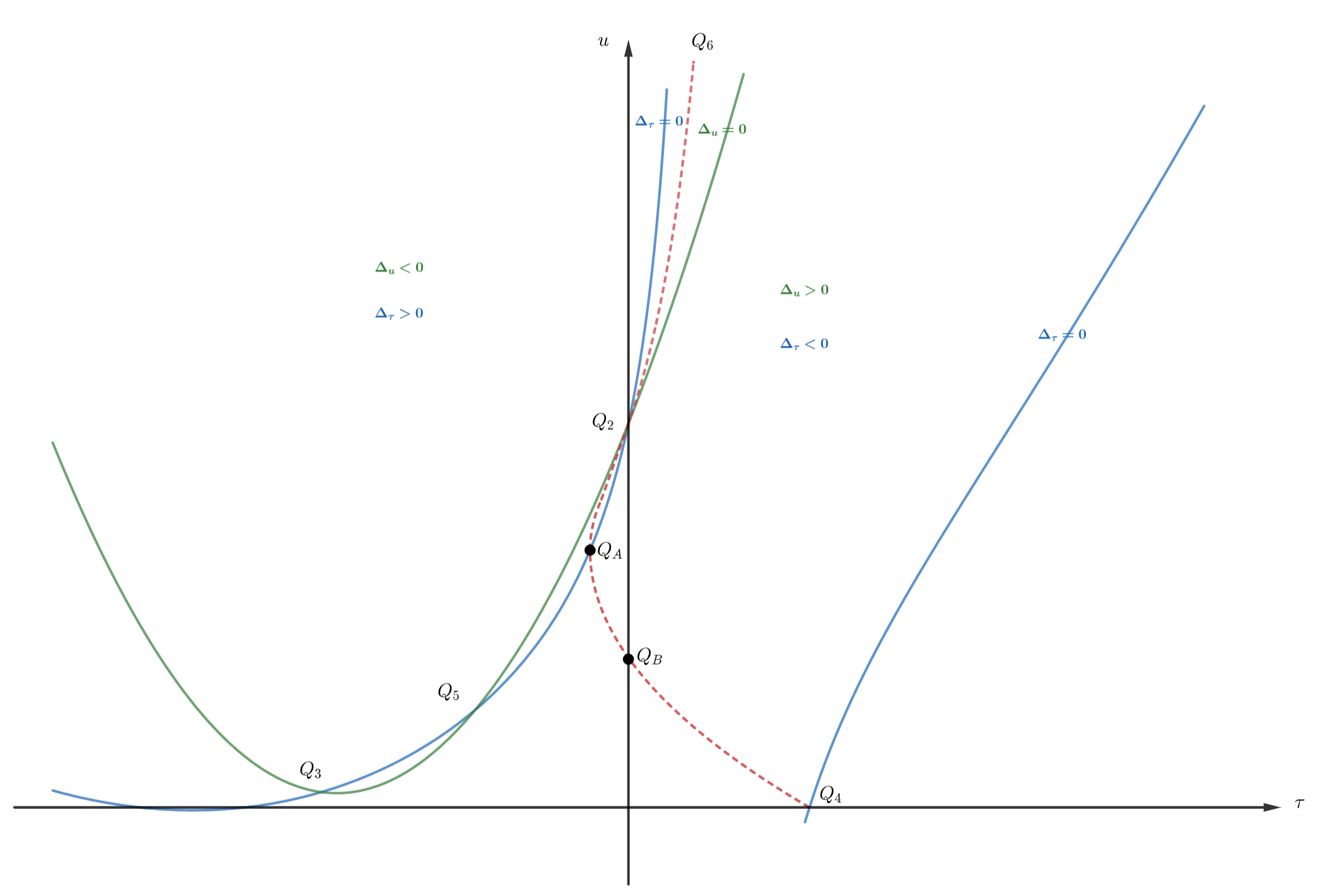}
	\caption{Phase portrait for the $\tau-u$ system \eqref{ODE_u_tau}: Positions of auxiliary points $Q_A$ and $Q_B$.}
	\label{Fig.u-tau-replusivity}
\end{figure}

We prove Proposition \ref{Prop.replusivity_reduce} by considering the four intervals $x\in(-\infty, 0]$, $x\in(0, x_A]$, $x\in(x_A, x_B]$, and $x\in(x_B, +\infty)$ separately. As a result, Proposition \ref{Prop.replusivity_reduce} is a direct consequence of Lemmas \ref{Lem.repulse1}--\ref{Lem.repulse4}.

\begin{lemma}\label{Lem.U_sigma1_compare}
    Let $d=\ell=3$. Let $\al\in(0,1)$ be such that the $Q_6-Q_2$ solution $u_F$ to \eqref{ODE_u_tau} passes through $Q_2$ smoothly (i.e. $u_F=u_L$, recalling Proposition \ref{Prop.sol_sonic_point}). We denote
    \[U_{\sigma_1}(\tau):=\frac19\left[\frac{2(\al+1)\tau+2(3\al+1)}{-2\tau+(3\al+1)}+1+\tau\right]^2,\quad\forall\ \tau\in(0,\al),\]
    then
    \begin{equation*}
        \cL\left[U_{\sigma_1}\right](\tau)<0\quad\text{and}\quad u_F(\tau)>U_{\sigma_1}(\tau),\quad\forall\ \tau\in(0,\al=\tau(Q_6)).
    \end{equation*}
\end{lemma}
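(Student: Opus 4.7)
The plan is to establish the two assertions in sequence: first verify the differential inequality $\mathcal{L}[U_{\sigma_1}](\tau)<0$ by direct computation, and then deduce the pointwise comparison $u_F>U_{\sigma_1}$ by a barrier argument of the type used in the proofs of Propositions \ref{Prop.R_near_odd} and \ref{Prop.R_near_even}. A useful structural observation is that \eqref{Eq.Delta_u} and \eqref{Eq.Delta_tau} can be rewritten, using \eqref{Eq.u_g} and \eqref{Eq.u_b}, as
\[
\Delta_u(\tau,u)=-2u\bigl(u-u_{\mathrm g}(\tau)\bigr),\qquad \Delta_\tau(\tau,u)=3(\alpha-\tau)\bigl(u-u_{\mathrm b}(\tau)\bigr),
\]
so that
\[
\mathcal{L}[U_{\sigma_1}](\tau)=3(\alpha-\tau)\bigl(U_{\sigma_1}-u_{\mathrm b}\bigr)U_{\sigma_1}'+2U_{\sigma_1}\bigl(U_{\sigma_1}-u_{\mathrm g}\bigr).
\]

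For the first assertion I would write $U_{\sigma_1}=P^2/(9D^2)$, where $D(\tau):=3\alpha+1-2\tau$ and, after combining the fraction with $1+\tau$,
\[
P(\tau):=-2\tau^2+(5\alpha+1)\tau+3(3\alpha+1).
\]
On $(0,\alpha)$ both $D>\alpha+1>0$ and $P>0$, so $U_{\sigma_1}$ is smooth and positive. A brief polynomial division gives the pleasant identity $P/D=\tau-\alpha+(3\alpha+1)(\alpha+3)/D$, while a similar calculation yields $P'D-PD'=4\tau^2-4(3\alpha+1)\tau+(3\alpha+1)(5\alpha+7)$, so $U_{\sigma_1}'=2P(P'D-PD')/(9D^3)$ is a rational function whose sign on $(0,\alpha)$ can be read off from its numerator. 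Substituting these closed-form expressions into the factored $\mathcal{L}[U_{\sigma_1}]$, clearing $D^3$, and simplifying with the explicit forms of $u_{\mathrm g}$ and $u_{\mathrm b}$, the numerator collapses to a polynomial in $\tau$ and $\alpha$; the specific rational form of $U_{\sigma_1}$ is clearly engineered so that this polynomial factors into manifestly sign-definite pieces on $(0,\alpha)\times(0,1)$, yielding strict negativity.

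For the second assertion, observe that $U_{\sigma_1}(0)=(1/9)(2+1)^2=1=u_F(0)$, and $U_{\sigma_1}(\alpha)=(3\alpha+1)^2(\alpha+3)^2/[9(\alpha+1)^2]<+\infty$ while $u_F(\tau)\to+\infty$ as $\tau\uparrow\alpha$ by \eqref{Eq.tau(Q6)=al}. A direct computation gives $U_{\sigma_1}'(0)=2(5\alpha+7)/[3(3\alpha+1)]$; writing $x:=\sqrt\alpha$, one finds
\[
u_F'(0)-U_{\sigma_1}'(0)=a_1-\frac{2(5\alpha+7)}{3(3\alpha+1)}=\frac{(2x+4)(3x^2+1)-2x(5x^2+7)}{3x(3x^2+1)}=\frac{-4(x-1)^3}{3x(3x^2+1)}>0
\]
for $\alpha\in(0,1)$, since $(x-1)^3<0$. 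Hence $u_F>U_{\sigma_1}$ in a right-neighborhood of $0$ and also in a left-neighborhood of $\alpha$. If the set $E:=\{\tau\in(0,\alpha):u_F(\tau)\leq U_{\sigma_1}(\tau)\}$ were nonempty, then $\tau_R:=\sup E\in(0,\alpha)$ would satisfy $u_F(\tau_R)=U_{\sigma_1}(\tau_R)$ and $u_F>U_{\sigma_1}$ on $(\tau_R,\alpha)$, forcing $U_{\sigma_1}'(\tau_R)\leq u_F'(\tau_R)$. Since $u_F(\tau_R)<u_{\mathrm b}(\tau_R)$ by \eqref{Eq.u_F_property}, the factored formula for $\Delta_\tau$ gives $\Delta_\tau(\tau_R,u_F(\tau_R))<0$; combining with $\mathcal{L}[u_F](\tau_R)=0$ yields
\[
\mathcal{L}[U_{\sigma_1}](\tau_R)=\Delta_\tau(\tau_R,u_F(\tau_R))\bigl(U_{\sigma_1}'(\tau_R)-u_F'(\tau_R)\bigr)\geq 0,
\]
contradicting the first assertion. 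Hence $E=\emptyset$.

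The main obstacle will be the bookkeeping in the first step: executing the substitution cleanly and identifying the factorization that exhibits strict negativity of the numerator of $\mathcal{L}[U_{\sigma_1}]$ uniformly for $\tau\in(0,\alpha)$ and $\alpha\in(0,1)$. This amounts to a finite polynomial identity, but careful algebra is required. The remainder of the argument is standard: the slope comparison at $\tau=0$ (which reduces to the nontrivial-looking identity $-4(x-1)^3>0$) and the blow-up of $u_F$ at $\tau=\alpha$ together seal the barrier comparison.
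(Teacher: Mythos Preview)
Your approach is correct and matches the paper's. For the first assertion the paper carries out exactly the brute-force computation you outline, arriving at
\[
\cL\left[U_{\sigma_1}\right](\tau)=-\frac{16\tau(\tau+1-\al)}{81(-2\tau+3\al+1)^4}\left[\frac{2(\al+1)\tau+2(3\al+1)}{-2\tau+(3\al+1)}+1+\tau\right]\phi_1(\tau;\al),
\]
where $\phi_1$ is an explicit quintic in $\tau$; the factorization is not quite ``manifestly sign-definite'' in your sense, since $\phi_1>0$ still requires a short argument (the paper checks a quadratic sub-piece at $\tau=0$ and $\tau=\al$). For the second assertion the paper simply notes that $U_{\sigma_1}$ is bounded on $(0,\al)$ and invokes the barrier argument of Lemma~\ref{Lem.u_2_compare}, which is precisely your $\tau_R=\sup E$ contradiction. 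Your additional slope computation at $\tau=0$ (yielding $a_1-U_{\sigma_1}'(0)=-4(\sqrt\al-1)^3/[3\sqrt\al(3\al+1)]>0$) is correct but superfluous: the barrier argument from the right only needs $u_F>U_{\sigma_1}$ near $\tau=\al$, which follows from $u_F\to+\infty$ and boundedness of $U_{\sigma_1}$.
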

\begin{proof}
    Recalling \eqref{Eq.Lu}, it is a brute force computation to obtain 
    \begin{align*}
        \cL\left[U_{\sigma_1}\right](\tau)=-\frac{16\tau(\tau+1-\al)}{81(-2\tau+3\al+1)^4}\left[\frac{2(\al+1)\tau+2(3\al+1)}{-2\tau+(3\al+1)}+1+\tau\right]\phi_1(\tau;\al),
    \end{align*}
    where
    \begin{align*}
        &\phi_1(\tau;\al):=\ 9(1-\al)^2(1+3\al)^2+6(1-\al)(1+3\al)(2+11\al-5\al^2)\tau\\
        &\quad+(1+3\al)(19+112\al-83\al^2)\tau^2+4(-9+2\al+67\al^2)\tau^3-4(9+29\al)\tau^4+16\tau^5>0
    \end{align*}
    for all $\al\in(0,1)$ and $\tau\in(0,\al)$. Here we have used the following facts:
    \begin{itemize}
        \item $2+11\al-5\al^2>0$ for $\al\in(0,1)$;
        \item the quadratic polynomial $\wt\phi_1(\tau;\al):=(1+3\al)(19+112\al-83\al^2)+4(-9+2\al+67\al^2)\tau-4(9+29\al)\tau^2$ satisfies $\wt\phi_1(0;\al)=(1+3\al)(19+112\al-83\al^2)>0$ and $\wt\phi(\al;\al)=19+133\al+225\al^2-97\al^3>225\al^2-97\al^3>0$ for all $\al\in(0,1)$, hence $\wt\phi_1(\tau;\al)>\min\left\{\wt\phi_1(0;\al), \wt\phi(\al;\al)\right\}>0$ for all $\tau\in(0,\al)$.
    \end{itemize}
    As a consequence, we obtain
    \begin{equation}\label{Eq.U_sigma1_compare}
        \cL\left[U_{\sigma_1}\right](\tau)<0, \quad\forall\ \tau\in(0,\al),\ \forall\ \al\in(0,1).
    \end{equation}
As $ U_{\sigma_1}$ is bounded on $(0,\al)$, the proof of $u_F(\tau)>U_{\sigma_1}(\tau)$ is similar to the proof of $u_{(2)}(\tau)<u_F(\tau)$ in Lemma \ref{Lem.u_2_compare}. 
\if0    At the sonic point $Q_2(0,1)$, we have $U_{\sigma_1}(0)=1=u_F(0)$ and
    \begin{align*}
        \frac{\mathrm dU_{\sigma_1}}{\mathrm d\tau}(0)-\frac{\mathrm du_{F}}{\mathrm d\tau}(0)=\frac{2(5\al+7)}{3(3\al+1)}-a_1=\frac{2(5\al+7)}{3(3\al+1)}-\frac{2\al+4\sqrt\al}{3\al}=\frac{4(1-\sqrt\al)^3}{3(3\al+1)\sqrt\al}>0
    \end{align*}
    for $\al\in(0,1)$. We assume for contradiction that there exists $\tau_*\in(0,\al)$ satisfying $u_F(\tau_*)=U_{\sigma_1}(\tau_*)$ and $u_F(\tau)>U_{\sigma_1}(\tau)$ for all $\tau\in(0,\tau_*)$, then $\frac{\mathrm dU_{\sigma_1}}{\mathrm d\tau}(\tau_*)\geq \frac{\mathrm du_{F}}{\mathrm d\tau}(\tau_*)$. Using \eqref{Eq.Lu} and $\Delta_\tau(\tau, u_F(\tau))<0$ for $\tau\in(0,\al)$, we obtain
    \begin{align*}
        \cL\left[U_{\sigma_1}\right](\tau_*)&=\Delta_\tau\left(\tau_*, U_{\sigma_1}(\tau_*)\right)\frac{\mathrm dU_{\sigma_1}}{\mathrm d\tau}(\tau_*)-\Delta_u\left(\tau_*, U_{\sigma_1}(\tau_*)\right)\\
        &\leq \Delta_\tau\left(\tau_*, u_F(\tau_*)\right)\frac{\mathrm du_{F}}{\mathrm d\tau}(\tau_*)-\Delta_u\left(\tau_*, u_F(\tau_*)\right)=0,
    \end{align*}
    which contradicts with \eqref{Eq.U_sigma1_compare}.\fi This completes the proof.
\end{proof}

\begin{lemma}[Repulsivity: $P_6-P_2$]\label{Lem.repulse1}
    Let $d=\ell=3$ and $\{r_n\}$ be given by Theorem \ref{Thm.ODE}. For each $r\in\{r_n\}$, let $(\sigma,w)$ be the solution to \eqref{autonomousODE} given by Theorem \ref{Thm.ODE}. Then $w'(x)>0$, $\sigma(x)+\sigma'(x)>0$ and $1-w(x)-w'(x)-\sigma(x)-\sigma'(x)>0$ for all $x\in(-\infty, 0]$.
\end{lemma}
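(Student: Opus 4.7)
The plan is to transport the three inequalities into the $(\tau, u)$ coordinates of \eqref{Eq.renormalization}, where the $P_{6}$--$P_{2}$ trajectory is $u = u_{F}(\tau)$ on $\tau \in [0, \alpha)$, with $\tau = 0$ corresponding to $x = 0$ and $\tau \uparrow \alpha$ to $x \to -\infty$. The first claim $w'(x) > 0$ on $x < 0$ is immediate from Theorem \ref{Thm.ODE}, since $w' = -\Delta_{1}/\Delta$ with $\Delta_{1} > 0$ and $\Delta < 0$. For $\sigma(x) + \sigma'(x) > 0$, a direct algebraic expansion gives $\sigma\Delta - \Delta_{2} = -\tfrac{2\sigma}{3}(w - w_{-})(w - w_{+})$; on $P_{6}$--$P_{2}$ we have $r - 1 < w < w_{-} < w_{+}$, so both factors are negative, hence $\sigma\Delta - \Delta_{2} < 0$, and dividing by $\Delta < 0$ yields $\sigma + \sigma' = (\sigma\Delta - \Delta_{2})/\Delta > 0$. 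The endpoint $x = 0$ will be handled separately by explicit limits.

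The heart of the argument is the third inequality. Using $1 - w = (1+\tau)/(1+a)$, $\sigma = \sqrt{u}/(1+a)$, $\Delta = \big((1+\tau)^{2} - u\big)/(1+a)^{2}$, and \eqref{Eq.Delta_1,2_tau,u}, I will derive
\begin{equation*}
1 - w - w' - \sigma - \sigma' \;=\; \frac{Q\big(\tau,\,\sqrt{u_{F}(\tau)}\big)}{(1+a)\big(u_{F}(\tau) - (1+\tau)^{2}\big)},
\end{equation*}
where
\begin{equation*}
Q(\tau, v) \;:=\; (3\alpha - 2\tau + 1)\,v^{2} \;-\; \tfrac{2\tau(1 - \alpha + \tau)}{3}\,v \;-\; (\tau + 1)\big[(\alpha + 1)\tau + 3\alpha + 1\big].
\end{equation*}
Since $u_{g}(\tau) - (1+\tau)^{2} = \tfrac{2\tau(1 - \alpha + \tau)}{3} > 0$ for $\tau \in (0, \alpha)$ and $u_{F} > u_{g}$ by \eqref{Eq.u_F_property}, the denominator is strictly positive. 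The quadratic $Q(\tau, \cdot)$ opens upward (coefficient $3\alpha - 2\tau + 1 > 0$) and $Q(\tau, 0) < 0$, so it has a unique positive root $v_{*}(\tau)$; proving the desired inequality reduces to showing $\sqrt{u_{F}(\tau)} > v_{*}(\tau)$.

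The key computation, to be verified by direct expansion, is the polynomial identity
\begin{equation*}
Q\big(\tau, \sqrt{U_{\sigma_{1}}(\tau)}\big) \;=\; \frac{8\,\tau^{2}\,(\tau + 1 - \alpha)^{2}}{9\,(3\alpha - 2\tau + 1)} \;>\; 0, \qquad \tau \in (0, \alpha),
\end{equation*}
which shows $\sqrt{U_{\sigma_{1}}(\tau)} > v_{*}(\tau)$. Lemma \ref{Lem.U_sigma1_compare} then gives $\sqrt{u_{F}(\tau)} > \sqrt{U_{\sigma_{1}}(\tau)} > v_{*}(\tau)$, hence $Q(\tau, \sqrt{u_{F}(\tau)}) > 0$ for $x \in (-\infty, 0)$. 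At $x = 0$ (i.e.\ $\tau = 0$) the ratio is $0/0$; using $u_{F}(\tau) = 1 + a_{1}\tau + O(\tau^{2})$ together with $a_{1} - 2 = \tfrac{4(1 - \sqrt{\alpha})}{3\sqrt{\alpha}}$, L'H\^opital's rule evaluates the limit to $(1 - \sqrt{\alpha})^{2}/(1 + a) > 0$, and analogous limits give $w'(0) = \tfrac{3\sqrt{\alpha}(1-\sqrt{\alpha})}{2(1+a)} > 0$ and $\sigma(0) + \sigma'(0) = \tfrac{\sqrt{\alpha}(1+\sqrt{\alpha})}{2(1+a)} > 0$. The main obstacle will be the polynomial identity itself: writing $\sqrt{U_{\sigma_{1}}} = N/D$ with $N = -2\tau^{2} + (5\alpha + 1)\tau + 3(3\alpha + 1)$ and $D = 3(3\alpha - 2\tau + 1)$, multiplying through by $3D$ reduces it to verifying that the resulting quartic in $\tau$ collapses to $8\tau^{2}(\tau + 1 - \alpha)^{2}$; the constant and linear-in-$\tau$ coefficients must cancel exactly, which requires careful bookkeeping between $N^{2}$, $2\tau(1 - \alpha + \tau)\,N$, and $3D(\tau + 1)[(\alpha + 1)\tau + 3\alpha + 1]$.
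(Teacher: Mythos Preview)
Your proposal is correct and follows essentially the same strategy as the paper: both reduce the third inequality to the comparison $u_F>U_{\sigma_1}$ of Lemma~\ref{Lem.U_sigma1_compare}. The differences are cosmetic. Where the paper works in $(\sigma,w)$, introduces the positive root $\bar\sigma(w)$ of the quadratic $A(w)\sigma^2-\tfrac{1}{\ell}B(w)\sigma-(1-w)[1-(2-r)w]$, and bounds $\bar\sigma<\sigma_1$ via the crude inequality $\ell>1$ applied inside the discriminant, you instead evaluate $Q(\tau,\sqrt{U_{\sigma_1}})$ exactly and obtain the closed form $8\tau^2(\tau+1-\alpha)^2/[9(3\alpha-2\tau+1)]$; this is slightly sharper and cleaner, since the factorisation $(N-4\tau\beta)(N+2\tau\beta)=9D(\tau+1)[(\alpha+1)\tau+3\alpha+1]$ (with $\beta=1-\alpha+\tau$, $D=3\alpha+1-2\tau$) does the whole job in one line. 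At $x=0$ the paper computes through the eigensystem of the linearisation at $P_2$ and the slope $c_-$, whereas you recover the same values by L'H\^opital in $\tau$; both are straightforward. Your first two inequalities match the paper's argument verbatim (sign of $\Delta_1/\Delta$ and the factorisation $\sigma\Delta-\Delta_2=-\tfrac{2\sigma}{3}(w-w_-)(w-w_+)$, which is exactly the paper's \eqref{Eq.F_expression}).
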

\begin{proof}
    The proof for $x=0$, which corresponds to the sonic point $P_2$, is a direct computation. We introduce
    \begin{align*}
        e_1:&=\pa_\sigma\Delta_1(P_2)=-2(dw(P_2)-\ell(r-1))\sigma(P_2)<0,\\
        e_2:&=\pa_w\Delta_1(P_2)=\big(w(P_2)-w_1(\sigma(P_2))\big)\big(w(P_2)-w_{3}(\sigma(P_2))\big)<0,\\
        e_3:&=\pa_\sigma\Delta_2(P_2)=-2\sigma(P_2)^2<0,\\
        e_4:&=\pa_w\Delta_2(P_2)=\frac{(\ell+d-1)\sigma(P_2)}{\ell}\big(w(P_2)-w_2^+(\sigma(P_2))\big)<0.
    \end{align*}
    Then
    \[\frac{\mathrm dw}{\mathrm d\sigma}(\sigma(P_2))=c_-:=\frac{e_3-e_2-\sqrt{(e_3-e_2)^2+4e_1e_4}}{2|e_4|}\in\left(-\frac{e_3}{e_4}, -\frac{e_1}{e_2}\right)\subset (-1, 0),\]
    and
    \[w'(0)=\frac{e_1+e_2c_-}{2\sigma(P_2)(1+c_-)}>0, \quad \sigma'(0)=\frac{e_3+e_4c_-}{2\sigma(P_2)(1+c_-)}<0.\]
    Hence,
    \[\sigma(0)+\sigma'(0)=\frac{2\sigma(P_2)^2(1+c_-)+e_3+e_4c_-}{2\sigma(P_2)(1+c_-)}=\frac{(e_4-e_3)c_-}{2\sigma(P_2)(1+c_-)}>0,\]
    and
    \[1-w(0)-w'(0)-\sigma(0)-\sigma'(0)=-w'(0)-\sigma'(0)=-(1+c_-)\sigma'(0)>0.\]
    This completes the proof for $x=0$.

    Now we consider the case $x<0$. We define $F(x):=\sigma(x)+\sigma'(x)$ for $x\in\R$. A direct computation yields that
    \begin{equation}\label{Eq.F_expression}
        F(x)=-(d-1)\frac{\sigma(x)}{\ell\Delta(\sigma(x), w(x))}(w(x)-w_-)(w(x)-w_+),\quad\forall\ x\in\R\setminus\{0\},
    \end{equation}
    recalling \eqref{Eq.w+-}. See also \cite[Lemma 8.3]{MRRJ2}. For $x<0$, by the facts that $\Delta_1(\sigma(x), w(x))<0$ $\Delta(\sigma(x), w(x))<0$, $w'(x)=-\Delta_1/\Delta$ and $r-1<w(x)<w(P_2)=w_-<w_+$, we have $w'(x)>0$ and $F(x)=\sigma(x)+\sigma'(x)>0$. It remains to show that
    \begin{equation}\label{Eq.1-w-w'-F>0}
        1-w(x)-w'(x)-F(x)>0,\quad\forall\ x<0.
    \end{equation}
    Using \eqref{Eq.Delta}, \eqref{Eq.Delta_1} and \eqref{Eq.Delta_2}, we compute that
    \begin{equation}\label{Eq.1-w-w'-F_expression}
        1-w-w'-F=-\frac{A(w)\sigma^2-\frac1\ell B(w)\sigma-(1-w)[1-(2-r)w]}{\Delta(\sigma, w)},
    \end{equation}
    where\footnote{Here we abuse the notation $A$, which is different from the one in \eqref{Eq.A_def}.}
    \begin{align}
        &\qquad\qquad A(w):=(d-1)w+1-\ell(r-1),\label{Eq.A(w)}\\
        B(w):&=(d-1)w^2+(\ell+r-d-\ell r)w+\ell(r-1)=(d-1)(w-w_-)(w-w_+),\label{Eq.B(w)}
    \end{align}
    recalling \eqref{Eq.w+-}. We note that $A(w)(1-w)+B(w)=1-(2-r)w$, hence,
    \begin{equation*}
    	1-w-w'-F=-\frac{A(w)\sigma^2-\frac1\ell B(w)\sigma-(1-w)[A(w)(1-w)+B(w)]}{\Delta(\sigma, w)}.
    \end{equation*}
    For $x<0$, we have $\ell(r-1)/d<w(x)<w_-=w(P_2)<\min\{1, w_+\}$. Hence, $A(w(x))>0$, $B(w(x))>0$ and $(1-w(x))[A(w(x))(1-w(x))+B(w(x))]>0$. Let
    \[\bar\sigma(w):=\frac1{A(w)}\left(\frac{B(w)}{2\ell}+\sqrt{\frac{B(w)^2}{4\ell^2}+A(w)(1-w)[A(w)(1-w)+B(w)]}\right)\]
    be the unique positive root to the quadratic polynomial $\sigma\mapsto A(w)\sigma^2-\frac1\ell B(w)\sigma-(1-w)[A(w)(1-w)+B(w)]$, where $w\in(\ell(r-1)/d, w_-)$. Using $\ell>1$ gives that
    \[\bar \sigma(w){<}\frac1{A(w)}\left(\frac{B(w)}{2\ell}+A(w)(1-w)+\frac{B(w)}2\right):=\sigma_1(w),\quad\forall\ w\in\left(\frac{\ell(r-1)}{d}, w_-\right).\]
    To prove \eqref{Eq.1-w-w'-F>0}, it suffices to show that
    \begin{equation}\label{Eq.sigma>sigma_1}
    	\sigma(x)>\sigma_1(w(x)),\quad\forall\ x<0.
    \end{equation}

    Now we focus on $d=\ell=3$. In this case, by $A(w)(1-w)+B(w)=1-(2-r)w$, we have
    \[\sigma_1(w)=\frac23\frac{B(w)}{A(w)}+1-w=\frac23\frac{1-(2-r)w}{A(w)}+\frac{1-w}{3}.\]
    It follows \eqref{Eq.w+-} and \eqref{Eq.renormalization} that $w_-=a/(1+a)$ and $r=\frac{a^2+6a+3}{(a+1)(a+3)}$. It is a straightforward computation to verify that
    \begin{equation}\label{Eq.U_sigma1_relation}
        U_{\sigma_1}(\tau)=(1+a)^2\left(\sigma_1\left(-\frac{\tau}{1+a}+w_-\right)\right)^2,\quad\forall\ \tau\in(0,\al).
    \end{equation}

    Let $(\tau_\text s(x), u_\text s(x)):=\Psi(\sigma(x), w(x))=(-(1+a)(w(x)-w_-), (1+a)^2\sigma(x)^2)$ for $x\in\R$. Then $x\mapsto \tau_\text s(x)$ is strictly decreasing on $x\leq x_A$, and then the function $u(\tau):=u_\text s\left(\tau_\text s^{-1}(\tau)\right)$ is a smooth solution to \eqref{ODE_u_tau} connecting $Q_A$ and $Q_6$. By Lemma \ref{Lem.U_sigma1_compare}, we have $u_\text s(x)=u(\tau_\text s(x))>U_{\sigma_1}(\tau_\text s(x))$ for $x<0$. So, \eqref{Eq.U_sigma1_relation} implies that
    \[(1+a)^{2}\sigma(x)^2>(1+a)^2\left(\sigma_1\left(-\frac{\tau_\text s(x)}{1+a}+w_-\right)\right)^2
    =(1+a)^2\big(\sigma_1(w(x))\big)^2, \quad\forall\ x<0.\]
    This proves \eqref{Eq.sigma>sigma_1}.
\end{proof}

\begin{lemma}[Repulsivity: $P_2-P_A$]\label{Lem.repulse2}
    Let $d=\ell=3$ and $\{r_n\}$ be given by Theorem \ref{Thm.ODE}. For each $r\in\{r_n\}$, let $(\sigma,w)$ be the solution to \eqref{autonomousODE} given by Theorem \ref{Thm.ODE}. Then $w'(x)>0$, $\sigma(x)+\sigma'(x)>0$ and $1-w(x)-w'(x)-\sigma(x)-\sigma'(x)>0$ for all $x\in(0, x_A)$. Moreover, \eqref{equal condition for repulsivity property} holds for $x=x_A$.
\end{lemma}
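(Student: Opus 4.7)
\textbf{Proof plan for Lemma \ref{Lem.repulse2}.} The three conclusions on $(0, x_A)$---$w' > 0$, $\sigma + \sigma' > 0$, and $1 - w - w' - \sigma - \sigma' > 0$---together imply the endpoint inequality at $x_A$ by continuity, since $w'(x_A) = 0$ (because $\Delta_1(P_A) = 0$), so $\max\{0, w'(x_A)\} = 0$. The first two are immediate from the phase portrait data of Theorem \ref{Thm.ODE}: on $(0, x_A)$ one has $\Delta > 0$ and $\Delta_1 < 0$, hence $w' = -\Delta_1/\Delta > 0$; and the factorization \eqref{Eq.F_expression}, namely
\[
\sigma(x) + \sigma'(x) = -\frac{2\sigma(x)}{3\Delta(\sigma(x), w(x))}\big(w(x) - w_-\big)\big(w(x) - w_+\big),
\]
reduces positivity to $w(x) < w_+$. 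Since $w$ attains its maximum $w_A = w_2(\sigma_A)$ on this segment at $x = x_A$, I will show $w_2(\sigma_A) < w_+$ by locating $\sigma_A$ relative to $\sigma(P_3) = 1 - w_+$: the triple-point geometry \eqref{Eq.degeneracy} forces $\sigma(P_3) < \sigma(P_5) \leq \sigma_A$ for $r$ sufficiently close to $3 - \sqrt 3$, so $\sigma_A > 1 - w_+$, and the factorization $\Delta_1(w_+, \sigma) = 3(w_+ - r + 1)\big[(1-w_+)^2 - \sigma^2\big]$ together with $w_+ > r-1$ then gives $\Delta_1(w_+, \sigma_A) < 0$, forcing $w_2(\sigma_A) < w_+$.

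For the main inequality, formula \eqref{Eq.1-w-w'-F_expression} and $\Delta > 0$ on $(0, x_A)$ convert the claim into
\[
\Psi(w, \sigma) := A(w)\sigma^2 - \tfrac13 B(w)\sigma - (1-w)\big[A(w)(1-w) + B(w)\big] < 0
\]
along the trajectory. Observe that $\Psi(P_2) = 0$ because $B(w_-) = 0$, so the trajectory starts on the critical curve $\{\Psi = 0\}$ and must enter $\{\Psi < 0\}$. The strategy parallels Lemma \ref{Lem.repulse1}: introduce a simpler lower bound $\sigma_\flat(w) \leq \bar\sigma(w)$ for the positive root of $\Psi(\cdot, \sigma) = 0$ in $\sigma$, and establish $\sigma(x) < \sigma_\flat(w(x))$ via a barrier argument in the $\tau$--$u$ plane. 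A natural candidate is
\[
\sigma_\flat(w) := (1-w) + \frac{B(w)}{6 A(w)},
\]
which satisfies $\sigma_\flat \leq \bar\sigma$ by the elementary estimate $\sqrt{B^2/36 + A(1-w)[A(1-w) + B]} \geq A(1-w)$. Set $U_\flat(\tau) := (1+a)^2 \sigma_\flat(w)^2$ with $w = w_- - \tau/(1+a)$; I will verify that $\mathcal L[U_\flat](\tau)$ factors (much as in Lemma \ref{Lem.U_sigma1_compare}) with an overall sign flip on $(-(1-\alpha), 0)$ coming from the factor $\tau(\tau + 1 - \alpha)$, yielding $\mathcal L[U_\flat] > 0$ on $(\tau_A, 0)$. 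Combined with $u_E(0) = U_\flat(0) = 1$ and the slope inequality $u_E'(0) = a_1 > U_\flat'(0)$ (to be verified), a standard barrier comparison forces $u_E(\tau) < U_\flat(\tau)$ on $(\tau_A, 0)$, i.e., $\sigma < \sigma_\flat(w) \leq \bar\sigma(w)$, hence $\Psi < 0$; the strict sign of $\mathcal L[U_\flat]$ prevents the barrier from being touched before $\tau_A$, so the endpoint inequality at $x = x_A$ follows by continuity.

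The chief obstacle is the explicit verification that $\mathcal L[U_\flat](\tau) > 0$ on all of $(\tau_A, 0)$, uniformly in $\alpha$ over the relevant range. Because $\sigma_\flat$ carries the denominator $A(w)$, the rational expression for $\mathcal L[U_\flat]$ is more involved than in Lemma \ref{Lem.U_sigma1_compare}, and positivity of the nontrivial polynomial factor $\phi_\flat(\tau; \alpha)$ must be confirmed by a careful case analysis on its coefficients in $\tau$ (testing at the endpoints $\tau = 0$, $\tau = -(1-\alpha)/2 = \tau(Q_5)$). A secondary technical issue is the slope comparison at $\tau = 0$: the triple-point degeneracy sends both $a_1$ and $U_\flat'(0)$ to $2$ as $r \uparrow 3 - \sqrt 3$, so the sign of $a_1 - U_\flat'(0)$ must be extracted from the subleading term in $\sqrt{r^2 - 6r + 6}$ and $1 - \sqrt\alpha$, and pinned down uniformly for $r = r_n$ in the discrete sequence produced by Theorem \ref{Thm.ODE}.
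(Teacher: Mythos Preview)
Your overall strategy---reducing to $\Psi(w,\sigma)<0$ via \eqref{Eq.1-w-w'-F_expression} and then comparing $\sigma$ to a surrogate of the positive root $\bar\sigma$---matches the paper, but your barrier $\sigma_\flat$ lies on the \emph{wrong side} of $\bar\sigma$. On $(w_-, r/2)$ one has $B(w)<0$ while $A(w)(1-w)+B(w)=1-(2-r)w>0$, so $A(1-w)>|B|>|B|/36$; hence
\[
\frac{B^2}{36}+A(1-w)\big[A(1-w)+B\big]-A^2(1-w)^2
=B\!\left(\frac{B}{36}+A(1-w)\right)<0,
\]
which gives $\sqrt{\cdot}<A(1-w)$, not $\geq$. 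Therefore $\bar\sigma<\sigma_\flat$ strictly on $(w_-,r/2)$, and the implication $\sigma<\sigma_\flat\Rightarrow\Psi<0$ fails: your barrier gives no information. The paper instead exploits $\bar\sigma<1-w$ (which \emph{does} follow from $B<0$) to obtain a genuine lower bound $\sigma_2(w)^2:=\tfrac{1-w}{A}\big[A(1-w)+\tfrac{4}{3}B\big]<\bar\sigma(w)^2$, and proves $\sigma<\sigma_2(w)$ via the barrier $U_{\sigma_2}$; the key computation is that $\cL[U_{\sigma_2}]$ factors with the sign-carrying piece $(-1+\al-2\tau)(-1+\al-\tau)>0$ on $(\tau(Q_5),0)$.

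Two smaller points. First, your route to $w(x)<w_+$ via the location of $P_3$ is unnecessarily delicate (and the inequality $\sigma(P_3)<\sigma(P_5)$ is not generally true for $r$ near $3-\sqrt3$); the trivial bound $w(x)<w(P_5)=r/2<w_+$ suffices. Second, anchoring the comparison at $\tau=0$ through the slope difference $a_1-U'_\flat(0)$ is fragile (both sides tend to $2$), whereas the paper avoids this entirely by anchoring at $\tau_A$: there $u(\tau_A)=u_\text b(\tau_A)$, and the explicit identity
\[
U_{\sigma_2}(\tau)-u_\text b(\tau)=-\frac{2\tau(1+\tau)(-2\tau+\al-1)(-\tau+\al-1)}{3(-2\tau+3\al+1)(\al-\tau)}>0
\]
on $(\tau(Q_5),0)$ gives $U_{\sigma_2}(\tau_A)>u(\tau_A)$ directly, from which the barrier comparison runs to $\tau=0^-$ without slope issues and yields the strict inequality at $x_A$ as well.
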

\begin{proof}
    For $x\in(0, x_A)$, the solution curve $x\mapsto(\sigma(x), w(x))$ lies in the region $\{(\sigma, w): \Delta(\sigma, w)>0, \Delta_1(\sigma, w)<0, \Delta_2(\sigma, w)>0\}$, hence $w'(x)>0$ and $w_-=w(0)<w(x)<w(P_5)=r/2$, which implies that $F(x)>0$, recalling \eqref{Eq.F_expression}. It suffices to consider $1-w-w'-F$. We recall \eqref{Eq.1-w-w'-F_expression}, \eqref{Eq.A(w)} and \eqref{Eq.B(w)}. For $x\in(0, x_A)$, since $w_-<w(x)<r/2<1/(2-r)$, we have $A(w(x))>0$, $B(w(x))<0$ and $(1-w(x))[1-(2-r)w(x)]>0$. We still denote the unique positive root of the quadratic polynomial $\sigma\mapsto A(w)\sigma^2-\frac1\ell B(w)\sigma-(1-w)[A(w)(1-w)+B(w)]$ by $\bar \sigma(w)$, defined for $w\in(w_-, r/2)$. Note that
      \begin{align*}A(w)(1-w)^2-B(w)(1-w)/\ell-(1-w)[A(w)(1-w)+B(w)]\\=-(1+1/\ell)B(w)(1-w)>0,\end{align*}
    hence, $\bar\sigma(w)<1-w$ for $w\in(w_-, r/2)$, and then
    \begin{align*}
        \bar\sigma(w)^2&=\frac1{A(w)}\left(\frac{B(w)\bar\sigma(w)}\ell+(1-w)[A(w)(1-w)+B(w)]\right)\\
        &>\frac1{A(w)}\left(\frac{B(w)(1-w)}\ell+(1-w)[A(w)(1-w)+B(w)]\right)\\
        &=\frac{1-w}{A(w)}\left(A(w)(1-w)+\frac{\ell+1}{\ell}B(w)\right)=:\sigma_2(w)^2,\quad \forall\ w\in(w_-, r/2).
    \end{align*}
    Here we also require $\sigma_2(w)>0$ for $w\in(w_-, r/2)$. To prove that $1-w(x)-w'(x)-F(x)>0$ for all $x\in(0, x_A)$, it remains to show that
    \begin{equation}
        \sigma(x)<\sigma_2(w(x)),\quad \forall\ x\in(0, x_A).
    \end{equation}

    Now we focus on $d=\ell=3$. In this case, we have
    \begin{align*}
        \sigma_2(w)^2&=\frac{1-w}{A(w)}\left(A(w)(1-w)+\frac{4}{3}[1-(2-r)w-A(w)(1-w)]\right)\\
        &=\frac{4(1-w)}{A(w)}[1-(2-r)w]-\frac{(1-w)^2}{3}
    \end{align*}
    Using \eqref{Eq.w+-} and \eqref{Eq.renormalization}, we compute that
    \begin{align}\label{second barrier funtion}
        U_{\sigma_2}(\tau):&=(1+a)^2\left(\sigma_2\left(-\frac{\tau}{1+a}+w_-\right)\right)^2\\
        &=\frac{4(1+\tau)[(3\al+1)+(\al+1)\tau]}{3(-2\tau+3\al+1)}-\frac13(1+\tau)^2.\nonumber
    \end{align}
    Using an argument similar to the proof of Lemma \ref{Lem.repulse1}, we only need to prove
    \begin{equation}\label{Eq.u<U_sigma2}
        u(\tau)<U_{\sigma_2}(\tau),\quad\forall\ \tau\in(\tau_A, 0).
    \end{equation}

    Recall $P_5=(\sqrt 3r/6, r/2)$. Then the re-normalization \eqref{Eq.renormalization} yields that $$Q_5=\left(\frac{a^2-3}{2(a+3)}, \frac{(a^2+6a+3)^2}{12(a+3)^2}\right).$$
    Hence, $\tau_A\in\left(\frac{a^2-3}{2(a+3)},0\right)$. By the choice of $Q_A$, we know that $Q_A$ is the intersection point of the solution curve and $\tau\mapsto u_\text b(\tau)$; see Figure \ref{Fig.u-tau-replusivity}.  We claim that
    \begin{align}
        U_{\sigma_2}(\tau)>u_\text b(\tau),\quad\forall\ \tau\in\left(\frac{a^2-3}{2(a+3)},0\right),\label{Eq.U_sigma2>u_b}\\
        \cL\left[U_{\sigma_2}\right](\tau)>0, \quad\forall\ \tau\in\left(\frac{a^2-3}{2(a+3)},0\right).\label{Eq.U_sigma2_compare}
    \end{align}
    Assuming \eqref{Eq.U_sigma2>u_b} and \eqref{Eq.U_sigma2_compare}, we proceed to prove \eqref{Eq.u<U_sigma2} as follows. First of all, \eqref{Eq.U_sigma2>u_b} and $\tau_A\in\left(\frac{a^2-3}{2(a+3)},0\right)$ imply that $U_{\sigma_2}(\tau_A)>u_b(\tau_A)=u(\tau_A)$. We assume for contradiction that $U_{\sigma_2}(\tau_*)=u(\tau_*)$ for some $\tau_*\in(\tau_A, 0)$ and $U_{\sigma_2}(\tau)>u(\tau)$ for all $\tau\in(\tau_A, \tau_*)$. Then $\frac{\mathrm dU_{\sigma_2}}{\mathrm d\tau}(\tau_*)\leq \frac{\mathrm du}{\mathrm d\tau}(\tau_*)$. Using \eqref{Eq.Lu} and the fact that $\Delta_\tau(\tau, u(\tau))>0$ for $\tau\in(\tau_A, 0)$, we obtain
    \begin{align*}
        \cL\left[U_{\sigma_2}\right](\tau_*)&=\Delta_\tau\left(\tau_*, U_{\sigma_2}(\tau_*)\right)\frac{\mathrm dU_{\sigma_2}}{\mathrm d\tau}(\tau_*)-\Delta_u\left(\tau_*, U_{\sigma_2}(\tau_*)\right)\\
        &\leq \Delta_\tau\left(\tau_*, u(\tau_*)\right)\frac{\mathrm du}{\mathrm d\tau}(\tau_*)-\Delta_u\left(\tau_*, u(\tau_*)\right)=0,
    \end{align*}
    which contradicts \eqref{Eq.U_sigma2_compare}. This proves \eqref{Eq.u<U_sigma2}. So, it suffices to prove \eqref{Eq.U_sigma2>u_b} and \eqref{Eq.U_sigma2_compare}.

    \underline{\textit{Proof of \eqref{Eq.U_sigma2>u_b}}.} We compute that
    \[U_{\sigma_2}(\tau)-u_\text b(\tau)=-\frac{2\tau(1+\tau)(-2\tau+\al-1)(-\tau+\al-1)}{3(-2\tau+3\al+1)(\al-\tau)}.\]
    For $\tau\in\left(\frac{a^2-3}{2(a+3)},0\right)\subset(-1,0)$, by \eqref{Eq.renormalization}, we have
    \begin{align*}
        -\tau+\al-1<-2\tau+\al-1<\frac{3-a^2}{a+3}+\frac{a(a+1)}{a+3}-1=0,
    \end{align*}
    hence $U_{\sigma_2}(\tau)>u_\text b(\tau)$. This proves \eqref{Eq.U_sigma2>u_b}.

    \underline{\textit{Proof of \eqref{Eq.U_sigma2_compare}}.} Recalling \eqref{Eq.Lu}, one can obtain  through brute force computation    
    \[\cL\left[U_{\sigma_2}\right](\tau)=\frac{16\tau(\tau+1)(-1+\alpha-2\tau)(-1+\alpha-\tau)[-(1+\alpha)\tau^{2}+(3\alpha+1)(-1+\alpha-2\tau)]}{3(-2\tau+3\al+1)^3}.\]
    For $\tau\in\left(\frac{a^2-3}{2(a+3)},0\right)\subset(-1,0)$, since $-1+\al-\tau<-1+\al-2\tau<0$, we have $\cL\left[U_{\sigma_2}\right](\tau)>0$. This proves \eqref{Eq.U_sigma2_compare}.

    Finally, for $x=x_A$, we have $w'(x_A)=0$ and $F(x_A)>0$ thanks to \eqref{Eq.F_expression} and $w_-<w(x_A)<w_+$. Combining this with $w(x_A)+\sigma(x_A)<1$ and the strict decrease of $\sigma$, we obtain
    \begin{align*}
        1-w(x_A)-\max\{0, w'(x_A)\}-\left|\sigma(x_A)+\sigma'(x_A)\right|=1-w(x_A)-\sigma(x_A)-\sigma'(x_A)>0.
    \end{align*}
    Thus, \eqref{equal condition for repulsivity property} holds for $x=x_A$.
\end{proof}

\if0
\subsubsection{The proof for (\ref{equal condition for repulsivity property}) in
$P_{2}\rightarrow P_{A}$}

In this range, we still aim to prove (\ref{P6P2 repulsivity}), which
implies (\ref{equal condition for repulsivity property}).

The solution lies in the region $\{\triangle>0,\triangle_{1}<0,\triangle_{2}>0\}$,
which leads to $w'=-\frac{\triangle_{1}}{\triangle}>0$ except for
$w'(P_{A})=0$. Besides, recall that
\[
F=\sigma+\sigma'=-\frac{(d-1)\sigma}{\ell\triangle}(w-w_{2})(w-w_{3}),
\]
which means $F>0$ except for $F(P_{A})=0$. Now we focus on the proof
for (\ref{P6P2 repulsivity 1}). Similar to the last case, we consider
the equation $\triangle(1-w-w'-F)=0$, which is equal to,
\[
A\sigma^{2}-\frac{1}{\ell}B\sigma-(1-w)[1-(2-r)w]=0.
\]
Since $w\in(w_{2},w_{3})$, which gives $B<0$ and the solution $\bar{\sigma}(w)<1-w$
in this range. Therefore,
\begin{align*}
\bar{\sigma}^{2}(w) & =\frac{1}{A}(\frac{1}{\ell}B\bar{\sigma}+(1-w)(A(1-w)+B))\\
 & >\frac{1}{A}(\frac{1}{\ell}B(1-w)+(1-w)(A(1-w)+B))\\
 & =\frac{1}{A}(1-w)[A(1-w)+(1+\frac{1}{\ell})B].
\end{align*}
Due to $d=\ell=3$ and the property (\ref{property 1 for A and B}),
we define
\begin{align*}
\bar{\sigma}_{2}(w) & :=\frac{1}{A}(1-w)[A(1-w)+(1+\frac{1}{\ell})B]\\
 & =\frac{1}{A}(1-w)[A(1-w)+\frac{4}{3}[1-(2-r)w-A(1-w)]]\\
 & =\frac{4}{3}\cdotp\frac{1}{A}(1-w)[1-(2-r)w]-\frac{1}{3}(1-w)^{2}.
\end{align*}

Under the new variables $(u,\tau)$,
\[
1-w=1-w_{2}-W=1-\frac{a}{a+1}+\frac{\tau}{a+1}=\frac{(1+\tau)}{a+1},
\]

\[
1-(2-r)w=\frac{3a^{2}+4a+3}{(a+1)^{2}(a+3)}+\frac{(-a^{2}-2a-3)}{(a+1)(a+3)}W=\frac{1}{(a+1)^{2}}[(3\alpha+1)+(\alpha+1)\tau],
\]
and
\[
A=2W+\frac{3a^{2}+4a+3}{(a+1)(a+3)}=\frac{-2\tau+3\alpha+1}{z+1}.
\]
Therefore, $\bar{\sigma}_{2}(\tau)$ could be rewritten as
\[
\bar{\sigma}_{2}(\tau)=\frac{4}{3}\cdotp\frac{1+\tau}{-2\tau+3\alpha+1}\cdotp\frac{1}{(a+1)^{2}}[(3\alpha+1)+(\alpha+1)\tau]-\frac{1}{3}(\frac{1+\tau}{a+1}){}^{2}.
\]

We define the barrier function for this part as
\begin{align} \label{secong barrier P2PA}
\tilde{\sigma}_{2}(\tau) & :=(1+a)^{2}\bar{\sigma}(\tau)\\
 & =\frac{4}{3}\cdotp\frac{1+\tau}{-2\tau+3\alpha+1}[(3\alpha+1)+(\alpha+1)\tau]-\frac{1}{3}(1+\tau)^{2},\nonumber
\end{align}
and a direct computation implies
\begin{equation}
\tilde{\sigma}_{2}'(\tau)=-\frac{8}{3}(\alpha+1)[\tau^{2}-(3\alpha+1)\tau-2(3\alpha+1)]\cdotp\frac{1}{(3\alpha+1)^{2}}-\frac{2}{3}(1+\tau).\label{derivvative second barrier function}
\end{equation}

For the solution from $P_{2}$ to $P_{A}$, $w=w_{2}+W\in(w_{2},w_{5})$,
which implies $W\in(0,w_{5}-w_{2})$ and $W\in(0,\frac{-a^{2}+3}{2(a+1)(a+3)})$.
Under the variable $\tau$, it means $\frac{a^{2}-3}{2(a+3)}<\tau<0$.
In order to prove $\tilde{\sigma}_{2}(\tau)$ is a barrier function
for $\tau\in(\frac{a^{2}-3}{2(a+3)},0)$, we first prove at $\tau=\tau_{A}$,
where $\tau_{A}$ is the abscissa for $Q_{A}$, it holds $\tilde{\sigma}_{2}(\tau_{A})>u_{b}(\tau_{A})$.
To achieve this, we prove a stronger version, i.e.
\begin{equation}
\tilde{\sigma}_{2}(\tau)>u_{b}(\tau),\;\tau\in(\frac{a^{2}-3}{2(a+3)},0).\label{property 1 for second barrier}
\end{equation}

Recall that $u_{b}(\tau)=\frac{(1+\tau)[-\tau^{2}+(\alpha-1)\tau+3\alpha]}{3(\alpha-\tau)}$,
then (\ref{property 1 for second barrier}) is equal to show,
\begin{align*}
 & \frac{4}{3}\cdotp\frac{1+\tau}{-2\tau+3\alpha+1}[(3\alpha+1)+(\alpha+1)\tau]-\frac{1}{3}(1+\tau)^{2}>\frac{(1+\tau)[-\tau^{2}+(\alpha-1)\tau+3\alpha]}{3(\alpha-\tau)}\\
\Leftrightarrow & \frac{4}{-2\tau+3\alpha+1}\cdotp[(3\alpha+1)+(\alpha+1)\tau]-(1+\tau)>\frac{[-\tau^{2}+(\alpha-1)\tau+3\alpha]}{(\alpha-\tau)}\\
\Leftrightarrow & 4(\alpha-\tau)[(3\alpha+1)+(\alpha+1)\tau]-(1+\tau)(-2\tau+3\alpha+1)>[-\tau^{2}+(\alpha-1)\tau+3\alpha](-2\tau+3\alpha+1)\\
\Leftrightarrow & -2\tau(1-2\alpha+\alpha^{2}+3\tau-3\alpha\tau+2\tau^{2})>0\\
\Leftrightarrow & -2\tau(-1+\alpha-2\tau)(-1+\alpha-\tau)>0.
\end{align*}

Now we claim
\begin{equation}
-1+\alpha-2\tau<0,\;\tau\in(\frac{a^{2}-3}{2(a+3)},0).\label{property 11 for second barrier}
\end{equation}

If (\ref{property 11 for second barrier}) holds, we could derive
$-1+\alpha-\tau<-1+\alpha-2\tau<0$ and finish the proof for (\ref{property 1 for second barrier}).
For (\ref{property 11 for second barrier}), since it is monotone
decreasing for $\tau$, we only need to verify, at $\tau=\frac{a^{2}-3}{2(a+3)}$,
\[
-1+\alpha-2\tau=-1+\frac{a(a+1)}{a+3}-\frac{a^{2}-3}{a+3}=0,
\]
which finishes the proof for (\ref{property 11 for second barrier}).

Finally, we claim the following fact holds,
\begin{equation}
\mathcal{L}[\tilde{\sigma}_{2}](\tau)>0,\;\tau\in(\frac{a^{2}-3}{2(a+3)},0).\label{property 2 for second barrier}
\end{equation}

Assuming (\ref{property 2 for second barrier}) holds, let us prove
$\tilde{\sigma}_{2}(\tau)>u(\tau)$, $\tau\in(\tau_{A},0)$. By contradiction,
there exists $\tau_{2}\in(\tau_{A},0)$, such that $\tilde{\sigma}_{2}(\tau_{2})=u(\tau_{2})$,
and $\tilde{\sigma}_{2}(\tau)>u(\tau)$ for $\tau\in(\tau_{A},\tau_{2})$.
At the point $\tau=\tau_{2}$, $\triangle_{\tau}(\tau_{2},u)=\triangle_{\tau}(\tau_{2},\tilde{\sigma}_{2})>0$
and $u'(\tau_{2})\geq\tilde{\sigma}_{2}(\tau_{2})$. Therefore, we
could obtain
\begin{align*}
0 & =\mathcal{L}[u](\tau_{2})=\triangle_{\tau}(\tau_{2},u)u'(\tau_{2})-\triangle_{u}(\tau_{2},u)\\
 & \geq\mathcal{L}[\tilde{\sigma}_{2}](\tau_{2})=\triangle_{\tau}(\tau_{2},\tilde{\sigma}_{2})\tilde{\sigma}_{2}'(\tau_{2})-\triangle_{u}(\tau_{2},\tilde{\sigma}_{2}),
\end{align*}
which reaches a contradiction with (\ref{property 2 for second barrier}).
The only task is to derive (\ref{property 2 for second barrier}).
By the definition for the operator $\mathcal{L}$,
\begin{align*}
\mathcal{L}[\tilde{\sigma}_{2}](\tau)= & \{(\alpha-\tau)\cdotp[\frac{4}{A_{1}}(1+\tau)[(3\alpha+1)+(\alpha+1)\tau]-(1+\tau)^{2}]-3\alpha-(4\alpha-1)\tau-(\alpha-2)\tau^{2}+\tau^{3}\}\\
 & \{-\frac{8}{3}(\alpha+1)[\tau^{2}-(3\alpha+1)\tau-2(3\alpha+1)]\cdotp\frac{1}{A_{1}^{2}}-\frac{2}{3}(1+\tau)\}\\
+ & \{\frac{4}{3A_{1}}(1+\tau)[(3\alpha+1)+(\alpha+1)\tau]-\frac{(1+\tau)^{2}}{3}\}\\
 & \{2[\frac{4}{3A_{1}}(1+\tau)[(3\alpha+1)+(\alpha+1)\tau]-\frac{(1+\tau)^{2}}{3}]+(-2+\frac{4}{3}(\alpha-4)\tau-\frac{10}{3}\tau^{2})\}.
\end{align*}

Since $A_{1}>0$ in this part of analysis, we derive
\begin{align*}
A_{1}^{3}\cdotp\mathcal{L}[\tilde{\sigma}_{2}](\tau)= & \{(\alpha-\tau)[4(1+\tau)[(3\alpha+1)+(\alpha+1)\tau]-A_{1}(1+\tau)^{2}]\\
+&(-3\alpha-(4\alpha-1)\tau-(\alpha-2)\tau^{2}+\tau^{3})\cdotp A_{1}\}\\
 & \{-\frac{8}{3}(\alpha+1)[\tau^{2}-(3\alpha+1)\tau-2(3\alpha+1)]-\frac{2}{3}(1+\tau)A_{1}^{2}\}\\
+ & A_{1}\cdotp\{\frac{4}{3}\cdotp(1+\tau)[(3\alpha+1)+(\alpha+1)\tau]-\frac{1}{3}(1+\tau)^{2}\cdotp A_{1}\}\\
 & \{2[\frac{4}{3}\cdotp(1+\tau)[(3\alpha+1)+(\alpha+1)\tau]-\frac{1}{3}(1+\tau)^{2}\cdotp A_{1}]+(-2+\frac{4}{3}(\alpha-4)\tau-\frac{10}{3}\tau^{2})\cdotp A_{1}\}\\
= & \frac{16}{3}\tau(1+\tau)(-1+\alpha-2\tau)(-1+\alpha-\tau)(-1-2\alpha+3\alpha^{2}-2\tau-6\alpha\tau-\tau^{2}-\alpha\tau^{2})\\
= & \frac{16}{3}\tau(1+\tau)(-1+\alpha-2\tau)(-1+\alpha-\tau)[-(1+\alpha)\tau^{2}+(3\alpha+1)(-1+\alpha-2\tau)].
\end{align*}

Due to (\ref{property 11 for second barrier}) and the fact $\tau\in(-1,0)$,
we could obtain $A_{1}^{3}\cdotp\mathcal{L}[\tilde{\sigma}_{2}](\tau)>0$
and (\ref{property 2 for second barrier}) holds in this case.
\fi

\begin{lemma}[Repulsivity: $P_A-P_B$]\label{Lem.repulse3}
    Let $d=\ell=3$ and $\{r_n\}$ be given by Theorem \ref{Thm.ODE}. For each $r\in\{r_n\}$, let $(\sigma,w)$ be the solution to \eqref{autonomousODE} be given by Theorem \ref{Thm.ODE}. Then $w'(x)<0$, $\sigma(x)+\sigma'(x)>0$ and $1-w(x)-\sigma(x)-\sigma'(x)>0$ for all $x\in(x_A, x_B)$. Moreover, \eqref{equal condition for repulsivity property} holds for $x=x_B$.
\end{lemma}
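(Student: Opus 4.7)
On the interval $(x_A, x_B)$ the orbit lies strictly below the red curve $\{\Delta_1=0\}$ and strictly above the horizontal line $\{w=w_-\}$. Theorem \ref{Thm.ODE} therefore supplies, for every $x\in(x_A,x_B)$, the three sign conditions
\[
\Delta(\sigma(x),w(x))>0,\qquad \Delta_1(\sigma(x),w(x))>0,\qquad \Delta_2(\sigma(x),w(x))>0,
\]
together with $w(x)\in(w_-,w_+)\subset(0,1)$, where the upper bound $w_+<1$ is recorded in \eqref{Eq.w+-}. Each of the three repulsivity statements on $(x_A,x_B)$ then reduces to a one-line computation.

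The identity $w'=-\Delta_1/\Delta$ immediately yields $w'(x)<0$. For the quantity $F(x):=\sigma(x)+\sigma'(x)$ the plan is to invoke the explicit factorization \eqref{Eq.F_expression} established in the proof of Lemma \ref{Lem.repulse1}, namely
\[
F=-\frac{(d-1)\sigma}{\ell\,\Delta}(w-w_-)(w-w_+),
\]
which is strictly positive here because $(w-w_-)(w-w_+)<0$ on $(x_A,x_B)$ while $\sigma,\Delta,\ell>0$. The remaining inequality $1-w-\sigma-\sigma'>0$ is to be handled without any auxiliary barrier function: combine $\sigma'=-\Delta_2/\Delta<0$ with the factorization $\Delta=(1-w-\sigma)(1-w+\sigma)>0$, and use $1-w>0$ to deduce first $1-w>\sigma$, and then
\[
1-w-\sigma-\sigma'>1-w-\sigma>0.
\]

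At the right endpoint $x=x_B$ one has $w(x_B)=w_-$, so $F(x_B)=0$ by \eqref{Eq.F_expression}. Continuity (combined with the fact that $x_B>x_A$ lies in the regime covered by the fifth bullet of Theorem \ref{Thm.ODE}) gives $\Delta_1(\sigma(x_B),w_-)>0$ and $\Delta(\sigma(x_B),w_-)>0$, hence $w'(x_B)<0$ and so $\max\{0,w'(x_B)\}=0$. The condition \eqref{equal condition for repulsivity property} at $x_B$ therefore collapses to $1-w_->0$, which holds by \eqref{Eq.w+-}. No real obstacle is anticipated here: unlike the segments treated in Lemmas \ref{Lem.repulse1}--\ref{Lem.repulse2}, the segment $(x_A,x_B)$ requires no barrier-function construction and is fully controlled by the qualitative sign information already packaged in Theorem \ref{Thm.ODE}.
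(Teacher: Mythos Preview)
Your approach is the same as the paper's: deduce $w'<0$ from $\Delta_1>0,\ \Delta>0$; get $F>0$ from the factorization \eqref{Eq.F_expression}; and obtain $1-w-\sigma-\sigma'>1-w-\sigma>0$ from $\Delta>0$, $1-w>0$, and $\sigma'<0$. The endpoint $x=x_B$ is handled identically.

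There is one unjustified assertion. Theorem~\ref{Thm.ODE} does \emph{not} supply $w(x)<w_+$ on $(x_A,x_B)$; its bullet points give only the signs of $\Delta,\Delta_1,\Delta_2$ and the lower barrier $w>a(1+a)\sigma^2$. But you need $w<w_+$ to make $(w-w_-)(w-w_+)<0$ in \eqref{Eq.F_expression}. The paper fills this by the chain $w(x)<w(x_A)\le r/2<w_+$: the first inequality is from $w'<0$ on $(x_A,\infty)$, the second from the fact that the $P_2$--$P_A$ arc lies in the region $\cR$ (treated in Lemma~\ref{Lem.repulse2}), where $w<r/2$, and the last from $w_+=(r+\sqrt{r^2-6r+6})/2>r/2$. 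Add this one line and your argument is complete.
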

\begin{proof}
    For $x\in(x_A, x_B)$, we have $w'(x)<0$, $w_-<w(x)<w(x_A)<r/2<w_+$ and $w(x)+\sigma(x)<1$. Hence, $1-w(x)-\sigma(x)-\sigma'(x)>-\sigma'(x)>0$ thanks to the strict decrease of $\sigma$. It also follows from \eqref{Eq.F_expression} that $\sigma(x)+\sigma'(x)=F(x)>0$ for all $x\in(x_A, x_B)$. If $x=x_B$, then $w'(x_B)<0$, $w(x_B)=w_-<1$ and $\sigma(x_B)+\sigma'(x_B)=F(x_B)=0$, and thus \eqref{equal condition for repulsivity property} holds for $x=x_B$.
\end{proof}

\begin{lemma}[Repulsivity: $P_B-P_4$]\label{Lem.repulse4}
    Let $d=\ell=3$ and $\{r_n\}$ given by Theorem \ref{Thm.ODE}. For each $r\in\{r_n\}$, let $(\sigma,w)$ be the solution to \eqref{autonomousODE} given by Theorem \ref{Thm.ODE}. Then $w'(x)<0$, $\sigma(x)+\sigma'(x)<0$ and $1-w(x)+\sigma(x)+\sigma'(x)>0$ for all $x\in(x_B, +\infty)$.
\end{lemma}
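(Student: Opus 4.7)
The plan is to dispose of the first two inequalities by inspection and to concentrate the work on the third, which after a change of variables reduces to a single algebraic inequality. Throughout, the final three bullets of Theorem~\ref{Thm.ODE} ensure that on $(x_B, +\infty)$ we have $\Delta > 0$, $\Delta_1 > 0$, $\Delta_2 > 0$, and that $w$ and $\sigma$ are strictly decreasing with $w \in (0, w_-)$. Thus $w' = -\Delta_1/\Delta < 0$ is immediate. For the second property, the factored identity \eqref{Eq.F_expression} gives
$$F(x) := \sigma(x) + \sigma'(x) = -\frac{2\sigma}{3\Delta}(w-w_-)(w-w_+),$$
and since $w < w_- < w_+$, both factors $(w-w_-)$ and $(w-w_+)$ are negative, so the product is positive and $F < 0$.

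For the third claim I would first establish the identity $3\Delta\,(1 - w + F) = P(\sigma, y)$, with $y := 1 - w$ and
$$P(\sigma, y) := 3y^3 - 2y^2\sigma - 3y\sigma^2 + 2(2-r)y\sigma - (r-1)\sigma,$$
by a direct computation modeled on the derivation of \eqref{Eq.1-w-w'-F_expression}. I would then pass to the $(\tau, u)$ coordinates of \eqref{Eq.renormalization}. A short computation, in which the identity $1+a = 2/(2-r+D)$ (with $D := \sqrt{r^2-6r+6}$) collapses $T(\tau) := 2(1+\tau)^2 - 2(2-r)(1+a)(1+\tau) + (r-1)(1+a)^2$ to the simple form $2\tau(C+\tau)$, where $C := (1+a)D = (3-a^2)/(a+3)$, yields the clean expression
$$(1+a)^3 P = 3(1+\tau)^3 - 3(1+\tau)u - 2\sqrt{u}\,\tau(C+\tau).$$

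The argument now rests on two structural observations. First, $P$ is strictly decreasing in $u$ on the region of interest: since $\partial_\sigma P = -Q(y) - 6y\sigma$ where $Q(y) := 2y^2 - 2(2-r)y + (r-1)$, and a direct check gives $Q(\sigma_2) = 0$ with $Q'(\sigma_2) = 2D > 0$, one has $Q(y) > 0$ for $y > \sigma_2 = 1/(1+a)$, hence $\partial_\sigma P < 0$. Second, the barrier $U_\mathrm{O}(\tau) := 1 - \tau/a$ from Lemma~\ref{Lem.U_O_compare} dominates the trajectory strictly on $[0, a)$: the strict inequality $u(\tau) < U_\mathrm{O}(\tau)$ for $\tau \in (0, a)$ is the translation of Proposition~\ref{Prop.global} (i.e., $w > a(1+a)\sigma^2$), and at $\tau = 0$ one has $u(0) = (1+a)^2\sigma(P_B)^2 < 1 = U_\mathrm{O}(0)$ because $\sigma$ is strictly decreasing and $\sigma(P_B) < \sigma(P_2) = \sigma_2$. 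Combining monotonicity with the barrier bound gives $P(u(\tau), \tau) > P(U_\mathrm{O}(\tau), \tau)$, so it is enough to verify $P(U_\mathrm{O}(\tau), \tau) \geq 0$ on $[0, a]$.

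Substituting $u = 1 - \tau/a$ and factoring out $\tau/a$, this reduces (upon squaring the resulting inequality between non-negative quantities) to the algebraic inequality
$$9(a\tau + 2a+1)^2(\tau+1)^2 \;\geq\; 4a(a-\tau)(C+\tau)^2, \qquad \tau \in [0, a],$$
which is really the only technical step and where I expect the main (but modest) obstacle to lie. I would handle it by the clean chain
$$9(a\tau+2a+1)^2(\tau+1)^2 \;\geq\; 9(2a+1)^2 \;\geq\; 4a^2(C+a)^2 \;\geq\; 4a(a-\tau)(C+\tau)^2,$$
in which the outer steps are trivial monotonicities in $\tau$, and the middle step, after taking square roots, reduces to $3(2a+1) \geq 2a(C+a)$; substituting $C = (3-a^2)/(a+3)$ this simplifies to $3(5a+3)/(a+3) \geq 0$, which is obvious. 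At the tight endpoint $\tau = 0$ one only gets $P(U_\mathrm{O}(0), 0) = 0$, so I would separately note that direct substitution at $(u_B, 0)$ gives $(1+a)^3 P = 3(1 - u_B) > 0$. Assembling these estimates yields $1 - w(x) + \sigma(x) + \sigma'(x) > 0$ throughout $(x_B, +\infty)$, completing the proof.
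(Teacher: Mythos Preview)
Your argument is correct, but it takes a substantially longer route than the paper's. The paper stays entirely in $(\sigma,w)$ coordinates and never invokes the barrier $U_{\mathrm O}$ at all. After writing
\[
\ell\Delta\,(1-w+F)=\ell(1-w)\Delta-(d-1)\sigma(w_--w)(w_+-w),
\]
it uses only two crude bounds: (i) since $x>x_B>0$ one has $\sigma<\sigma(0)=1-w_-$, so $\Delta>(2-w-w_-)(w_--w)$; and (ii) $\sigma<1-w$. Plugging these in (and using $\ell=3>2=d-1$ to match coefficients) gives
\[
\ell\Delta\,(1-w+F)>2(1-w)(w_--w)\big[(2-w-w_-)-(w_+-w)\big]=2(1-w)(w_--w)(2-r)>0,
\]
the last step being Vieta's relation $w_-+w_+=r<2$. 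That is the entire proof.

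What your approach buys is a different kind of structural information: you exploit the sharper barrier $w>a(1+a)\sigma^2$ (equivalently $u<U_{\mathrm O}$) together with monotonicity of $P$ in $\sigma$, which is a natural continuation of the machinery from Section~6. The paper's observation is that this finer barrier is unnecessary here; the much weaker bounds $\sigma<1-w$ and $\sigma<1-w_-$ already close the estimate once one factors using $w_-+w_+=r$. One minor remark on your write-up: in the last inequality of your chain, $4a^2(C+a)^2\ge 4a(a-\tau)(C+\tau)^2$, the justification is the termwise bound $a-\tau\le a$ and $(C+\tau)^2\le (C+a)^2$, not literal monotonicity of the right-hand side in $\tau$ (which is in fact unimodal); you may want to phrase that step accordingly.
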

\begin{proof}
    For $x\in(x_B, +\infty)$, we have $w'(x)<0$, $0<a(1+a)\sigma(x)^2<w(x)<w(x_B)=w_-<w_+$ and $\Delta(\sigma(x), w(x))>0$. Hence, \eqref{Eq.F_expression} implies that $\sigma(x)+\sigma'(x)=F(x)<0$. It suffices to show that
    \begin{equation}\label{Eq.1-w+sigma+sigma'>0}
        1-w(x)+\sigma(x)+\sigma'(x)>0,\quad\forall\ x>x_B.
    \end{equation}
    Indeed, for $x>x_B$, we have $\sigma(x)<\sigma(0)=1-w_-$, so $\Delta(\sigma(x), w(x))=(1-w(x))^2-\sigma(x)^2>(1-w(x))^2-(1-w_-)^2=(2-w(x)-w_-)(w_--w(x))>0$, and hence,
    \begin{align*}
        &\ell\Delta(\sigma(x), w(x))\left(1-w(x)+\sigma(x)+\sigma'(x)\right)=\ell\Delta(\sigma(x), w(x))\left(1-w(x)+F(x)\right)\\
        =&\ {\ell}(1-w(x))\Delta(\sigma(x), w(x))-(d-1)\sigma(x)\left(w_--w(x)\right)\left(w_+-w(x)\right)\\
        >&\ {2}(1-w(x))(2-w(x)-w_-)(w_--w(x))-{2}(1-w(x))\left(w_--w(x)\right)\left(w_+-w(x)\right)\\
        =&\ 2(1-w(x))(w_--w(x))(2-w_--w_+),
    \end{align*}
    where we have used $\sigma(x)<1-w(x)$ and $\ell>d-1=2$ as $d=\ell=3$. Then \eqref{Eq.w+-} implies $2-w_--w_+=2-r>0$ for all $r<3-\sqrt3<2$. This proves \eqref{Eq.1-w+sigma+sigma'>0}.
\end{proof}

\if0
\subsubsection{The proof for (\ref{equal condition for repulsivity property}) in
$P_{A}\rightarrow P_{B}$}

In this subcase, we aim to show
\begin{equation}
\begin{cases} \label{PBPA repulsivity}
w'\leq0,\;\sigma+\sigma'>0,\\
1-w-\sigma-\sigma'>0,
\end{cases}
\end{equation}
which directly implies (\ref{equal condition for repulsivity property}).

In this region, $\triangle>0$, $\triangle_{1}\geq0$ and $\triangle_{2}>0$.
Therefore we could conclude $w'=-\frac{\triangle_{1}}{\triangle}\leq0$
and $\sigma'=-\frac{\triangle_{2}}{\triangle}<0$. Besides, $F=\sigma+\sigma'=-\frac{(d-1)\sigma}{\ell\triangle}(w-w_{2})(w-w_{3})\geq0$
since $w_{2}\in[w_{2},w_{3})$. Due to $1-w-\sigma>0$ in this region,
we obtain
\[
1-w-\sigma-\sigma'>0,
\]
which is equal to (\ref{equal condition for repulsivity property})
for the solution from $P_{A}$ to $P_{B}$.

\subsubsection{The proof for (\ref{equal condition for repulsivity property}) in
$P_{B}\rightarrow P_{4}$.}

In this final case, we focus on the proof for
\begin{equation}
\begin{cases}\label{PAP4 repulsivity}
w'<0,\;\sigma+\sigma'<0,\\
1-w+\sigma+\sigma'>0,
\end{cases}
\end{equation}
which means (\ref{equal condition for repulsivity property}) holds
in this subcase.

From the previous analysis in Proposition \ref{Prop.global}, we could obtain $w'<0$ in this region.
Since $0<w<w_{2}$, we conclude $F=-\frac{(d-1)\sigma}{\ell\triangle}(w-w_{2})(w-w_{3})<0$.
In order to show (\ref{equal condition for repulsivity property}),
we only need to show
\begin{equation}
1-w+\sigma+\sigma'>0\label{the equal form fourth line}
\end{equation}
in this range.

As the solution tends to $P_{4}$, $(w,\sigma)\rightarrow(0,0)$ and
$\sigma'\rightarrow0$. Therefore, (\ref{the equal form fourth line})
obviously holds with a positive lower bound $1$. Noting that $\sigma+\sigma'=-\frac{(d-1)\sigma}{\ell\triangle}(w-w_{2})(w-w_{3})$
and $\triangle=(1-w)^{2}-\sigma^{2}$, we only need to show
\begin{equation}
\frac{(d-1)\sigma}{\ell}(w_{2}-w)(w_{3}-w)<(1-w)[(1-w)^{2}-\sigma^{2}].\label{the equal form 1 fourth}
\end{equation}
As $\sigma<\sigma_{2}=1-w_{2}$ and
\[
\triangle=(1-w)^{2}-\sigma^{2}>(1-w)^{2}-\sigma_{2}^{2}=(1-w)^{2}-(1-w_{2})^{2}=(w_{2}-w)(2-w-w_{2}),
\]
we could derive
\begin{equation}
(1-w)[(1-w)^{2}-\sigma^{2}]>(1-w)(w_{2}-w)(2-w-w_{2}).\label{the equal form 2 fourth}
\end{equation}
Besides,
\begin{equation}
\frac{(d-1)\sigma}{\ell}(w_{2}-w)(w_{3}-w)<\frac{(d-1)}{\ell}(1-w_{2})(w_{2}-w)(w_{3}-w).\label{the equal form 3 fourth}
\end{equation}

Summing up (\ref{the equal form 1 fourth}), (\ref{the equal form 2 fourth})
and (\ref{the equal form 3 fourth}), it is equal to show
\[
\frac{(d-1)}{\ell}(1-w_{2})(w_{3}-w)<(1-w)(2-w-w_{2}),
\]
which reduces to prove
\begin{equation}
\frac{(d-1)}{\ell}(1-w_{2})<2-w-w_{2}.\label{the equal form 4 fourth}
\end{equation}

The inequality (\ref{the equal form 4 fourth}) holds in the case
$d=\ell=3$ and we finish the proof.
\fi

\section*{Acknowledgments}
The authors would like to thank Gonzalo Cao-Labora and Jia Shi for clarifying the validity of Theorem \ref{Thm.stability} for  the general form of the isentropic compressible Navier-Stokes equations, with the general dissipation term $-\mu\Delta \mathbf u-(\la+\mu)\nabla\dive\mathbf u$. D. Wei is partially supported by the National Key R\&D Program of China under the grant 2021YFA1001500. Z. Zhang is partially supported by  NSF of China  under Grant 12288101.


\begin{thebibliography}{99}
	


\bibitem{Speck2022} L. Abbrescia and J. Speck, The emergence of the singular boundary from the crease in 3D compressible Euler flow. arXiv:2207.07107, 2022.


\bibitem{AHS2023} C. Alexander, M. Had\v zi\'{c} and M. Schrecker, Supersonic Gravitational Collapse for Non-Isentropic Gaseous Stars. arXiv:2311.18795v2, 2024.


\bibitem{Alinhac1999} S. Alinhac, Blowup of small data solutions for a quasilinear wave equation in two space dimensions.
\textit{Ann. of Math.}, \textbf{149} (1999),  97--127.

\bibitem{Alinhac2001} S. Alinhac, The null condition for quasilinear wave equations in two space dimensions. II.
\textit{Amer. J. Math.},
\textbf{193} (2001), 1071--1101.



\bibitem{Bourgain2000} J. Bourgain, Problems in Hamiltonian PDE’s.
\textit{Geom. Funct. Anal.},
Special volume, Part I (2000), 32--56.

\bibitem{BCLGS} T. Buckmaster, G. Cao-Labora and J. G\'omez-Serrano, Smooth imploding solutions for 3D compressible fluids. To appear in {\it Forum Math. Pi}; arXiv:2208.09445, 2022.


\bibitem{Buckmaster-Chen} T. Buckmaster and J. Chen, Blowup for the defocusing septic complex-valued nonlinear wave equation in $\mathbb R^{4+1}$. arXiv:2410.15619, 2024.


\bibitem{Buck-D-Shko-Vicol} T. Buckmaster, T. D. Drivas, S. Shkoller and V. Vicol, Simultaneous development of shocks and cusps for 2D Euler with azimuthal symmetry from smooth data. \textit{Ann. PDE}, \textbf{8} (2022), Paper No. 26, 199 pp.

\bibitem{BDSV-survey} T. Buckmaster, T. D. Drivas, S. Shkoller and V. Vicol, Formation and development of singularities for the compressible Euler equations. {\it ICM-International Congress of Mathematicians. Vol. 5. Sections 9-11}, 3636--3659. EMS Press, Berlin, 2023.

\bibitem{Buck-Iyer} T. Buckmaster and S. Iyer, Formation of unstable shocks for 2D isentropic compressible Euler. \textit{Comm. Math. Phys.}, \textbf{389} (2022), 197--271.

\bibitem{Buck-Shko-Vicol1} T. Buckmaster, S. Shkoller and V. Vicol, Formation of shocks for 2D isentropic compressible Euler. \textit{Comm. Pure Appl. Math.}, \textbf{75} (2022), 2069--2120.

\bibitem{Buck-Shko-Vicol2} T. Buckmaster, S. Shkoller and V. Vicol, Shock formation and vorticity creation for 3D Euler. \textit{Comm. Pure Appl. Math.}, \textbf{76} (2023), 1965--2072.

\bibitem{Buck-Shko-Vicol3} T. Buckmaster, S. Shkoller and V. Vicol, Formation of point shocks for 3D compressible Euler. \textit{Comm. Pure Appl. Math.}, \textbf{76} (2023), 2073--2191.

\bibitem{CLGSSS} G. Cao-Labora, J. G\'omez-Serrano, J. Shi and G. Staffilani, Non-radial implosion for compressible Euler and Navier-Stokes in $\mathbb T^3$ and $\mathbb R^3$. arXiv:2310.05325, 2023.

\bibitem{CLGSSS-2024} G. Cao-Labora, J. G\'omez-Serrano, J. Shi and G. Staffilani, Non-radial implosion for the defocusing nonlinear Schr\"odinger equation in $\mathbb T^d$ and $\mathbb R^d$. arXiv:2410.04532, 2024.

\bibitem{Chemin1990} J.-Y. Chemin, Dynamique des gaz \`a masse totale finie. {\it Asymptotic Anal.}, {\bf 3} (1990), 215--220.

\bibitem{Chen2023} J. Chen, Nearly self-similar blowup of the slightly perturbed homogeneous Landau equation with very
soft potentials. arXiv:2311.11511, 2023.

\bibitem{Chen2024} J. Chen, Vorticity blowup in compressible Euler equations in $\R^d$, $d\geq3$. arXiv:2408.04319, 2024.

\bibitem{CCSV2024} J. Chen, G. Cialdea, S. Shkoller and V. Vicol, Vorticity blowup in 2D compressible Euler equations. arXiv:2407.06455, 2024.

\bibitem{Chen2021} J. Chen and T. Y. Hou, Finite time blowup of 2D Boussinesq and 3D Euler equations with $C^{1,\alpha}$ velocity
and boundary.
\textit{Comm. Math. Phys.},
\textbf{383} (2021), 1559--1667.


\bibitem{Chen2022-1} J. Chen and T. Y. Hou, Stable nearly self-similar blowup of the 2D Boussinesq and 3D Euler equations
with smooth data I: Analysis. arXiv:2210.07191v3, 2023.

\bibitem{Chen2022-2} J. Chen and T. Y. Hou, Stable nearly self-similar blowup of the 2D Boussinesq and 3D Euler equations
with smooth data II: Rigorous numerics. {\it Multiscale Model. Simul.}, {\bf23} (2025), 25--130.

\bibitem{ChenHouHuang2021} J. Chen, T. Y. Hou and D. Huang, On the finite time blowup of the De Gregorio model for the 3D Euler equations. \textit{Comm. Pure Appl. Math.},
\textbf{74} (2021), 1282--1350.

\bibitem{CCK2004} Y. Cho, H. J. Choe and H. Kim, Unique solvability of the initial boundary value problems for compressible viscous fluids. \textit{J. Math. Pures Appl. (9)}, \textbf{83} (2004), 243--275.

\bibitem{CK2006} Y. Cho and H. Kim, On classical solutions of the compressible Navier–Stokes equations with nonnegative initial densities. \textit{Manuscripta Math.}, \textbf{120} (2006), 91--129.

\bibitem{CK2003} H. J. Choe and H. Kim, Strong solutions of the Navier–Stokes equations for isentropic compressible fluids. \textit{J. Differential Equations}, \textbf{190} (2003), 504--523.

\bibitem{Christ2007} D. Christodoulou, {\it The formation of shocks in 3-dimensional fluids.}  {EMS Monographs in Mathematics}. European Mathematical Society (EMS), Z\"urich, 2007. viii+992 pp.

\bibitem{Christodoulou2019} D. Christodoulou, {\it The shock development problem.} EMS Monographs in Mathematics. European Mathematical Society (EMS), Z\"urich, 2019. ix+920 pp.

\bibitem{Chris-Lisibach2016} D. Christodoulou and A. Lisibach, Shock development in spherical symmetry. {\it Ann. PDE}, {\bf2} (2016), Art. 3, 246 pp.

\bibitem{Christ2014} D. Christodoulou and S. Miao, {\it Compressible flow and Euler's equations.}
Surveys of Modern Mathematics, vol. \textbf 9.
International Press, Somerville, MA; Higher Education Press, Beijing, 2014.




\bibitem{Danchin2001} R. Danchin, Local theory in critical spaces for compressible viscous and heat-conductive gases. \textit{Comm. Partial Differential Equations}, \textbf {26} (2001), 1183--1233.

\bibitem{Drivas-Elgindi} T. D. Drivas and T. M. Elgindi, Singularity formation in the incompressible Euler equation in finite and infinite time. \textit{EMS Surv. Math. Sci.}, \textbf{10} (2023), 1--100.

\bibitem{Elgindi2021-1} T. M. Elgindi, Finite-time singularity formation for $C^{1,\alpha}$ solutions to the incompressible Euler equations on $\mathbb R^3$. \textit{Ann. of Math.}, \textbf{194} (2021), 647--727.


\bibitem{Elgindi2021-2}  T. M. Elgindi, T. Ghoul and N. Masmoudi, On the stability of self-similar blow-up for $C^{1,\alpha}$ solutions to the incompressible Euler equations on $\mathbb R^3$.
\textit{Camb. J. Math.}, \textbf{9} (2021), 1035--1075.

\bibitem{Feireisl2001} E. Feireisl, A. Novotn\'y and H. Petzeltov\'a, On the global existence of globally defined weak solutions to the Navier–Stokes equations of isentropic
compressible fluids. \textit{J. Math. Fluid Mech.}, \textbf{3} (2001), 358--392.





\bibitem{Guderley1942} G. Guderley, Starke kugelige und zylindrische Verdichtungsst\"osse in der N\"ahe des Kugelmittelpunktes bzw. der Zylinderachse.
\textit{Luftfahrtforschung},
\textbf{19} (1942), 302--311.


\bibitem{GHJ2021-1} Y. Guo, M. Had\v zi\'{c} and J. Jang, Continued gravitational collapse for Newtonian stars.
\textit{Arch. Ration. Mech. Anal.}, \textbf{239} (2021), 431--552.


\bibitem{GHJ2021-2} Y. Guo, M. Had\v zi\'{c} and J. Jang, Larson-Penston self-similar gravitational collapse. \textit{Comm. Math. Phys.}, \textbf{386} (2021), 1551--1601.

\bibitem{GHJ2023} Y. Guo, M. Had\v zi\'{c} and J. Jang, Naked singularities in the Einstein-Euler system. \textit{Ann. PDE}, \textbf{9} (2023), Paper No. 4, 182 pp.

\bibitem{GHJS2022} Y. Guo, M. Had\v zi\'{c}, J. Jang and M. Schrecker, Gravitational collapse for polytropic  gaseous stars: self-similar solutions. \textit{Arch. Ration. Mech. Anal.}, \textbf{246} (2022), 957--1066.




\bibitem{HQWW2024} D. Huang, X. Qin, X. Wang and D. Wei, Self-similar finite-time blowups with smooth profiles of the
generalized Constantin-Lax-Majda model.
\textit{Arch. Ration. Mech. Anal.}, \textbf{248} (2024), Paper No. 22, 65 pp.


\bibitem{HQWW2023} D. Huang, X. Qin, X. Wang and D. Wei, On the exact self-similar finite-time blowup of the Hou-Luo
model with smooth profiles. arXiv:2308.01528v2, 2024.


\bibitem{HTW2023} D. Huang, J. Tong and D. Wei, On self-similar finite-time blowups of the de Gregorio model on the real line.
\textit{Comm. Math. Phys.}, \textbf{402} (2023), 2791--2829.


%



\bibitem{Itaya1976} N. Itaya, On the initial value problem of the motion of compressible viscous fluid, especially on the problem of uniqueness. \textit{J. Math. Kyoto Univ.}, \textbf{16} (1976), 413--427.

\bibitem{JZ2001} S. Jiang and P. Zhang, Global spherically symmetric solutions of the compressible isentropic Navier–Stokes equations. \textit{Comm. Math. Phys.}, \textbf{215} (2001), 559--581.


\bibitem{JZ2003} S. Jiang and P. Zhang, Axisymmetric solutions of the 3-D Navier–Stokes equations for compressible isentropic flows. \textit{J. Math. Pures Appl. (9)}, \textbf{82} (2003), 949--973.

\bibitem{Kato1975} T. Kato, The Cauchy problem for quasi-linear symmetric hyperbolic systems. {\it Arch. Ration. Mech. Anal.}, \textbf{58} (1975), 181--205.

\bibitem{Lax1964} P. D. Lax, Development of singularities of solutions of nonlinear hyperbolic partial differential equations.
{\it J. Mathematical Phys.}, {\bf 5} (1964), 611--613.

\bibitem{Lax1973} P. D. Lax, \textit{Hyperbolic systems of conservation laws and the mathematical theory of shock waves.} Conference Board of the Mathematical Sciences Regional Conference Series in Applied Mathematics, No. \textbf{11}. Society for Industrial and Applied Mathematics, Philadelphia, PA, 1973. v+48 pp.

\bibitem{Lebaud1994} M.-P. Lebaud, Description de la formation d'un choc dans le $p$-syst\`eme. {\it J. Math. Pures Appl. (9)}, {\bf 73} (1994), 523--565.

\bibitem{LWX2019} H. Li, Y. Wang and Z. Xin, Non-existence of classical solutions with finite energy to the Cauchy problem of the compressible Navier-Stokes equations. \textit{Arch. Ration. Mech. Anal.}, \textbf{232} (2019), 557--590.




\bibitem{Lions1998} P. L. Lions, {\it Mathematical Topics in Fluid Mechanics, vol. 2: Compressible Models.} Oxford Lecture Ser. Math. Appl., {\bf10}. Oxford Sci. Publ. The Clarendon Press, Oxford University Press, New York, 1998. xiv+348 pp.

\bibitem{Luk2018} J. Luk and J. Speck, Shock formation in solutions to the 2D compressible Euler equations in the presence of non-zero vorticity. \textit{Invent. Math.}, \textbf{214} (2018), 1--169.

\bibitem{Luk2024} J. Luk and J. Speck, The stability of simple plane-symmetric shock formation for three-dimensional compressible Euler flow with vorticity and entropy. \textit{Anal. PDE}, \textbf{17} (2024), 831--941.



\bibitem{HL2014}  G. Luo and T. Y. Hou, Toward the finite-time blowup of the 3D incompressible Euler equations: a numerical investigation. \textit{Multiscale Model. Simul.}, \textbf{12} (2014), 1722--1776.




\bibitem{Majda1984} A. J. Majda, {\it Compressible fluid flow and systems of conservation laws in several space variables.}
{Applied Mathematical Sciences}, \textbf{53}. Springer-Verlag, New York, 1984. viii+159 pp.

\bibitem{MUK1987} T. Makino, S. Ukai and S. Kawashima, On compactly supported solutions of the compressible Euler equation. In {\it Recent topics in nonlinear PDE, III (Tokyo, 1986)}, North-Holland Math. Stud. {\bf148}, Lecture Notes Numer. Appl. Anal. {\bf9}. North-Holland, Amsterdam, pp. 173--183.



\bibitem{MRRJ2} F. Merle, P. Raph\"ael, I. Rodnianski, and J. Szeftel, On the implosion of a compressible fluid I: Smooth self-similar inviscid profiles. \textit{Ann. of Math. (2)}, \textbf{196} (2022), 567--778.

\bibitem{MRRJ3} F. Merle, P. Raph\"ael, I. Rodnianski, and J. Szeftel, On the implosion of a compressible fluid II: Singularity formation. \textit{Ann. of Math. (2)}, \textbf{196} (2022), 779--889.


\bibitem{MRRJ4} F. Merle, P. Raph\"ael, I. Rodnianski, and J. Szeftel, On blow up for the energy super critical defocusing nonlinear Schrödinger equations. \textit{Invent. Math.}, \textbf{227} (2022), 247--413.

\bibitem{Nash1958} J. Nash,  Continuity of solutions of parabolic and elliptic equations. {\it Amer. J. Math.},  80 (1958), 931-954.

\bibitem{Nash1962}  J. Nash, Le probl\`eme de Cauchy pour les \'equations diff\'erentielles d'un fluide g\'en\'eral. \textit{Bull. Soc. Math. France}, \textbf{90} (1962), 487--497.

\bibitem{Rozanova2008}  O. Rozanova, Blow up of smooth solutions to the compressible Navier-Stokes equations with the data
highly decreasing at infinity. \textit{J. Differential Equations},
\textbf{245} (2008), 1762--1774.




\bibitem{Sedov1959} L. I. Sedov, {\it Similarity and dimensional methods in mechanics.} Translation by Morris Friedman. {Academic Press, New York-London}, 1959. xvi+363 pp.


\bibitem{Serrin1959}  J. Serrin, On the uniqueness of compressible fluid motions. \textit{Arch. Ration. Mech. Anal.}, \textbf{3} (1959), 271--288.

\bibitem{SWZ2024_1} F. Shao, D. Wei and Z. Zhang, Self-similar imploding solutions of the relativistic Euler equations. arXiv:2403.11471, 2024.

\bibitem{SWZ2024_2} F. Shao, D. Wei and Z. Zhang, On blow-up for the supercritical defocusing nonlinear wave equation. arXiv:2405.19674, 2024.

\bibitem{SV} S. Shkoller and V. Vicol, The geometry of maximal development and shock formation for the Euler equations in multiple space dimensions.
\textit{Invent. Math.},  \textbf{237}(2024), 871--1252.

\bibitem{Sideris1985} T. C. Sideris, Formation of singularities in three-dimensional compressible fluids. \textit{Comm. Math. Phys.}, \textbf{101}(1985), 475--485.

\bibitem{SWZ2011} Y. Sun, C. Wang and Z. Zhang, A Beale-Kato-Majda blow-up criterion for the 3-D compressible Navier-Stokes equations. \textit{J. Math. Pures Appl. (9)}, \textbf{95} (2011), 36--47.

\bibitem{Xin1998} Z. Xin, Blowup of smooth solutions to the compressible Navier-Stokes equation with compact density. \textit{Comm. Pure Appl. Math.}, \textbf{51} (1998), 229--240.

\bibitem{XY2013} Z. Xin and W. Yan, On blowup of classical solutions to the compressible Navier-Stokes equations. \textit{Comm. Math. Phys.}, \textbf{321} (2013), 529--541.

\bibitem{Yin2004} H. Yin, Formation and construction of a shock wave for 3-D compressible Euler equations with the spherical initial data. {\it Nagoya Math. J.}, {\bf 175} (2004), 125--164.


	
\end{thebibliography}
\end{document}